\documentclass[a4paper]{amsart}
\usepackage[utf8]{inputenc}
\usepackage[T1]{fontenc}
\usepackage{lmodern}
\usepackage{amssymb}
\usepackage{mathtools}
\usepackage{enumitem}
\usepackage{amsthm}
\usepackage{dsfont}
\usepackage{extpfeil}
\usepackage{xargs}
\usepackage{MnSymbol}

\usepackage{microtype}

\usepackage{tikz,tikz-cd}
\usetikzlibrary{graphs,graphs.standard,quotes}

\usetikzlibrary{matrix,arrows}
\tikzset{cd/.style=matrix of math nodes,row sep=2em,column sep=2em, text height=1.5ex, text depth=0.5ex}
\tikzset{cdar/.style=->,auto}

\tikzset{dar/.style={double,double equal sign distance,-implies}}
\tikzset{mid/.style={anchor=mid}} 

\tikzset{triar/.style={anchor=mid,->}}
\tikzset{tridar/.style={anchor=mid,double,double equal sign distance,-implies}}
\tikzset{narrowfill/.style={inner sep=0pt, fill=white}}

\setlist[enumerate,1]{label=\textup{(\arabic*)}}
\setlist[enumerate,2]{label=\textup{(\alph*)}}
\numberwithin{equation}{section}

\usepackage[
pdftitle={Bicategorical perspective on Steinberg algebras},
pdfauthor={Ralf Meyer, Taufik Yusof, Fabian Rodatz},
pdfsubject={Mathematics}
]{hyperref}

\usepackage{todonotes}
\usepackage{soul}

\usepackage[lite]{amsrefs}

\renewcommand*{\PrintDOI}[1]{\href{http://dx.doi.org/\detokenize{#1}}{doi:
\detokenize{#1}}}

\BibSpec{book}{%
  +{}  {\PrintPrimary}                {transition}
  +{,} { \textit}                     {title}
  +{.} { }                            {part}
  +{:} { \textit}                     {subtitle}
  +{,} { \PrintEdition}               {edition}
  +{}  { \PrintEditorsB}              {editor}
  +{,} { \PrintTranslatorsC}          {translator}
  +{,} { \PrintContributions}         {contribution}
  +{,} { }                            {series}
  +{,} { \voltext}                    {volume}
  +{,} { }                            {publisher}
  +{,} { }                            {organization}
  +{,} { }                            {address}
  +{,} { \PrintDateB}                 {date}
  +{,} { }                            {status}
  +{}  { \parenthesize}               {language}
  +{}  { \PrintTranslation}           {translation}
  +{;} { \PrintReprint}               {reprint}
  +{.} { }                            {note}
  +{.} {}                             {transition}
  +{} { \PrintDOI}                   {doi}
  +{} { available at \url}            {eprint}
  +{}  {\SentenceSpace \PrintReviews} {review}
}

\BibSpec{thesis}{%
    +{}  {\PrintAuthors}                {author}
    +{,} { \textit}                     {title}
    +{:} { \textit}                     {subtitle}
    +{,} { \PrintThesisType}            {type}
    +{,} { }                            {organization}
    +{,} { }                            {address}
    +{,} { \PrintDateB}                 {date}
    +{,} { \eprint}                     {eprint}
    +{,} { }                            {status}
    +{}  { \parenthesize}               {language}
    +{}  { \PrintTranslation}           {translation}
    +{;} { \PrintReprint}               {reprint}
    +{.} { }                            {note}
    +{.} {}                             {transition}
    +{} { \PrintDOI}                   {doi}
    +{}  {\SentenceSpace \PrintReviews} {review}
}

\theoremstyle{plain}
\newtheorem{theorem}[equation]{Theorem}
\newtheorem{lemma}[equation]{Lemma}
\newtheorem{proposition}[equation]{Proposition}
\newtheorem{propdef}[equation]{Proposition and Definition}

\newtheorem{corollary}[equation]{Corollary}

\theoremstyle{definition}
\newtheorem{definition}[equation]{Definition}

\theoremstyle{remark}
\newtheorem{remark}[equation]{Remark}
\newtheorem{example}[equation]{Example}

\newcommand*{\alb}{\hspace{0pt}} 
\newcommand*{\nb}{\nobreakdash}

\newcommand{\Bound}{\mathbb B}
\newcommand{\Mat}{\mathbb M}

\newcommand*{\s}{s} 
\newcommand*{\rg}{r}

\newcommand*{\Cat}[1][C]{\mathcal #1}
\newcommand*{\Bisp}[1][X]{\mathcal #1}
\newcommand{\Gr}[1][G]{\mathcal #1}
\newcommand{\Qu}{\Pi}
\newcommand{\Gtimes}[1][\s,\Gr^0,\rg]{\times_{#1}}

\newcommand*{\op}{\mathrm{op}}
\newcommand*{\prop}{\mathrm{prop}}
\newcommand{\Corr}{\mathfrak{Corr}}

\newcommand*{\ContS}{\mathfrak S}
\newcommand*{\Cont}{\mathrm C}
\newcommand*{\Contc}{\mathrm{C_c}}

\newcommand*{\Hils}[1][H]{\mathcal #1}

\newcommand*{\Cst}{\textup C^*}
\newcommand*{\Star}{$^*$\nobreakdash-}

\newcommand*{\defeq}{\mathrel{\vcentcolon=}}
\newcommand*{\congto}{\xrightarrow\sim}

\newcommand*{\id}{\mathrm{id}}

\newcommand{\C}{\mathbb{C}}
\newcommand{\N}{\mathbb{N}}
\newcommand{\Z}{\mathbb{Z}}

\DeclarePairedDelimiter{\abs}{\lvert}{\rvert}
\DeclarePairedDelimiter{\norm}{\lVert}{\rVert}
\DeclarePairedDelimiterX{\braket}[2]{\langle}{\rangle}{#1\,\delimsize\vert\,\mathopen{}#2}
\DeclarePairedDelimiterX{\setgiven}[2]{\{}{\}}{#1\,{:}\,\mathopen{}#2}

\newcommand*{\conj}[1]{\overline{#1}}

\DeclareMathOperator{\Cone}{Cone}

\DeclareMathOperator{\Hom}{Hom}
\DeclareMathOperator{\Endo}{End}
\DeclareMathOperator\supp{supp}
\DeclareMathOperator{\CovariantRep}{CovRep}

\newcommand{\charmap}[1]{\mathds{1}_{#1}}
\newcommand*{\A}{\mathfrak A}
\newcommand*{\B}{\mathcal B}
\newcommand*{\BXY}{\B_{\Bisp\circ\Bisp[Y]}}

\newcommand*{\CPg}[1][g]{\mathcal{C}_P^{#1}}

\newcommand*{\URing}{\operatorname{\mathtt{Ring}}}

\newcommand*{\Set}{\operatorname{\mathtt{Set}}}
\newcommand*{\Cats}{\operatorname{\mathfrak{Cat}}}
\newcommand*{\Top}{\operatorname{\mathtt{Top}}}

\newcommand*{\F}{\mathcal F} 
\newcommand*{\Hg}[1][g]{H_{{\mathcal{F},#1}}}
\newcommand*{\OO}[1][e]{\mathcal{O}_{#1}}
\newcommand*{\OOO}{\mathcal{O}}

\newcommand*{\w}{w}

\newcommand{\CovRep}[2][D]{\CovariantRep(#1,#2)}
\newcommand{\LaxCovRep}[2][D]{\CovariantRep_{\mathrm{lax}}(#1,#2)}

\newcommand*{\X}{\mathfrak X}

\newcommand*{\HXg}{H_{{\mathcal{\X},g}}}
\newcommand{\GMH}{\mathcal{H}}

\newcommand{\bigslant}[2]{{\raisebox{.2em}{$#1$}\left/\raisebox{-.2em}{$#2$}\right.}}

\newcommand*{\Grcat}{\mathfrak{Gr}} 

\newcommand*{\Rings}{\mathfrak{Rings}}
\newcommand*{\Ringspr}{\mathfrak{Rings}_\prop} 

\begin{document}
\title[A bicategorical perspective on Steinberg algebras]{A
  bicategorical perspective\\on Steinberg algebras}
\author{Ralf Meyer}
\email{rmeyer2@uni-goettingen.de}
\author{Fabian Rodatz}
\email{fabian.rodatz@hhu.de}
\author{Taufik Yusof}
\email{muhammad-taufik.bin-mohd-yusof@mathematik.uni-goettingen.de}
\address{Mathematisches Institut\\
  Universität Göttingen\\
  Bunsenstraße 3--5\\
  37073 Göttingen\\
  Germany}

\keywords{ample groupoid; groupoid correspondence; bicategory;
  pseudofunctor; Steinberg algebra; covariance ring; Ore monoid;
  groupoid model}

\begin{abstract}
  We show that the Steinberg algebra construction for ample groupoids
  is part of a pseudofunctor from the bicategory of ample groupoids and
  groupoid correspondences to the bicategory of rings with local units
  and nondegenerate bimodules.
  We define a covariance ring for diagrams in this bicategory of rings
  and show that it is a bicategorical limit.
  We compute the covariance ring for a diagram of ``proper'' bimodules
  over an Ore monoid.
  For diagrams coming from groupoid correspondences, we identify the
  covariance ring with the Steinberg algebra of its groupoid model.
\end{abstract}

\subjclass[2020]{Primary 16S99; Secondary 22A22, 18D05, 18A30, 20M25}
\thanks{This work is part of the project Graph Algebras partially
  supported by EU grant HORIZON-MSCA-SE-2021 Project 101086394.
  We thank the mathematical research institute MATRIX and Western
  Sydney University in Australia, where part of this research was
  performed.
  The second author was supported by the DFG Research Training Group
  2240: Algebro-Geometric Methods in Algebra, Arithmetic and
  Topology.
  The third author was supported by a DAAD scholarship.}

\maketitle

\setcounter{tocdepth}{1} 
\tableofcontents

\section{Introduction}
\label{sec:intro}

Several important \(\Cst\)\nb-algebras have purely algebraic analogues.
For instance, a graph \(\Cst\)\nb-algebra contains the Leavitt path algebra of the
graph and a groupoid \(\Cst\)\nb-algebra of an ample groupoid contains the
Steinberg algebra of the groupoid.
The situation for a graph is a special case because the Leavitt path
algebra and the graph \(\Cst\)\nb-algebra of a directed graph are the
Steinberg algebra and the groupoid \(\Cst\)\nb-algebra of the boundary
path groupoid associated to the graph.
Both the boundary path groupoid and the graph \(\Cst\)\nb-algebra enjoy
certain universal properties, which become particularly clear when
phrased in terms of certain bicategories of groupoids and \(\Cst\)\nb-algebras
(see \cites{Albandik:Thesis, Albandik-Meyer:Colimits,
  Albandik-Meyer:Product, Meyer:Diagrams_models,
  Meyer:Groupoid_models_relative, Castro-Meyer:Graph_actors}).
This article adapts that theory to Leavitt path algebras and Steinberg
algebras instead of graph \(\Cst\)\nb-algebras and groupoid \(\Cst\)\nb-algebras.
Some of our main results are the following.
The Steinberg algebra construction is part of a pseudofunctor of
bicategories from a suitable bicategory of ample groupoids and
groupoid correspondences to a bicategory of rings and bimodules.
For a diagram of proper groupoid correspondences over an Ore monoid,
the Steinberg algebra of the groupoid model is isomorphic to the
covariance ring of the associated diagram of Steinberg algebras and
bimodules.
The covariance ring here is characterised by a universal property that
shows that it is a bicategorical limit.
For a diagram of proper bimodules over an Ore monoid, the covariance
ring may be described explicitly using inductive limits and a grading
by the group completion of the Ore monoid.
The structure is the same as in the \(\Cst\)\nb-algebraic case in
\cite{Albandik-Meyer:Product}*{Theorem 3.16}.
A similar structure is also found for the algebraic Cuntz--Pimsner
rings of Carlsen and Ortega~\cite{Carlsen-Ortega:Algebraic_CP} and for
Kumjian--Pask algebras of higher-rank graphs.
Our results apply in the same way to higher-rank self-similar graphs
or ample topological graphs.
In fact, diagrams of proper ample groupoid correspondences are the
right definition for higher-rank self-similar ample groupoids.
Besides higher-rank graphs, we also treat a class of higher-rank
self-similarities of groups to illustrate our theory.

First, in Section~\ref{sec:Rings}, we define the relevant bicategory
of rings, which plays the role of the \(\Cst\)\nb-correspondence bicategory.
The bicategory of unital rings and bimodules introduced by
B\'enabou~\cite{Benabou:Bicategories} is not suitable because we also
need certain rings without unit in order to treat all Leavitt path
algebras and the Steinberg algebras of all ample groupoids.  We cannot
take all nonunital rings as objects either because, at least, we need
the multiplication map to induce an isomorphism
\(R\otimes_R R \cong R\) in order to have unit arrows in our
bicategory.
While it is conceivable that this property suffices, we assume more,
namely, that our rings have local units.
This simplifies several proofs and suffices to treat Steinberg
algebras of ample groupoids.

In Section~\ref{sec:steinmod}, we recall how to define the bicategory
of ample groupoid correspondences, following
\cites{Antunes-Ko-Meyer:Groupoid_correspondences,
  Meyer:Diagrams_models} and we enrich the map that takes an ample
groupoid~\(\Gr\) to its Steinberg algebra \(A_R(\Gr)\) to a
pseudofunctor of bicategories.
This is an algebraic analogue of the pseudofunctor from the bicategory
of \'etale groupoid correspondences to the bicategory of
\(\Cst\)\nb-correspondences defined in
\cite{Antunes-Ko-Meyer:Groupoid_correspondences}*{Section~7}.
We changed notation compared
to~\cite{Antunes-Ko-Meyer:Groupoid_correspondences}, where
pseudofunctors are called ``homomorphisms'' of bicategories,
following~\cite{Benabou:Bicategories}.
We did this because the term pseudofunctor is more common in current
bicategory literature such as~\cite{Johnson-Yau:2-Dim}.
We also prove that proper groupoid correspondences are mapped to
proper bimodules.

In Section~\ref{sec:diagrams}, we study diagrams of rings and
bimodules and their covariance rings.  Diagrams of rings and bimodules
are the purely algebraic analogues of product systems in the realm of
\(\Cst\)\nb-algebras, and the covariance ring plays the role of the
Cuntz--Pimsner algebra of a product system, at least for diagrams of
proper bimodules.
The covariance ring is characterised by a universal property that
involves covariant representations of the diagram.
It is also a bicategorical limit in the bicategory of rings and
bimodules.
We prove that a covariance ring always exists and construct it using a
Cohn localisation.
We also describe it through generators and relations for diagrams of
groupoid correspondences.

Section~\ref{sec:cov_ring_construction} describes the covariance ring
of a proper diagram over an Ore monoid~\(P\) in a way that is
analogous to the description of the Cuntz--Pimsner algebra of a
product system over~\(P\) in \cite{Albandik-Meyer:Product}*{Theorem
  3.16}.
This explicit computation shows that the covariance ring is still a
rather concrete object in this special case.

In Section~\ref{sec:diagrams_in_grcat}, we recall the construction of
the groupoid model of a proper diagram of groupoid correspondences
over an Ore monoid in \cites{Albandik:Thesis, Meyer:Diagrams_models}.
The groupoid model realises a limit of the diagram in the bicategory
of groupoid correspondences.

Section~\ref{sec:steinalg_of_grpmodel} contains the main result of
this article, which identifies the Steinberg algebra of the groupoid
model of a diagram of proper ample groupoid correspondences over an
Ore monoid with the covariance ring of the associated diagram of
proper bimodules of Steinberg algebras over the groupoids in
the diagram.
In particular, the Steinberg algebra pseudofunctor on the bicategory
of proper ample groupoid correspondences preserves limits over Ore
monoids.
Here the rather explicit descriptions of the covariance ring and the
groupoid model for proper diagrams over Ore monoids are used to
identify the two.
We know counterexamples of diagrams where the \(\Cst\)\nb-algebra of the
groupoid model is quite different from the Cuntz--Pimsner algebra of
the corresponding product system (see
\cite{Albandik-Meyer:Colimits}*{Example~3.7} and
\cite{Meyer:Diagrams_models}*{Section~4.4}).
It is clear that this difference remains when we take Steinberg
algebras instead of groupoid \(\Cst\)\nb-algebras.

In Section~\ref{sec:Cstar-consequence}, we use our main result to
prove an analogous result for groupoid \(\Cst\)\nb-algebras.
This reproves the main result of Albandik~\cite{Albandik:Thesis} in
the ample case.
The key idea is that the groupoid \(\Cst\)\nb-algebra of an ample
groupoid is the maximal \(\Cst\)\nb-completion of its Steinberg
algebra.
It is rather easy to characterise when a representation of the
Steinberg algebra becomes a \Star{}representation, and this allows us
to compare its universal property with that of the Cuntz--Pimsner
algebra of the product system associated to the diagram.

Finally, in Section~\ref{sec:eg}, we apply our theory to two classes
of examples.
First, we consider the special case when the ample groupoids are just
ample topological spaces.
Then a groupoid correspondence is the same as a topological
correspondence as defined in~\cite{Albandik-Meyer:Product}.
Diagrams of them are the same as discrete Conduch\'e
fibrations~\cite{Brown-Yetter:Conduche} when the space is discrete.
Discrete Conduch\'e fibrations generalise higher-rank graphs to any
monoid instead of~\(\N^k\) for some \(k\in\N\).
Our theory describes covariance rings and \(\Cst\)\nb-algebras
associated to this data.
For instance, it covers the Kumjian--Pask algebras of regular
higher-rank graphs.
The second class of examples are some higher-rank self-similarities of
discrete groups, coming from injective group endomorphisms with
finite-index range.
These were studied by Stammeier~\cite{Stammeier:Irreversible} using a
different language, and we show that they are examples of covariance
rings of diagrams of groupoid correspondences naturally associated to
his original data.
This was first worked out in the Master's thesis of Daniel Jentsch.

This article shows how to adapt the successful bicategorical
approach to groupoids and \(\Cst\)\nb-algebras in order to also treat
Steinberg algebras and their relatives.
It is remarkable that we can prove these results using only bimodules
as arrows without invoking an involution.
In the usual definition of, say, a Leavitt path algebra, the number of
generators is doubled to take into account the involution.
The extra generators enter differently in our theory.
Some of our ``relations'' stipulate that a module map should be an
isomorphism, so that an inverse is required implicitly.
These inverses provide the missing generators.
This mechanism is clarified in Section~\ref{sec:Cohn_localisation},
where we show that our covariance rings are Cohn localisations of much
simpler rings.
Our results are limited, however, to proper groupoid correspondences
and proper bimodules.
In more general cases, we need the algebraic correspondences of
Carlsen--Ortega~\cite{Carlsen-Ortega:Algebraic_CP} in order to define
suitable generalisations of Leavitt path algebras because the
involution or the extra generators no longer come for free.
This generalisation is pursued in the recent
dissertation~\cite{Taufik:Thesis}.
This article incorporates results obtained by the second author in his
Master's thesis~\cite{Rodatz:Master}.

\section{Rings with local units and smooth bimodules}
\label{sec:Rings}

In this section, we define the bicategory of rings with local units
and smooth bimodules between them, and a subbicategory of proper
bimodules, and we prove some bimodule isomorphisms needed later.

\begin{definition}
  \label{def:local_unit}
  A ring~\(R\) has \emph{local units} if for any finite set
  \(F=\{r_1,\dotsc,r_n\}\subseteq R\), there is an idempotent \(e\in R\)
  such that \(r_i e=r_i=e r_i\) for all \(i=1,\dotsc,n\).  The idempotent
  element~\(e\) is called a \emph{local unit} for~\(F\).
\end{definition}

This definition goes back to \cites{Abrams:Morita_local_units,
  Anh-Marki:Morita_without_identity}.
A prominent example of a ring with local units is the ring
\(\Mat_\infty(R)\) of finite matrices over a unital ring~\(R\).

\begin{lemma}
  \label{lem:local_units_inductive_limit}
  Let~\(R\) be a ring with local unit.
  Then \(R = \varinjlim R\cdot e\) as a left \(R\)\nb-module, where
  the inductive system is indexed by the directed set of idempotents
  in~\(R\).
  Similarly, \(R = \varinjlim e\cdot R\) as a right \(R\)\nb-module
  and \(R = \varinjlim e\cdot R\cdot e\) as a ring.
\end{lemma}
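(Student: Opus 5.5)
The plan is to first make the inductive system precise, then check the universal property of a colimit directly.

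On the set \(E\) of idempotents of~\(R\), define \(e\le f\) if \(ef=e=fe\). This is a partial order: antisymmetry holds because \(e\le f\le e\) gives \(e=ef=f\). It is directed by Definition~\ref{def:local_unit}: given \(e_1,e_2\), a local unit~\(f\) for \(\{e_1,e_2\}\) satisfies \(e_if=e_i=fe_i\), so \(e_i\le f\).

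For \(e\le f\) we have \(R e\subseteq R f\), because \(re=r(ef)=(re)f\). So the left \(R\)\nb-modules \(Re\) form an inductive system whose structure maps are the inclusions. Likewise \(eR\subseteq fR\) gives a system of right modules. For the corner rings, \(eRe\subseteq fRf\) because \(ere=f(ere)f\), and this inclusion is multiplicative but not unital.

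The inductive limit of a directed system of injective maps is the union, with the colimit maps being the inclusions. The proof therefore reduces to showing \(\bigcup_{e\in E} Re=R\), and similarly for \(\bigcup eR\) and \(\bigcup eRe\). Given \(r\in R\), take a local unit~\(e\) for \(\{r\}\). Then \(r=re=er=ere\), so \(r\) lies in all three unions.

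It remains to verify the universal property. Suppose \(M\) is a left \(R\)\nb-module with compatible module maps \(\varphi_e\colon Re\to M\). Define \(\varphi(r)=\varphi_e(r)\) for any \(e\) with \(r\in Re\).
\begin{itemize}
\item The definition is independent of~\(e\): two choices have a common upper bound~\(f\), and compatibility with the inclusions into \(Rf\) gives the same value.
\item \(\varphi\) is additive and \(R\)\nb-linear: finitely many elements lie in a single \(Rf\), and \(\varphi_f\) is linear there.
\item \(\varphi\) is unique, because the \(Re\) cover~\(R\).
\end{itemize}
The same argument works verbatim for right modules and for rings, where multiplicativity is checked inside a common corner \(fRf\).

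No step is a real obstacle. The only point needing care is that directedness of~\(E\) must come from a local unit for a finite set of idempotents, not merely from one for a single element, and this is exactly what Definition~\ref{def:local_unit} provides.
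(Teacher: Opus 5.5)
Your proof is correct and follows the same route as the paper. Both use the order \(e\le f\iff ef=e=fe\) on idempotents, get directedness from a local unit for \(\{e_1,e_2\}\), and observe that every \(r\in R\) satisfies \(r=ere\) for a local unit \(e\) of \(\{r\}\), so \(R\) is the directed union of the \(Re\), \(eR\) and \(eRe\). You additionally spell out the inclusions \(Re\subseteq Rf\), \(eR\subseteq fR\), \(eRe\subseteq fRf\) for \(e\le f\) and check the universal property of the colimit explicitly, which the paper leaves implicit.
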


\begin{proof}
  We define an order relation on idempotents in~\(R\) by \(e_1 \le
  e_2\) if \(e_1 \cdot e_2 = e_1 = e_2\cdot e_1\).
  If \(e_1,e_2\in R\) are idempotent, then there is another idempotent
  \(e\in R\) with \(e\cdot e_j = e_j = e_j\cdot e\) for \(j=1,2\)
  because~\(R\) is a ring with local units.
  This makes the set of idempotents in~\(R\) directed.
  By assumption, any element of~\(R\) belongs to~\(e R e\) for some
  idempotent \(e\in R\).
  This implies that \(R = \varinjlim e R e\).
  Here~\(e R e\) are subrings in~\(R\).
  They happen to be unital, but the inclusion homomorphism \(e_1 R e_1
  \to e_2 R e_2\) for \(e_1 \le e_2\) is not unital.
  The statements about \(R e\) and \(e R\) are proven similarly.
\end{proof}

\begin{proposition}
  \label{prop:loc_then_smooth}
  Let~\(R\) be a ring with local units and~\(M\) a left \(R\)\nb-module.
  Let \(\mu_M\colon R\otimes_R M\to M\) be induced by the
  multiplication map.  The following are equivalent:
  \begin{enumerate}
  \item \label{en:loc_then_smooth_1}%
    \(M\) is \emph{smooth}: \(\mu_M\) is an isomorphism
    \(R\otimes_R M\congto M\);
  \item \label{en:loc_then_smooth_2}%
    \(M\) is \emph{nondegenerate}: \(\mu_M\) is surjective;
  \item \label{en:loc_then_smooth_3}%
    for each \(m\in M\) there is \(e\in \mathrm{Idem}(R)\) such that
    \(e\cdot m=m\);
  \item \label{en:loc_then_smooth_4}%
    \(M = \varinjlim e\cdot M\) as an Abelian group, where the
    inductive system is indexed by the directed set of idempotents
    in~\(R\).
  \end{enumerate}
  Analogous equivalences hold for right \(R\)\nb-modules.
\end{proposition}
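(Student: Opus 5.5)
We prove the cycle \ref{en:loc_then_smooth_1}\(\Rightarrow\)\ref{en:loc_then_smooth_2}\(\Rightarrow\)\ref{en:loc_then_smooth_3}\(\Rightarrow\)\ref{en:loc_then_smooth_1}, and then show separately that \ref{en:loc_then_smooth_3} and \ref{en:loc_then_smooth_4} are equivalent.

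The implication \ref{en:loc_then_smooth_1}\(\Rightarrow\)\ref{en:loc_then_smooth_2} is trivial.

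For \ref{en:loc_then_smooth_2}\(\Rightarrow\)\ref{en:loc_then_smooth_3}, surjectivity of~\(\mu_M\) lets us write \(m=\sum_{i=1}^n r_i m_i\). Choose a local unit~\(e\) for \(\{r_1,\dotsc,r_n\}\). Then \(e r_i = r_i\) for all~\(i\), so \(e m = \sum_i e r_i m_i = m\).

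For \ref{en:loc_then_smooth_3}\(\Rightarrow\)\ref{en:loc_then_smooth_1}, surjectivity of~\(\mu_M\) is clear because \(m = \mu_M(e\otimes m)\). Injectivity is the main point. Take a general element \(x=\sum_i r_i\otimes m_i\) of \(R\otimes_R M\), and choose a local unit~\(e\) for \(\{r_i\}\). Then
\[
x=\sum_i e r_i\otimes m_i=\sum_i e\otimes r_i m_i = e\otimes \mu_M(x).
\]
So every element of \(R\otimes_R M\) has the form \(e\otimes m\) with \(m=\mu_M(x)\). If \(\mu_M(x)=0\), then \(x=e\otimes 0=0\). The only subtle point is to pick the local unit for the elements of~\(R\) appearing in~\(x\), and not for the elements of~\(M\). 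Note also that \ref{en:loc_then_smooth_3} is not needed for injectivity; it is needed only for surjectivity.

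For \ref{en:loc_then_smooth_3}\(\Leftrightarrow\)\ref{en:loc_then_smooth_4}, we use the directed order on idempotents from the proof of Lemma~\ref{lem:local_units_inductive_limit}. If \(e_1\le e_2\), then \(e_1 M = e_2 e_1 M\subseteq e_2 M\). So the subgroups \(e\cdot M\) form a directed system under inclusion, and its inductive limit is the union \(\bigcup_e eM\subseteq M\). Statement~\ref{en:loc_then_smooth_4} says that this union is all of~\(M\). Statement~\ref{en:loc_then_smooth_3} says that every \(m\in M\) lies in some~\(eM\); conversely, \(m\in eM\) gives \(m=em'\) and hence \(em=e^2m'=m\). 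So the two statements are the same condition.

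The right-module case follows by the symmetric argument, using local units on the right.
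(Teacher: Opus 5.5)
Your proposal is correct and follows the paper's proof essentially step by step: the same local-unit trick is used for (2)\(\Rightarrow\)(3) and for injectivity in (3)\(\Rightarrow\)(1), and (3)\(\Leftrightarrow\)(4) rests on the same observation that \(m\in eM\) if and only if \(em=m\). Your remark that injectivity of \(\mu_M\) holds without assuming (3) is also correct; it is a harmless sharpening of the paper's argument.
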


\begin{proof}
  \ref{en:loc_then_smooth_1}\(\implies\)\ref{en:loc_then_smooth_2} is
  trivial.
  The conditions \ref{en:loc_then_smooth_3}
  and~\ref{en:loc_then_smooth_4} are equivalent because \(m\in M\)
  belong to \(e\cdot M\subseteq M\) if and only if \(e\cdot m = m\).

  We show
  \ref{en:loc_then_smooth_2}\(\implies\)\ref{en:loc_then_smooth_3}.  Let
  \(m\in M\).  Then we may write \(m=\sum_{i=1}^n r_i\cdot m_i\) for some
  \(r_i\in R\), \(m_i\in M\) by~\ref{en:loc_then_smooth_2}.  Let \(e\in R\)
  be a local unit for the finite set \(\{r_1,r_2,\dotsc,r_n\}\).  Then
  \(e\cdot m= m\), verifying~\ref{en:loc_then_smooth_3}.
   
  We show
  \ref{en:loc_then_smooth_3}\(\implies\)\ref{en:loc_then_smooth_1}.
  The hypothesis implies immediately that~\(\mu_M\) is surjective.
  We prove that it is also injective.  Let \(\sum_{i=1}^n r_i\otimes m_i\)
  be in the kernel of~\(\mu_M\), that is,
  \(\sum_{i=1}^n r_i\cdot m_i=0\) in~\(M\).  Let \(e\in R\) be a local unit
  for \(\{r_i\}_{i=1}^n\).  Then
  \[
    \sum_{i=1}^n r_i\otimes m_i
    = \sum_{i=1}^n e r_i\otimes m_i
    = e\otimes\biggl(\sum_{i=1}^n r_i\cdot m_i\biggr)=0
  \]
  in \(R\otimes_R M\).  So \(\ker \mu_M = 0\) as needed.

  The analogous result for right modules is proven in the same way.
\end{proof}

\begin{proposition}
  \label{pro:Rings_bicat}
  There is a bicategory \(\Rings\), which is defined by the following
  data:
  \begin{itemize}
  \item the objects are rings with local units;
  \item the arrows \(R\leftarrow S\) are the smooth \(R ,S\)-bimodules;
  \item the \(2\)\nb-arrows \(M\Rightarrow N\) for two smooth
    \(R ,S\)-bimodules \(M,N\) are the \(R ,S\)-bimodule homomorphisms
    \(f\colon M\to N\);
  \item the vertical product is the composition of \(R ,S\)-bimodule
    homomorphisms;
  \item the composition of arrows is the balanced tensor product; its
    functoriality gives the horizontal product of \(2\)\nb-arrows;
  \item the unit arrow on~\(R\) is~\(R\) with multiplication as
    \(R,R\)-bimodule structure;
  \item the \emph{associators} are the canonical bimodule
    isomorphisms
    \[
      (M_1\otimes_R M_2)\otimes_S M_3 \to M_1\otimes_R (M_2\otimes_S M_3),\qquad
      (m_1\otimes m_2)\otimes m_3 \mapsto m_1 \otimes (m_2\otimes
      m_3);
    \]
  \item the \emph{uniters} are the canonical multiplication maps
    \[
      R\otimes_R M \to M,\quad r\otimes m \mapsto r\cdot m,\qquad
      M\otimes_S S \to M,\quad m\otimes t \mapsto m\cdot t;
    \]
    these are isomorphisms because~\(M\) is a smooth bimodule.
  \end{itemize}
\end{proposition}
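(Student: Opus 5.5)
The plan is to obtain \(\Rings\) as a sub-bicategory of Bénabou's bicategory of rings and bimodules, adapted to nonunital rings. The coherence data (associators, uniters) are given by the universal canonical maps. So the pentagon and triangle identities, and the naturality of associators and uniters, can all be checked on elementary tensors. What really needs an argument is that the data are well defined: smooth bimodules must be closed under the balanced tensor product, and the unit arrows and uniters must make sense.

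The steps, in order, are as follows.

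First, the hom-categories. For rings \(R,S\) with local units, the smooth \(R,S\)-bimodules and bimodule homomorphisms form a category, with vertical product given by composition. This is clear.

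Second, closure under composition. Let \(M\) be a smooth \(R,S\)-bimodule and \(N\) a smooth \(S,T\)-bimodule. Then \(M\otimes_S N\) is an \(R,T\)-bimodule, and I claim it is smooth on both sides. By Proposition~\ref{prop:loc_then_smooth}, it suffices to check condition~\ref{en:loc_then_smooth_3} on each side. Take a finite sum \(\sum m_i\otimes n_i\). Choose an idempotent \(e\in R\) with \(e m_i=m_i\) for all~\(i\); this is possible by condition~\ref{en:loc_then_smooth_3} for~\(M\) together with the directedness of idempotents from Lemma~\ref{lem:local_units_inductive_limit}. Then \(e\) fixes the whole sum. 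Right smoothness is proved symmetrically, using an idempotent in~\(T\). Functoriality of \(\otimes_S\) in both variables gives the horizontal product of \(2\)-arrows, and the interchange law holds because \((f\otimes g)(f'\otimes g')=ff'\otimes gg'\).

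Third, the units. The ring \(R\) is a smooth \(R,R\)-bimodule, since every \(r\) satisfies \(er=r=re\) for a local unit~\(e\). The uniters \(R\otimes_R M\to M\) and \(M\otimes_S S\to M\) are well-defined balanced maps. They are isomorphisms precisely by condition~\ref{en:loc_then_smooth_1} of Proposition~\ref{prop:loc_then_smooth} and its right-module analogue. Each uniter is a bimodule map because the bimodule actions commute.

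Fourth, the associator. The associator is the usual canonical isomorphism of balanced tensor products. It is well defined in both directions by the universal property, and this needs no units: for fixed \(m_3\), the map \(m_1\otimes m_2\mapsto m_1\otimes(m_2\otimes m_3)\) is balanced, and so on. The associator is natural in all three variables. The pentagon and triangle identities hold on elementary tensors, which generate the tensor products, so they hold everywhere.

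I expect the main (mild) obstacle to be the nonunital subtleties. The triangle identity involves \(m_1\otimes r\otimes m_2\mapsto m_1 r\otimes m_2=m_1\otimes r m_2\), which holds by balancedness. One must also make sure that, without units, the tensor product \(R\otimes_R M\) is the quotient of \(R\otimes_\Z M\) by the balancing relations. This is exactly the setting of Proposition~\ref{prop:loc_then_smooth}, so the invertibility of the uniters is the only point where local units are genuinely used.
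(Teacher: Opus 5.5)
Your proposal is correct and takes the same route as the paper, whose proof just says the verification is straightforward. You supply the details the paper leaves implicit: \(M\otimes_S N\) is again smooth, by condition~\ref{en:loc_then_smooth_3} of Proposition~\ref{prop:loc_then_smooth} and directedness of idempotents; the uniters are invertible, by condition~\ref{en:loc_then_smooth_1}; and the associator, pentagon and triangle need no units and can be checked on elementary tensors.
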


\begin{proof}
  This is straightforward to check.
\end{proof}

Many of our results require bimodules with extra properties that make
them analogues of \emph{proper} \(\Cst\)\nb-correspondences between
\(\Cst\)\nb-algebras.  To formulate these, we first need the following
notation:

\begin{definition}
  \label{def:fgp}
  Let~\(R\) be a ring with local units.
  A smooth right \(R\)\nb-module~\(M\) is
  \begin{itemize}
  \item \emph{finitely generated} if there are finitely many
    \(m_1,m_2,\dotsc,m_k\in M\) such that~\(M\) is spanned by them as a
    right \(R\)-module;
  \item \emph{projective} if any surjective module homomorphism
    onto~\(M\) splits by a module homomorphism;
  \item \emph{fgp} if it is both finitely generated and projective.
  \end{itemize}
\end{definition}

The following lemma generalises a well-known result for unital rings:

\begin{lemma}
  \label{lem:fgp}
  Let~\(R\) be a ring with local units.
  A right \(R\)\nb-module is fgp if and only if it is isomorphic to
  \(e\cdot R^k\) for some idempotent \(e\in \Mat_k(R)\) and \(k\in\N\);
  here we treat elements of~\(R^k\) as column vectors with
  \(\Mat_k(R)\) acting on the left by matrix--vector multiplication.
\end{lemma}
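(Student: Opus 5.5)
We prove both directions, working throughout in the category of smooth right \(R\)\nb-modules. Here ``projective'' means that surjections onto~\(M\) from smooth modules split. Since~\(R\) has no unit, \(R^k\) is not free in the unital sense, so the first step is to find good substitutes. For an idempotent \(f\in R\), the module \(fR\) is smooth by Proposition~\ref{prop:loc_then_smooth}, because \(f\in R\) and~\(R\) has local units. It is also projective: for a surjection \(\pi\colon N\to fR\), choose \(n\in N\) with \(\pi(n)=f\) and define the splitting \(fr\mapsto nfr\). This is well defined because \(fr\mapsto fr\) is the identity on~\(fR\), and \(\pi(nfr)=f\cdot fr = fr\). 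The same holds for finite direct sums \(f_1R\oplus\dotsb\oplus f_kR = \mathrm{diag}(f_1,\dotsc,f_k)R^k\). Consequently, every direct summand \(eR^k\) is projective whenever \(e\in\Mat_k(R)\) is idempotent, because \(e\) has a local unit of the form \(\mathrm{diag}(f,\dotsc,f)\) and then \(eR^k\) is a direct summand of \((fR)^k\).

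\emph{``If'' direction.} Suppose \(M\cong eR^k\) with \(e\in\Mat_k(R)\) idempotent. By the above, \(eR^k\) is projective, and it is smooth since it is a summand of \((fR)^k\). It is finitely generated by the columns of~\(e\), because \(e x = \sum_j e_{\ast j} x_j\) for \(x\in R^k\), and \(ex\) ranges over all of \(eR^k\).

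\emph{``Only if'' direction.} Let \(M\) be fgp with generators \(m_1,\dotsc,m_k\). By smoothness (Proposition~\ref{prop:loc_then_smooth}\ref{en:loc_then_smooth_3}, right-module version) and the local-unit property, there is one idempotent \(f\in R\) with \(m_i f = m_i\) for all~\(i\). Define the surjection \(\pi\colon (fR)^k \to M\) by \((fr_i)_i\mapsto \sum m_i f r_i\). Its image contains each \(m_i r = m_i f r\), so \(\pi\) is onto. Since~\(M\) is projective, \(\pi\) has a splitting \(\sigma\colon M\to (fR)^k\). Then \(p\defeq\sigma\pi\) is an idempotent endomorphism of \((fR)^k\) with image isomorphic to~\(M\).

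The key step is to show that every right-module endomorphism~\(\phi\) of \((fR)^k\) is left multiplication by a matrix in \(\Mat_k(fRf)\subseteq \Mat_k(R)\). Write \(\epsilon_j\) for the vector with~\(f\) in position~\(j\) and zeros elsewhere. For \(x=(fr_j)_j\) we have \(x=\sum_j \epsilon_j f r_j\). Hence \(\phi(x)=\sum_j \phi(\epsilon_j) f r_j\), and \(\phi(\epsilon_j) = \phi(\epsilon_j f) = \phi(\epsilon_j)f\) lies in \((fRf)^k\). So~\(\phi\) is given by the matrix with columns \(\phi(\epsilon_j)\). Applying this to~\(p\), we get a matrix \(e\in\Mat_k(fRf)\) with \(e^2=e\), since composition of these maps corresponds to matrix multiplication on \((fR)^k\) and \(\mathrm{diag}(f,\dotsc,f)\) acts as the identity there. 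Finally, \(M\cong p\bigl((fR)^k\bigr) = e(fR)^k = eR^k\), because \(e = e\,\mathrm{diag}(f,\dotsc,f)\).

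I expect the main obstacle to be the careful handling of non-unitality. One must replace \(R^k\) by \((fR)^k\), check that it is projective among smooth modules, and check that its endomorphisms are matrices. Once the single idempotent~\(f\) is chosen, this reduces to the unital ring \(fRf\).
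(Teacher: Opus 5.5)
Your proof is correct and follows the same argument as the paper. In one direction, a surjection from a free-like module, split by projectivity, yields an idempotent matrix; in the other, the columns of \(e\) generate \(eR^k\), and lifting them gives a splitting. The only difference is your detour through \((fR)^k\) to make every endomorphism a matrix over \(fRf\). The paper works with \(R^k\) directly: \(\sigma\pi(r)=\sum_j \sigma(m_j)r_j\) is already left multiplication by the matrix with columns \(\sigma(m_j)\), so no description of all endomorphisms of \(R^k\) is needed. Such a description would in fact fail for nonunital \(R\), so your caution is reasonable but not necessary here.
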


\begin{proof}
  Let~\(M\) be a right \(R\)\nb-module.
  First assume~\(M\) to be fgp.
  Since~\(M\) is finitely generated, there are elements
  \(m_1,\dotsc,m_k\) so that the map
  \[
    \pi\colon R^k \to M,\qquad
    (r_1,\dotsc,r_k)\mapsto
    \sum_{j=1}^k m_j r_j,
  \]
  is surjective.
  Since~\(M\) is projective, \(\pi\) splits by a module homomorphism
  \(\sigma\colon M \to R^k\).
  The map \(\sigma\circ\pi\colon R^k \to R^k\) multiplies by the
  \(k\times k\)-matrix~\(e\) with columns \(\sigma(m_j)\in R^k\),
  \(j=1,\dotsc,k\).
  Since \(\pi\circ\sigma = \id_M\), the map~\(\sigma\pi\) is
  idempotent, and then~\(e\) is idempotent as a matrix.
  The map~\(\pi\) restricts to an isomorphism from the image \(e\cdot
  R^k\) of~\(\sigma\pi\) onto~\(M\).

  Conversely, let \(M\cong e\cdot R^k\) for an idempotent
  \(e\in\Mat_k(R)\).
  Let \(e_1,\dotsc,e_k\in R^k\) be the columns of~\(e\).
  These belong to~\(M\) because \(e\cdot e_j = e_j\), and they form a
  finite generating set for~\(M\).
  Let~\(N\) be another right \(R\)\nb-module and let \(q\colon N\to
  M\) be a surjective \(R\)\nb-module map.
  Then there are \(\hat{e}_1,\dotsc,\hat{e}_k\in N\) with
  \(q(\hat{e}_j) = e_j\).
  The module map \(R^k \to N\), \((r_1,\dotsc,r_k) \mapsto
  \sum_{j=1}^k \hat{e}_j r_j\), restricts to a map \(\sigma\colon M
  =e\cdot R^k \to N\).
  The composite \(q\circ\sigma\colon M\to M\) maps \(x\mapsto e\cdot
  x\) for all \(x\in M\subseteq R^k\), so that it is the identity
  on~\(M\).
  Thus~\(\sigma\) is a section for~\(q\).
\end{proof}

\begin{definition}
  \label{def:properbimod}
  Let \(R\) and~\(S\) be rings with local units.
  A smooth \(R,S\)-bimodule~\(P\) is called \emph{proper} if \(e\cdot
  P\) is fgp for each idempotent \(e\in R\).
\end{definition}

\begin{example}
  \label{ex:ring_is_fgp}
  The identity bimodule~\(S\) over a ring with local unit is proper by
  Lemma~\ref{lem:fgp} (with \(k=1\)).
\end{example}

\begin{example}
  \label{ex:graphmod}
  Let \(G=(E,V)\) be a directed graph, with range and source maps
  \(\rg,\s\colon E\rightrightarrows V\).
  Let~\(\mathbb K\) be a commutative unital ring.
  Let \(R\defeq \bigoplus_{v\in V}\mathbb K\), viewed as functions
  \(V\to \mathbb K\) with finite support.
  This is a ring with pointwise multiplication.
  It has local units even if~\(V\) is infinite.
  Let \(M\defeq \bigoplus_{e\in E}\mathbb K\) as a free \(\mathbb
  K\)\nb-module.
  We write \(\delta_v\in R\) and \(\delta_e \in M\) for the
  characteristic function of \(v\in V\) and \(e\in E\), respectively.
  Let \(\delta_{v=\rg(e)}\) be~\(1\) if \(v=\rg(e)\) and~\(0\)
  otherwise (Kronecker~\(\delta\)), and similarly for the condition
  \(v=\s(e)\) and other statements.
  We give~\(M\) the unique smooth \(R\)\nb-bimodule structure with
  \[
    \delta_v \delta_e = \delta_{v=\rg(e)} \cdot \delta_e,\qquad
    \delta_e \delta_v = \delta_{v=\s(e)} \cdot \delta_e.
  \]
  By the way, any nondegenerate \(R\)\nb-bimodule~\(M\) may be brought
  into this form by choosing bases for \(\delta_v M \delta_w\) for all
  \(v,w\in R\).
  Since any idempotent in~\(R\) is a finite sum of~\(\delta_v\) for
  \(v\in V\), this bimodule is proper if and only if~\(\delta_v M\) is
  fgp for all \(v\in V\).
  Here~\(\delta_v M\) has \(\delta_e\) with \(e\in \rg^{-1}(v)\) as a
  basis.
  As a consequence, \(M\) is a proper \(R\)\nb-bimodule if and only
  if~\(G\) is \emph{row-finite}, that is, \(\rg\colon E \to V \) is
  finite-to-one.
\end{example}

The following theorem is the main reason why we need fgp bimodules.
Recall that if~\(M\) is an \(R,T\)\nb-bimodule and~\(P\) is an
\(S,T\)-bimodule, then there is a canonical \(S,R\)\nb-bimodule structure
on \(\Hom_{-,T}(M,P)\) defined by
\((s\cdot f\cdot r)(m)\defeq s\cdot \bigl(f(r\cdot m)\bigr)\).  This is
usually not smooth, even if \(M\) and~\(P\) are smooth bimodules.

\begin{theorem}
  \label{thm:fgp_is_nice}
  Let \(R,S,T,U\) be rings with local units, let~\(M\) be a smooth
  \(R,T\)-bimodule, let~\(N\) be a smooth \(U,S\)-bimodule, and let~\(P\) be a
  smooth \(S,T\)-bimodule.  Give \(\Hom_{-,T}(M,P)\) and
  \(\Hom_{-,T}(M,N \otimes_S P)\) the canonical \(S,R\)- and
  \(U,R\)-bimodule structures.
  There is a canonical \(U,R\)\nb-bimodule homomorphism
  \[
    N\otimes_S \Hom_{-, T}(M,P) \to \Hom_{-,T}(M,N \otimes_S P),
    \qquad
    n\otimes f \mapsto \bigl[m\mapsto n\cdot f(m)\bigr].
  \]
  It is an isomorphism if~\(M\) is fgp as a \(T\)\nb-module or~\(N\) is
  fgp as an \(S\)\nb-module.
\end{theorem}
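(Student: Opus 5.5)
First I will check that the map \(\Phi\colon n\otimes f\mapsto [m\mapsto n\cdot f(m)]\) is well defined and a bimodule map. For fixed \(n\) and \(f\), the map \(m\mapsto n\otimes f(m)\) is right \(T\)\nb-linear because \(f\) is. The expression is additive in \(n\) and in \(f\), and it is \(S\)\nb-balanced: \((n s)\otimes f(m) = n\otimes s f(m) = n\otimes (s\cdot f)(m)\). Compatibility with the left \(U\)\nb-action and the right \(R\)\nb-action, \((f\cdot r)(m) = f(r m)\), is immediate from the definitions.

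For bijectivity, I will fix the left factor and view both sides as functors in the other variable. This gives two additive functors, \(M\mapsto N\otimes_S\Hom_{-,T}(M,P)\) and \(M\mapsto \Hom_{-,T}(M,N\otimes_S P)\), and \(\Phi\) is natural in~\(M\). Both functors turn finite direct sums into finite direct sums. By Lemma~\ref{lem:fgp}, an fgp \(M\) is a direct summand of~\(T^k\): it is isomorphic to \(e\cdot T^k\), with complement \((1-e)T^k\) inside the unitalisation, or simply as the image of the idempotent endomorphism \(e\) of~\(T^k\). Additive natural transformations are compatible with idempotent endomorphisms, so if \(\Phi\) is an isomorphism for~\(T^k\), it is an isomorphism on the image of~\(e\). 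Thus the case \(M\) fgp reduces to \(M=T\), hence to the claim that \(\Phi_T\) restricted along the summand is bijective.

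The main obstacle is that \(T\) is not unital, so \(\Hom_{-,T}(T,P)\) is not simply~\(P\). I will avoid this by working with \(e\cdot T^k\) directly, where \(e\in\Mat_k(T)\) is idempotent. A right \(T\)\nb-linear map \(f\colon eT^k\to P\) is determined by the values \(p_j\defeq f(e_j)\) on the columns of~\(e\). These values satisfy the relations \(\sum_i p_i e_{ij} = p_j\), since \(e_j = \sum_i e_i e_{ij}\). Conversely, any tuple satisfying these relations defines such a map via \(x\mapsto \sum_j p_j x_j\) for \(x\in eT^k\). Hence \(\Hom_{-,T}(eT^k,P)\cong \{p\in P^k: p e = p\} = P^k e\), viewed as row vectors. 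The same identification gives \(\Hom_{-,T}(eT^k, N\otimes_S P)\cong (N\otimes_S P)^k e\). Since \(p\mapsto pe\) is an idempotent \(S\)\nb-linear endomorphism of~\(P^k\), tensoring with~\(N\) commutes with taking its image, so \(N\otimes_S P^k e\cong (N\otimes_S P)^k e\). One then checks that this isomorphism is exactly~\(\Phi\).

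For the case \(N\) fgp, I will argue symmetrically, treating \(N\) as the variable. Both \(N\mapsto N\otimes_S\Hom(M,P)\) and \(N\mapsto\Hom(M,N\otimes_S P)\) are additive, so it suffices to treat \(N=fS^k\) for an idempotent \(f\in\Mat_k(S)\), with a left \(U\)\nb-action that plays no role in bijectivity. Here \(fS^k\otimes_S X\cong f\cdot X^k\) for every smooth left \(S\)\nb-module~\(X\): this holds for \(S\otimes_S X\cong X\) by smoothness (Proposition~\ref{prop:loc_then_smooth}), and it passes to direct sums and to idempotent images. Using this, both sides become \(f\cdot \Hom_{-,T}(M,P)^k\) and \(\Hom_{-,T}(M, fP^k)\), and these agree because \(\Hom\) commutes with finite products and with idempotent images in the second variable. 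Smoothness of \(\Hom_{-,T}(M,P)\) as a left \(S\)\nb-module is not needed here, because the isomorphism \(fS^k\otimes_S X\cong fX^k\) only uses that \(f\) has entries in~\(S\), with \(f_{ij} x\) defined for all~\(x\). I expect this last point to need the most care.
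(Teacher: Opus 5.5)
Your proof is correct and is essentially the paper's. For \(M\cong eT^k\) you identify \(\Hom_{-,T}(eT^k,P)\) with the row vectors \(P^k e\) via the values on the columns of~\(e\), and pass \(N\otimes_S{-}\) through this idempotent image. For \(N\cong fS^k\) you use \(fS^k\otimes_S X\cong fX^k\) together with additivity of \(\Hom_{-,T}(M,{-})\).

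Your remark that this last isomorphism needs no smoothness of \(X=\Hom_{-,T}(M,P)\) makes explicit a point the paper uses silently. The reason is that \(S\otimes_S X\to X\) is injective with image \(SX\), and \(f(SX)^k=fX^k\).

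You should delete the ``reduction to \(M=T\)'' rather than keep it as a step. \(T\) need not be fgp, and \(\Phi_T\) can genuinely fail to be bijective: for \(T=S=U=N=P=\bigoplus_{\N}\Z\) it is the inclusion \(\bigoplus_{\N}\Z\hookrightarrow\prod_{\N}\Z\).
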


\begin{proof}
  We first assume~\(M\) to be fgp as a \(T\)\nb-module.  We use
  Lemma~\ref{lem:fgp} to identify
  \(M\cong e \cdot T^k = e\cdot \Mat_{k,1}(T)\) for some idempotent
  \(e\in \Mat_k(T)\).  Then any \(T\)\nb-module map
  \(M\to N \otimes_S P\) extends canonically to~\(T^k\) by composing
  with the projection \(T^k \to M\), \(x\mapsto e\cdot x\).
  Conversely, a \(T\)\nb-module map \(f\colon T^k\to N \otimes_S P\)
  is such a composite if and only if \(f\circ e = f\), where we
  view~\(e\) as an endomorphism of~\(T^k\).  The same applies to maps
  \(M\to P\).  Here a right \(T\)\nb-module map \(f\colon T^k\to P\)
  with \(f\circ e = f\) must be of the form
  \((t_j) \mapsto \sum_{j=1}^k t_j p_j\) with some~\(p_j \in P\);
  namely, \(p_j\) is the image of the \(j\)th column of~\(e\).  Thus
  \(\Hom_{-,T}(M, P) \cong \Mat_{1,k}(P)\cdot e\), where we use the
  canonical right \(\Mat_k(T)\)-module structure on \(\Mat_{1,k}(P)\).
  Then
  \[
    N\otimes_S \Hom_{-,T}(M, P)
    \cong N\otimes_S \Mat_{1,k}(P)\cdot e
    \cong \Mat_{1,k}(N \otimes_S P)\cdot e.
  \]
  While a general \(T\)\nb-module map \(T\to N \otimes_S P\) need not
  be of the form \(t\mapsto x\cdot t\) for some \(x\in N \otimes_S
  P\), this becomes so after multiplication with an element of~\(T\).
  Therefore, any \(T\)\nb-module map \(f\colon T^k\to N \otimes_S P\)
  with \(f\circ e = f\) must be of the form \((t_i)\mapsto
  \sum_{i=1}^k t_i x_i\) for unique \(x_1,\dotsc,x_k\in N \otimes_S
  P\).
  Then
  \[
    \Hom_{-,T}(M,N \otimes_S P)
    \cong \Mat_{1,k}(N \otimes_S P)\cdot e
    \cong N\otimes_S \Hom_{-,T}(M, P).
  \]

  Next, we assume~\(N\) to be fgp, that is, \(N\cong f \cdot S^k\) for
  some idempotent \(f\in\Mat_k(S)\).  As above, we identify
  \(N\otimes_S P \cong f\cdot P^k\) and
  \begin{align*}
    N\otimes_S \Hom_{-, T}(M,P)
    &\cong f\cdot \Hom_{-,T}(M,P)^k
    \\&\cong \Hom_{-,T}(M,f\cdot P^k)
    \cong \Hom_{-,T}(M,N \otimes_S P).\qedhere
  \end{align*}
\end{proof}

In the situation of Theorem~\ref{thm:fgp_is_nice}, let
\(\Hom_{-,T}(M,P)R\) be the isomorphic image of
\(\Hom_{-,T}(M,P)\otimes_RR\) under the scalar multiplication map.
Equivalently, it is the largest right \(R\)-submodule of
\(\Hom_{-,T}(M,P)\) that is nondegenerate (or smooth) as a right
\(R\)\nb-module.
Define \(\Hom_{-,T}(M,N \otimes_S P) R\) and \(U\Hom_{-,T}(M,N
\otimes_S P)\) similarly.

\begin{theorem}
  \label{thm:proper_is_nice}
  Let \(R,S,T,U\) be rings with local units, let~\(M\) be a smooth
  \(R,T\)-bimodule, let~\(N\) be a smooth \(U,S\)-bimodule, and let~\(P\) be a
  smooth \(S,T\)-bimodule.  Give \(\Hom_{-,T}(M,P)\) and
  \(\Hom_{-,T}(M,N \otimes_S P)\) the canonical \(S,R\)- and \(U,R\)-bimodule
  structures.  If~\(M\) is proper, then the canonical map in
  Theorem~\textup{\ref{thm:fgp_is_nice}} restricts to an isomorphism
  \[
    N\otimes_S \Hom_{-, T}(M,P) R \congto \Hom_{-,T}(M,N \otimes_S P) R.
  \]
  If~\(N\) is proper, then the canonical map in
  Theorem~\textup{\ref{thm:fgp_is_nice}} restricts to an isomorphism
  \[
    N\otimes_S \Hom_{-, T}(M,P)\congto U\Hom_{-,T}(M,N \otimes_S P).
  \]
\end{theorem}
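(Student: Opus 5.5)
We reduce both statements to Theorem~\ref{thm:fgp_is_nice} by cutting down with idempotents and passing to inductive limits over the directed set of idempotents, as in Lemma~\ref{lem:local_units_inductive_limit} and Proposition~\ref{prop:loc_then_smooth}.

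\emph{Case: \(M\) proper.} For an idempotent \(e\in R\), the module \(e\cdot M\) is a fgp right \(T\)\nb-module. For any right \(T\)\nb-module~\(Q\), restriction gives an isomorphism \(\Hom_{-,T}(M,Q)\cdot e\cong \Hom_{-,T}(eM,Q)\). Indeed, \(f\cdot e\) is the map \(m\mapsto f(em)\), which is determined by \(f|_{eM}\). Conversely, a map \(g\colon eM\to Q\) extends to \(m\mapsto g(em)\), which is fixed by right multiplication with~\(e\).

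Proposition~\ref{prop:loc_then_smooth} applies to the nondegenerate part. It gives \(\Hom_{-,T}(M,Q)R=\bigcup_e \Hom_{-,T}(M,Q)e\), an increasing union over the directed set of idempotents, hence the inductive limit \(\varinjlim_e \Hom_{-,T}(eM,Q)\). The canonical map is natural in~\(e\) and commutes with right multiplication by~\(e\). It therefore maps \(N\otimes_S \Hom_{-,T}(M,P)e\) to \(\Hom_{-,T}(M,N\otimes_S P)e\), and under the identifications above this is exactly the map of Theorem~\ref{thm:fgp_is_nice} for the fgp module~\(eM\). That map is an isomorphism.

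It remains to pass to the union. We have
\[
N\otimes_S \Hom_{-,T}(M,P)R\cong N\otimes_S \varinjlim_e \Hom_{-,T}(M,P)e\cong\varinjlim_e N\otimes_S\Hom_{-,T}(M,P)e,
\]
because tensor products commute with inductive limits. The inductive limit of isomorphisms is an isomorphism onto \(\varinjlim_e\Hom_{-,T}(M,N\otimes_SP)e=\Hom_{-,T}(M,N\otimes_SP)R\).

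\emph{Case: \(N\) proper.} For an idempotent \(u\in U\), the module \(uN\) is fgp over~\(S\). Since \(N\) is smooth, Proposition~\ref{prop:loc_then_smooth} gives \(N=\varinjlim_u uN\) and \(N\otimes_S X=\varinjlim_u uN\otimes_S X\) for every left \(S\)\nb-module~\(X\). The second half of Theorem~\ref{thm:fgp_is_nice}, applied to \(uN\), gives \(uN\otimes_S\Hom_{-,T}(M,P)\cong\Hom_{-,T}(M,uN\otimes_SP)\).

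Next we identify the right-hand side with \(u\Hom_{-,T}(M,N\otimes_SP)\). The module \(uN\otimes_S P\) is a direct summand of \(N\otimes_SP\), namely the image of left multiplication by~\(u\). A map \(f\) into \(N\otimes_SP\) satisfies \(uf=f\) exactly when it lands in this summand. The tensor product \(uN\otimes_S P\) embeds into \(N\otimes_S P\) because \(uN\) is a direct summand of~\(N\). Taking the union over~\(u\) and using \(U\Hom=\bigcup_u u\Hom\) gives the isomorphism.

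\textbf{Main obstacle.} The step needing the most care is the bookkeeping check that the isomorphisms of Theorem~\ref{thm:fgp_is_nice} for \(eM\), respectively \(uN\), are compatible with the canonical map and with the connecting inclusions for \(e\le e'\), respectively \(u\le u'\). Only then do they assemble into an isomorphism of the limits. I would verify this directly on elementary tensors \(n\otimes f\), where every map involved is given by the same formula \(m\mapsto n\cdot f(m)\). This also shows that the restricted map has its image in the claimed nondegenerate submodule.
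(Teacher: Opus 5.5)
Your proof is correct and follows essentially the same route as the paper: cut down by idempotents \(e\in R\) (respectively \(u\in U\)), apply Theorem~\ref{thm:fgp_is_nice} to the fgp modules \(eM\) (respectively \(uN\)), and pass to the directed inductive limit over idempotents. You also supply details the paper leaves implicit, namely the identifications \(\Hom_{-,T}(M,Q)e\cong\Hom_{-,T}(eM,Q)\) and \(u\Hom_{-,T}(M,N\otimes_S P)\cong\Hom_{-,T}(M,uN\otimes_S P)\) via direct summands, and the observation that compatibility with the transition maps is automatic because every map involved is a restriction of the single canonical map.
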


\begin{proof}
  We assume~\(M\) to be proper and prove the first isomorphism.
  Lemma~\ref{lem:local_units_inductive_limit} implies
  \[
    \Hom_{-,T}(M,N \otimes_S P)R
    \cong \varinjlim \Hom_{-,T}(M,N \otimes_S P)\cdot e
    \cong \varinjlim \Hom_{-,T}(e\cdot M,N \otimes_S P),
  \]
  where~\(e\) runs through a local unit in~\(R\).
  Similarly, \(\Hom_{-, T}(M,P) R\) is the inductive limit of
  \(\Hom_{-,T}(e\cdot M,P)\).
  For fixed~\(e\), Theorem~\ref{thm:fgp_is_nice} implies an
  isomorphism
  \[
    N\otimes_S \Hom_{-,T}(e\cdot M,P)
    \cong \Hom_{-,T}(e\cdot M,N \otimes_S P).
  \]
  These isomorphisms for all~\(e\) are natural and thus induce the
  desired isomorphism on the inductive limits.

  The second isomorphism is proven similarly.  Writing both sides as
  inductive limits over idempotents~\(f\) in~\(U\), we see that it
  suffices to prove that the canonical map is an isomorphism
  \(f N\otimes_S \Hom_{-, T}(M,P)\congto \Hom_{-,T}(M,f N \otimes_S
  P)\), and this follows from Theorem~\ref{thm:fgp_is_nice}
  because~\(f N\) is fgp as an \(S\)\nb-module.
\end{proof}

We want to prove that the proper smooth bimodules form a subbicategory
of the bicategory~\(\Rings\).  We already noted that the identity
bimodules are proper.  The remaining thing to check is the following
lemma:

\begin{lemma}
  \label{lem:tensor=proper}
  A tensor product of proper smooth bimodules is also proper.
\end{lemma}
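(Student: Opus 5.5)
Let \(M\) be a proper smooth \(R,S\)-bimodule and \(N\) a proper smooth \(S,T\)-bimodule. We must show that \(M\otimes_S N\) is a smooth \(R,T\)-bimodule and that \(e\cdot(M\otimes_S N)\) is fgp as a right \(T\)-module for each idempotent \(e\in R\).

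Smoothness is immediate from Proposition~\ref{prop:loc_then_smooth}. Indeed, \(R\otimes_R(M\otimes_S N)\cong (R\otimes_R M)\otimes_S N\cong M\otimes_S N\) by associativity, and similarly on the right. This is exactly the composition in the bicategory \(\Rings\) of Proposition~\ref{pro:Rings_bicat}.

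Now fix an idempotent \(e\in R\). Since \(e\) is idempotent, \(e\cdot(M\otimes_S N)\cong (e\cdot M)\otimes_S N\). By properness of \(M\) and Lemma~\ref{lem:fgp}, there is an isomorphism of right \(S\)-modules \(e\cdot M\cong f\cdot S^k\) for some idempotent \(f\in\Mat_k(S)\). Hence
\[
  e\cdot(M\otimes_S N)\cong f\cdot S^k\otimes_S N \cong f\cdot N^k
\]
as right \(T\)-modules, using \(S\otimes_S N\cong N\) by smoothness. The entries of \(f\) form a finite subset of \(S\). Since \(S\) has local units, there is an idempotent \(g\in S\) with \(g f_{ij}=f_{ij}=f_{ij} g\) for all \(i,j\). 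Let \(D\defeq\operatorname{diag}(g,\dotsc,g)\in\Mat_k(S)\), so that \(f=fD=Df\). Then
\[
  f\cdot N^k = f\cdot D\cdot N^k = f\cdot (gN)^k .
\]
So \(f\cdot N^k\) is the image of the idempotent endomorphism \(x\mapsto f\cdot x\) of the right \(T\)-module \((gN)^k\). Therefore it is a direct summand of \((gN)^k\).

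By properness of \(N\), the module \(gN\) is fgp. Finite direct sums of fgp modules are fgp: by Lemma~\ref{lem:fgp}, a direct sum of modules \(e_i\cdot T^{k_i}\) is of the form \(\operatorname{diag}(e_i)\cdot T^{\sum k_i}\). A direct summand of an fgp module is also fgp. Finite generation holds because a summand is the image under the projection of a generating set. Projectivity follows from the splitting definition: given a surjection onto a summand \(Q\) of \(F\), add the complement \(Q'\) to get a surjection onto \(F\), split it, and compose with the projection onto \(Q\). Alternatively, \(Q\) is the image of an idempotent \(h\) on \(e'\cdot T^m\), and \(he'\) is an idempotent matrix with \(Q=he'\cdot T^m\), so Lemma~\ref{lem:fgp} applies directly. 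Hence \(e\cdot(M\otimes_S N)\) is fgp.

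The step needing most care is the identification \(f\cdot S^k\otimes_S N\cong f\cdot N^k\) and the passage through the local unit \(g\). This replaces the nonunital \(S^k\) by something compatible with properness of \(N\), since properness only controls the modules \(gN\) and not \(N\) itself. Both steps are routine once \(f=fD\) is noted.
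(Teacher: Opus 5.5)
Your proof is correct and follows essentially the same route as the paper: identify \(e\cdot(M\otimes_S N)\cong f\cdot N^k\) via Lemma~\ref{lem:fgp}, use a local unit \(g\in S\) for the entries of \(f\) to exhibit \(f\cdot N^k\) as a direct summand of \((g\cdot N)^k\), and conclude from the properness of \(N\). The only additions are your explicit check that \(M\otimes_S N\) is smooth, and your justification that finite direct sums and direct summands of fgp modules are again fgp; the paper takes both for granted.
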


\begin{proof}
  Let \(R,S,T\) be rings with local units and let \(M={_RM}_S\) and
  \(N={_SN}_T\) be proper smooth bimodules.  We claim that \(M\otimes_SN\)
  is proper, that is, \(e\cdot(M\otimes_SN)\) is fgp as a right
  \(T\)-module for every idempotent \(e\in R\).  Since \(e\cdot M\) is an fgp
  right \(S\)-module, \(e\cdot M \cong f\cdot S^k\) for some idempotent
  \(f\in \Mat_k(S)\).  Then
  \[
    e\cdot (M\otimes_SN) \cong (e\cdot M)\otimes_S N
    \cong f\cdot S^k \otimes_S N
    \cong f\cdot N^k,
  \]
  where~\(f\) acts on~\(N^k\) by matrix-vector multiplication and the
  left \(S\)\nb-module structure on~\(N\).  Since~\(S\) is a ring with
  local units, there is an idempotent \(g\in S\) that is a local unit
  for each entry of~\(f\).  Therefore, \(f\cdot N^k\) is contained in
  \((g\cdot N)^k\).  Even more, it is the direct summand of
  \((g\cdot N)^k\) given by left multiplication with~\(f\).
  Since~\(N\) is proper, \(g\cdot N\) is an fgp right \(T\)\nb-module.
  Then \((g\cdot N)^k\) is fgp as well, and so is any direct summand
  of it.  Thus \(e\cdot (M\otimes_SN)\) is an fgp right
  \(T\)\nb-module as asserted.
\end{proof}

\begin{definition}
  Let \(\Ringspr \subseteq \Rings\) be the subbicategory with the
  same objects and \(2\)\nb-arrows and with only the proper bimodules
  as arrows.
\end{definition}

\section{Steinberg bimodules and the pseudofunctor to rings}
\label{sec:steinmod}

In this section, we define a pseudofunctor from the bicategory of
(ample) groupoid correspondences to the bicategory of rings and
bimodules, which maps an ample groupoid~\(\Gr\) to its Steinberg
algebra \(A_R(\Gr)\).
This is an algebraic analogue of the pseudofunctor from the bicategory
of (\'etale) groupoid correspondences to the bicategory of
\(\Cst\)\nb-correspondences defined in
\cite{Antunes-Ko-Meyer:Groupoid_correspondences}*{Section~7}.
We also prove that our pseudofunctor  maps proper groupoid
correspondences to proper bimodules.
As a prerequisite, we recall the variant of the bicategory of groupoid
correspondences that we need for this construction.
We are particularly interested in certain families of compact open
Hausdorff subsets that we call ample bases because these will be used
to describe Steinberg algebras and bimodules.

\subsection{The bicategory of ample groupoid correspondences}
\label{sec:ample_groupoid_corr}

Slightly different variants of the bicategory of groupoid
correspondences have been defined in
\cites{Antunes-Ko-Meyer:Groupoid_correspondences,
  Meyer:Diagrams_models}.
Here, we need yet another variant because the Steinberg algebra only
works for groupoids with totally disconnected unit space.

All topological groupoids in this article are assumed to be
\emph{ample}, that is, their source and range maps \(\s,\rg\colon \Gr
\rightrightarrows \Gr^0\) are local homeomorphisms, \(\Gr^0\) is
Hausdorff, and each point in~\(\Gr^0\) has a compact open
neighbourhood.
Hence, the Hausdorff, compact open subsets form a base for the
topologies both on \(\Gr^0\) and~\(\Gr\).
We do not require~\(\Gr\) to be Hausdorff.
A \emph{slice} or bisection is an open subset~\(U\) of~\(\Gr\) such
that \(\s|_U\) and~\(\rg|_U\) are injective.
The set of compact slices in~\(\Gr\) is a base for the topology
of~\(\Gr\) consisting of compact, open, Hausdorff subsets.
We multiply compact slices as subsets of~\(\Gr\).
This gives an inverse semigroup with unit and zero.
For the construction of the Steinberg algebra of~\(\Gr\), it is
important that the family of compact slices is a base for the topology
with the following extra property:

\begin{definition}
  \label{def:ample_base}
  An \emph{ample base} for a space~\(X\) is a base~\(\B\)
  of the topology consisting of compact, Hausdorff, open subsets
  of~\(X\) such that \(U\setminus V \in \B\) whenever \(U,V\in\B\).
\end{definition}

Notice that \(U\setminus V\) is again compact, Hausdorff, open if
\(U\) and~\(V\) are so.
The base~\(\B_{\Gr}\) of all compact slices in~\(\Gr\) and most bases
that we shall use in the following have the stronger property that any
compact open subset~\(U\) with \(U\subseteq V\) for some
\(V\in\B_{\Gr}\) belongs to~\(\B_{\Gr}\).
One use of a base is to produce a presentation of the Steinberg
algebra.
In this context, it is useful to have as few sets in~\(\B\) as
possible and to define bases assuming the weaker property in
Definition~\ref{def:ample_base}.

We define left and right actions of groupoids as usual.
By convention, we write~\(\rg\) for the anchor map of a left action
and~\(\s\) for the anchor map of a right action.
This ensures that a product \(x\cdot y\) of any kind is defined if and
only if \(\s(x) = \rg(y)\).
We may write \(\s_X\) or~\(\rg_X\) for the anchor maps of an action
on~\(X\) to avoid ambiguity.

Let \(\Gr\) and~\(\Gr[H]\) be ample groupoids.
An \emph{ample groupoid correspondence} \(\Bisp\colon \Gr[H]\leftarrow
\Gr\) is a space~\(\Bisp\) with commuting actions of~\(\Gr[H]\) on the
left and~\(\Gr\) on the right, such that the right \(\Gr\)\nb-action
is free and proper and its anchor map \(\s\colon \Bisp \to \Gr^0\) is
a local homeomorphism.

This is the same definition as
in~\cite{Antunes-Ko-Meyer:Groupoid_correspondences}.
As shown in~\cite{Antunes-Ko-Meyer:Groupoid_correspondences}, it
follows that the orbit space \(\Bisp/\Gr\) is Hausdorff and that the
orbit space projection \(\Qu\colon \Bisp \to \Bisp/\Gr\) is a local
homeomorphism.
The left anchor map of a groupoid correspondence descends to a
continuous map \(\rg_*\colon \Bisp/\Gr \to \Gr[H]^0\).
As in~\cite{Antunes-Ko-Meyer:Groupoid_correspondences}, we call the
correspondence \emph{proper} if~\(\rg_*\) is proper, and \emph{tight}
if~\(\rg_*\) is a homeomorphism.
Any tight groupoid correspondence is proper, and both conditions
define subbicategories, that is, the identity groupoid correspondences
are tight and the composite of two tight or proper groupoid
correspondences is again tight or proper, respectively.

An open subset \(U\subseteq\Bisp\) is called a \emph{slice} or
bisection if both \(\Qu|_U\) and~\(\s|_U\) are injective.
Then \(\s|_U\) and~\(\Qu|_U\) are homeomorphisms onto open subsets of
\(\Gr^0\) and~\(\Bisp/\Gr\), respectively, and~\(U\) is Hausdorff.
It is shown in \cites{Antunes-Ko-Meyer:Groupoid_correspondences,
  Meyer:Diagrams_models} that these slices form a base for the
topology on~\(\Bisp\).
In addition, since any neighbourhood in~\(\Gr^0\) contains a compact
open neighbourhood, any neighbourhood of \(x\in\Bisp\) contains a
neighbourhood of~\(x\) that is a compact open slice.
Thus the set of compact open slices is an ample base
for~\(\Bisp\).
Since~\(\Qu\) is a local homeomorphism, the images~\(\Qu(U)\) of compact
open slices \(U\subseteq \Bisp/\Gr\) form an ample base for~\(\Bisp/\Gr\).

Let \(\Bisp\colon \Gr[H] \leftleftarrows \Gr\) be a groupoid
correspondence and let \(\Qu\colon \Bisp\to\Bisp/\Gr\) be the orbit
space projection.
Recall that \(\braket{x}{y}\) for \(x,y\in \Bisp\) with \(\Qu(x)=\Qu(y)\)
is the unique \(g\in\Gr\) with \(\rg(g)= \s(x)\) and \(x\cdot g = y\).
If \(U_1,U_2\subseteq \Bisp\) are slices, let
\[
  \braket{U_1}{U_2} \defeq
  \setgiven{ \braket{x}{y}}{u_1\in U_1,\ u_2\in U_2,\ \Qu(u_1)=\Qu(u_2)}.
\]
If  \(V\subseteq \Gr[H]\), \(U\subseteq \Bisp\), \(W\subseteq \Gr\)
are slices, define the products \(V U, U W\subseteq \Bisp\) similarly
as the sets of all products \(v u\) or~\(u w\), respectively, with
\(v\in V\), \(u\in U\), \(w\in W\) and \(\s(v) = \rg(u)\) or \(\s(u) =
\rg(w)\).

\begin{lemma}
  \label{lem:operations_on_compact_slices}
  Let \(\Bisp\colon \Gr[H] \leftleftarrows \Gr\) be a groupoid
  correspondence.
  Let \(V\subseteq \Gr[H]\), \(U,U_1,U_2\subseteq \Bisp\),
  \(W\subseteq \Gr\) be compact slices.
  Then \(V U, U W \subseteq \Bisp\) and \(\braket{U_1}{U_2} \subseteq
  \Gr\) are again compact slices.
  Let \(\Bisp[Y]\) be any \(\Gr\)\nb-space.
  If \(W\subseteq \Gr\) is a compact slice and \(Z\subseteq \Bisp[Y]\)
  is a compact, Hausdorff, open subset, then~\(W Z\) is again a
  compact, Hausdorff, open subset of~\(\Bisp[Y]\).
\end{lemma}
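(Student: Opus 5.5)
The strategy is to realise each of the sets \(VU\), \(UW\), \(\braket{U_1}{U_2}\) and \(WZ\) as the image of a compact set under a continuous map that is a homeomorphism onto an open subset. Compactness then comes from continuity, openness from the local homeomorphism property, and the slice property from explicit injectivity checks. Throughout I use that compact slices are Hausdorff and that \(\s|_U\), \(\Qu|_U\) are homeomorphisms onto open sets.

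\emph{The product \(VU\).} Consider the fibre product \(V\times_{\s,\Gr[H]^0,\rg} U\). Since \(\s|_V\) is a homeomorphism onto the open set \(\s(V)\), the projection to the second factor identifies this fibre product with the open subset \(U\cap \rg_{\Bisp}^{-1}(\s(V))\) of~\(U\). This set is closed in the compact Hausdorff space~\(U\), because \(\s(V)\) is compact in the Hausdorff space \(\Gr[H]^0\); hence it is compact. The multiplication map sends \(u\) in this set to \((\s|_V)^{-1}(\rg(u))\cdot u\), which is continuous, so \(VU\) is compact.

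For openness, note that the action by the slice~\(V\) has a continuous inverse \(x\mapsto (\rg|_V)^{-1}(\rg(x))^{-1}\cdot x\) defined on the open set \(\rg^{-1}(\rg(V))\). So \(u\mapsto vu\) is a homeomorphism from the open set \(\rg^{-1}(\s(V))\) onto the open set \(\rg^{-1}(\rg(V))\), and \(VU\) is the image of the open set \(U\cap\rg^{-1}(\s(V))\), hence open.

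For the slice property, \(\s(vu)=\s(u)\), so \(\s\) is injective on \(VU\). If \(\Qu(vu)=\Qu(v'u')\), apply~\(\rg_*\): this gives \(\rg(v)=\rg(v')\), hence \(v=v'\). Cancelling~\(v\) gives \(\Qu(u)=\Qu(u')\), hence \(u=u'\).

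\emph{The product \(UW\).} This works in the same way. Here \(\s(uw)=\s(w)\) is injective on \(UW\) because both \(\rg|_W\) and \(\s|_W\) are injective. Also \(\Qu(uw)=\Qu(u)\), so \(\Qu\) is injective on \(UW\) as well.

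\emph{The set \(WZ\).} The argument for \(VU\) applies verbatim, except that we only need Hausdorffness in place of the slice property. Hausdorffness holds because \(z\mapsto wz\) is a homeomorphism of the relevant open pieces, so \(WZ\) is homeomorphic to the open subset \(Z\cap\rg^{-1}(\s(W))\) of~\(Z\), and this subset is Hausdorff.

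\emph{The main obstacle is \(\braket{U_1}{U_2}\).} Set \(D\defeq \Qu(U_1)\cap\Qu(U_2)\). This is open, and it is compact because it is an intersection of two compact subsets of the Hausdorff space \(\Bisp/\Gr\). For \(p\in D\), let \(x_i(p)\defeq(\Qu|_{U_i})^{-1}(p)\); these are continuous. The map \(p\mapsto\braket{x_1(p)}{x_2(p)}\) is continuous because \(\braket{\cdot}{\cdot}\) is continuous on \(\Bisp\times_{\Bisp/\Gr}\Bisp\), which follows from freeness and properness of the right action (as shown in \cite{Antunes-Ko-Meyer:Groupoid_correspondences}). Its image \(\braket{U_1}{U_2}\) is therefore compact.

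On this image, the source map is \(\s\bigl(\braket{x_1}{x_2}\bigr)=\s(x_2)\) and the range map is \(\rg\bigl(\braket{x_1}{x_2}\bigr)=\s(x_1)\). Both are injective because \(\s|_{U_i}\) and \(\Qu|_{U_i}\) are injective. The set is open because it is the image of an open subset of \(\Gr^0\) under a local section of~\(\s\). Concretely, it equals the preimage under the local homeomorphism \(\s\) of the open set \(\s(U_2\cap\Qu^{-1}(D))\), intersected with the open set of all \(g\) such that \(x_1\cdot g\in U_2\) for the appropriate \(x_1\in U_1\). I would verify this by writing locally \(g=\braket{x_1}{x_1 g}\) and using that the action map \(U_1\times_{\s,\Gr^0,\rg}\Gr\to\Bisp\) is a local homeomorphism.
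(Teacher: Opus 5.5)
Your proof is correct. It follows a somewhat different, more self-contained route than the paper's.

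\textbf{Compactness and \(WZ\).} For compactness you use the same principle as the paper: continuous images of compact sets, with Hausdorffness of \(\Gr[H]^0\), \(\Gr^0\) and \(\Bisp/\Gr\) giving the needed closedness. Your treatment of \(WZ\) is essentially the paper's argument.

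\textbf{Where the routes differ.} The paper does not check the slice property or openness of \(VU\), \(UW\), \(\braket{U_1}{U_2}\) itself. It cites \cite{Antunes-Ko-Meyer:Groupoid_correspondences}*{Lemma~7.4} for these, and Lemma~2.9 there for the action map being a local homeomorphism. It gets compactness from closed subsets of the products \(U\times V\), \(V\times W\), \(U_1\times U_2\).

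You instead parametrise each set by a single compact piece: \(U\cap\rg^{-1}(\s(V))\), \(U\cap\s^{-1}(\rg(W))\), or \(D\subseteq\Bisp/\Gr\). You then prove openness and the injectivity conditions by hand. For openness you use that translation by a slice is a homeomorphism \(\rg^{-1}(\s(V))\to\rg^{-1}(\rg(V))\), and that continuous sections of the local homeomorphism \(\s\) are open maps. The paper's route is shorter because it leans on the reference; yours needs nothing beyond continuity of the actions, the bracket map, and the local homeomorphism properties of \(\s\) and \(\Qu\).

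\textbf{Two small corrections.}
\begin{enumerate}
\item In the \(UW\) case you attribute the injectivity of \(\s\) to \(\rg|_W\). In fact it uses \(\s|_W\) and \(\s|_U\). The injectivity of \(\rg|_W\) is what you need for injectivity of \(\Qu\), since \(u\) determines \(w\) through \(\rg(w)=\s(u)\).
\item For \(\braket{U_1}{U_2}\) there is a cleaner openness argument. By freeness of the right action, \(\braket{U_1}{U_2}=\phi^{-1}(U_2)\) for the continuous map
\[
  \phi\colon\rg^{-1}(\s(U_1))\to\Bisp,\qquad
  g\mapsto(\s|_{U_1})^{-1}(\rg(g))\cdot g.
\]
This needs only continuity of the action. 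So the extra intersection with an \(\s\)-preimage in your last paragraph is redundant, and so is the appeal to the action map being a local homeomorphism.
\end{enumerate}
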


\begin{proof}
  The subsets \(V U\), \(U W\) and \(\braket{U_1}{U_2}\)  are slices by
  \cite{Antunes-Ko-Meyer:Groupoid_correspondences}*{Lemma~7.4}.
  In particular, they are Hausdorff and open.
  In addition, since \(\Gr^0\) and~\(\Bisp/\Gr\) are Hausdorff, the
  sets of pairs \((u_1,u_2)\), \((u,v)\) or \((v,w)\) for which
  \(\braket{u_1}{u_2}\), \(u\cdot v\) or \(v\cdot w\) are defined are
  closed subsets of the product spaces \(U_1 \times U_2\), \(U \times
  V\), and \(V\times W\), respectively.
  These products of compact spaces are again compact, and so are their
  closed subsets.
  Since the multiplication and the bracket map are continuous, it
  follows that \(V U\), \(U W\), and \(\braket{U_1}{U_2}\) are again
  compact.

  Next we prove that \(W Z\subseteq \Bisp[Y]\) is open, Hausdorff and
  compact.
  The multiplication map \(\mu\colon \Gr\times_{\Gr^0} \Bisp[Y] \to
  \Bisp[Y]\) of any groupoid action is a local homeomorphism by
  \cite{Antunes-Ko-Meyer:Groupoid_correspondences}*{Lemma~2.9}.
  Therefore, \(W Z = \mu(W\times_{\Gr^0} Z)\) is open in~\(\Bisp[Y]\).
  Since~\(\s|_W\) is a homeomorphism onto \(\s(W)\), the projection
  \(W\times_{\Gr^0} Z \to \Bisp[Y]\), \((w,y)\mapsto y\), is a
  homeomorphism onto \(\rg_{\Bisp[Y]}^{-1}(\s(W))\).
  So \(W\times_{\Gr^0} Z\) is a compact Hausdorff space.
  The restriction of~\(\mu\) to \(W\times_{\Gr^0} Z\) is an injective
  local homeomorphism, hence a homeomorphism.
  So \(W Z\) is also compact and Hausdorff.
\end{proof}

For two ample groupoid correspondences \(\Bisp,\Bisp[Y]\colon \Gr[H]
\leftleftarrows \Gr\), a \emph{\(2\)\nb-arrow}
\(\Bisp\Rightarrow\Bisp[Y]\) is a continuous
\(\Gr[H],\Gr\)-equivariant map \(\varphi\colon \Bisp\to\Bisp[Y]\).
In~\cite{Antunes-Ko-Meyer:Groupoid_correspondences}, the
map~\(\varphi\) is assumed to be injective.
This is needed for it to induce an \emph{isometry} between the
\(\Cst\)\nb-correspondences induced by \(\Bisp\) and~\(\Bisp[Y]\).
Since our bicategory of rings allows all bimodule maps as
\(2\)\nb-arrows, we may allow more \(2\)\nb-arrows of groupoid
correspondences.
This change does not matter much because our constructions only need
invertible \(2\)\nb-arrows.

The composition \(\Bisp \circ \Bisp[Y]\) of two groupoid
correspondences \(\Bisp\colon \Gr[H] \leftarrow \Gr\) and
\(\Bisp[Y]\colon \Gr\leftarrow \Gr[K]\) is defined as
in~\cite{Antunes-Ko-Meyer:Groupoid_correspondences}, as the orbit
space of the canonical diagonal action of~\(\Gr\) on the fibre product
space \(\Bisp \Gtimes \Bisp[Y]\).
This is a groupoid correspondence \(\Gr[H] \leftarrow \Gr[K]\).
It is automatically ample because \(\Gr[H]\) and~\(\Gr[K]\) are ample.
The same arguments as in
\cites{Antunes-Ko-Meyer:Groupoid_correspondences,
  Meyer:Diagrams_models} show that ample groupoids with ample groupoid
correspondences and the \(2\)\nb-arrows above form a
bicategory~\(\Grcat\).
We sometimes write \(\Bisp\circ_{\Gr} \Bisp[Y]\) if we need to
mention~\(\Gr\).

The construction of \(\Bisp\circ \Bisp[Y]\) still works in the same
way if~\(\Bisp[Y]\) is replaced by a topological space with a left
action of~\(\Gr\), without any other groupoid acting on~\(\Bisp[Y]\)
on the right.
In this generality, it is true that the \(\Gr\)\nb-action on \(\Bisp
\Gtimes \Bisp[Y]\) is basic, so that the orbit space projection
\(\Bisp \Gtimes \Bisp[Y] \to \Bisp \circ \Bisp[Y]\) is a local
homeomorphism.
(This action need not be proper, however, even if~\(\Bisp[Y]\) is also
a groupoid correspondence.
Equivalently, \(\Bisp \circ \Bisp[Y]\) need not be Hausdorff.)
We will use the spaces \(\Bisp\circ \Bisp[Y]\) later to construct the
groupoid model.
The following lemma yields a standard ample base
in~\(\Bisp\circ\Bisp[Y]\), given ample bases in \(\Bisp\),
\(\Bisp[Y]\) and~\(\Gr\) that are suitably compatible:

\begin{lemma}
  \label{lem:base_of_comp}
  Let \(\Bisp\colon\Gr[H]\leftarrow\Gr\) be an ample correspondence
  and let~\(\Bisp[Y]\) be a left \(\Gr\)\nb-space.
  Let \(\pi\colon\Bisp\Gtimes\Bisp[Y]\to \Bisp\circ\Bisp[Y]\)
  denote the orbit space projection of the diagonal action.
  Let \(\B_{\Gr}\), \(\B_{\Bisp}\), and \(\B_{\Bisp[Y]}\) be ample
  bases in \(\Gr\), \(\Bisp\), and \(\Bisp[Y]\), respectively.
  Assume that all elements of \(\B_{\Gr}\) and~\(\B_{\Bisp}\) are
  compact slices and that \(U W\in\B_{\Bisp}\), \(W
  V\in\B_{\Bisp[Y]}\), and \(\braket{U_1}{U_2}\in \B_{\Gr}\) if \(U,
  U_1,U_2 \in\B_{\Bisp}\), \(W\in\B_{\Gr}\), and
  \(V\in\B_{\Bisp[Y]}\).
  Then the restriction of~\(\pi\) to \(U\Gtimes V\) is a homeomorphism
  onto \(U V\defeq\pi(U\Gtimes V) \subseteq
  \Bisp\circ\Bisp[Y]\) for all \(U\in\B_{\Bisp}\),
  \(V\in\B_{\Bisp[Y]}\).
  These sets \(U V\) are open, Hausdorf, and compact, and form an
  ample base in \(\Bisp\circ\Bisp[Y]\).
  In addition,
  \[
    \BXY \defeq \setgiven{U V}{U\in\B_{\Bisp},\ V\in\B_{\Bisp[Y]}}
    = \setgiven{U V}{U\in\B_{\Bisp},\
      V\in\B_{\Bisp[Y]},\ \s(U)\supseteq \rg(V)}.
  \]
\end{lemma}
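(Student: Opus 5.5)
The plan has four steps: first show that \(\pi\) restricted to \(U\Gtimes V\) is a homeomorphism onto an open set, then handle the base property, then closure under differences, and finally the equality of the two descriptions of \(\BXY\).

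\emph{Step 1: \(\pi|_{U\Gtimes V}\) is a homeomorphism onto an open set.} Fix \(U\in\B_{\Bisp}\) and \(V\in\B_{\Bisp[Y]}\). Since \(U\) is a slice, \(\s|_U\) is a homeomorphism onto the open set \(\s(U)\). Hence the projection \(U\Gtimes V\to V\), \((u,y)\mapsto y\), is a homeomorphism onto the open subset \(V\cap\rg^{-1}(\s(U))\) of \(V\). In particular \(U\Gtimes V\) is Hausdorff and open in \(\Bisp\Gtimes\Bisp[Y]\). It is also compact: it is a closed subset of \(U\times V\), because \(\Gr^0\) is Hausdorff. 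The orbit projection \(\pi\) is a local homeomorphism, since the diagonal action is basic, as recalled before the lemma. So \(\pi\) is open, and it suffices to show that \(\pi\) is injective on \(U\Gtimes V\). Suppose \(\pi(u_1,y_1)=\pi(u_2,y_2)\). Then \(u_2=u_1g\) and \(y_2=g^{-1}y_1\) for some \(g\in\Gr\). Thus \(\Qu(u_1)=\Qu(u_2)\), and injectivity of \(\Qu|_U\) gives \(u_1=u_2\). Freeness of the right action then forces \(g\) to be a unit, so \(y_1=y_2\). An injective open continuous map is a homeomorphism onto its open image. Therefore \(UV\) is open, compact and Hausdorff.

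\emph{Step 2: the sets \(UV\) form a base.} Let \(\pi(x,y)\in O\) with \(O\) open. The set \(\pi^{-1}(O)\) is an open neighbourhood of \((x,y)\). Choose \(U\ni x\) and \(V\ni y\) in the bases with \(U\Gtimes V\subseteq\pi^{-1}(O)\); this is possible because the products \(U\times V\) form a base of the product topology. Then \(\pi(x,y)\in UV\subseteq O\).

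\emph{Step 3: closure under differences.} This is the main obstacle. Given \(U_1V_1\) and \(U_2V_2\), I want to write \(U_1V_1\setminus U_2V_2=U_1V'\) with \(V'\in\B_{\Bisp[Y]}\). A point \(\pi(u,y)\) with \(u\in U_1\) lies in \(U_2V_2\) exactly when \(u=u_2g\) and \(gy\in V_2\) for some \(u_2\in U_2\), where \(g=\braket{u_2}{u}\in\braket{U_2}{U_1}\). Put \(W\defeq\braket{U_2}{U_1}\in\B_{\Gr}\); it is a compact slice. Then
\[
  U_1V_1\cap U_2V_2 = U_1\bigl(V_1\cap W^{-1}V_2\bigr).
\]
The set \(W^{-1}\) is \(\braket{U_1}{U_2}\in\B_{\Gr}\), so \(W^{-1}V_2\in\B_{\Bisp[Y]}\) by hypothesis. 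The key point is that, by injectivity on \(U_1\Gtimes V_1\),
\[
  U_1V_1\setminus U_2V_2 = U_1\bigl(V_1\setminus W^{-1}V_2\bigr).
\]
Here \(V_1\setminus W^{-1}V_2\in\B_{\Bisp[Y]}\) because \(\B_{\Bisp[Y]}\) is an ample base. I must check carefully that membership of \((u,y)\) in \(U_2V_2\) is detected by \(y\in W^{-1}V_2\) for the unique relevant \(g\), using that \(U_2\) is a slice so that \(u_2\) is determined by \(\Qu(u)\).

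\emph{Step 4: the two descriptions of \(\BXY\) agree.} One inclusion is trivial. For the other, given \(UV\), take \(W\defeq\braket{U}{U}\in\B_{\Gr}\), which is the set of units \(\s(U)\). Then \(WV=V\cap\rg^{-1}(\s(U))\in\B_{\Bisp[Y]}\), and
\[
  UV=U(WV)
\]
with \(\s(U)\supseteq\rg(WV)\).
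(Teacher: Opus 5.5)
Your proof is correct and follows essentially the same route as the paper. Both arguments run as follows: injectivity of \(\pi\) on \(U\Gtimes V\) comes from \(\Qu|_U\) being injective together with freeness. The base property comes from products \(U\times V\). The identity \(U_1V_1\cap U_2V_2=U_1\bigl(V_1\cap\braket{U_1}{U_2}V_2\bigr)\) gives \(U_1V_1\setminus U_2V_2=U_1\bigl(V_1\setminus\braket{U_1}{U_2}V_2\bigr)\). Finally, \(V\) is replaced by \(\braket{U}{U}V=\s(U)V\).

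One small correction to the check you flag in Step 3. The converse direction, that \(y\in\braket{U_1}{U_2}V_2\) forces \(\pi(u,y)\in U_2V_2\), uses injectivity of \(\s|_{U_1}\): if \(y=hv_2\) with \(h=\braket{u'}{u_2}\) and \(u'\in U_1\), then \(\s(u')=\rg(y)=\s(u)\) gives \(u'=u\). The injectivity of \(\Qu\) on \(U_2\) is not what is needed there.
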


\begin{proof}
  Let \(U\in\B_{\Bisp}\) and \(V\in\B_{\Bisp[Y]}\).
  We first prove that \(U V\subseteq \Bisp\circ\Bisp[Y]\) is open,
  Hausdorff, and compact.
  The subset \(U\times_{\Gr^0} V\subseteq\Bisp\Gtimes\Bisp[Y]\)
  is open.
  It is Hausdorff and compact as a closed subset of the Hausdorff
  compact space \(U\times V\).
  We claim that the restriction of~\(\pi\) to \(U\Gtimes V\) is
  injective.
  Indeed, let \(\pi(u_1,v_1) = \pi(u_2,v_2)\) for some \(u_1,u_2\in
  U\), \(v_1,v_2\in V\) with \(\s(u_j) = \rg(v_j)\) for \(j=1,2\).
  Then there is \(g\in\Gr\) with \(u_2 = u_1 g\), \(v_1 = g v_2\).
  Since the orbit space projection on~\(U\) is injective, \(u_2 = u_1
  g\) implies \(u_1 = u_2\) and \(g=1_{\s(u_1)}\).
  Then \(v_1 = v_2\) follows.
  Since~\(\pi\) is injective on the open set \(U\times_{\Gr^0} V\) and
  a local homeomorphism, it restricts to a homeomorphism from
  \(U\times_{\Gr^0} V\) onto an open subset~\(U V\) of
  \(\Bisp\circ\Bisp[Y]\).
  So~\(U V\) is open, Hausdorff and compact.

  If \(U\in\B_{\Bisp}\), \(V\in\B_{\Bisp[Y]}\), then \(\s(U) =
  \braket{U}{U}\in \B_{\Gr}\) and \(V' \defeq \braket{U}{U} V\in
  \B_{\Bisp[Y]}\).
  We compute  \(V' = \rg_{\Bisp[Y]}^{-1}(\s(U)) \cap V\), so that \(U
  V = U V'\) and \(\rg(V') \subseteq \s(U)\).
  So all elements of~\(\BXY\) are of the form~\(U V\) for
  \(U\in\B_{\Bisp}\), \(V\in\B_{\Bisp[Y]}\), with \(\s(U) \supseteq
  \rg(V)\) as claimed.

  Let \(W\subseteq \Bisp\circ \Bisp[Y]\) be an open subset and
  let \(\pi(x,y)\in W\).
  We claim that~\(W\) contains a neighbourhood of~\(\pi(x,y)\) of the
  form~\(U V\).
  The subset \(\pi^{-1}(W) \subseteq\Bisp\Gtimes \Bisp[Y]\) is open.
  So there is an open subset \(\tilde{W}\subseteq
  \Bisp\times\Bisp[Y]\) with \(\pi^{-1}(W) = \tilde{W}\cap
  \Bisp\Gtimes \Bisp[Y]\).
  Since \((x,y)\in \tilde{W}\) and \(\B_{\Bisp}\) and
  \(\B_{\Bisp[Y]}\) are bases, there are \(U\in\B_{\Bisp}\) and
  \(V\in\B_{\Bisp[Y]}\) with \((x,y)\in U\times V\subseteq\tilde{W}\).
  Then \(\pi(x,y)\in U V\subseteq W\).

  It remains to show that the family of subsets~\(\BXY\) is stable
  under intersections and set differences.
  Let \(U_j\in\B_{\Bisp}\) and \(V_j\in\B_{\Bisp[Y]}\) for \(j=1,2\).
  We must show that \(U_1 V_1 \cap U_2 V_2\) and \(U_1 V_1 \setminus
  U_2 V_2\) are in \(\BXY\).
  First, we claim
  \begin{equation}
    \label{eq:intersect_in_BXY}
    U_1 V_1 \cap U_2 V_2 = U_1 V_3,\qquad
    \text{with} \quad V_3 \defeq V_1 \cap \braket{U_1}{U_2} V_2.
  \end{equation}
  Assume \(u_1 v_1 = u_2 v_2\) in \(\Bisp\circ\Bisp[Y]\) for some
  \(u_j\in U_j\), \(v_j\in V_j\) for \(j=1,2\).
  This means that there is \(g\in \Gr\) with \(u_1 g = u_2\) and
  \(v_1 = g v_2\).
  This implies \(g = \braket{u_1}{u_2}\) and \(v_1 = g v_2 =
  \braket{u_1}{u_2} v_2 \in \braket{U_1}{U_2} V_2\).
  So \(u_1 v_1 \in U_1 V_3\) with \(V_3 = V_1 \cap
  \braket{U_1}{U_2} V_2\).
  Conversely, any element in \(U_1 V_3\) belongs to both \(U_1 V_1\)
  and \(U_1 \braket{U_1}{U_2} V_2 \subseteq U_2 V_2\).
  This proves the claim.
  So \(U_1 V_1 \cap U_2 V_2\in \BXY\).

  Next, we claim that
  \[
    U_1 V_1 \setminus U_2 V_2
    = U_1 V_1 \setminus (U_1 V_1 \cap U_2 V_2)
    = U_1 V_1 \setminus U_1 V_3
    \overset{!}= U_1 (V_1 \setminus V_3) \in \BXY.
  \]
  Only the equality marked with~\(!\) requires justification.
  Here we use that an element \(u_1 v_1 \in U_1 V_1\) is determined
  uniquely by \(v_1 \in V_1\) because \(\s(u_1) = \rg(v_1)\)
  determines~\(u_1\) in the slice~\(U_1\).
  Therefore, \(u_1 v_1 \in U_1 V_1\) belongs to \(U_1 V_3\) if and
  only if \(v_1 \in V_3\), as needed.
  We have now verified all claims in the lemma.
\end{proof}

The assumptions in Lemma~\ref{lem:base_of_comp} are satisfied
by Lemma~\ref{lem:operations_on_compact_slices} if
\(\B_{\Gr}\), \(\B_{\Bisp}\) consist of all compact slices 
and~\(\B_{\Bisp[Y]}\) consists of all compact, open Hausdorff subsets.
If~\(\Bisp[Y]\) is a groupoid correspondence, we may also
let~\(\B_{\Bisp[Y]}\) be the set of all compact slices.

\begin{lemma}
  \label{lem:base_of_composition_equality}
  Let \(U_1,U_2\in\B_{\Bisp}\), \(V_1,V_2\in\B_{\Bisp[Y]}\).
  Then \(U_1 V_1 = U_2 V_2\) holds in \(\B_{\Bisp\circ\Bisp[Y]}\) if
  and only if there are \(W_1,W_2\in\Bisp_{\Gr}\) with
  \begin{equation}
    \label{eq:base_of_composition_equality}
    \rg(W_j) \supseteq \rg(V_j) \cap \s(U_j) \text{ for
    }j=1,2 \text{ and }
    (U_1 W_1, W_1^{-1} V_1) = (U_2 W_2, W_2^{-1} V_2).
  \end{equation}
\end{lemma}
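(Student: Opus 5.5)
We read \(W_1,W_2\in\Bisp_{\Gr}\) as \(W_1,W_2\in\B_{\Gr}\), and we work throughout with the hypotheses of Lemma~\ref{lem:base_of_comp}. The two directions are handled separately. The ``if'' direction is a direct pointwise computation. For ``only if'' we write down explicit slices \(W_1,W_2\), built from the brackets \(\braket{U_1}{U_2}\), and check~\eqref{eq:base_of_composition_equality} pointwise.

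\emph{If.} Assume~\eqref{eq:base_of_composition_equality}. We show \(U_j V_j = (U_j W_j)(W_j^{-1}V_j)\) for \(j=1,2\); then both sides of the claimed equality agree.
\begin{itemize}
\item Inclusion \(\subseteq\): take \(u v\in U_j V_j\) with \(\s(u)=\rg(v)\). Then \(\s(u)\in \rg(V_j)\cap\s(U_j)\subseteq\rg(W_j)\). So there is a (unique) \(w\in W_j\) with \(\rg(w)=\s(u)\). Since \(u v=(u w)(w^{-1}v)\) in \(\Bisp\circ\Bisp[Y]\) by definition of the diagonal orbit space, \(u v\) lies in the right-hand side.
\item Inclusion \(\supseteq\): every element of the right-hand side has the form \((u w)(w^{-1}v)=u v\) with \(u\in U_j\), \(w\in W_j\), \(v\in V_j\), so it lies in \(U_j V_j\).
\end{itemize}

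\emph{Only if.} Assume \(U_1V_1=U_2V_2\). We take
\[
  W_2\defeq\braket{U_2}{U_1},\qquad
  W_1\defeq W_2^{-1}W_2=\braket{U_1}{U_2}\braket{U_2}{U_1}.
\]
\begin{itemize}
\item Membership in \(\B_{\Gr}\): \(W_2\in\B_{\Gr}\) by hypothesis. For \(W_1\) we use \(\braket{U_1}{U_2}\braket{U_2}{U_1}=\braket{U_1\braket{U_1}{U_2}}{U_1}\), where \(U_1\braket{U_1}{U_2}\in\B_{\Bisp}\) by hypothesis. Hence \(W_1\in\B_{\Gr}\).
\item Description of \(W_1\): it is the set of units \(\s(u_1)\) for those \(u_1\in U_1\) whose orbit meets \(U_2\).
\item First components: \(U_1W_1\) and \(U_2W_2\) both equal \(U_1\cap\Qu^{-1}\Qu(U_2)\), the part of \(U_1\) lying over orbits that meet \(U_2\).
\item Range conditions: if \(u_1v_1\in U_1V_1\), then \(u_1v_1=u_2v_2\) for some \(u_2\in U_2\), \(v_2\in V_2\), which forces \(\Qu(u_1)=\Qu(u_2)\). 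Hence \(\rg(V_1)\cap\s(U_1)\subseteq\rg(W_1)\). Symmetrically, \(\rg(W_2)=\s\bigl(U_2\cap\Qu^{-1}\Qu(U_1)\bigr)\supseteq\rg(V_2)\cap\s(U_2)\).
\end{itemize}

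\emph{Second components.} It remains to prove \(W_1^{-1}V_1=W_2^{-1}V_2\), that is,
\[
  V_1\cap\rg^{-1}(\rg(W_1)) = \braket{U_1}{U_2}V_2 .
\]
\begin{itemize}
\item Inclusion \(\subseteq\): take \(y\in V_1\) with \(\rg(y)\in\rg(W_1)\), and let \(u_1\in U_1\) be the point with \(\s(u_1)=\rg(y)\). Then \(u_1y\in U_1V_1=U_2V_2\), say \(u_1y=u_2v_2\). As in the proof of~\eqref{eq:intersect_in_BXY}, this gives \(y=\braket{u_1}{u_2}v_2\in\braket{U_1}{U_2}V_2\).
\item Inclusion \(\supseteq\): the same argument, run with the roles of the indices \(1\) and \(2\) exchanged.
\end{itemize}

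The main obstacle is this last set-theoretic equality together with the membership \(W_1\in\B_{\Gr}\). Both depend on using exactly the closure properties assumed in Lemma~\ref{lem:base_of_comp}, and on uniqueness of points in slices over a given source.
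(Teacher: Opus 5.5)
Your argument is the paper's proof with the indices \(1,2\) exchanged. The paper takes \(W_1=\braket{U_1}{U_2}\) and the unit slice \(W_2=\rg(\braket{U_2}{U_1})\); you take \(W_2=\braket{U_2}{U_1}\) and the unit slice \(W_1=\s(\braket{U_2}{U_1})\). Your ``if'' direction is correct. So are your pointwise checks of \(U_1W_1=U_2W_2=U_1\cap\Qu^{-1}\Qu(U_2)\), of the range conditions, and of \(W_1^{-1}V_1=\braket{U_1}{U_2}V_2\). One small point on the inclusion \(\supseteq\) in the last equality: the literal index swap only gives \(V_2\cap\rg^{-1}(\rg(W_2))\subseteq\braket{U_2}{U_1}V_1\). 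You still have to multiply on the left by \(\braket{U_1}{U_2}\), which is harmless.

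The step that fails as written is your justification of \(W_1\in\B_{\Gr}\). The identity \(\braket{U_1}{U_2}\braket{U_2}{U_1}=\braket{U_1\braket{U_1}{U_2}}{U_1}\) is false. By~\eqref{eq:braket_products}, \(\braket{U_1\braket{U_1}{U_2}}{U_1}=\braket{U_1}{U_2}^{-1}\braket{U_1}{U_1}=\braket{U_2}{U_1}=W_2\).

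Pointwise, \(U_1\braket{U_1}{U_2}=U_2\cap\Qu^{-1}\Qu(U_1)\), so bracketing it against \(U_1\) produces the arrows \(\braket{u_2}{u_1}\), not units. For a group \(\Gr\) acting on itself with \(U_1=\{1\}\) and \(U_2=\{g\}\), \(g\neq 1\), you get \(\{g^{-1}\}\) instead of \(W_1=\{1\}\).

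The correct identity has the roles reversed: \(W_1=W_2^{-1}W_2=\braket{U_2W_2}{U_1}\), or equivalently \(W_1=\s(U_2W_2)=\braket{U_2W_2}{U_2W_2}\). Here \(U_2W_2\in\B_{\Bisp}\) by the hypotheses of Lemma~\ref{lem:base_of_comp}. With this replacement your proof is complete. The same device also justifies the paper's unexplained claim that \(\rg(\braket{U_2}{U_1})\in\B_{\Gr}\).
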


\begin{proof}
  First assume that there are \(W_1\) and~\(W_2\)
  satisfying~\eqref{eq:base_of_composition_equality}.
  Any element of \((u_j,v_j)\in U_j \times_{\s,\rg} V_j\) satisfies
  \(\s(u_j) = \rg(v_j) \in \rg(W_j)\), so that there is \(g\in W_j\)
  with \(\rg(g) = \s(u_j) = \rg(v_j)\).
  Then \(\pi(u_j,v_j) = \pi(u_j g,g^{-1} v_j) \in (U_j W_j) (W_j^{-1}
  V_j)\).
  It is clear that any element of \((U_j W_j) (W_j^{-1} V_j)\) also
  belongs to~\(U_j V_j\).
  Therefore, our assumptions imply that \(U_1 V_1 = (U_1 W_1)
  (W_1^{-1} V_1) = (U_2 W_2) (W_2^{-1} V_2) = U_2 V_2\).

  Conversely, assume that \(U_1 V_1 = U_2 V_2\).
  Then
  \begin{equation}
    \label{eq:UV_intersect}
    U_1 V_1
    = U_1 V_1 \cap U_2 V_2
    = U_1 (V_1 \cap \braket{U_1}{U_2} V_2)
  \end{equation}
  by~\eqref{eq:intersect_in_BXY}.
  Since~\(U_1\) is a slice, the only way for two elements of the form
  \(\pi(u,v)\) and \(\pi(u',v')\) with \(u,u'\in U_1\), \(v,v'\in\Bisp[H]\) to
  have the same class in \(\Bisp\circ \Bisp[Y]\) is if \(u= u'\) and
  \(v=v'\).
  Therefore, \eqref{eq:UV_intersect} implies that any element \(v_1\in
  V_1\) with \(\rg(v_1)\in\s(U_1)\) is also contained in
  \(\braket{U_1}{U_2}V_2\).
  Since \(\s(\braket{U_1}{U_2}) \subseteq \s(U_2)\), we may rewrite
  this further as
  \(\s(U_1) V_1 \subseteq \braket{U_1}{U_2} \s(U_2) V_2\).
  The same argument with \(1,2\) exchanged gives
  \(\s(U_2) V_2 \subseteq \braket{U_2}{U_1} \s(U_1)V_1\).
  These two equations together imply \(\s(U_2) V_2 = \braket{U_2}{U_1}
  \s(U_1)V_1\) and \(\s(U_1) V_1 = \braket{U_1}{U_2} \s(U_2)V_2\).
  Then it follows that \(\rg(V_1) \cap \s(U_1) \subseteq
  \rg(\braket{U_1}{U_2})\) because \(\rg(V_1) \cap \s(U_1) = \rg(\s(U_1)
  V_1)\).
  Exchanging \(1\) and~\(2\), the same argument gives \(\rg(V_2) \cap
  \s(U_2) \subseteq \rg(\braket{U_2}{U_1})\).
  Now let
  \[
    W_1\defeq \braket{U_1}{U_2} \in \B_{\Gr},\qquad
    W_2\defeq \rg(\braket{U_2}{U_1}) \in \B_{\Gr^0} \subseteq
    \B_{\Gr}.
  \]
  We have seen above that \(\rg(W_j) \supseteq \rg(V_j) \cap \s(U_j)\)
  for \(j=1,2\).
  We claim that \((U_1 W_1, W_1^{-1} V_1) = (U_2 W_2, W_2^{-1} V_2)\)
  holds as well.
  To prove this, we use that \(U_1 \braket{U_1}{U_2} \subseteq U_2\)
  and \(\s(U_1\braket{U_1}{U_2}) = \rg(\braket{U_2}{U_1})\) to
  conclude that \(U_1 W_1 = U_2 W_2\).
  The computations above show that \(W_1^{-1} V_1 = \braket{U_2}{U_1}
  V_1= \braket{U_2}{U_1} \s(U_1)V_1 = \s(U_2) V_2 = \braket{U_2}{U_1}
  \s(U_1)V_1 = \braket{U_2}{U_1}\braket{U_1}{U_2} \s(U_2) V_2 = W_2^{-1}
  V_2\).
\end{proof}

\subsection{The Steinberg module of a topological space}

Let~\(R\) be a unital ring.  We give it the discrete topology, so that a
function to~\(R\) is continuous if and only if it is locally constant.
Let~\(X\) be a topological space.  The set of all maps
\(R^X\defeq \{\xi\colon X\to R\}\) is an \(R\)-module by pointwise
addition and scalar multiplication.  For a subset \(F\subseteq R^X\),
let \(\langle F\rangle_R \subseteq R^X\) be the \emph{smallest
  \(R\)\nb-submodule of~\(R^X\) containing~\(F\)}.  For a subset
\(A\subseteq X\), its \emph{characteristic function}~\(\charmap{A}\) is
defined by
\begin{align*}
    \charmap{A}\colon X\to R,\qquad
    x\mapsto
        \begin{cases}
            1 &\text{if } x\in A,\\
            0 &\text{if } x\not\in A.
        \end{cases}
\end{align*}
This map is locally constant if and only if \(A\subseteq X\) is closed
and open.
For a subset \(U\subseteq X\) and a map \(\xi\colon U\to R\), its
\emph{extension by zero} \(\tilde{\xi}\in R^X\) is defined by
\(\tilde{\xi}\vert_U \defeq \xi\) and \(\tilde{\xi}\vert_{X\setminus
  U}\defeq 0\).
We do not require~\(X\) to be Hausdorff.
Therefore, \(\tilde{\xi}\) may fail to be locally constant even
if~\(\xi\) is.
Let \(\Contc(U,R) \subseteq R^X\) denote the space of
all~\(\tilde{\xi}\) where \(\xi\colon U\to R\) is locally constant and
the support is compact.
Here
\[
  \supp(\xi)\defeq \xi^{-1}\bigl(R\setminus \{0\}\bigr)\subseteq U.
\]

\begin{propdef}
  \label{lem:steinalg}
  The \emph{Steinberg module} of a topological space~\(X\) is the
  \(R\)\nb-submodule \(A_R(X)\) of~\(R^X\) described in the following
  equivalent ways:
  \begin{enumerate}[label=\textup{(\ref*{lem:steinalg}.\arabic*)},
    leftmargin=*,labelindent=0em]
  \item\label{enum:steinalg1}%
    \(\bigl\langle \tilde{\xi} \bigm\mid \xi\in \Contc(U,R)\text{ for }
    U\subseteq X\text{ a Hausdorff open subset}\bigr\rangle_R\);
  \item\label{enum:steinalg2}%
    \(\bigl\langle \tilde\xi\in R^X \bigm\mid \supp(\xi)\text{ compact
      Hausdorff open and } \xi\vert_{\supp(\xi)}\text{ locally
      constant} \bigr\rangle\);
  \item\label{enum:steinalg3}%
    \(\bigl\langle \charmap{U} \bigm\mid U\subseteq X\text{ a compact Hausdorff
      open subset}\bigr\rangle_R\).
  \end{enumerate}
\end{propdef}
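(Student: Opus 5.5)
We prove the chain of inclusions \ref{enum:steinalg3} \(\subseteq\) \ref{enum:steinalg2} \(\subseteq\) \ref{enum:steinalg1} \(\subseteq\) \ref{enum:steinalg3} between the three \(R\)\nb-submodules of~\(R^X\). Since each is defined as the submodule generated by a set of functions, it suffices in each case to show that every generator of the smaller set lies in the next module.

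\emph{First inclusion.} If \(U\subseteq X\) is compact, Hausdorff and open, then \(\charmap{U}\) is the extension by zero of the constant function~\(1\) on~\(U\). Its support is~\(U\) if \(1\neq 0\), and is empty if \(R=0\). In either case the support is compact, Hausdorff and open, and the function is locally constant on it. So \(\charmap{U}\) is a generator of type~\ref{enum:steinalg2}.

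\emph{Second inclusion.} Let \(\tilde\xi\) be a generator of type~\ref{enum:steinalg2}, and put \(U\defeq\supp(\xi)\). Then \(U\) is a Hausdorff open subset of~\(X\). The restriction of~\(\tilde\xi\) to~\(U\) is locally constant and has compact support~\(U\), so it lies in \(\Contc(U,R)\). Its extension by zero is exactly~\(\tilde\xi\).

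\emph{Third inclusion.} This is the only step with content. Let \(U\subseteq X\) be Hausdorff and open, and let \(\xi\colon U\to R\) be locally constant with compact support \(K\defeq\supp(\xi)\subseteq U\). Because \(\xi\) is locally constant, \(K=\xi^{-1}(R\setminus\{0\})\) is open in~\(U\), hence open in~\(X\). It is also compact, and Hausdorff as a subspace of~\(U\). The fibres \(\xi^{-1}(r)\cap K\) for \(r\neq0\) are open and cover~\(K\). By compactness only finitely many values \(r_1,\dotsc,r_n\) occur. Each fibre \(K_i\defeq\xi^{-1}(r_i)\) is then also closed in~\(K\), being the complement of the union of the other fibres, and therefore compact. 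It is open in~\(X\), and Hausdorff. Consequently \(\tilde\xi=\sum_i r_i\charmap{K_i}\) in~\(R^X\), and this lies in the module~\ref{enum:steinalg3}.

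The points that need care are all in the third inclusion. First, openness of~\(K\) in~\(X\) uses that \(U\) is open in~\(X\), so that openness is transitive. Second, we need the finite decomposition into clopen fibres of the compact Hausdorff space~\(K\). Third, the identity \(\tilde\xi=\sum_i r_i\charmap{K_i}\) must hold on all of~\(X\), including non-Hausdorff points outside~\(U\); it does, because both sides vanish off~\(K\). No local constancy on~\(X\) is required anywhere, so non-Hausdorffness of~\(X\) causes no trouble.
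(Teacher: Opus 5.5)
Your proof is correct and rests on the same key argument as the paper: the only substantive step is decomposing a locally constant function with compact, Hausdorff, open support into finitely many clopen fibres, \(\tilde\xi=\sum_i r_i\charmap{K_i}\). You run the cycle of inclusions in the opposite direction, so your step \(\ref{enum:steinalg1}\subseteq\ref{enum:steinalg3}\) merges the paper's two inclusions \(\ref{enum:steinalg1}\subseteq\ref{enum:steinalg2}\subseteq\ref{enum:steinalg3}\), and your other two inclusions are the trivial ones.
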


\begin{proof}
  We prove that
  \(\ref{enum:steinalg1}\subseteq \ref{enum:steinalg2}\subseteq
  \ref{enum:steinalg3}\subseteq\ref{enum:steinalg1}\).  The third
  inclusion is immediate, so it remains to prove the first two.

  For the first inclusion, take a Hausdorff open subset \(U\subseteq X\)
  and a map \(\xi\in \Contc(U,R)\).  Since subsets of Hausdorff spaces
  are Hausdorff, \(\supp(\tilde{\xi})=\supp(\xi)\) is Hausdorff.
  Since~\(\xi\) is locally constant, \(\supp(\xi)\) is open in~\(U\) and
  as~\(U\) is open in~\(X\), it is also open in~\(X\).  Thus
  \(\supp(\tilde{\xi})\) is a compact, Hausdorff open subset of~\(X\).
  The restriction of~\(\tilde{\xi}\) to its support remains continuous
  as a function on \(\supp(\tilde{\xi})\subseteq U\).

  For the second inclusion, we take a map \(\xi\colon X\to R\) such that
  \(U\defeq \supp(\xi)\subseteq X\) is compact Hausdorff open and
  \(\xi\vert_U\) is locally constant.  Then \(\xi (U)\) is a finite subset
  \(\xi(U)=\{r_1,\dotsc,r_n\}\subseteq R\).  The subsets
  \(U_i\defeq \xi^{-1}(r_i)\subseteq \supp(\xi)\) for \(i=1,\dotsc, n\)
  are closed and open in~\(U\) because~\(\xi\) is locally constant.
  Since~\(U\) is compact and Hausdorff, they are compact and Hausdorff
  as well.  The subsets~\(U_i\) are disjoint and
  \(X=\xi^{-1}(0)\sqcup U_1\sqcup\dotsb\sqcup U_n\).  Then
  \(\xi=\sum_{i=1}^n r_i\charmap{U_i}\) follows.
\end{proof}

If~\(X\) is a Hausdorff space, then \(A_R(X)=\Contc(X,R)\) is
exactly the space of all locally constant maps \(X\to R\) with compact
support.  In general, however, functions in \(A_R(X)\) need not be
locally constant because~\(\charmap{U}\) for a compact Hausdorff open
subset \(U\subseteq X\) fails to be locally constant when~\(U\) is not
closed.  We will only consider \(A_R(X)\) when~\(X\) has a base
of compact, Hausdorff, open subsets.  If~\(X\) has no such subsets,
then \(A_R(X)=0\) and so our construction is useless.

\begin{lemma}
  \label{lem:dis_union_is_direkt_sum}
  Let \(X=\bigsqcup_{i\in I} X_i\) be a disjoint union of spaces.  Then
  \[
    A_R(X)\cong \bigoplus_{i\in I}  A_R(X_i).
  \]
\end{lemma}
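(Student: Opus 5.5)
The plan is to write down the two natural maps between \(A_R(X)\) and \(\bigoplus_{i\in I} A_R(X_i)\) and check that they are mutually inverse. Throughout, I use the description~\ref{enum:steinalg3} of the Steinberg module as the \(R\)\nb-span of characteristic functions of compact Hausdorff open subsets. Here the disjoint union carries the coproduct topology, so each~\(X_i\) is open and closed in~\(X\) and a subset of~\(X\) is open if and only if its intersection with each~\(X_i\) is open in~\(X_i\).

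First I define the restriction map. Let \(\rho\colon R^X\to\prod_{i\in I} R^{X_i}\) send \(\xi\mapsto(\xi|_{X_i})_{i\in I}\); it is an isomorphism of \(R\)\nb-modules. I claim that \(\rho\) maps \(A_R(X)\) into \(\bigoplus_i A_R(X_i)\). It suffices to check this on a generator \(\charmap{U}\), where \(U\subseteq X\) is compact Hausdorff open.
\begin{itemize}
\item Since the~\(X_i\) are open and cover~\(U\), compactness forces \(U\cap X_i\neq\emptyset\) for only finitely many~\(i\).
\item Each \(U\cap X_i\) is open, and it is closed in the compact set~\(U\), hence compact; it is Hausdorff as a subspace of~\(U\).
\end{itemize}
Therefore \(\rho(\charmap{U})=(\charmap{U\cap X_i})_i\) has finitely many nonzero entries, each lying in \(A_R(X_i)\). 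Injectivity of \(\rho\) on \(A_R(X)\) is inherited from injectivity on~\(R^X\).

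For surjectivity, extension by zero sends \(A_R(X_i)\) into \(A_R(X)\). Indeed, a compact Hausdorff open subset \(V\subseteq X_i\) remains compact, Hausdorff and open in~\(X\), because \(X_i\) is open in~\(X\). Summing the extensions over the finitely many nonzero components of an element of the direct sum gives a preimage under~\(\rho\). So \(\rho\) restricts to the asserted isomorphism.

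The only step needing any care is the finiteness argument for a compact~\(U\), which uses that the~\(X_i\) form an open cover of~\(X\). Without that finiteness, the image of~\(\rho\) would only land in the product rather than in the direct sum.
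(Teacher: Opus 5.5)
Your proof is correct and is essentially the paper's argument. The paper uses the extension-by-zero map \(\iota\colon\bigoplus_{i\in I}A_R(X_i)\to A_R(X)\), which is the inverse of your restriction map \(\rho\), and it rests on the same two facts: only finitely many \(U\cap X_i\) are nonempty, and each of them is compact, Hausdorff and open. The paper uses these for surjectivity of \(\iota\), where you use them for well-definedness of \(\rho\).
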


\begin{proof}
  We use Definition~\ref{enum:steinalg3} of the Steinberg module.  Let
  \[
    \iota_i\colon A_R(X_i) \to A_R(X),\qquad
    \xi \mapsto \tilde{\xi},
  \]
  be the extension by zero map.
  It is a well-defined \(R\)-module homomorphism because
  \(X_i\subseteq X\) is open.
  These maps induce a map \(\iota\colon \bigoplus_{i\in I} A_R(X_i)
  \to A_R(X)\).
  Since the sets~\(X_i\) are disjoint, the map~\(\iota\) is clearly
  injective.
  It remains to prove that it is also surjective.
  Let \(U\subseteq X\) be compact, Hausdorff, and open.
  Then \(U_i \defeq U\cap X_i\) is compact, Hausdorff, and open for
  \(i \in I\), and only finitely many of them are nonempty.
  So \(\charmap{U} = \sum_{i\in F} \iota_i(\charmap{U_i})\) for some
  finite subset \(F\subseteq I\).
\end{proof}

Next, we describe the Steinberg module through generators and
relations.

\begin{proposition}
  \label{pro:ample_base_unions}
  Let~\(\B\) be an ample base for~\(X\).
  A subset \(U\subseteq X\) is Hausdorff, compact and open if and only
  if it is a finite disjoint union of subsets in~\(\B\).
\end{proposition}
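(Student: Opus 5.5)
We prove the two implications separately; the substantial one is ``only if''.

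For the ``if'' direction, let \(U = U_1\sqcup\dotsb\sqcup U_n\) with \(U_i\in\B\). Then \(U\) is open as a union of open sets, and compact as a finite union of compact sets. For Hausdorffness, take \(x\neq y\) in~\(U\). If both lie in the same~\(U_i\), we separate them inside the Hausdorff open set~\(U_i\); the separating sets are open in~\(U_i\), hence open in~\(X\). If \(x\in U_i\) and \(y\in U_j\) with \(i\neq j\), then \(U_i\) and~\(U_j\) are disjoint open neighbourhoods of \(x\) and~\(y\). Hence \(U\) is Hausdorff.

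For the ``only if'' direction, let \(U\) be compact, Hausdorff and open.
\begin{enumerate}
\item Since~\(\B\) is a base, \(U\) is a union of elements of~\(\B\) contained in~\(U\). By compactness, \(U = V_1\cup\dotsb\cup V_n\) for some \(V_i\in\B\) with \(V_i\subseteq U\).
\item We disjointify: put \(W_1\defeq V_1\) and \(W_k\defeq V_k\setminus(V_1\cup\dotsb\cup V_{k-1})\). These sets are pairwise disjoint and their union is~\(U\).
\item The only axiom available for~\(\B\) is closure under \(U\setminus V\) for \emph{single} \(U,V\in\B\), so we compute iteratively: \(W_k = (\dotsb((V_k\setminus V_1)\setminus V_2)\dotsb)\setminus V_{k-1}\). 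Each step subtracts one element of~\(\B\) from an element of~\(\B\), so by induction every intermediate set, and in particular~\(W_k\), lies in~\(\B\).
\end{enumerate}
Discarding empty~\(W_k\) is harmless either way (and \(\emptyset = V\setminus V\in\B\) anyway). This exhibits \(U\) as a finite disjoint union of members of~\(\B\).

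The main point of care is step 3: the disjointification must use only iterated single set differences, not differences by unions, because \(\B\) need not be closed under finite unions. Writing \(W_k\) as an iterated difference avoids this. Note also that compactness and openness of~\(U\) alone suffice for this direction; Hausdorffness of~\(U\) is needed only for the converse.
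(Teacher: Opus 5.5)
Your proof is correct and follows the same route as the paper: take a finite subcover of~\(U\) by members of~\(\B\) contained in~\(U\), then disjointify by iterated single set differences. The paper also uses Hausdorffness of~\(U\) to note that the pieces are relatively clopen in~\(U\); as you correctly observe, this is not needed for the pieces to lie in~\(\B\).
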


\begin{proof}
  A finite disjoint union of compact, Hausdorff spaces is again
  compact and Hausdorff, and a finite union of open subsets is again
  open.
  Thus any finite disjoint union of subsets in~\(\B\) is Hausdorff,
  compact and open in~\(X\).
  Conversely, let \(U\subseteq X\) be compact, Hausdorff, and open.
  Since~\(\B\) is a base, for any \(x\in U\) there is \(U_x\in\B\)
  with \(x\in U_x\) and \(U_x \subseteq U\).
  These sets cover~\(U\).
  Since~\(U\) is compact, there are finitely many \(U_i\in \B\) for
  \(i=1,\dotsc, n\) with \(U=\bigcup_{i=1}^n U_i\).
  As~\(U\) is Hausdorff and each~\(U_i\) is compact, \(U_i\) is
  relatively closed in~\(U\).
  For \(i=1,\dotsc,n\), define \(W_i\defeq U_i\setminus
  (\bigcup_{j=1}^{i-1} U_j)\).
  These sets are disjoint, closed and open in~\(U\), and satisfy
  \(U=\bigsqcup_{i=1}^n W_i\).
  They belong to~\(\B\) because we may get them by repeating the set
  difference operation.
\end{proof}

\begin{theorem}[\cite{Li:Semigroup_amenability}*{Lemma 2.2}]
  \label{the:steinmod_is_quot_of_sum}
  Let~\(X\) be a topological space with an ample base~\(\B\).
  Then the \(R\)\nb-module homomorphism
  \[
    \pi\colon \bigoplus_{B\in\B} R\to A_R(X),\qquad
    (r_B) \mapsto \sum_{B\in\B} r_B\cdot\charmap{B},
  \]
  is surjective and its kernel is the \(R\)\nb-submodule generated by
  \(\delta_{U\sqcup V} - \delta_U - \delta_V\) for disjoint
  \(U,V\in\B\) with \(U \sqcup V \in\B\).
\end{theorem}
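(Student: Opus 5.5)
Surjectivity is immediate from Proposition~\ref{pro:ample_base_unions}: by \ref{enum:steinalg3} the module \(A_R(X)\) is spanned by the functions \(\charmap{U}\) for compact Hausdorff open \(U\subseteq X\). Each such \(U\) is a finite disjoint union \(U=\bigsqcup_i B_i\) with \(B_i\in\B\), so \(\charmap{U}=\sum_i\charmap{B_i}\) lies in the image of~\(\pi\). Let \(K\) be the submodule generated by the elements \(\delta_{U\sqcup V}-\delta_U-\delta_V\). It is clear that \(K\subseteq\ker\pi\), because \(\charmap{U\sqcup V}=\charmap{U}+\charmap{V}\). The remaining task is to show \(\ker\pi\subseteq K\).

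For this I would use a common refinement argument. Take \(\sum_{B\in F} r_B\delta_B\in\ker\pi\), where \(F\subseteq\B\) is finite. All \(B\in F\) are compact open subsets, and they generate a finite Boolean algebra of subsets of \(\bigcup F\). Its atoms are the nonempty sets of the form \(\bigcap_{B\in S}B\setminus\bigcup_{B\in F\setminus S}B\) with \(\emptyset\ne S\subseteq F\). Because \(\B\) is closed under set differences, and hence under intersections via \(U\cap V=U\setminus(U\setminus V)\), every atom belongs to~\(\B\). Each \(B\in F\) is the disjoint union of the atoms \(A_1,\dots,A_m\) contained in it.

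The key step is the following claim: if \(B\in\B\) is a disjoint union \(B=A_1\sqcup\dots\sqcup A_m\) of sets \(A_j\in\B\), then \(\delta_B\equiv\sum_j\delta_{A_j}\) modulo~\(K\). I expect this to be the main obstacle. The issue is that the defining relation only allows splitting off pieces two at a time, and the intermediate unions must themselves lie in~\(\B\). I would handle it by induction on~\(m\). Set \(C\defeq B\setminus A_1\). Then \(C\in\B\), and \(B=A_1\sqcup C\) with all three sets in~\(\B\), so \(\delta_B-\delta_{A_1}-\delta_C\in K\). Moreover \(C=A_2\sqcup\dots\sqcup A_m\), so the induction hypothesis applies to~\(C\). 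One also needs \(\delta_\emptyset\in K\), which holds whenever \(\emptyset\in\B\), since \(\emptyset\sqcup\emptyset=\emptyset\) gives \(\delta_\emptyset\in K\). If some atom or difference is empty, it can simply be dropped.

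With the claim, \(\sum_B r_B\delta_B\equiv\sum_{\text{atoms }A}\bigl(\sum_{B\supseteq A}r_B\bigr)\delta_A\) modulo~\(K\). Applying \(\pi\) and evaluating at a point of each nonempty atom~\(A\) shows that the coefficient \(\sum_{B\supseteq A}r_B\) vanishes, because the atoms are disjoint and nonempty. So the element lies in~\(K\), which completes the proof.
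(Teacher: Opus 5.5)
Your proof is correct and follows essentially the same route as the paper: you refine the finitely many sets to the atoms \(\bigcap_{i\in I}U_i\setminus\bigcup_{j\in J}U_j\) (which lie in~\(\B\) by closure under differences), show that each \(\delta_B\) is congruent modulo the relations to the sum of the atoms it contains, and then evaluate pointwise, using \(\delta_\emptyset\in K\). The only cosmetic difference is that you peel off one atom at a time via \(B\setminus A_1\), whereas the paper splits repeatedly using \(\delta_{U_1}\equiv\delta_{U_1\cap U_2}+\delta_{U_1\setminus U_2}\).
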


\begin{proof}
  The map~\(\pi\) is well-defined because \(\charmap{B}\in A_R(X)\)
  for \(B\in\B\).
  Proposition~\ref{pro:ample_base_unions} implies that the
  \(R\)\nb-module generated by~\(\charmap{B}\) for \(B\in\B\)
  contains~\(\charmap{U}\) for any Hausdorff, compact, open subset
  \(U\subseteq X\).
  The latter functions generate~\(A_R(X)\) by~\ref{enum:steinalg3}.
  Thus~\(\pi\) is surjective.

  Let~\(T\) denote the \(R\)\nb-submodule in \(\bigoplus_{B\in\B} R\)
  generated by \(\delta_{U\sqcup V} - \delta_U - \delta_V\) for
  disjoint \(U,V\in\B\) with \(U \sqcup V \in\B\).
  If \(U,V\in\B\) are disjoint, then
  \(\charmap{U\sqcup V}=\charmap{U}+\charmap{V}\).
  This implies \(\pi|_T=0\).
  Let \(x = \sum_{j=1}^N a_j \delta_{U_j}\) be an element of \(\ker
  \pi\).
  We want to subtract an element of~\(T\) so that the result is equal
  to such a sum where all~\(U_j\) are disjoint.
  Recall that \(U_1,U_2\in\B\) implies \(U_1 \cap U_2\in\B\) and \(U_1
  \setminus U_2 \in \B\) and our relations give
  \[
    \delta_{U_1}
    \equiv \delta_{U_1\cap U_2} + \delta_{U_1 \setminus U_2} \bmod T.
  \]
  Iterating this, an induction argument on \(\abs{I \cup J}\) shows
  that~\(\B\) contains
  \[
    U_{I,J} \defeq \bigcap_{i\in I} U_i \setminus \bigcup_{j\in J} U_j
  \]
  for any disjoint finite subsets \(I,J\subseteq \{1,\dotsc,N\}\) and
  that the sum of~\(\delta_{U_{I,J}}\) for all disjoint \(I,J\) with
  \(i \in I\) and fixed \(I \cup J\) is equivalent to~\(\delta_i\) modulo~\(T\).
  By construction, if \(I_k,J_k\) are disjoint pairs with \(I_k \cup
  J_k = \{1,\dotsc,N\}\) for \(k=1,2\), then \(U_{I_1,J_1} \cap
  U_{I_2,J_2} = \emptyset\) unless \(I_1=I_2\) and \(J_1 = J_2\).
  The argument above shows that \(x \equiv \sum_{I,J} a_{I,J}
  \delta_{U_{I,J}} \bmod T\) for some \(a_{I,J}\in R\), where the sum
  runs over all pairs \(I,J\) of disjoint sets with \(I \cup J =
  \{1,\dotsc,N\}\).
  Now \(\pi(x)=0\) implies that \(a_{I,J}=0\) or \(U_{I,J}=\emptyset\)
  for all \(I,J\).
  Since \(\delta_\emptyset = -(\delta_\emptyset + \delta_\emptyset -
  \delta_{\emptyset \cup \emptyset})\in T\), this implies \(x\in T\) as
  desired.
\end{proof}

Results like the following functoriality result are already known in
special cases, but we have not found a reference in the required
generality.

\begin{proposition}
  \label{pro:Steinberg_functorial}
  Let \(X\) and~\(Y\) be spaces with ample bases \(\B_X\) and~\(\B_Y\).
  Let \(f\colon X\to Y\) be a local homeomorphism and let \(g\colon
  Y\to X\) be a proper, continuous map; that is, \(g\times \id_Z\) is
  closed for all spaces~\(Z\).
  Then there are well-defined \(R\)\nb-module maps
  \begin{alignat*}{2}
    f_*\colon A_R(X) &\to A_R(Y),&\qquad
    f_*(h)(y) &= \sum_{x\in X, f(x)=y} h(x),\\
    g^*\colon A_R(X) &\to A_R(Y),&\qquad
    g^*(h)(y) &= h\bigl(g(x)\bigr).
  \end{alignat*}
  Let \(U\in \B_X\).
  Then \(g^*(\charmap{U}) = \charmap{g^{-1}(U)}\).
  If \(f|_U\) is injective, then \(f_*(\charmap{U}) =
  \charmap{f(U)}\).
\end{proposition}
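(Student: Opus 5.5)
The plan is to define both maps on all of \(R^X\) by the stated formulas, check that the sums make sense on \(A_R(X)\), and then verify that the images of the generators \(\charmap{U}\), \(U\in\B_X\), lie in \(A_R(Y)\). By Theorem~\ref{the:steinmod_is_quot_of_sum}, or simply by description~\ref{enum:steinalg3}, these generators span \(A_R(X)\). Since both formulas are given pointwise, linearity is automatic, and no relations have to be checked. (For \(g\colon Y\to X\) the formula should read \(g^*(h)(y)=h(g(y))\).)

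For \(f_*\), I first check that the sum is finite for \(h=\charmap{U}\) with \(U\) compact, Hausdorff and open. The set \(f^{-1}(y)\cap U\) is discrete, because~\(f\) is a local homeomorphism, and it is closed in the compact Hausdorff space~\(U\). Hence it is finite, and by linearity the sum is finite for all \(h\in A_R(X)\).

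Next, let \(U\in\B_X\) with \(f|_U\) injective. Then \(f|_U\) is an injective, continuous, open map, hence a homeomorphism onto the open set \(f(U)\). So \(f(U)\) is compact, Hausdorff and open, and \(f_*(\charmap{U})=\charmap{f(U)}\) directly from the formula. For a general \(U\in\B_X\), I cover~\(U\) by finitely many sets in~\(\B_X\) on which~\(f\) is injective; this is possible because~\(f\) is a local homeomorphism and~\(\B_X\) is a base. I then make these sets disjoint by iterated set differences as in the proof of Proposition~\ref{pro:ample_base_unions}. This gives \(\charmap{U}=\sum_i\charmap{U_i}\) with \(f|_{U_i}\) injective, and therefore \(f_*(\charmap{U})=\sum_i \charmap{f(U_i)}\in A_R(Y)\).

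For \(g^*\), the identity \(g^*(\charmap{U})=\charmap{g^{-1}(U)}\) is immediate, and \(g^{-1}(U)\) is open. It is compact because~\(g\) is proper in the stated sense: universally closed maps have compact preimages of compact sets, by the standard Bourbaki argument. The main obstacle is that~\(Y\) need not be Hausdorff, so there is no reason for \(g^{-1}(U)\) to be Hausdorff, and I cannot invoke~\ref{enum:steinalg3} directly. I plan to avoid this as follows. Cover the compact set \(g^{-1}(U)\) by finitely many \(V_1,\dotsc,V_n\in\B_Y\) contained in it. Note that \(\B_Y\) is closed under finite intersections, since \(V\cap W=V\setminus(V\setminus W)\). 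Inclusion–exclusion then gives
\[
  \charmap{g^{-1}(U)}=\sum_{\emptyset\neq I\subseteq\{1,\dotsc,n\}}(-1)^{\abs{I}+1}\charmap{\bigcap_{i\in I}V_i},
\]
and every term on the right lies in \(A_R(Y)\). By linearity, \(g^*\) then maps \(A_R(X)\) into \(A_R(Y)\).
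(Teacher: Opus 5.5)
Your proof is correct. For \(f_*\) it is essentially the paper's argument. The paper passes to the subfamily \(\B_X'\subseteq\B_X\) of sets on which \(f\) is injective, notes that it is again an ample base, and invokes Theorem~\ref{the:steinmod_is_quot_of_sum}. Your explicit disjoint decomposition of \(U\in\B_X\) into such pieces is the same argument unpacked. Your finiteness check for the sum defining \(f_*\) is a detail the paper leaves implicit.

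For \(g^*\) your route differs. The paper asserts that \(g^{-1}(U)\) is compact, open and Hausdorff ``because \(g\) is proper'' and concludes \(\charmap{g^{-1}(U)}\in A_R(Y)\) directly. You instead avoid any Hausdorff claim and write \(\charmap{g^{-1}(U)}\) by inclusion--exclusion as a combination of characteristic functions of finite intersections of sets in \(\B_Y\). That argument is valid, but the obstacle it circumvents does not actually arise: a space with an ample base in the sense of Definition~\ref{def:ample_base} is automatically Hausdorff.

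To see this, let \(x\neq y\) and pick \(U\ni x\) and \(V\ni y\) in \(\B_Y\). If \(y\in U\) or \(x\in V\), separate the two points inside that Hausdorff open set. Otherwise \(U\setminus V\) and \(V\setminus U\) belong to \(\B_Y\), so they are open; they are disjoint and contain \(x\) and \(y\), respectively.

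Hence \(g^{-1}(U)\) is Hausdorff simply as a subspace of \(Y\), and the paper's one-line claim is right. The actual reason is the ample base on \(Y\), not properness of \(g\). Properness alone would not suffice: the collapse map from a Cantor set with one doubled point onto the Cantor set is proper, yet the preimage of the whole Cantor set is not Hausdorff. That space has no ample base. Your inclusion--exclusion uses exactly the same closure property of \(\B_Y\), so it is a longer way around rather than a more general result.
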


\begin{proof}
  Let~\(\B_X'\) be the set of compact open subsets \(U\in\B_X\) such
  that~\(f|_U\) is injective.
  This subset of~\(\B_X\) is an ample base for~\(X\) as well
  because~\(f\) is a local homeomorphism.
  If \(U\in \B_X'\), then \(f(U)\subseteq Y\) is compact, Hausdorff
  and open because~\(f\) is a local homeomorphism, and
  \(f_*(\charmap{U}) = \charmap{f(U)}\).
  Thus \(f_*(\charmap{U})\in A_R(Y)\).
  This implies that~\(f_*\) maps \(A_R(X)\) to \(A_R(Y)\) because
  functions of the form~\(\charmap{U}\) for \(U\in \B_X'\) generate
  \(A_R(X)\) as an \(R\)\nb-module by
  Theorem~\ref{the:steinmod_is_quot_of_sum}.
  If \(U\in\B_X\), then \(g^{-1}(U)\subseteq Y\) is open because~\(g\)
  is continuous, and Hausdorff and compact because~\(g\) is proper.
  It is clear that \(g^*(\charmap{U}) = \charmap{g^{-1}(U)}\).
  Then~\(g^*\) maps \(A_R(X)\) to \(A_R(Y)\) by
  Theorem~\ref{the:steinmod_is_quot_of_sum}.
\end{proof}

\begin{definition}[{compare \cite{Steinberg:Groupoid_approach}*{Definition~4.4}}]
  \label{def:conv}
  Let~\(\Gr\) be an ample groupoid.
  Define a multiplication on \(A_R(\Gr)\) by the \emph{convolution}
  \((\xi * \eta)(g) = \sum_{h \in \Gr^{\rg(g)}} \xi(h)\eta(h^{-1}g)\)
  for \(\xi, \eta \in A_R(\Gr)\) and \(g\in\Gr\).
  Here \(\Gr^x = \setgiven{g\in \Gr}{\rg(g)=x}\).
\end{definition}

If \(U,V\subseteq \Gr\) are compact slices, then so is~\(U V\) and
\(\charmap{U}*\charmap{V}=\charmap{UV}\) (see
\cite{Steinberg:Groupoid_approach}*{Proposition~4.5}).
Since these characteristic functions generate
\(A_R(\Gr)\) as an \(R\)\nb-module by
Theorem~\ref{the:steinmod_is_quot_of_sum},
it follows that \(\xi*\eta\in A_R(\Gr)\) for all \(\xi, \eta \in
A_R(\Gr)\).
The convolution of functions is associative because the multiplication
of slices is associative.
The Steinberg algebra \(A_R(\Gr)\) is unital if and only if~\(\Gr^0\)
is compact (see
\cite{Steinberg:Groupoid_approach}*{Proposition~4.11}).
In general, it is a ring with local units, that is, an object in
\(\Rings\):

\begin{proposition}
  \label{prop:steinalg_loc_units}
  For an ample groupoid~\(\Gr\), the following is a local unit of
  \(A_R(\Gr)\):
  \[
    E\defeq \setgiven{\charmap{U}}{U\subseteq\Gr^0\text{ compact, open}}.
  \]
\end{proposition}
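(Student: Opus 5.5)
Each element of~\(E\) is idempotent: for a compact open \(U\subseteq\Gr^0\), \(U\) is a compact slice and \(UU=U\), so \(\charmap{U}*\charmap{U}=\charmap{U}\) by the product formula \(\charmap{U}*\charmap{V}=\charmap{UV}\) for compact slices. Here compact open subsets of~\(\Gr^0\) are Hausdorff because~\(\Gr^0\) is Hausdorff, and~\(\Gr^0\) is open in~\(\Gr\) since~\(\Gr\) is étale. The statement then means: for every finite set \(\{\xi_1,\dotsc,\xi_n\}\subseteq A_R(\Gr)\) there is a compact open \(K\subseteq\Gr^0\) with \(\charmap{K}*\xi_i=\xi_i=\xi_i*\charmap{K}\) for all~\(i\).

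First I reduce to characteristic functions of compact slices. By Theorem~\ref{the:steinmod_is_quot_of_sum}, applied to the ample base of compact slices, each~\(\xi_i\) is a finite \(R\)\nb-linear combination of functions~\(\charmap{V}\) with \(V\subseteq\Gr\) a compact slice. Convolution is \(R\)\nb-bilinear, so it suffices to find~\(K\) with \(\charmap{K}*\charmap{V_j}=\charmap{V_j}=\charmap{V_j}*\charmap{K}\) for finitely many compact slices \(V_1,\dotsc,V_m\).

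Next I choose
\[
  K\defeq\bigcup_{j=1}^m \bigl(\rg(V_j)\cup\s(V_j)\bigr).
\]
Each \(\rg(V_j)\) and \(\s(V_j)\) is compact, being a continuous image of a compact set, and open, because \(\rg\) and~\(\s\) are local homeomorphisms and hence open maps. So~\(K\) is a finite union of compact open subsets of the Hausdorff space~\(\Gr^0\) and is therefore compact open, which gives \(\charmap{K}\in E\). As subsets of~\(\Gr\), we have \(KV_j=\setgiven{v\in V_j}{\rg(v)\in K}=V_j\) and likewise \(V_jK=V_j\). The product formula then gives \(\charmap{K}*\charmap{V_j}=\charmap{KV_j}=\charmap{V_j}\), and similarly on the right.

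If the statement is read as saying that \(E\) is cofinal among idempotents, the same argument gives exactly that. The only step needing care is the reduction to characteristic functions of compact slices together with the use of the cited product formula in the non-Hausdorff setting. Both are already available: the first from Theorem~\ref{the:steinmod_is_quot_of_sum} and the second from the remark preceding the proposition. So I do not expect a genuine obstacle.
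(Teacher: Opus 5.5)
Your proof is correct and is essentially the argument the paper relies on: the paper omits the proof and defers to Lemma~\ref{lem:steinalg_is_smooth}, which writes each element as a combination of characteristic functions and chooses a compact open subset of the unit space containing the compact range and source images of their supports. The differences are cosmetic. You work with compact slices, so \(\rg(V_j)\) and \(\s(V_j)\) are already compact open and no neighbourhoods need to be chosen, and you take a single \(K\) for both sides instead of separate left and right idempotents.
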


\begin{proof}
  We omit the proof of this well known result because we will prove a
  more general statement in Lemma~\ref{lem:steinalg_is_smooth}.
\end{proof}

\subsection{Steinberg bimodules of ample correspondences}

For some time, we consider any space~\(\Bisp\) with commuting actions
of~\(\Gr[H]\) on the left and of~\(\Gr\) on the right.
We will assume~\(\Bisp\) to be a groupoid correspondence later, when it
becomes necessary.
We turn \(A_R(\Bisp)\) into a smooth
\(A_R(\Gr[H]),A_R(\Gr)\)-bimodule.

\begin{definition}
  \label{def:conv_module}
  For a space with commuting actions of two groupoids \(\Gr[H]\) on
  the left and~\(\Gr\) on the right, define an \(A_R(\Gr[H]),
  A_R(\Gr)\)\nb-bimodule structure on \(A_R(\Bisp)\) by
  \begin{align*}
    (\alpha * \xi) (x)
    &\defeq  \sum_{g\in \Gr_{\s(x)}} \alpha(x g^{-1}) \cdot\xi(g),\\
    (\zeta * \alpha)(x)
    &\defeq  \sum_{h \in \Gr[H]^{\rg(x)}} \zeta(h)\cdot\alpha(h^{-1} x)
  \end{align*}
  for \(\zeta \in A_R(\Gr[H])\), \(\alpha\in A_R(\Bisp)\),
  \(\xi \in A_R(\Gr)\) and \(x\in\Bisp\).
\end{definition}

We check that this bimodule structure is well-defined, that is,
\(\alpha*\xi,\zeta*\alpha \in A_R(\Bisp)\).
If \(U\subseteq\Gr[H]\) and \(W\subseteq\Gr\) are
compact slices and \(V\subseteq\Bisp\) is compact, open and Hausdorff,
then \(U V\) and~\(V W\) are compact open and Hausdorff
by Lemma~\ref{lem:base_of_comp}, and the same computation as for
\cite{Antunes-Ko-Meyer:Groupoid_correspondences}*{Lemma~7.4}) shows
that  \(\charmap{U}*\charmap{V}=\charmap{UV}\) and
\(\charmap{V}*\charmap{W}=\charmap{VW}\).
Since the characteristic functions generate \(A_R(\Gr[H])\),
\(A_R(\Bisp)\) and \(A_R(\Gr)\) as \(R\)\nb-modules, this implies
\(\alpha*\xi,\zeta*\alpha \in A_R(\Bisp)\).
Since \((U V) W = U (V W)\), the convolutions above make
\(A_R(\Bisp)\) an \(A_R(\Gr[H]),A_R(\Gr)\)-bimodule.

\begin{lemma}
  \label{lem:steinalg_is_smooth}
  The bimodule structure on \(A_R(\Bisp)\) defined above is
  smooth.
\end{lemma}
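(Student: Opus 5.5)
By Proposition~\ref{prop:loc_then_smooth}, smoothness of a left (or right) module over a ring with local units is equivalent to criterion~\ref{en:loc_then_smooth_3}: every element is fixed by some idempotent. I will verify this criterion on both sides. Along the way I will obtain Proposition~\ref{prop:steinalg_loc_units}, because \(A_R(\Gr)\) is the special case \(\Bisp=\Gr\) with the two translation actions.

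\textbf{Key observation.} Let \(K\subseteq\Gr^0\) be compact open. Then \(K\) is a compact slice and \(\charmap{K}\) is idempotent in \(A_R(\Gr)\), because \(K K = K\). For a compact, Hausdorff, open \(V\subseteq\Bisp\), the computation already noted after Definition~\ref{def:conv_module} gives
\[
  \charmap{V}*\charmap{K} = \charmap{VK},\qquad
  VK = V\cap \s^{-1}(K).
\]
Indeed, \(x\cdot 1_{\s(x)} = x\) and the units are the only arrows in~\(K\). Hence \(\charmap{V}*\charmap{K} = \charmap{V}\) whenever \(\s(V)\subseteq K\). Symmetrically, for a compact open \(L\subseteq\Gr[H]^0\) we have \(\charmap{L}*\charmap{V} = \charmap{LV} = \charmap{V}\) whenever \(\rg(V)\subseteq L\).

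\textbf{Main step.} Let \(\alpha\in A_R(\Bisp)\). By~\ref{enum:steinalg3}, \(\alpha=\sum_{i=1}^n r_i\charmap{V_i}\) with \(V_i\) compact, Hausdorff, open. Then \(\s(V_i)\subseteq\Gr^0\) is compact. The one point that needs care is that \(\s(V_i)\) need not be open, since for a general space~\(\Bisp\) the anchor map~\(\s\) is only continuous. To handle this I cover the compact set \(\bigcup_i \s(V_i)\) by finitely many compact open subsets of \(\Gr^0\), which exist because \(\Gr^0\) is ample. Their union~\(K\) is compact open, and it is Hausdorff because \(\Gr^0\) is Hausdorff. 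The same construction in \(\Gr[H]^0\) produces a compact open~\(L\) containing \(\bigcup_i\rg(V_i)\). Then \(\charmap{K}\) and \(\charmap{L}\) are idempotents with \(\alpha*\charmap{K}=\alpha\) and \(\charmap{L}*\alpha=\alpha\). By Proposition~\ref{prop:loc_then_smooth}, \(A_R(\Bisp)\) is smooth as a right \(A_R(\Gr)\)-module and as a left \(A_R(\Gr[H])\)-module, that is, it is a smooth bimodule.

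\textbf{Local units and the remaining obstacle.} Proposition~\ref{prop:loc_then_smooth} presupposes that \(A_R(\Gr)\) and \(A_R(\Gr[H])\) have local units, so I check this first. Take \(\Bisp=\Gr\) and a finite set \(\{\xi_1,\dotsc,\xi_m\}\subseteq A_R(\Gr)\). Write each \(\xi_j\) via compact slices, and choose a single compact open \(K\subseteq\Gr^0\) containing both the ranges and the sources of all slices involved. Then \(\charmap{K}*\xi_j=\xi_j=\xi_j*\charmap{K}\) for every~\(j\), so \(\charmap{K}\) is a local unit for this finite set. This proves Proposition~\ref{prop:steinalg_loc_units}. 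The only real obstacle in the whole argument is the identity \(\charmap{V}*\charmap{K}=\charmap{V\cap\s^{-1}(K)}\) for non-slice~\(V\). I will settle it pointwise from Definition~\ref{def:conv_module}: \((\charmap{V}*\charmap{K})(x)=\sum_{g\in K,\ \rg(g)=\s(x)}\charmap{V}(xg^{-1})\), and the only possible term is \(g=\s(x)\), which contributes exactly when \(\s(x)\in K\).
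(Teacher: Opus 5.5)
Your proof is correct and is the paper's argument: you write each element as an \(R\)-linear combination of characteristic functions of compact, Hausdorff, open subsets, cover the compact anchor images by compact open \(K\subseteq\Gr^0\) and \(L\subseteq\Gr[H]^0\), and use \(\charmap{L}*\alpha=\alpha=\alpha*\charmap{K}\). Your pointwise check of \(\charmap{V}*\charmap{K}=\charmap{V\cap\s^{-1}(K)}\) and your derivation of Proposition~\ref{prop:steinalg_loc_units} as the case \(\Bisp=\Gr\) only fill in details the paper leaves implicit; one small slip is that the sum in Definition~\ref{def:conv_module} runs over \(g\) with \(\s(g)=\s(x)\), not \(\rg(g)=\s(x)\), which is harmless here because \(g\) is a unit.
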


\begin{proof}
  Let \(\xi_1,\dotsc,\xi_n\in A_R(\Bisp)\).
  Then \(\xi_j\) are \(R\)\nb-linear combinations of characteristic
  functions of Hausdorff, compact, open subsets of~\(\Bisp\).
  Their images under the anchor maps in \(\Gr[H]^0\) and~\(\Gr^0\) are
  compact.
  Since the latter spaces are ample and Hausdorff, the unions of these
  compact subsets are again compact and admit compact open
  neighbourhoods \(U\subseteq \Gr[H]^0\), \(W\subseteq \Gr^0\).
  Then \(\charmap{U}* \xi_j = \xi_j = \xi_j * \charmap{W}\) for \(j=1,\dotsc,n\).
\end{proof}

The following results require~\(\Bisp\) to be a proper groupoid
correspondence.

\begin{proposition}
  \label{pro:Steinberg_proper_with_bases}
  Let \(\Bisp\colon \Gr[H]\leftarrow\Gr\) be a proper ample groupoid
  correspondence.
  Let \(\B_{\Gr}\), \(\B_{\Bisp}\), and \(\B_{\Bisp[Y]}\) be ample
  bases in \(\Gr\), \(\Bisp\), and \(\Bisp[Y]\), respectively,
  satisfying the assumptions in Lemma~\textup{\ref{lem:base_of_comp}}.
  Let \(K\subseteq \Gr[H]^0\).
  Then there are \(U_1,\dotsc,U_n\in\B_{\Bisp[X]}\) such that the
  following map is a right \(A_R(\Gr)\)-module isomorphism:
  \[
    \bigoplus_{j=1}^n \charmap{\s(U_j)} * A_R(\Gr)
    \to \charmap{K} * A_R(\Bisp),\qquad
    \sum f_j \mapsto \charmap{U_j} * f_j.
  \]
  Here each summand \(\charmap{\s(U_j)} * A_R(\Gr)\subseteq A_R(\Gr)\)
  is fgp, so that \(\charmap{K} * A_R(\Bisp)\) is fgp as well, and
  \(\s(U_j) \in\B_{\Gr}\) for \(j=1,\dotsc,n\).
\end{proposition}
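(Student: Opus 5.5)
The statement implicitly takes \(K\subseteq\Gr[H]^0\) compact open, so that \(\charmap{K}\in A_R(\Gr[H])\). The plan is to cover the compact set \(\rg_*^{-1}(K)\subseteq\Bisp/\Gr\) by finitely many disjoint images of slices in~\(\B_{\Bisp}\). Then each summand of \(\charmap{K}*A_R(\Bisp)\) is a corner of \(A_R(\Gr)\) transported along a slice.

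\emph{Step 1: choosing the slices.} Since~\(\Bisp\) is proper, \(\rg_*^{-1}(K)\subseteq \Bisp/\Gr\) is compact. It is also open, because \(\rg_*\) is continuous and \(K\) is open. The sets \(\Qu(U)\) with \(U\in\B_{\Bisp}\) and \(\rg(U)\subseteq K\) form a base of~\(\Bisp/\Gr\) near \(\rg_*^{-1}(K)\), since \(\Qu\) is a local homeomorphism. Compactness gives finitely many such \(U_1',\dotsc,U_n'\) covering \(\rg_*^{-1}(K)\). To make them disjoint, replace \(U_j'\) by
\[
U_j\defeq U_j'\setminus \Qu^{-1}\Bigl(\bigcup_{i<j}\Qu(U_i')\Bigr).
\]
To see \(U_j\in\B_{\Bisp}\), I would write \(\Qu^{-1}(\Qu(U_i'))\cap U_j' = U_i'\braket{U_i'}{U_j'}\) and use that \(\B_{\Bisp}\) is closed under the operation \(U W\), the bracket, and set differences. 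Then the \(\Qu(U_j)\) partition \(\rg_*^{-1}(K)\), and \(\s(U_j)=\braket{U_j}{U_j}\in\B_{\Gr}\).

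\emph{Step 2: the summands and the map.} The element \(\charmap{\s(U_j)}\) is an idempotent in \(A_R(\Gr)\), so \(\charmap{\s(U_j)}*A_R(\Gr)\) is a direct summand of \(A_R(\Gr)\), hence fgp by Lemma~\ref{lem:fgp} with \(k=1\). The map \(f\mapsto \charmap{U_j}*f\) is a right module map. Its inverse on the \(j\)th piece should be \(\alpha\mapsto \langle\charmap{U_j},\alpha\rangle\), where
\[
\langle \charmap{U},\charmap{V}\rangle\defeq\charmap{\braket{U}{V}}
\]
is defined by the formula \(\langle\beta,\alpha\rangle(g)=\sum_{\Qu(x)=\Qu(y),\,\braket{x}{y}=g}\beta(x)\alpha(y)\). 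I would check this on characteristic functions of sets in~\(\B_{\Bisp}\) using the identities \(\braket{U}{U}=\s(U)\) and \(U\braket{U}{V}=V\cap\Qu^{-1}\Qu(U)\).

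\emph{Step 3: bijectivity.} Injectivity follows from \(\langle \charmap{U_j},\charmap{U_i}*f\rangle=\delta_{ij}\,\charmap{\s(U_j)}*f\), by disjointness of the \(\Qu(U_j)\). For surjectivity, take a generator \(\charmap{V}\in\charmap{K}*A_R(\Bisp)\) with \(V\in\B_{\Bisp}\) and \(\rg(V)\subseteq K\). Then \(\Qu(V)\subseteq\bigsqcup_j\Qu(U_j)\), so
\[
\charmap{V}=\sum_j \charmap{U_j\braket{U_j}{V}}=\sum_j\charmap{U_j}*\charmap{\braket{U_j}{V}},
\]
and \(\charmap{\braket{U_j}{V}}\in\charmap{\s(U_j)}*A_R(\Gr)\).

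The main obstacle is the bookkeeping in Step~1 and in the surjectivity argument. One must show that the decompositions of \(V\) and \(U_j'\) along saturations of slices stay inside~\(\B_{\Bisp}\), and that the characteristic function identities hold even though~\(\Bisp\) need not be Hausdorff. I would reduce everything to the slice calculus of Lemma~\ref{lem:base_of_comp}, in particular equation~\eqref{eq:intersect_in_BXY} applied with \(\Bisp[Y]=\Gr\).
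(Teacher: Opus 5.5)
Your proof is correct, but it is organised differently from the paper's.

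The paper works at the level of spaces. It first notes \(\charmap{K}*A_R(\Bisp)=A_R(\rg^{-1}(K))\) and splits \(\rg^{-1}(K)=\bigsqcup_j U_j\cdot\Gr\) into disjoint open saturated pieces. It then applies Lemma~\ref{lem:dis_union_is_direkt_sum} and identifies each \(A_R(U_j\cdot\Gr)\) with \(\charmap{\s(U_j)}*A_R(\Gr)\) through the homeomorphism \(U_j\cdot\Gr\cong\Gr_{\s(U_j)}\), \(xg\mapsto\s(x)g\), which comes from freeness and properness of the right action.

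You stay at the level of modules and write the inverse explicitly as \(\alpha\mapsto(\langle\charmap{U_j},\alpha\rangle)_j\). You check that it is a left inverse using the orthogonality \(\braket{U_j}{U_i}=\emptyset\) for \(i\neq j\) and \(\braket{U_j}{U_i}=\s(U_j)\) for \(i=j\). You check that it is a right inverse using \(V=\bigsqcup_j U_j\braket{U_j}{V}\). This exhibits the \(\charmap{U_j}\) as a finite ``frame''. It anticipates exactly the relations \(\delta_{U_i}^*\delta_{U_j}=\delta_{\braket{U_i}{U_j}}\) and \(\delta_K=\sum_j\delta_{U_j}\delta_{U_j}^*\) that the paper needs later in Proposition~\ref{pro:dual_of_correspondence} and Theorem~\ref{the:presentation_covariance_ring_groupoid}. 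The paper's geometric route is shorter once Lemma~\ref{lem:dis_union_is_direkt_sum} is available. It leaves as an easy implicit check that the resulting isomorphism is the stated map \(f\mapsto\charmap{U_j}*f\).

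Your argument that the disjointified sets lie in \(\B_{\Bisp}\) is also more direct. You use \(U_j'\cap\Qu^{-1}(\Qu(U_i'))=U_i'\braket{U_i'}{U_j'}\). The paper instead uses properness of the action to cover, by finitely many \(W_k\in\B_{\Gr}\), the compact set of \(g\in\Gr\) with \(u=vg\) for some \(u\in U\), \(v\in V\). That set is just \(\braket{V}{U}\), which already lies in \(\B_{\Gr}\) by hypothesis.

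One small point concerns the spanning set. \(\charmap{K}*A_R(\Bisp)\) is spanned by \(\charmap{K}*\charmap{V}=\charmap{V\cap\rg^{-1}(K)}\). The set \(V\cap\rg^{-1}(K)\) need not belong to \(\B_{\Bisp}\), since nothing is assumed about closure under the left \(\Gr[H]\)-action. This does no harm: your surjectivity identity holds verbatim for every compact open slice \(V\subseteq\rg^{-1}(K)\). Alternatively, the members of \(\B_{\Bisp}\) contained in the open set \(\rg^{-1}(K)\) form an ample base of it.
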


\begin{proof}
  Since \(\charmap{K}*\charmap{V}=\charmap{K V} = \charmap{V\cap
  \rg^{-1}(K)}\) for any compact open slice~\(V\), it follows that
  \(\charmap{K}* A_R(\Bisp) = A_R(\rg_{\Bisp}^{-1}(K))\).
  The map \(\Bisp/\Gr\to\Gr[H]^0\) induced by~\(\rg_{\Bisp}\) is
  proper by assumption.
  So the right \(\Gr\)\nb-space \(\rg_{\Bisp}^{-1}(K)\) is cocompact,
  that is, \(\Bar{\rg}^{-1}_{\Bisp}(K)=\Qu(\rg_{\Bisp}^{-1}(K))\) is a
  compact subset of~\(\Bisp/\Gr\), where \(\Qu\colon \Bisp\to\Bisp/\Gr\)
  is the quotient map.
  Since~\(K\) is also open, so is \(\rg_{\Bisp}^{-1}(K) \subseteq
  \Bisp\).
  Therefore, if \(x\in \Qu(\rg_{\Bisp}^{-1}(K))\), then there is
  \(U_x\in\B_{\Bisp}\) with \(x \in \Qu(U_x)\) and \(U_x \subseteq
  \rg_{\Bisp}^{-1}(K)\).
  The sets \(\Qu(U_x)\) form an open cover of
  \(\Qu(\rg_{\Bisp}^{-1}(K))\).
  Since the latter is compact, there are finitely many elements so
  that \(\bigcup_{i=1}^n U_{x_i}\cdot\Gr = \rg_{\Bisp}^{-1}(K)\).
  The image \(\Qu(U_{x_i})\) is still compact open in~\(\Bisp/\Gr\).
  Now we replace these subsets by ones with disjoint images
  in~\(\Bisp/\Gr\), letting
  \[
    U_j \defeq U_{x_j} \setminus \bigcup_{i=1}^{j-1}
    \Qu^{-1}(\Qu(U_{x_i}))
    = U_{x_j} \setminus \bigcup_{i=1}^{j-1} U_{x_i} \cdot \Gr
  \]
  for \(j=1,\dotsc,n\).
  Then \(\Qu(U_j) = \Qu(U_{x_j}) \setminus \bigcup_{i=1}^{j-1}
  \Qu(U_{x_i})\).
  So the subsets \(\Qu(U_j) \subseteq \Bisp/\Gr\) are disjoint and their
  union is still \(\Qu(\rg_{\Bisp}^{-1}(K))\).
  Thus
  \[
    \rg_{\Bisp}^{-1}(K) = \bigsqcup U_j \cdot \Gr,
    \qquad
    A_R(\rg_{\Bisp}^{-1}(K)) \cong \bigoplus A_R(U_j \cdot \Gr).
  \]
  We claim that \(U_j\in \B_{\Bisp}\) for \(j=1,\dotsc,n\).
  We may construct~\(U_j\) by repeated set differences.
  So it suffices to prove that \(U \setminus (V\cdot \Gr)
  \in\B_{\Bisp}\) if \(U,V\in\B_{\Bisp}\).
  Since \(U\) and~\(V\) are compact and the right \(\Gr\)\nb-action is
  proper, the set of \(g\in \Gr\) for which there are \(u\in U\),
  \(v\in V\) with \(u = v g\) is compact.
  By Proposition~\ref{pro:ample_base_unions}, we may write this set as
  a finite union \(\bigcup_{j=1}^\ell W_j\) of sets in~\(\B_{\Gr}\).
  So \(U \setminus (V\cdot \Gr) = U\setminus \bigcup_{j=1}^\ell V\cdot
  W_j\).
  The assumptions in Lemma~\ref{lem:base_of_comp} imply \(V W_j\in
  \B_{\Bisp}\) for all~\(j\), and then repeated set difference shows
  that \(U \setminus (V\cdot \Gr) \in \B_{\Bisp}\) as needed.

  Since~\(U_i\) is a slice, \(\s|_{U_i}\) is a homeomorphism \(U_i
  \congto \s(U_i)\), and \(\s(U_i)\) is a compact open subset
  of~\(\Gr^0\).
  The map \(x\cdot g\mapsto \s(x)\cdot g\) for \(x\in U_i\),
  \(g\in\Gr\) with \(\s(x) = \rg(g)\) is a homeomorphism from
  \(U_i\cdot \Gr \subseteq \Bisp\) onto~\(\Gr_{\s(U_i)}\) because the
  right \(\Gr\)\nb-action on~\(\Bisp\) is free and proper.
  Thus \(A_R(U_i\cdot \Gr) \cong A_R(\s(U_i)\cdot \Gr) =
  A_R(\Gr_{\s(U_i)}) = \charmap{\s(U_i)} * A_R(\Gr)\).
  Now the asserted direct sum decomposition follows.
  The assumptions in Lemma~\ref{lem:base_of_comp} also imply \(\s(U_j)
  = \braket{U_j}{U_j} \in \B_{\Gr}\) as asserted.
\end{proof}

\begin{proposition}
  \label{prop:Steinpro}
  Let \(\Bisp\colon \Gr[H]\leftarrow\Gr\) be a proper ample groupoid
  correspondence.
  Then \(A_R(\Bisp)\) is a proper \(A_{R}(\Gr[H]), A_R(\Gr)\)-bimodule
  as in Definition~\textup{\ref{def:properbimod}}.
\end{proposition}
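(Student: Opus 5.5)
We have to show that \(A_R(\Bisp)\) is smooth and that \(e * A_R(\Bisp)\) is fgp as a right \(A_R(\Gr)\)\nb-module for every idempotent \(e\in A_R(\Gr[H])\). Smoothness is Lemma~\ref{lem:steinalg_is_smooth}. For the fgp part, we first treat idempotents of the form \(\charmap{K}\) for a compact open subset \(K\subseteq\Gr[H]^0\). We then reduce an arbitrary idempotent to this case.

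For \(e=\charmap{K}\), we apply Proposition~\ref{pro:Steinberg_proper_with_bases} with \(\B_{\Gr}\) and \(\B_{\Bisp}\) the families of all compact slices. The remark after Lemma~\ref{lem:base_of_comp} shows that these satisfy its hypotheses. The proposition gives an isomorphism \(\charmap{K}*A_R(\Bisp)\cong\bigoplus_{j=1}^n\charmap{\s(U_j)}*A_R(\Gr)\). Each summand is \(f\cdot A_R(\Gr)\) for the idempotent \(f=\charmap{\s(U_j)}\in A_R(\Gr)\). It is therefore fgp by Lemma~\ref{lem:fgp} with \(k=1\). A finite direct sum of fgp modules is fgp: concretely, it is \(\operatorname{diag}(f_1,\dotsc,f_n)\cdot A_R(\Gr)^n\), again by Lemma~\ref{lem:fgp}. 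Hence \(\charmap{K}*A_R(\Bisp)\) is fgp.

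Now let \(e\in A_R(\Gr[H])\) be any idempotent. By Proposition~\ref{prop:steinalg_loc_units}, and more precisely by the argument of Lemma~\ref{lem:steinalg_is_smooth} applied to the single element~\(e\), there is a compact open \(K\subseteq\Gr[H]^0\) with \(\charmap{K}*e=e=e*\charmap{K}\). Then \(e*A_R(\Bisp)\) is the image of the right module endomorphism \(x\mapsto e*x\) of \(\charmap{K}*A_R(\Bisp)\). This endomorphism is idempotent, so \(e*A_R(\Bisp)\) is a direct summand of \(\charmap{K}*A_R(\Bisp)\), with complement \((\charmap{K}-e)*A_R(\Bisp)\).

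It remains to see that a direct summand of an fgp module is fgp. If \(M\cong p\cdot A_R(\Gr)^n\) and \(q\) is an idempotent endomorphism of~\(M\), then \(q\) extends to an idempotent matrix \(p'=qp\in\Mat_n(A_R(\Gr))\) with \(q M\cong p'\cdot A_R(\Gr)^n\). The matrix entries lie in \(A_R(\Gr)\) because a module endomorphism of \(p\cdot A_R(\Gr)^n\) is given by left multiplication by \(pap\) for some matrix \(a\). Alternatively, we can argue directly from Definition~\ref{def:fgp}: a summand of a finitely generated module is finitely generated, being a quotient, and a summand of a projective module is projective. This step is routine.

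The only step needing care is the reduction from general idempotents to the \(\charmap{K}\). I expect it to be unproblematic, since the substantive work was already done in Proposition~\ref{pro:Steinberg_proper_with_bases}.
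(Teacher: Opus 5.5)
Your proof is correct and follows the paper's argument. The paper also deduces the claim from Proposition~\ref{pro:Steinberg_proper_with_bases} for the idempotents \(\charmap{K}\) with \(K\subseteq\Gr[H]^0\) compact open, and then invokes the fact that these idempotents form a local unit. Your direct-summand step, in which \(e*A_R(\Bisp)\) is cut out of \(\charmap{K}*A_R(\Bisp)\) by the idempotent endomorphism \(x\mapsto e*x\), spells out the reduction that the paper leaves implicit in that last sentence.
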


\begin{proof}
  Proposition~\ref{pro:Steinberg_proper_with_bases} implies that
  \(\charmap{K} * A_R(\Bisp)\) for a compact open subset \(K\subseteq
  \Gr[H]^0\) is fgp as a right \(A_R(\Gr)\)-module.
  This implies the claim because~\(\charmap{K}\) for such~\(K\) is a
  local unit in \(A_R(\Gr)\) by
  Proposition~\ref{prop:steinalg_loc_units}.
\end{proof}

\subsection{Steinberg bimodule as a pseudofunctor}

Now we show that there is a normal pseudofunctor from the
bicategory of ample groupoid correspondences to \(\Rings\) that maps
an ample groupoid~\(\Gr\) to \(A_R(\Gr)\) and an ample groupoid
correspondence~\(\Bisp\) to the bimodule~\(A_R(\Bisp)\).
This is a purely algebraic analogue of the construction in
\cite{Antunes-Ko-Meyer:Groupoid_correspondences}*{Section~7}.
First, we define the pseudofunctor on \(2\)-arrows.
Then we define the bimodule isomorphisms making our pseudofunctor
multiplicative on arrows.

We refer to \cite{Johnson-Yau:2-Dim}*{Definition~4.1.2} for the
definition of a pseudofunctor.
What we call normal is called ``strictly unitary''
in~\cite{Johnson-Yau:2-Dim}.
A pseudofunctor is also called a \emph{homomorphism} of bicategories.

\begin{lemma}
  \label{lem:A(f)_bimod_hom}
  Let \(\Gr\) and~\(\Gr[H]\) be ample groupoids and let
  \(\Bisp,\Bisp[Y]\colon \Gr[H] \leftarrow \Gr\) be ample
  correspondences.
  Let \(f\colon\Bisp\Rightarrow\Bisp[Y]\) be a continuous
  \(\Gr[H],\Gr\)-equivariant map or, equivalently, a \(2\)\nb-arrow.
  Then the \(R\)-bimodule map
  \[
    A_R(f)= f_*\colon A_R(\Bisp) \to A_R(\Bisp[Y]),\qquad
    \alpha \mapsto \left[ y\mapsto \sum_{x\in f^{-1}(y)} \alpha(x) \right],
  \]
  is an \(A_R(\Gr[H]) , A_R(\Gr)\)-bimodule homomorphism.
  It maps~\(\charmap{U}\) to \(\charmap{f(U)}\) for all compact slices
  \(U\subseteq\Bisp\).
  The map \(f\mapsto A_R(f)\) is functorial.
\end{lemma}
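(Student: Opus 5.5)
The proof has three parts: well-definedness, bimodule linearity, and functoriality. For well-definedness we first show that \(f\) is a local homeomorphism; then Proposition~\ref{pro:Steinberg_functorial} applies. A continuous \(\Gr\)\nb-equivariant map between spaces whose anchor maps \(\s_{\Bisp}\) and \(\s_{\Bisp[Y]}\) are local homeomorphisms satisfies \(\s_{\Bisp[Y]}\circ f = \s_{\Bisp}\). Given \(x\in\Bisp\), choose a slice \(V\ni f(x)\) in~\(\Bisp[Y]\) and a slice \(U\ni x\) in~\(\Bisp\) with \(f(U)\subseteq V\). Then \(f|_U = (\s|_V)^{-1}\circ \s|_U\) is a homeomorphism onto the open set \((\s|_V)^{-1}(\s(U))\subseteq V\). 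So~\(f\) is a local homeomorphism, and \(f_*\) maps \(A_R(\Bisp)\) into \(A_R(\Bisp[Y])\).

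The same argument applies to a compact slice \(U\subseteq\Bisp\) directly. Equivariance gives \(f(x g)=f(x)g\), so~\(f\) induces a map \(\Bisp/\Gr\to\Bisp[Y]/\Gr\). The relation \(\s\circ f=\s\) then shows that \(f|_U\) is injective. It also shows that \(f(U)\) is a slice, since \(\s|_{f(U)}\) is injective. Is \(\Qu|_{f(U)}\) injective too? If \(f(u_1)=f(u_2)g\), then \(\s(u_1)=\rg(g)^{-1}\)-related data give \(\s(u_1)=\s(g)\). Injectivity of \(\Qu\) on \(f(U)\) may fail, but it is not needed: \(f(U)\) is compact, open and Hausdorff because \(f|_U\) is a homeomorphism onto an open subset of the Hausdorff slice~\(V\) where this applies, and in general because \(\s|_{f(U)}\) is injective and continuous with open image. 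Hence Proposition~\ref{pro:Steinberg_functorial} yields \(f_*(\charmap{U})=\charmap{f(U)}\).

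For bimodule linearity, it suffices by Theorem~\ref{the:steinmod_is_quot_of_sum} and additivity to check generators. Let \(W\subseteq\Gr\), \(Z\subseteq\Gr[H]\) be compact slices and \(U\subseteq\Bisp\) a compact slice. Then
\[
  f_*(\charmap{U}*\charmap{W}) = \charmap{f(UW)} = \charmap{f(U)W} = \charmap{f(U)}*\charmap{W},
\]
because \(f(UW)=f(U)W\) by equivariance, and \(UW\) is again a compact slice by Lemma~\ref{lem:operations_on_compact_slices}. The left action is handled the same way, using \(f(ZU)=Zf(U)\). Here the convolution identities \(\charmap{V}*\charmap{W}=\charmap{VW}\) in~\(\Bisp[Y]\) apply because \(f(U)\) is compact, open and Hausdorff.

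Functoriality is immediate from the formula: \((f'\circ f)_*(\alpha)(z)=\sum_{f'(f(x))=z}\alpha(x)=f'_*(f_*\alpha)(z)\), and \(\id_*=\id\). The main obstacle is the first step, namely checking that an equivariant map is a local homeomorphism and injective on slices. The key tool there is the identity \(\s\circ f=\s\) combined with the fact that \(\s\) is a local homeomorphism on both spaces.
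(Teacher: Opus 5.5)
Your proposal is correct and follows the paper's proof. Proposition~\ref{pro:Steinberg_functorial} gives well-definedness and $f_*(\charmap{U})=\charmap{f(U)}$, bimodule linearity is checked on characteristic functions of compact slices via $f(UW)=f(U)W$ and $f(ZU)=Zf(U)$, and functoriality is trivial; your derivation from $\s\circ f=\s$ that $f$ is a local homeomorphism and injective on slices usefully makes explicit what the paper uses silently.

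One cleanup is needed in your middle paragraph. Delete the claim that $f(U)$ is a slice: for $\Bisp=\Bisp[Y]\sqcup\Bisp[Y]$ with $f$ the fold map, $\Qu$ can fail to be injective on $f(U)$. You only need $f(U)$ to be compact, open and Hausdorff. Its openness comes from $f$ being an open map, not from $\s|_{f(U)}$ having open image.
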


\begin{proof}
  Recall that the compact slices form an ample base for
  the topology on~\(\Bisp\).
  Proposition~\ref{pro:Steinberg_functorial} implies that \(A_R(f)\)
  maps \(A_R(\Bisp)\) to \(A_R(\Bisp[Y])\) and maps \(\charmap{U}\)
  to~\(\charmap{f(U)}\) as claimed.  If \(W\subseteq \Gr[H]\),
  \(V\subseteq \Gr\) are compact slices, then \(A_R(f)\) maps
  \(\charmap{U}* \charmap{V} = \charmap{U V}\) to
  \(\charmap{f(U)V}= \charmap{f(U)}*\charmap{V}\) and
  \(\charmap{W}* \charmap{U} = \charmap{W U}\) to
  \(\charmap{W f(U)}= \charmap{W}*\charmap{f(U)}\).  Therefore,
  \(A_R(f)\) is an \(A_R(\Gr[H]) , A_R(\Gr)\)-bimodule homomorphism.
  The functoriality of \(f\mapsto A_R(f)\) is trivial.
\end{proof}

Secondly, we want to define natural bimodule isomorphisms
\[
  \mu_{\Bisp,\Bisp[Y]}\colon A_R(\Bisp)\otimes_{ A_R(\Gr)}
  A_R(\Bisp[Y]) \to A_R(\Bisp\circ\Bisp[Y])
\]
for two composable groupoid correspondences.
For later purposes, we construct this map in slightly greater
generality.
The following theorem was also proved independently in
\cite{Miller:Ample_groupoid_homology}*{Proposition~2.9}.

\begin{definition}
  \label{def:mu}
  Let \(\Gr\), \(\Gr[H]\) and~\(\Gr[K]\) be ample groupoids, let
  \(\Bisp\colon \Gr[H] \leftarrow \Gr\) be an ample correspondence.
  Let \(\Bisp[Y]\colon\Gr \leftarrow \Gr[K]\) be a
  \(\Gr,\Gr[K]\)-space with an ample base.
  Define
  \begin{align*}
    \tilde{\mu}_{\Bisp,\Bisp[Y]}\colon A_R(\Bisp)\times
    A_R(\Bisp[Y])
    &\to A_R(\Bisp\circ\Bisp[Y]),\\
    (\alpha,\beta)
    &\mapsto \left[ [x,y] \mapsto
      \sum_{g\in \Gr_{\s(x)}} \alpha(xg^{-1})\cdot\beta(gy)
      \right].
  \end{align*}
\end{definition}

We need~\(\Bisp\) to be a groupoid correspondence in order for
\(\Bisp\circ \Bisp[Y]\) to be a well behaved space.

\begin{theorem}
  \label{the:mu_invertible}
  If \(U\subseteq\Bisp\) and \(V\subseteq \Bisp[Y]\) are compact slices,
  then \(\tilde{\mu}_{\Bisp,\Bisp[Y]}(\charmap{U},\charmap{V}) =
  \charmap{U V}\).
  The map~\(\tilde{\mu}_{\Bisp,\Bisp[Y]}\) induces an \(A_R(\Gr[H]) ,
  A_R(\Gr[K])\)-bimodule isomorphism
  \[
    \mu_{\Bisp,\Bisp[Y]}\colon
    A_R(\Bisp)\otimes_{ A_R(\Gr)} A_R(\Bisp[Y])
    \congto A_R(\Bisp\circ\Bisp[Y]),
  \]
  which is natural for \(2\)\nb-arrows \(\Bisp\to \Bisp'\) and
  \(\Bisp[Y]\to \Bisp[Y]'\).
\end{theorem}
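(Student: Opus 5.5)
First I would verify the formula on characteristic functions. Let \(U\subseteq\Bisp\) and \(V\subseteq\Bisp[Y]\) be compact slices and fix a class \([x,y]\). A summand \(\charmap{U}(xg^{-1})\charmap{V}(gy)\) is nonzero exactly when \(xg^{-1}\in U\) and \(gy\in V\), that is, when \([x,y]=[xg^{-1},gy]\in UV\). Since \(U\) is a slice, \(xg^{-1}\in U\) determines \(g\) uniquely. So the sum equals \(\charmap{UV}([x,y])\), which gives \(\tilde\mu(\charmap{U},\charmap{V})=\charmap{UV}\). Lemma~\ref{lem:base_of_comp} shows that \(UV\) is compact, open and Hausdorff, so this lies in \(A_R(\Bisp\circ\Bisp[Y])\). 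The map \(\tilde\mu\) is biadditive, because the sum is finite (properness of the \(\Gr\)-action and compact supports). Since characteristic functions of compact slices generate, \(\tilde\mu\) lands in \(A_R(\Bisp\circ\Bisp[Y])\). For balancedness I would use \(\tilde\mu(\charmap{U}*\charmap{W},\charmap{V})=\charmap{(UW)V}=\charmap{U(WV)}=\tilde\mu(\charmap{U},\charmap{W}*\charmap{V})\), and bimodule linearity is checked the same way using \(T(UV)=(TU)V\) and \((UV)S=U(VS)\). Together these give a bimodule map \(\mu\) on the balanced tensor product.

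Surjectivity is immediate from Lemma~\ref{lem:base_of_comp}: the sets \(UV\) form an ample base, so their characteristic functions span by Theorem~\ref{the:steinmod_is_quot_of_sum}. Naturality for \(2\)-arrows \(f,h\) reduces on generators to \(f(U)h(V)=(f\times h)_*(UV)\), using Lemma~\ref{lem:A(f)_bimod_hom}.

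Injectivity is the main obstacle, and I would prove it by constructing an inverse. Take \(\B_{\Bisp}\) and \(\B_{\Gr}\) to be the compact slices and \(\B_{\Bisp[Y]}\) an ample base closed under the operations in Lemma~\ref{lem:base_of_comp}, for instance all compact open Hausdorff subsets. By Theorem~\ref{the:steinmod_is_quot_of_sum}, \(A_R(\Bisp\circ\Bisp[Y])\) is the quotient of \(\bigoplus_{B\in\BXY}R\) by the disjoint-union relations. Define \(\nu(UV)\defeq\charmap{U}\otimes\charmap{V}\), which requires three checks.

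First, \(\nu\) is well defined on the sets themselves: if \(U_1V_1=U_2V_2\), Lemma~\ref{lem:base_of_composition_equality} gives \(W_j\) with \(\rg(W_j)\supseteq\rg(V_j)\cap\s(U_j)\) and \((U_1W_1,W_1^{-1}V_1)=(U_2W_2,W_2^{-1}V_2)\). Then
\[
  \charmap{U_j}\otimes\charmap{V_j}
  =\charmap{U_j}\otimes\charmap{\s(U_j)}*\charmap{V_j}
  =\charmap{U_jW_j}\otimes\charmap{W_j^{-1}V_j},
\]
because \(\charmap{W_jW_j^{-1}}*\charmap{\s(U_j)V_j}=\charmap{\s(U_j)V_j}\) by the range condition.

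Second, \(\nu\) respects the disjoint-union relations. If \(UV=U_1V_1\sqcup U_2V_2\), I would first normalise using the intersection formula~\eqref{eq:intersect_in_BXY}, so that all pieces share a common \(U\). Then \(UV=U(V_1'\sqcup V_2')\) with disjoint \(V_i'\), and additivity in the second tensor factor gives the relation.

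Third, \(\nu\circ\mu=\id\) on generators \(\charmap{U}\otimes\charmap{V}\), and these span the tensor product. Since \(\mu\) is surjective, this makes \(\mu\) bijective. The delicate point throughout is this normalisation step showing that \(\nu\) respects the relations.
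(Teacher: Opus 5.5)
Your proposal is correct and is essentially the paper's proof. Both arguments present $A_R(\Bisp\circ\Bisp[Y])$ via Theorem~\ref{the:steinmod_is_quot_of_sum} with the base $\BXY$. Both use Lemma~\ref{lem:base_of_composition_equality} to show that $(U_1,V_1)$ and $(U_2,V_2)$ give the same tensor when $U_1V_1=U_2V_2$, and use \eqref{eq:intersect_in_BXY}, after normalising to $\rg(V)\subseteq\s(U)$, to turn disjoint-union relations into additivity in the second tensor factor. The only difference is packaging: you construct an explicit inverse $\nu$ and let the formula for $\tilde\mu$ settle the other direction, while the paper shows that both modules are quotients of the free module on $\BXY$ by the same relations.
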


\begin{proof}
  The sum \([x,y] \mapsto \sum_{g\in \Gr_{\s(x)}}
  \alpha(xg^{-1})\cdot\beta(gy)\) does not depend on the
  representation of \([x,y]\), since a different representative
  \((x\tilde{g}^{-1},\tilde{g} y)\in [x,y]\) only changes the order of
  the summands.
  It is easy to check that \(\tilde{\mu}_{\Bisp,\Bisp[Y]}(\charmap{U},
  \charmap{V}) = \charmap{U V}\).

  In the following proof, we pick ample bases for \(\Bisp\), \(\Gr\)
  and~\(\Bisp[Y]\) that satisfy the assumptions in
  Lemma~\ref{lem:base_of_comp}.
  For instance, taking the ample bases of all compact slices will do.
  The balanced tensor product commutes with direct sums and cokernels,
  and \(R\otimes_R R \cong R\).
  Since \(A_R(\Gr)\) is spanned by~\(\charmap{W}\) for
  \(W\in\B_{\Gr}\), the balancing of the tensor product
  \(A_R(\Bisp)\otimes_{ A_R(\Gr)} A_R(\Bisp[Y])\)
  over~\(A_R(\Gr)\) has the same effect as dividing out
  \(x*\charmap{W} \otimes y - x \otimes \charmap{W}*y\) for all
  \(W\in\B_{\Gr}\) and all generators \(x\) and~\(y\) for
  \(A_R(\Bisp)\) and \(A_R(\Bisp[Y])\).
  Therefore, Theorem~\ref{the:steinmod_is_quot_of_sum} implies that
  \(A_R(\Bisp)\otimes_{ A_R(\Gr)} A_R(\Bisp[Y])\) is the quotient of the
  free \(R\)\nb-module on the set of pairs \((U,V)\) with
  \(U\in\B_{\Bisp}\), \(V\in\B_{\Bisp[Y]}\) by the subspace~\(S\)
  generated by
  \[
    \delta_{(U_1 \sqcup U_2,V)} -  \delta_{(U_1,V)} -
    \delta_{(U_2,V)},\quad
    \delta_{(U,V_1 \sqcup V_2)} -  \delta_{(U,V_1)} -
    \delta_{(U,V_2)},\quad
    \delta_{U W,V} - \delta_{U, W V}
  \]
  for all \(U,U_1,U_2\in \B_{\Bisp}\), \(V,V_1,V_2\in \B_{\Bisp}\),
  and \(W\in\B_{\Gr}\)  with \(U_1 \sqcup U_2 = U\) and \(V_1 \sqcup
  V_2 = V\).

  We first implement the relation \(\delta_{U W,V} \equiv \delta_{U,W
    V}\).
  We claim that the quotient by this relation is the free module
  generated by the elements of the base \(\B_{\Bisp\circ \Bisp[Y]}\)
  in Lemma~\ref{lem:base_of_comp}.
  First, it is clear that \((U W) V = U (W V)\) holds in
  \(\B_{\Bisp\circ \Bisp[Y]}\).
  Secondly, let \(U_j,V_j,W_j\) be as in
  Lemma~\ref{lem:base_of_composition_equality}.
  Let \(j\in \{1,2\}\).
  Since \(\rg(W_j) \supseteq \rg(V_j) \cap \s(U_j)\),
  the compact open subset \(\s(U_j) \setminus \rg(W_j)\) is disjoint
  from \(\rg(V_j)\).
  Since the latter is compact, there is a compact open set~\(X_j\)
  with \(\rg(V_j) \subseteq X_j\) and \(X_j \cap \s(U_j) \setminus
  \rg(W_j) = \emptyset\), so that \(X_j \cap \s(U_j) \subseteq
  \rg(W_j)\).
  Then \(X_j V_j = V_j\), so that \((U_j,V_j) \equiv (U_j X_j, V_j) =
  (U_j W_j W_j^{-1}, V_j) \equiv (U_j W_j,W_j V_j)\).
  Thus \((U_1,V_1)\) and \((U_2,V_2)\) are identified when we divide
  out the relations \((U W,V) \equiv (U, W V)\).

  Let \(U,U_1,U_2\in\B_{\Bisp}\), \(V,V_1,V_2\in\B_{\Bisp[Y]}\) be
  such that \(U = U_1 \sqcup U_2\) and \(V = V_1 \sqcup V_2\).
  In \(A_R(\Bisp)\otimes_{ A_R(\Gr)} A_R(\Bisp[Y])\), we also divide
  out the relations \(\delta_{(U_1 \sqcup U_2,V)} - \delta_{(U_1,V)} -
  \delta_{(U_2,V)}\) and \(\delta_{(U,V_1 \sqcup V_2)} -
  \delta_{(U,V_1)} - \delta_{(U,V_2)}\).
  We claim that, after identifying the generators
  \(\delta_{(U_1,V_1)}\) and \(\delta_{(U_2,V_2)}\) when \(U_1 V_1 = U_2
  V_2\), these relations are exactly the same relations as the relation
  \(\delta_X - \delta_{X_1} - \delta_{X_2}\) for all \(X,X_1,X_2 \in
  \B_{\Bisp\circ\Bisp[Y]}\) with \(X= X_1 \sqcup X_2\), which occur in
  the presentation of \(A_R(\Bisp\circ\Bisp[Y])\) in
  Theorem~\ref{the:steinmod_is_quot_of_sum}.

  First, the assumptions above imply \(U V = U_1 V \cup U_2 V = U V_1
  \cup U V_2\).
  In addition, since \(U \supseteq U_1,U_2\) and \(V\supseteq
  V_1,V_2\) are slices, \eqref{eq:intersect_in_BXY} implies that \(U_1 V
  \cap U_2 V = \emptyset\) and \(U V_1 \cap U V_2 = \emptyset\) in
  \(\Bisp\circ\Bisp[Y]\).
  Therefore, \(\delta_{U V} - \delta_{U_1 V} - \delta_{U_2 V}\) and
  \(\delta_{U V} - \delta_{U V_1} - \delta_{U V_2}\) are among the
  generating relations in Theorem~\ref{the:steinmod_is_quot_of_sum}.

  Conversely, let \(X_1\sqcup X_2 = X\) with disjoint
  \(X_1,X_2,X\in\BXY\).
  Write \(X = U V\) for \(U\in\B_{\Bisp}\), \(V\in\B_{\Bisp[Y]}\)
  with \(\s(U) \supseteq \rg(V)\).
  Equation~\eqref{eq:intersect_in_BXY} applied to the trivial
  statements \(X_j = X\cap X_j\) for \(j=1,2\) implies that \(X_j = U
  V_j\) for some \(V_1,V_2 \subseteq V\) with
  \(V_1,V_2\in\B_{\Bisp[Y]}\).
  In addition, it follows that \(X_1 \cap X_2 = U(V_1 \cap
  V_2)\) and \((X\setminus X_1) \setminus X_2 = U \bigl((V\setminus V_1)
  \setminus V_2\bigr)\).
  Since \(\s(U) \supseteq \rg(V)\), these sets would be nonempty if
  \(V_1 \cap V_2\) or \((V\setminus V_1) \setminus V_2\) were
  nonempty, respectively.
  Thus, \(V_1 \cap V_2 = \emptyset\) and \(V_1 \sqcup V_2 =V\).
  So our identification maps the relation \(\delta_{(U,V)} -
  \delta_{(U,V_1)} - \delta_{(U,V_2)}\) from the presentation of
  \(A_R(\Bisp)\otimes_{ A_R(\Gr)} A_R(\Bisp[Y])\) to the relation
  \(\delta_X - \delta_{X_1} - \delta_{X_2}\) from the presentation of
  \(A_R(\Bisp\circ\Bisp[Y])\).
  As a consequence, both \(A_R(\Bisp)\otimes_{ A_R(\Gr)}
  A_R(\Bisp[Y])\) and \(A_R(\Bisp\circ\Bisp[Y])\) are the quotients of
  the free module on~\(\BXY\) by the same relations.
  This makes them isomorphic.
  The isomorphism is the map that maps \(\charmap{U}\otimes
  \charmap{V}\) to~\(\charmap{U V}\).
  The same formula holds for~\(\tilde{\mu}_{\Bisp,\Bisp[Y]}\), and so
  the latter is an isomorphism as asserted.

  If \(X\in\B_{\Gr[H]}\), \(Y\in\B_{\Gr[K]}\), then \(\charmap{X} *
  \charmap{U} = \charmap{X U}\), \(\charmap{V} * \charmap{Y} = \charmap{V
  Y}\), \(\charmap{X} * \charmap{U V} = \charmap{(X U) V}\),
  and \(\charmap{U V} * \charmap{Y} = \charmap{U (V Y)}\).
  These computations on slices imply immediately that the
  map~\(\mu_{\Bisp,\Bisp[Y]}\) is an \(A_R(\Gr[H]) ,
  A_R(\Gr[K])\)-bimodule homomorphism.
  If \(f\colon \Bisp\to\Bisp'\) and \(g\colon \Bisp[Y]\to\Bisp[Y]'\)
  are \(2\)\nb-arrows of groupoid correspondences, then \(f(U)\subseteq
  \Bisp'\) and \(g(V) \subseteq \Bisp[Y]'\) are slices as well, and the
  induced map \(f\circ g\colon \Bisp \circ \Bisp[Y] \to \Bisp' \circ
  \Bisp[Y]'\) maps \(U V\) to \(f_*(U) g_*(V)\).
  This implies that~\(\mu_{\Bisp,\Bisp[Y]}\) is natural.
\end{proof}

\begin{theorem}
  \label{thm:pseudofunctor}
  The following data defines a normal pseudofunctor
  \(\A\colon\Grcat\to\Rings\):
  \begin{itemize}
  \item the map on objects sends an ample groupoid~\(\Gr\) to its
    Steinberg algebra \(A_R(\Gr)\);
  \item for \(\Gr,\Gr[H]\in\Grcat\) the functor
    \[
      \A_{\Gr,\Gr[H]} \colon \Grcat (\Gr,\Gr[H])
      \to \Rings \bigl( A_R(\Gr), A_R(\Gr[H])\bigr)
    \]
    maps an ample groupoid correspondence~\(\Bisp\) to its Steinberg
    bimodule \(A_R(\Bisp)\) and a continuous equivariant map~\(f\) to the
    bimodule homomorphism \(A_R(f)\);
  \item for ample groupoid correspondences
    \(\Bisp\colon \Gr[H]\leftarrow \Gr\) and
    \(\Bisp[Y]\colon\Gr \leftarrow \Gr[K]\), the natural bimodule
    isomorphisms
    \(\mu_{\Bisp,\Bisp[Y]}\colon A_R(\Bisp)\otimes_{ A_R(\Gr)}
    A_R(\Bisp[Y]) \to A_R(\Bisp\circ\Bisp[Y])\).
  \end{itemize}
  The pseudofunctor~\(\A\) maps the subbicategory of proper groupoid
  correspondences to the subbicategory of proper smooth bimodules.
\end{theorem}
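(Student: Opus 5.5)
Most of the data has already been constructed, so the plan is to check the pseudofunctor axioms of \cite{Johnson-Yau:2-Dim}*{Definition~4.1.2}: functoriality on \(2\)\nb-arrows, naturality of the multiplicativity isomorphisms, the unity (normality) condition, and the hexagon compatibility with associators. I will test every identity on the generators \(\charmap{U}\otimes\charmap{V}\) (or triple tensors) with \(U,V\) compact slices; by Theorem~\ref{the:steinmod_is_quot_of_sum} and Proposition~\ref{pro:ample_base_unions}, these span all the relevant modules. Lemma~\ref{lem:A(f)_bimod_hom} gives that \(\A_{\Gr,\Gr[H]}\) is a functor on each hom\nb-category. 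Theorem~\ref{the:mu_invertible} gives that \(\mu_{\Bisp,\Bisp[Y]}\) is a natural bimodule isomorphism. Lemma~\ref{lem:steinalg_is_smooth} gives that \(A_R(\Bisp)\) is a smooth bimodule, that is, an arrow in~\(\Rings\).

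For normality, the identity correspondence on~\(\Gr\) is \(\Gr\) itself with left and right multiplication. Its image \(A_R(\Gr)\) is exactly the unit arrow of~\(\Rings\), so the unit constraint can be taken to be the identity. I must then check that \(\mu_{\Gr,\Bisp[Y]}\) and \(\mu_{\Bisp,\Gr}\) agree with the uniters of \(\Rings\), up to the uniters \(\Gr\circ\Bisp[Y]\cong\Bisp[Y]\) and \(\Bisp\circ\Gr\cong\Bisp\) of~\(\Grcat\) (given by \([g,y]\mapsto gy\)). On generators, \(\charmap{W}\otimes\charmap{V}\) goes to \(\charmap{WV}\) along both routes, because \(\charmap{W}*\charmap{V}=\charmap{WV}\) and the uniter of \(\Grcat\) sends the slice \(WV\subseteq\Gr\circ\Bisp[Y]\) to \(WV\subseteq\Bisp[Y]\). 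The right unit works the same way.

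For the associativity hexagon, take \(\Bisp\colon\Gr[H]\leftarrow\Gr\), \(\Bisp[Y]\colon\Gr\leftarrow\Gr[K]\), and \(\Bisp[Z]\colon\Gr[K]\leftarrow\Gr[L]\). Start with \((\charmap{U}\otimes\charmap{V})\otimes\charmap{W}\). One route gives \(\charmap{(UV)W}\) in \((\Bisp\circ\Bisp[Y])\circ\Bisp[Z]\), and applying \(A_R\) of the associator of~\(\Grcat\), \([[x,y],z]\mapsto[x,[y,z]]\), yields \(\charmap{U(VW)}\). The other route goes through the associator of~\(\Rings\) and then \(\mu_{\Bisp,\Bisp[Y]\circ\Bisp[Z]}\circ(\id\otimes\mu_{\Bisp[Y],\Bisp[Z]})\), giving \(\charmap{U(VW)}\) as well. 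I also need \(A_R(f)(\charmap{X})=\charmap{f(X)}\) for \(X=(UV)W\). This follows from Lemma~\ref{lem:A(f)_bimod_hom} once I know that \(UV\) is a compact slice of \(\Bisp\circ\Bisp[Y]\) when \(U,V\) are compact slices, which I check directly from the injectivity argument in Lemma~\ref{lem:base_of_comp}. Alternatively, Proposition~\ref{pro:Steinberg_functorial} suffices since \(f\) is a homeomorphism.

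The final claim is immediate: Proposition~\ref{prop:Steinpro} says proper correspondences go to proper bimodules, and both classes are closed under composition and contain the identities. I expect the main obstacle to be bookkeeping rather than mathematics. One point is making sure the generating sets are adequate, namely that the sets \((UV)W\) are slices so that characteristic-function formulas apply. The other is matching the direction conventions of the coherence diagrams in \cite{Johnson-Yau:2-Dim}.
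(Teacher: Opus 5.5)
Your proposal is correct and takes essentially the same approach as the paper. Like the paper, you check normality and the associativity hexagon on elementary tensors of characteristic functions of compact slices, using \(\mu_{\Bisp,\Bisp[Y]}(\charmap{U}\otimes\charmap{V})=\charmap{UV}\), and you cite Lemma~\ref{lem:A(f)_bimod_hom}, Theorem~\ref{the:mu_invertible} and Proposition~\ref{prop:Steinpro} for functoriality, naturality and properness. You are slightly more explicit than the paper about needing \(UV\) to be a compact slice of \(\Bisp\circ\Bisp[Y]\) before applying \(\mu\) a second time; the paper uses this implicitly.
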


\begin{proof}
  Being normal means that the identity groupoid
  correspondence on an ample groupoid~\(\Gr\) is mapped to the
  identity bimodule on the Steinberg algebra~\(A_R(\Gr)\) and that the
  multiplicativity maps \(\mu_{\Gr,\Bisp[Y]}\) and \(\mu_{\Bisp,\Gr}\)
  are the canonical bimodule maps.
  This is easy to check using that \(\mu_{\Bisp,\Bisp[Y]}\) maps
  \(\charmap{U} \otimes \charmap{V} \mapsto \charmap{U V}\) for
  compact slices \(U,V\) in \(\Bisp,\Bisp[Y]\).
  It is also easy to check that~\(\mu_{\Bisp,\Bisp[Y]}\) is natural
  with respect to maps of groupoid correspondences.
  For a pseudofunctor, it remains to check the following commuting
  diagram for three composable ample correspondences \(\Bisp\colon
  \Gr[H]\leftarrow \Gr\), \(\Bisp[Y]\colon\Gr \leftarrow \Gr[K]\) and
  \(\Bisp[Z]\colon \Gr[K]\leftarrow\Gr[L]\):
  \[
    \begin{tikzcd}[column sep=large]
      {\bigl( A_R(\Bisp)\otimes_{ A_R(\Gr)} A_R(\Bisp[Y])\bigr)\otimes_{ A_R(\Gr[K])} A_R(\Bisp[Z])}
      \arrow[r, "{\mu_{\Bisp,\Bisp[Y]}\otimes \id}", "{\cong}"']
      \arrow[d, "\mathrm{assoc}", "{\cong}"']
      & { A_R(\Bisp\circ \Bisp[Y])\otimes_{A_R(\Gr[K])} A_R(\Bisp[Z])}
      \arrow[d, "{\mu_{\Bisp\circ \Bisp[Y], \Bisp[Z]}}", "{\cong}"'] \\
      { A_R(\Bisp)\otimes_{ A_R(\Gr)} \bigl(A_R(\Bisp[Y]) \otimes_{A_R(\Gr[K])} A_R(\Bisp[Z])\bigr)}
      \arrow[d, "{\id\otimes\mu_{\Bisp[Y],\Bisp[Z]}}", "{\cong}"']
      & { A_R({(\Bisp\circ \Bisp[Y])\circ_{\Gr[K]}\Bisp[Z]})}
      \arrow[d, "\A(\mathrm{assoc})", "{\cong}"']\\
      { A_R(\Bisp)\otimes_{ A_R(\Gr)} A_R({\Bisp[Y]\circ_{\Gr[K]}\Bisp[Z]})}
      \arrow[r, "{\mu_{\Bisp, \Bisp[Y]\circ_{\Gr[K]} \Bisp[Z]}}", "{\cong}"']
      & { A_R({\Bisp\circ (\Bisp[Y]\circ_{\Gr[K]}\Bisp[Z])})}
    \end{tikzcd}
  \]
  This commutes because both ways around the diagram map
  \(\charmap{U} \otimes \charmap{V} \otimes \charmap{W}\) for compact
  slices \(U\), \(V\) and~\(W\) in \(\Bisp\), \(\Bisp[Y]\)
  and~\(\Bisp[Z]\) to \(\charmap{U V W}\).
  The pseudofunctor maps proper correspondences to proper bimodules by
  Proposition~\ref{prop:Steinpro}.
\end{proof}

\begin{example}
  \label{exa:graph_Steinberg}
  We show that Example~\ref{ex:graphmod} comes from a rather trivial
  groupoid correspondence.  Let \(\Gr=\Gr[H]=V\) be a discrete set
  with only identity arrows.  This is an ample groupoid and
  \(A_\mathbb{K}(V) = \bigoplus_{v\in V} \mathbb{K}\).  The maps
  \(\rg,\s\colon E\rightrightarrows V\) make~\(E\) a groupoid
  correspondence on~\(V\), and
  \(A_\mathbb{K}(E) = \bigoplus_{e\in E} \mathbb{K}\) with the
  bimodule structure in Example~\ref{ex:graphmod}.  The
  correspondence~\(E\) is proper if and only if~\(\rg\) is
  finite-to-one.

  Let \(n\in\N\).
  The composite correspondence~\(E^{\circ n}\) is the set of all paths
  of length~\(n\) in the graph defined by \(\rg,\s\colon
  E\rightrightarrows V\).
  The multiplicativity of the Steinberg construction gives the
  well-known isomorphism
  \[
    A_\mathbb{K}(E^{\circ n})
    \cong A_\mathbb{K}(E)^{\otimes_{A_\mathbb{K}(V)} n}.
  \]
\end{example}

\begin{corollary}[see \cite{Steinberg:Sheaves}*{Corollary 3.6}]
  \label{cor:Morita_equivalent_groupoids}
  If two ample groupoids \(\Gr\) and~\(\Gr[H]\) are Morita equivalent,
  then their Steinberg algebras are Morita equivalent.
\end{corollary}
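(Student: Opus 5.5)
The plan is to transport a Morita equivalence through the pseudofunctor~\(\A\) of Theorem~\ref{thm:pseudofunctor}. A Morita equivalence between \(\Gr\) and~\(\Gr[H]\) amounts to an equivalence in the bicategory~\(\Grcat\). Concretely, there are ample groupoid correspondences \(\Bisp\colon \Gr[H]\leftarrow\Gr\) and \(\Bisp[Y]\colon \Gr\leftarrow\Gr[H]\) together with invertible \(2\)\nb-arrows
\[
  \Bisp\circ_{\Gr}\Bisp[Y]\Rightarrow \Gr[H],\qquad
  \Bisp[Y]\circ_{\Gr[H]}\Bisp\Rightarrow \Gr.
\]
I would take~\(\Bisp\) to be the given Morita equivalence bispace and~\(\Bisp[Y]\) its inverse bispace. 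Both actions on~\(\Bisp\) are free and proper with local homeomorphisms as anchor maps, and the anchor maps induce homeomorphisms \(\Bisp/\Gr\cong\Gr[H]^0\) and \(\Gr[H]\backslash\Bisp\cong\Gr^0\). Hence \(\Bisp\) and~\(\Bisp[Y]\) are tight groupoid correspondences. That the two composites are isomorphic to the identity correspondences via the maps \([x,y]\mapsto {}_{\Gr[H]}\langle x, y\rangle\), and symmetrically, is the standard fact that equivalences in~\(\Grcat\) are exactly Morita equivalences. I would cite this from \cites{Antunes-Ko-Meyer:Groupoid_correspondences, Meyer:Diagrams_models}, adapted verbatim to the ample setting.

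Next I apply~\(\A\). Any pseudofunctor maps equivalences to equivalences. The isomorphisms \(\mu_{\Bisp,\Bisp[Y]}\) of Theorem~\ref{the:mu_invertible}, composed with \(A_R\) applied to the invertible \(2\)\nb-arrows (functorial by Lemma~\ref{lem:A(f)_bimod_hom}), give bimodule isomorphisms
\[
  A_R(\Bisp)\otimes_{A_R(\Gr)} A_R(\Bisp[Y]) \cong A_R(\Gr[H]),\qquad
  A_R(\Bisp[Y])\otimes_{A_R(\Gr[H])} A_R(\Bisp) \cong A_R(\Gr).
\]
Here I use normality of~\(\A\), which identifies \(\A\) of the identity correspondence with the identity bimodule. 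The modules \(A_R(\Bisp)\) and \(A_R(\Bisp[Y])\) are smooth by Lemma~\ref{lem:steinalg_is_smooth}. So \(A_R(\Gr)\) and~\(A_R(\Gr[H])\) are equivalent objects of~\(\Rings\).

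Finally, I would conclude that equivalence in~\(\Rings\) is Morita equivalence for rings with local units, in the sense of \cites{Abrams:Morita_local_units, Anh-Marki:Morita_without_identity}. Both sources characterise Morita equivalence by the existence of smooth (unital) bimodules whose balanced tensor products are isomorphic to the rings themselves as bimodules. In particular, the functors \(A_R(\Bisp)\otimes_{A_R(\Gr)}-\) and \(A_R(\Bisp[Y])\otimes_{A_R(\Gr[H])}-\) are mutually inverse equivalences between the categories of smooth modules. Associativity and the uniters in~\(\Rings\) make this immediate.

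I expect the main obstacle to be the first step: identifying groupoid Morita equivalence with an equivalence in~\(\Grcat\). In particular, one must check that the inverse bispace satisfies the definition of an ample correspondence and that the bracket maps are homeomorphisms on the composites. I would handle this by citing the cited literature rather than reproving it.
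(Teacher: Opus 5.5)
Your proposal is correct and takes essentially the same route as the paper. You identify Morita equivalence of ample groupoids with equivalence in \(\Grcat\), use that the pseudofunctor \(\A\) maps equivalences to equivalences, and note that equivalences in \(\Rings\) are exactly Morita equivalences of rings with local units. The only differences are that the paper cites \cite{Meyer:Groupoid_models_relative}*{Theorem~2.3} for the groupoid step rather than \cites{Antunes-Ko-Meyer:Groupoid_correspondences, Meyer:Diagrams_models}, and that you write out the resulting bimodule isomorphisms and the equivalence of module categories more explicitly.
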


\begin{proof}
  The equivalences in the bicategories of groupoid correspondences and
  of bimodules are exactly the Morita equivalences of groupoids and
  rings with local units, respectively.
  For the groupoid case, this is
  \cite{Meyer:Groupoid_models_relative}*{Theorem~2.3}.
  The case of rings is trivial if Morita equivalence is defined using
  equivalence bimodules.
  Since a pseudofunctor maps equivalences to equivalences, the
  Steinberg algebra pseudofunctor maps Morita equivalences of
  groupoids to Morita equivalences of rings with local units.
\end{proof}

\section{Diagrams of rings and covariant representations}
\label{sec:diagrams}

We define diagrams of rings and bimodules and their covariance rings,
characterised by a universal property for covariant representations.
We prove some basic results about covariance rings that work for
all proper diagrams.
This includes the existence of covariance rings and a description by
generators and relations in case the diagram comes from a diagram of
groupoid correspondences.
We also prove that the covariance ring is a bicategorical limit.

From now on, we fix a monoid~\(P\), written multiplicatively.
Let~\(1\) denote its unit element.
We view~\(P\) as a category and thus as a strict bicategory.
We are going to define diagrams of shape~\(P\) as pseudofunctors
\(P\to\Rings\).
We are going to define (lax) covariant representations of such diagrams
and use these to define the covariance ring of a diagram.
We could do all this in the more general setting of a lax functor from
any small bicategory to~\(\Rings\), but we only work with monoids for
simplicity.
The following definition makes the definition of a lax functor and a
pseudofunctor explicit in the concrete case that we need.
See also Definition~\ref{def:diagrams_in_Grcat} for a similar
definition in the bicategory of groupoid correspondences.

\begin{definition}
  \label{def:diagram_in_rings}
  A \emph{lax diagram in~\(\Rings\)} of shape~\(P\) is a normal
  lax functor \(\F\colon P\to \Rings\), that is, it is described by the
  following data \(\mathcal{F}=(P, F_p, \mu_{p,q})\) and conditions:
  \begin{itemize}
  \item a ring with local units \(A\);
  \item for \(p\in P\), a smooth \(A,A\)-bimodule \(F_p\);
  \item for \(p,q\in P\), an \(A,A\)-bimodule homomorphism
    \(\mu_{p,q}\colon F_p\otimes_{A} F_q\to F_{pq}\);
  \end{itemize}
  such that
  \begin{itemize}
  \item if \(p=1\), then~\(F_1\) is the unit \(A,A\)-bimodule~\(A\), and
    the maps \(\mu_{1,q}\) and~\(\mu_{q,1}\) are the canonical
    isomorphisms \(A \otimes_{A} F_q \congto F_q\) and
    \(F_q \otimes_{A} A \congto F_q\);
  \item if \(p,q,t\in P\), then the following diagram commutes:
    \[
      \begin{tikzcd}[column sep=large]
        F_p\otimes_{A} F_q \otimes_{A} F_t
        \arrow[d, "{\mu_{p,q}\otimes \id}"]
        \arrow[r, "{\id\otimes \mu_{q,t}}"]
        & F_p\otimes_{A} F_{q t} \arrow[d, "{\mu_{p,q t}}"] \\
        F_{pq}\otimes_{A} F_t \arrow[r, "{\mu_{p q,t}}"]
        & F_{p q t}
      \end{tikzcd}
    \]
  \end{itemize}
  If, in addition, the maps~\(\mu_{p,q}\) are isomorphisms,
  then~\(\F\) is a \emph{pseudofunctor} and we call~\(\F\) a
  \emph{diagram} or a \emph{strong diagram} in \(\Rings\).
\end{definition}

Since \(A=F_1\) with the multiplication map~\(\mu_{1,1}\), the
ring~\(A\) and the \(A\)\nb-bimodule structures on~\(F_p\) are
redundant.
We just write~\(F_1\) for~\(A\) in the following.

\begin{definition}
  \label{proper}
  A (lax) diagram~\(\F\) is called \emph{proper} if for all \(p\in P\) the
  \(F_1\)-bimodule~\(F_p\) is proper, that is, if \(e\cdot F_p\) is fgp for
  each idempotent~\(e\) in~\(F_1\).
\end{definition}

In category theory, we are interested in the limit of a
diagram~\(\F\), which is a representing object of the cone functor,
where a cone over~\(\F\) with summit~\(D\) is a natural transformation
\(D\Rightarrow \F\).
Similarly, a limit in a bicategory is defined by suitable cones
\(D\Rightarrow \F\) and a universal object representing the cone
pseudofunctor, see Section~\ref{sec:bilimit}.
By design, such bicategorical limits are only unique up to
equivalence.
In \(\Rings\), this means up to Morita equivalence
(see~\cite{Anh-Marki:Morita_without_identity}).
Therefore, we prefer a slightly more technical definition, which
distinguishes a particular limit as its \emph{covariance ring}.
This is based on \emph{covariant representations}, which are a special
kind of cones over the diagram.
We will see in Section~\ref{sec:bilimit} that the covariance ring is a
limit both in \(\Rings\) and~\(\Ringspr\).

\begin{definition}
  \label{def:cov_rep_of_F}
  For a lax diagram in \(\Rings\) given by \(\F=(P, F_p, \mu_{p,q})\)
  and a ring~\(D\) with local units, a \emph{lax covariant
    representation of}~\(\F\) in~\(D\) is a family of additive maps
  \(\tilde\nu_p\colon F_p \to D\) for \(p\in P\) that satisfy
  \(\tilde\nu_p(x) \cdot \tilde\nu_q(y) = \tilde\nu_{p
    q}(\mu_{p,q}(x\otimes y))\) for all \(x\in F_p\), \(y\in F_q\),
  \(p,q\in P\).
  These maps induce maps
  \[
    \nu_p\colon F_p \otimes_{F_1} D \to D,\qquad
    x\otimes d \mapsto \tilde\nu_p(x)(d).
  \]
  We call \((\tilde\nu_p)_{p\in P}\) a \emph{strong covariant
    representation} or just \emph{covariant representation} of~\(\F\)
  in~\(D\) if the maps~\(\nu_p\) are isomorphisms for all \(p\in P\).
\end{definition}

Let \(\URing\) denote the concrete category that has rings with local
units as objects and nondegenerate homomorphisms as arrows.

\begin{proposition}
  For a lax diagram~\(\F\) and a ring~\(D\) with local units, we
  define \(\LaxCovRep{\F}\) as the set of all lax covariant
  representations of~\(\F\) in~\(D\).  This becomes a functor
  \(\URing \to\Set\), where \(f\in \URing(D_1, D_2)\) induces the map
  \[
    f_* \colon\LaxCovRep[D_1]{\F}\to\LaxCovRep[D_2]{\F}
  \]
  that maps \((\tilde\nu_p)_{p\in P}\) to
  \((f\circ\tilde\nu_p)_{p\in P}\) for all \(p\in P\).  If the
  covariant representation \((\tilde\nu_p)_{p\in P}\) is strong, then
  so is \((f\circ\tilde\nu_{p})_{p\in P}\).  Hence there is a functor
  \(\CovRep[-]{\F}\colon \URing\to\Set\) that maps~\(D\) to the set
  of all covariant representations of~\(\F\) in~\(D\).
\end{proposition}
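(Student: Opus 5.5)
We have to check three things: that \(f_*\) sends lax covariant representations to lax covariant representations, that this assignment is functorial, and that it preserves strongness. For the first, take a lax covariant representation \((\tilde\nu_p)_{p\in P}\) in~\(D_1\) and a nondegenerate ring homomorphism \(f\colon D_1\to D_2\). Each \(f\circ\tilde\nu_p\colon F_p\to D_2\) is additive. The covariance condition follows because \(f\) is multiplicative:
\[
  f(\tilde\nu_p(x))\cdot f(\tilde\nu_q(y))
  = f\bigl(\tilde\nu_p(x)\tilde\nu_q(y)\bigr)
  = f\bigl(\tilde\nu_{pq}(\mu_{p,q}(x\otimes y))\bigr).
\]
For functoriality, \((g\circ f)_* = g_*\circ f_*\) and \(\id_* = \id\) hold because composition of maps is associative. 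So \(\LaxCovRep{\F}\) is a functor \(\URing\to\Set\).

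The substantive point is that strongness is preserved. Here \(\nu_p\) is the map \(F_p\otimes_{F_1}D\to D\), where \(F_1\) acts on~\(D\) on the left through~\(\tilde\nu_1\). I read ``nondegenerate'' for \(f\) as \(D_2 = f(D_1)\cdot D_2\), so that \(D_1\otimes_{D_1}D_2\cong D_2\) by Proposition~\ref{prop:loc_then_smooth}, with \(D_2\) viewed as a smooth left \(D_1\)\nb-module. Write \(\nu^{(2)}_p\) for the map \(F_p\otimes_{F_1}D_2\to D_2\) induced by \(f\circ\tilde\nu_p\). I will show that the square
\[
  F_p\otimes_{F_1} D_2 \cong F_p\otimes_{F_1} D_1\otimes_{D_1} D_2
  \xrightarrow{\nu_p\otimes\id} D_1\otimes_{D_1} D_2 \cong D_2
\]
commutes with \(\nu^{(2)}_p\). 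On generators, \(x\otimes d_1\otimes d_2\) goes to \(\tilde\nu_p(x)d_1\otimes d_2\mapsto f(\tilde\nu_p(x))f(d_1)d_2\). Along the other route, \(x\otimes f(d_1)d_2\mapsto f(\tilde\nu_p(x))f(d_1)d_2\). These agree.

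Granting the commutativity, \(\nu^{(2)}_p\) is a composite of isomorphisms whenever \(\nu_p\) is an isomorphism, because \(\nu_p\otimes\id_{D_2}\) is then an isomorphism.

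The main obstacle is justifying the identification \(F_p\otimes_{F_1}D_2\cong F_p\otimes_{F_1}D_1\otimes_{D_1}D_2\). Using associativity of the balanced tensor product, this reduces to showing \(D_1\otimes_{D_1}D_2\cong D_2\) as left \(F_1\)\nb-modules, which is where nondegeneracy of~\(f\) enters. Some care is needed because \(F_1\) acts on~\(D_2\) through \(f\circ\tilde\nu_1\). This action factors through~\(D_1\), so the isomorphism \(D_1\otimes_{D_1}D_2\to D_2\), \(d_1\otimes d_2\mapsto f(d_1)d_2\), is \(F_1\)\nb-linear. Its bijectivity follows from Proposition~\ref{prop:loc_then_smooth}, applied to the left \(D_1\)\nb-module~\(D_2\), once we know that \(D_2\) is nondegenerate over~\(D_1\). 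With this in hand, the strong covariant representations form a subfunctor \(\CovRep[-]{\F}\).
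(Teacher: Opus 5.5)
Your proof is correct and is the paper's argument: the paper also writes the map \(F_p\otimes_{F_1}D_2\to D_2\) induced by \(f\circ\tilde\nu_p\) as the composite \(F_p\otimes_{F_1}D_2\cong F_p\otimes_{F_1}D_1\otimes_{D_1}D_2\xrightarrow{\nu_p\otimes\id}D_1\otimes_{D_1}D_2\cong D_2\), and it treats the lax part and functoriality as trivial. The only difference is that you make explicit what the paper leaves implicit: the check on generators that this composite is the induced map, and why \(D_1\otimes_{D_1}D_2\cong D_2\) is \(F_1\)-linear and bijective, using nondegeneracy of \(f\) and Proposition~\ref{prop:loc_then_smooth}.
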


\begin{proof}
  It is trivial that the maps \(\tilde{\eta}_p\defeq
  f\circ\tilde\nu_p\) for \(p\in P\) form a lax covariant
  representation if \((\tilde\nu_p)\) does and that
  \(\LaxCovRep[-]{\F}\) is a functor.
  Assume now that \((\tilde\nu_p)_{p\in P}\) is a strong covariant
  representation.  Let \(p\in P\).  The bimodule homomorphism
  \(F_p\otimes_{F_1}D_2 \to D_2\) corresponding to~\(\tilde{\eta}_p\)
  is the composite map
  \[
    F_p\otimes_{F_1}D_2
    \cong F_p\otimes_{F_1}(D_1\otimes_{D_1}D_2)
    \cong (F_p\otimes_{F_1}D_1)\otimes_{D_1}D_2
    \xrightarrow[\cong]{\nu_p} D_1\otimes_{D_1}D_2
    \cong D_2.
  \]
  Since this is an isomorphism, \((\tilde{\eta}_p)_{p\in P}\) is a
  strong covariant representation.
\end{proof}

\begin{definition}
  \label{def:covariance_ring}
  Let~\(\F\) be a lax diagram.  We call a ring that represents the
  functor \(\CovRep[-]{\F}\) a \emph{strong covariance ring of~\(\F\)} or
  just \emph{covariance ring of~\(\F\)}.
\end{definition}

A covariance ring is unique up to ring isomorphism by the Yoneda
Lemma.

\subsection{The covariance ring as a Cohn localisation}
\label{sec:Cohn_localisation}

We are going to prove that any diagram of proper bimodules has a
covariance ring.
We construct this as a Cohn localisation of the graded ring built from
the diagram.
The Cohn localisation description of the covariance ring offers
interesting information because of some special properties of
localisations.
For instance, it is known that Cohn localisations of quasifree rings
remain quasifree, and this gives the most transparent proof why Leavitt path
algebras are quasifree (see~\cite{Gundelach:Master} for more details
on this).

Let~\(P\) be any monoid and let \(\F=(F_p,\mu_{p,q})\) be a
\(P\)\nb-shaped diagram in \(\Ringspr\).  Let
\(L \defeq \bigoplus_{p\in P} F_p\) with the multiplication that
restricts to the maps
\(\mu_{p,q}\colon F_p \otimes_{F_1} F_q \to F_{p q} \subseteq L\) for
\(p,q\in P\) on the direct summands.  This is a \(P\)\nb-graded ring
with a local unit in \(F_1 \subseteq F_p\).  Even more, it carries a
strong grading because all the maps~\(\mu_{p,q}\) are isomorphisms.
For \(p\in P\), \(F_p \otimes_{F_1} L\) becomes an
\(F_1,L\)\nb-bimodule, and the multiplication maps~\(\mu_{p,q}\) with
fixed~\(p\) and variable~\(q\) define a canonical bimodule map
\(\psi_p\colon F_p \otimes_{F_1} L \to L\).  If \(e\in F_1\) is
idempotent, we may restrict this to a right \(L\)\nb-module map
\[
  e\psi_p\colon e F_p \otimes_{F_1} L \to e L.
\]
Since~\(F_p\) is a proper bimodule, \(e F_p\) is an fgp
\(F_1\)\nb-bimodule.  Thus \(e F_p \otimes_{F_1} L\) is an fgp
\(L\)\nb-module.  So is~\(e L\) by definition.  So \(e\psi_p\) is a
module homomorphism between two fgp \(L\)\nb-modules.

\begin{lemma}
  \label{lem:covariant_rep_through_graded_algebra}
  A covariant representation of~\(\F\) in~\(D\) is the same as a
  nondegenerate homomorphism \(\varphi\colon L\to D\) such that the
  induced maps
  \[
    e\psi_p\otimes_{L} \id_D\colon (e F_p\otimes_{F_1} L) \otimes_L D \to
    ( eL) \otimes_L D
  \]
  are invertible for all \(p\in P\) and all idempotents \(e\in F_1\).
  In fact, it is enough to assume the above for~\(p\) in a given set
  of generators of~\(P\) and~\(e\) in a given local unit in~\(F_1\).
\end{lemma}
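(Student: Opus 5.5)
The plan is to match the two notions piece by piece and then use Proposition~\ref{prop:loc_then_smooth} and Lemma~\ref{lem:local_units_inductive_limit} to handle the idempotents.

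\emph{From representations to homomorphisms.} Given a lax covariant representation \((\tilde\nu_p)\), define \(\varphi\colon L\to D\) by \(\varphi|_{F_p} = \tilde\nu_p\). The covariance condition \(\tilde\nu_p(x)\tilde\nu_q(y)=\tilde\nu_{pq}(\mu_{p,q}(x\otimes y))\) is exactly multiplicativity of~\(\varphi\) on homogeneous elements, so \(\varphi\) is a ring homomorphism. Conversely, a ring homomorphism \(\varphi\colon L\to D\) restricts to a lax covariant representation. Nondegeneracy of~\(\varphi\) will be read as \(D\) being smooth as a left and right \(L\)-module, equivalently \(\varphi(F_1)D=D=D\varphi(F_1)\), since \(F_1\) contains local units of~\(L\). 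I will check that strong covariant representations give nondegenerate~\(\varphi\): \(\nu_1\colon F_1\otimes_{F_1}D\to D\) is an isomorphism, and in particular surjective, so \(F_1 D=D\). Right nondegeneracy should follow similarly, or is part of what "representation in~\(D\)" means.

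\emph{Translating strongness.} Under the canonical isomorphisms \((eF_p\otimes_{F_1}L)\otimes_L D\cong eF_p\otimes_{F_1}D\) and \(eL\otimes_L D\cong eD\), which use associativity and smoothness of~\(D\) over~\(L\), the map \(e\psi_p\otimes\id_D\) becomes the restriction \(e\nu_p\colon eF_p\otimes_{F_1}D\to eD\) of \(\nu_p\). I will check this on elementary tensors: \(x\otimes l\otimes d\mapsto \mu(x\otimes l)\otimes d\mapsto\varphi(x)\varphi(l)d\). Now \(\nu_p\) is the inductive limit over idempotents~\(e\) of the maps~\(e\nu_p\). This uses Proposition~\ref{prop:loc_then_smooth}\ref{en:loc_then_smooth_4} for the smooth left \(F_1\)-modules \(F_p\otimes_{F_1}D\) and~\(D\), together with \(e(F_p\otimes D)=eF_p\otimes D\). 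Hence \(\nu_p\) is invertible if and only if every \(e\nu_p\) is, because a filtered colimit of isomorphisms is an isomorphism, and conversely \(e\nu_p\) is the compression of~\(\nu_p\) by the idempotent~\(e\), which commutes with~\(\nu_p\) since \(\nu_p\) is left \(F_1\)-linear. This proves the main equivalence.

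\emph{Reductions.} For a local unit~\(E\), any idempotent \(e\) satisfies \(e\le f\) for some \(f\in E\), and \(e\nu_p\) is the corner of \(f\nu_p\), so it is invertible when \(f\nu_p\) is; in addition, the \(f\in E\) are already cofinal. For generators, I will show that if \(\nu_p\) and \(\nu_q\) are isomorphisms, then so is \(\nu_{pq}\). Under the isomorphism \(\mu_{p,q}\otimes\id\colon F_p\otimes F_q\otimes D\cong F_{pq}\otimes D\), the covariance relation gives \(\nu_{pq}\circ(\mu_{p,q}\otimes\id)=\nu_p\circ(\id_{F_p}\otimes\nu_q)\), a composite of isomorphisms. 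Since \(\nu_1\) is always an isomorphism, induction on word length covers all of~\(P\).

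The main obstacle is the bookkeeping of nondegeneracy and smoothness. I need to ensure that the identification \(L\otimes_L D\cong D\), and the equality of the two notions of ``nondegenerate'' (over~\(L\) versus over~\(F_1\)), are valid, and that the isomorphisms used are natural enough that the compressions by~\(e\) commute with them.
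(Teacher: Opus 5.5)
Your proposal is correct and follows the same route as the paper. You identify covariant families with ring homomorphisms on \(L=\bigoplus_{p\in P} F_p\), read off nondegeneracy from \(\nu_1\), and identify \(e\psi_p\otimes_L\id_D\) with the corner \(e\nu_p\), passing to the union over idempotents. You then reduce to a given local unit via \(e\le f\) and to generators via \(\nu_{pq}\circ(\mu_{p,q}\otimes\id)=\nu_p\circ(\id\otimes\nu_q)\). Your worry about right nondegeneracy is unnecessary: only the left module structure enters the maps \(\nu_p\), and the paper likewise takes nondegeneracy to mean that \(\nu_1\) is an isomorphism.
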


\begin{proof}
  A covariant representation consists of maps \(\tilde\nu_p\colon F_p
  \to D\) for \(p\in P\) with some properties.
  The property \(\tilde\nu_p(x) \cdot \tilde\nu_q(y) = \tilde\nu_{p
    q}(\mu_{p,q}(x\otimes y))\) for all \(x\in F_p\), \(y\in F_q\),
  \(p,q\in P\) holds if and only if \(\bigoplus \tilde\nu_p\colon L
  \to D\) is a ring homomorphism.
  The map \(\nu_1\colon F_1 \otimes_{F_1} D \to D\) is an isomorphism
  if and only if the representation of~\(F_1\) in~\(D\) is
  nondegenerate, if and only if the representation of~\(L\) in~\(D\)
  is nondegenerate.
  In the nondegenerate case, the maps~\(\nu_p\) in
  Definition~\ref{def:cov_rep_of_F} are isomorphisms for all~\(p\) if
  and only if their restrictions \(e\nu_p\colon e F_p \otimes_{F_1} \id_D
  \to e D\) are isomorphisms for all \(e,p\).
  These maps are equivalent to the maps \(e\psi_p\otimes_{L} \id_D\).

  We prove the last statement.
  If the map \(e\psi_p\otimes_{L} \id_D\) is an isomorphism, then so
  is the map \(f\psi_p\otimes_{L} \id_D\) for any idempotent~\(f\)
  with \(f \le e\), that is, \(f e = f\).
  Therefore, if \(e\psi_p\otimes_{L} \id_D\) is an isomorphism for
  all~\(e\) in a local unit, then it is an isomorphism for all
  idempotents \(e\in F_1\), and then \(\psi_p\otimes_L \id_D\) is an
  isomorphism.
  This property is hereditary for products in~\(P\), so it suffices if
  this holds for~\(p\) in a given set of generators.
\end{proof}

Next we generalise the usual definition of Cohn localisation to rings
with local units.
The lemma above expresses that the covariance ring is a Cohn
localisation of the ring~\(L\) at the family of maps~\(e\psi_p\) for
all idempotents \(e\in F_1\) and all \(p\in P\).

\begin{definition}[\cite{Schofield:Representation_rings}]
  \label{def:Cohn_localisation}
  Let~\(R\) be a ring with local units.
  Let \(u_i\colon P_i \to Q_i\) for \(i\in I\) be a set of right
  \(R\)\nb-module maps between fgp right \(R\)\nb-modules \(P_i\)
  and~\(Q_i\).
  The \emph{Cohn localisation} of~\(R\) at the set
  \(\setgiven{u_i}{i\in I}\) is the universal ring~\(R'\) with local
  units with a nondegenerate homomorphism \(R\to R'\) such that the
  maps \(u_i \otimes_R \id_{R'}\colon P_i \otimes_R R' \to Q_i
  \otimes_R R'\) are invertible for all \(i\in I\).
  That is, if~\(D\) is another ring with local units and \(f\colon R
  \to D\) is a nondegenerate homomorphism, then~\(f\) factors
  through~\(R'\) if and only if \(u_i \otimes_R \id_D\colon P_i
  \otimes_R D \to Q_i \otimes_R D\) is invertible for all \(i\in I\),
  and this factorisation is unique if it exists.
\end{definition}

\begin{theorem}
  \label{the:covariance_exists}
  A Cohn localisation as above always exists and is unique up to
  isomorphism.
  Any diagram of proper bimodules has a covariance ring.
\end{theorem}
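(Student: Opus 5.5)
Uniqueness is immediate from the universal property by the usual Yoneda argument: two Cohn localisations \(R'\), \(R''\) give mutually inverse nondegenerate homomorphisms over~\(R\), by the uniqueness of factorisations. For existence, I will build \(R'\) by generators and relations and then reduce the statement about covariance rings to Lemma~\ref{lem:covariant_rep_through_graded_algebra}.

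\emph{Existence of the Cohn localisation.} For each \(i\in I\), Lemma~\ref{lem:fgp} gives \(P_i\cong e_i R^{k_i}\) and \(Q_i\cong f_i R^{l_i}\) with idempotent matrices \(e_i\in\Mat_{k_i}(R)\), \(f_i\in\Mat_{l_i}(R)\). A right module map \(u_i\colon P_i\to Q_i\) is then left multiplication by a matrix \(a_i\in f_i\Mat_{l_i,k_i}(R)e_i\); as in the proof of Theorem~\ref{thm:fgp_is_nice}, Hom sets between such modules are \(f\Mat(R)e\). For a nondegenerate \(R\to D\), the same identification shows that \(u_i\otimes_R\id_D\) is invertible if and only if there is a matrix \(b_i\in e_i\Mat_{k_i,l_i}(D)f_i\) with \(b_ia_i=e_i\) and \(a_ib_i=f_i\) in~\(D\); such a \(b_i\) is then unique.

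So I take~\(R'\) to be the quotient of the free product of~\(R\) with the free ring on symbols \(b_{i,jk}\) by three sets of relations: the relations \(b_i=e_ib_if_i\), \(b_ia_i=e_i\) and \(a_ib_i=f_i\), and also the relations of~\(R\). Since every generator~\(b_{i,jk}\) is absorbed on both sides by local units of~\(R\) (entries of \(e_i\), \(f_i\)), the image of~\(R\) supplies local units for~\(R'\), and \(R\to R'\) is nondegenerate. The universal property then follows from the matrix criterion together with uniqueness of inverses. The one step requiring care is checking that this nonunital construction really has local units and is nondegenerate. I would handle this by working inside \(eRe\)-corners via Lemma~\ref{lem:local_units_inductive_limit}, or simply by noting that the ideal generated by~\(R\) is all of~\(R'\) and that any finite set of words has a common local unit from~\(R\).

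\emph{Covariance ring.} Let~\(\F\) be a proper diagram and let~\(L\) be the graded ring constructed above. The maps \(e\psi_p\) are between fgp \(L\)\nb-modules, as observed before Lemma~\ref{lem:covariant_rep_through_graded_algebra}. I take~\(L'\) to be the Cohn localisation of~\(L\) at all the maps~\(e\psi_p\). By Lemma~\ref{lem:covariant_rep_through_graded_algebra}, covariant representations in~\(D\) correspond bijectively and naturally in~\(D\) to nondegenerate homomorphisms \(L\to D\) that invert all \(e\psi_p\otimes\id_D\). By the universal property these correspond to nondegenerate homomorphisms \(L'\to D\). Thus~\(L'\) represents \(\CovRep[-]{\F}\).

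I expect the main obstacle to be the bookkeeping in the nonunital setting, namely showing that the presented ring~\(R'\) has local units and satisfies the claimed universal property with respect to \emph{nondegenerate} maps only.
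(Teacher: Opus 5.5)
Your proof is correct and follows the same route as the paper. Uniqueness is by Yoneda. Existence comes from adjoining the entries of matrices \(b_i=e_ib_if_i\) with \(a_ib_i=f_i\) and \(b_ia_i=e_i\). The covariance ring is then identified with the Cohn localisation of~\(L\) at the maps \(e\psi_p\) via Lemma~\ref{lem:covariant_rep_through_graded_algebra}. Your extra check is worth keeping: every word in the generators is absorbed on both sides by an idempotent of~\(R\), so \(R'\) has local units and \(R\to R'\) is nondegenerate, a detail the paper leaves implicit.
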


\begin{proof}
  The Yoneda Lemma implies that all Cohn localisations are canonically
  isomorphic.
  The existence proof below uses the same idea as in the unital case.
  First, we use Lemma~\ref{lem:fgp} to identify \(P_i \cong e_i
  R^{m_i}\), \(Q_i \cong f_i R^{n_i}\) using \(m_i,n_i\in\N\) and
  idempotent matrices \(e_i,f_i\).
  Any module map \(P_i \to Q_i\) such as~\(u_i\) becomes the map of
  left multiplication by a matrix \(u_i' \in f_i \Mat_{n_i,m_i}(R) e_i
  \subseteq \Mat_{n_i,m_i}(R)\).
  To make the map~\(u_i\) invertible, we need to adjoin quasi-inverses
  to the matrices~\(u_i'\).
  More precisely, we let~\(R'\) be the ring with adjoined elements
  \((u_i^\dagger)_{1\le j\le m_i, 1\le k\le n_i}\) for \(i\in I\) --
  that is, we adjoin the entries of the
  \(m_i,n_i\)-matrices \(u_i^\dagger\) to~\(R\) -- subject to the
  relations \(u_i^\dagger = e_i\cdot u_i^\dagger \cdot f_i\), \(u_i'
  \cdot u_i^\dagger = f_i\), \(u_i^\dagger \cdot u_i' = e_i\),
  rewritten in terms of the entries of these matrices, that is, as
  relations involving the entries in \(R\) or~\(R'\) of the matrices
  \(u_i'\), \(e_i\), \(f_i\) and~\(u_i^\dagger\).
  These relations express exactly that left multiplication by the
  matrix~\(u_i^\dagger\) is a map \(f_i (R')^{n_i} \to e_i
  (R')^{m_i}\) that is inverse to left multiplication by~\(u_i'\).
  This gives the desired inverse to~\(u_i\).
  The resulting ring~\(R'\) has the desired universal property.
  The universal property of the Cohn localisation of~\(L\) at the
  maps~\(e\psi_p\) is exactly the same as the universal property of
  the covariance ring of the diagram.
\end{proof}

\begin{remark}
  \label{rem:lax_cov_ring_is_given}
  The discussion above shows along the way that the ring \(L\defeq
  \bigoplus_{p\in P} F_p\) with the canonical multiplication \(a\cdot
  b\defeq \mu_{p,q}(a\otimes b)\in F_{pq}\) for \(p,q\in P\) and
  \(a\in F_p\), \(b\in F_q\) is a \emph{lax covariance ring}
  of~\(\F\).
  That is, its nondegenerate representations are in bijection with lax
  covariant representations of~\(\F\).
\end{remark}

\subsection{The covariance ring for a diagram of groupoid
  correspondences}
\label{sec:covariance_ring_Grcorr}

Let~\(P\) be a monoid.
We recall the definition of a (proper or tight) diagram of ample
groupoid correspondences:

\begin{definition}[{compare \cite{Meyer:Diagrams_models}*{Proposition 3.1}}]
  \label{def:diagrams_in_Grcat}
  A \emph{diagram in~\(\Grcat\)} of shape~\(P\) is a \emph{normal
    pseudofunctor} \(P\to\Grcat\), that is, it is described by the
  data \(\X=(P, \Gr, \Bisp_p, \mu_{p,q})\) with
  \begin{itemize}
  \item an ample groupoid~\(\Gr\);
  \item ample groupoid correspondences
    \(\Bisp_p\colon \Gr\leftarrow \Gr\) for all \(p\in P\);
  \item isomorphisms of correspondences
    \(\mu_{p,q}\colon \Bisp_p\circ \Bisp_q\congto \Bisp_{pq}\)
    for all \(p,q\in P\);
  \end{itemize}
  subject to the following conditions:
  \begin{enumerate}
  \item \label{en:diagrams_in_Grcat_1}%
    \(\Bisp_1\) for the unit \(1\in P\) is the identity
    correspondence~\(\Gr\) on~\(\Gr\);
  \item \label{en:diagrams_in_Grcat_2}%
    \(\mu_{p,1}\colon \Bisp_p \circ \Gr \congto \Bisp_p\) and
    \(\mu_{1,p}\colon \Gr \circ \Bisp_p \congto \Bisp_p\) for
    \(p\in P\) are the canonical left and right multiplication maps;
  \item \label{en:diagrams_in_Grcat_3}%
    for all \(p,q,t\in P\), the following diagram of isomorphisms
    commutes:
    \begin{equation}
      \label{eq:coherence_category-diagram}
      \begin{tikzpicture}[yscale=1.2,xscale=2.5,baseline=(current bounding
      box.west)]
        \node (m-1-1) at (144:1)
        {\((\Bisp_{p}\circ \Bisp_{q})
          \circ \Bisp_{t}\)};
        \node (m-1-1b) at (216:1) {\(\Bisp_{p}\circ
          (\Bisp_{q}\circ \Bisp_{t})\)};
        \node (m-1-2) at (72:1)
        {\(\Bisp_{p q}\circ\Bisp_{t}\)};
        \node (m-2-1) at (288:1)
        {\(\Bisp_{p}\circ\Bisp_{q t}\)};
        \node (m-2-2) at (0:.8) {\(\Bisp_{p q t}\)};
        \draw[dar] (m-1-1) -- node[swap]
        {\scriptsize\textup{associator}} (m-1-1b);
        \draw[dar] (m-1-1.north) -- node[very near end]
        {\(\scriptstyle\mu_{p,q}\circ\id_{\Bisp_{t}}\)}
         (m-1-2.west);
        \draw[dar] (m-1-1b.south) -- node[swap,very near end]
        {\(\scriptstyle\id_{\Bisp_{p}}\circ\mu_{q,t}\)}
        (m-2-1.west);
        \draw[dar] (m-1-2.south) -- node[inner sep=0pt]
        {\(\scriptstyle\mu_{p q,t}\)} (m-2-2);
        \draw[dar] (m-2-1.north) -- node[swap,inner sep=1pt]
        {\(\scriptstyle\mu_{p,q t}\)} (m-2-2);
      \end{tikzpicture}
    \end{equation}
  \end{enumerate}
  If all the ample correspondences~\(\Bisp_p\) are tight or proper, we
  call the diagram~\(\X\) \emph{tight} or \emph{proper}, respectively.
\end{definition}

Let \(\X=(P, \Gr, \Bisp_p, \mu_{p,q})\) be a diagram of proper
groupoid correspondences as above.
The diagram~\(\X\) corresponds to a pseudofunctor \(P \to
\Grcat_\prop\).
Its composition with the Steinberg algebra pseudofunctor~\(\A\) in
Theorem~\ref{thm:pseudofunctor} is a pseudofunctor \(\A*\X\colon P\to
\Ringspr\).
This corresponds, in turn, to the data of a proper diagram
in~\(\Ringspr\) as in Definition~\ref{def:diagram_in_rings}.
Unravelling the definitions, we see that \(F_1 = A_R(\Gr)\) and \(F_p
= A_R(\Bisp_p)\) for all \(p\in P\), equipped with the canonical
\(A_R(\Gr)\)-bimodule structure; this is proper by
Proposition~\ref{prop:Steinpro}.
The multiplication map is the composite
\[
  A_R(\Bisp_p) \otimes_{A_R(\Gr)} A_R(\Bisp_q)
  \congto A_R(\Bisp_p \circ \Bisp_q)
  \congto A_R(\Bisp_{p q}),
\]
where the first bimodule isomorphism comes from
Theorem~\ref{the:mu_invertible} and the second is~\(\A(\mu_{p,q})\).
We are going to describe the covariance ring of the diagram \(\A*\X\)
in~\(\Ringspr\).
This uses ample bases~\(\B_p\) of~\(\Bisp_p\) consisting of compact
slices for all \(p\in P\).
We assume
\begin{equation}
  \label{eq:slices_to_generators}
  \mu_{p,q}(U V) \in \B_{p q},\quad
  \braket{U_1}{U_2} \in \B_1
  \qquad\text{for all } p,q\in P,\ U,U_1,U_2\in\B_p,\ V\in\B_q;
\end{equation}
for instance, letting~\(\B_p\) be the set of all compact open slices
for all \(p\in P\) will do.
It is useful to choose~\(\B_p\) smaller to get a small presentation of
the covariance ring.

\begin{theorem}
  \label{the:presentation_covariance_ring_groupoid}
  Let \(\X=(P, \Gr, \Bisp_p, \mu_{p,q})\) be a diagram of proper
  groupoid correspondences and let~\(\B_p\) be
  ample bases for~\(\Bisp_p\) for \(p\in P\)
  satisfying~\eqref{eq:slices_to_generators}.
  Then the covariance ring of the resulting diagram \(\A*\X\)
  in~\(\Ringspr\) is the \(R\)\nb-algebra with the following
  presentation.
  Its generators are elements \(\delta_U\) and~\(\delta_U^*\) for
  \(U\in\B_p\), \(p\in P\), and the relations are
  \begin{itemize}
  \item \(\delta_{U_1} + \delta_{U_2} = \delta_{U}\), \(\delta^*_{U_1}
    + \delta^*_{U_2} = \delta^*_U\) if \(U,U_1,U_2\in\B_p\) for some
    \(p\in P\) and \(U_1 \sqcup U_2 = U\);
  \item \(\delta_U \delta_V = \delta_{\mu_{p,q}(U V)}\) and
    \(\delta_V^* \delta_U^* = \delta_{\mu_{p,q}(U V)}^*\) for all
    \(U\in\B_p\), \(V\in\B_q\), \(p,q\in P\);
  \item \(\delta_{U_1}^* \delta_{U_2} = \delta_{\braket{U_1}{U_2}}\)
    if \(U_1,U_2\in\B_p\), so that \(\braket{U_1}{U_2}\in \B_1\);
  \item let \(p\in P\), \(K\subseteq \Gr^0\) with \(K\in \B_1\), and
    \(U_1,\dotsc,U_n\in\B_p\) be chosen as in
    Proposition~\textup{\ref{pro:Steinberg_proper_with_bases}}; in
    particular, \(\charmap{K} * A_R(\Bisp_p) \cong \bigoplus_{j=1}^n
    \charmap{\s(U_j)} * A_R(\Gr)\); then \(\delta_K = \sum_{j=1}^n
    \delta_{U_i} \delta^*_{U_i}\).
  \end{itemize}
\end{theorem}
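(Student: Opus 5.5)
We will show that the presented algebra \(C\) and the covariance ring of \(\A*\X\) represent the same functor on \(\URing\). By Lemma~\ref{lem:covariant_rep_through_graded_algebra}, a covariant representation in~\(D\) is a nondegenerate homomorphism \(\varphi\colon L\to D\), where \(L=\bigoplus_p A_R(\Bisp_p)\), such that \(e\psi_p\otimes_L\id_D\) is invertible for all \(p\in P\) and all idempotents~\(e\). By the last sentence of that lemma, it suffices to take \(e=\charmap{K}\) with \(K\in\B_1\), \(K\subseteq\Gr^0\): these form a local unit by Proposition~\ref{prop:steinalg_loc_units}, since every compact open subset of~\(\Gr^0\) is a finite disjoint union of such~\(K\) by Proposition~\ref{pro:ample_base_unions}.

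\emph{Step 1: presentation of~\(L\).} Theorem~\ref{the:steinmod_is_quot_of_sum} gives each \(A_R(\Bisp_p)\) as the free \(R\)-module on the \(\delta_U\), \(U\in\B_p\), modulo the additivity relations. The multiplication is \(\delta_U\delta_V=\delta_{\mu_{p,q}(UV)}\) by Theorem~\ref{the:mu_invertible}. So ring homomorphisms \(L\to D\) correspond exactly to families \(\tilde\nu(\delta_U)\) satisfying the first relation for~\(\delta\) and the first half of the second relation.

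\emph{Step 2: invertibility of \(e\psi_p\otimes\id_D\) in terms of starred generators.} Fix~\(K\) and~\(p\). Choose \(U_1,\dots,U_n\) as in Proposition~\ref{pro:Steinberg_proper_with_bases}, so that
\[
\charmap{K}A_R(\Bisp_p)\cong\bigoplus_{j}\charmap{\s(U_j)}A_R(\Gr).
\]
Tensoring with~\(L\) over~\(F_1\) gives \(\charmap{K}F_p\otimes_{F_1}L\cong\bigoplus_j \charmap{\s(U_j)}L\). Under this identification, \(\charmap K\psi_p\) becomes the row matrix \((\delta_{U_1},\dots,\delta_{U_n})\colon \bigoplus_j \charmap{\s(U_j)}L\to\charmap KL\). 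Its image in~\(D\) is invertible exactly when there is a column \((d_1,\dots,d_n)\) with \(d_j\in\charmap{\s(U_j)}D\charmap K\) such that
\[
\sum_j\delta_{U_j}d_j=\charmap K \quad\text{and}\quad \delta_{U_i}d_j=\delta_{i,j}\charmap{\s(U_j)}.
\]
Here the second condition should be read as \(d_i\delta_{U_j}=\delta_{i,j}\charmap{\s(U_j)}\). We then define \(\delta_{U_j}^*\defeq d_j\). The last relation of the theorem is \(\sum_j\delta_{U_j}\delta^*_{U_j}=\delta_K\). The relation \(\delta_{U_i}^*\delta_{U_j}=\delta_{\braket{U_i}{U_j}}\) gives \(\delta_{i,j}\delta_{\s(U_j)}\), because the \(U_j\) have disjoint orbit images, so \(\braket{U_i}{U_j}=\emptyset\) for \(i\neq j\).

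\emph{Step 3: both directions.} Given a representation of~\(C\), Step~2 shows that each \(e\psi_p\otimes\id_D\) is invertible, so we get a covariant representation. Conversely, given a covariant representation, we must define \(\delta_U^*\) for \emph{every} \(U\in\B_p\), not just for the chosen \(U_j\). We take \(\delta_U^*\) to be the composite
\[
D\xrightarrow{\nu_p^{-1}}F_p\otimes_{F_1}D\longrightarrow D,
\]
where the second map is induced by the right-module map \(\Hom(\charmap U,-)\) sending \(x\mapsto\braket{U}{x}\). Concretely, \(\delta_U^*\) is the unique element of \(\charmap{\s(U)}D\charmap{\rg(U)}\) determined by \(\nu_p\)-inverse applied to \(\delta_{\rg(U)}\). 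We then verify the following. Additivity in~\(U\) holds because \(\braket{U_1\sqcup U_2}{x}\) splits. The bracket relation follows from the definition. The relation \(\delta_V^*\delta_U^*=\delta^*_{UV}\) follows from the multiplicativity of \(\nu\), namely \(\nu_{pq}=\nu_p\circ(\id\otimes\nu_q)\) up to~\(\mu_{p,q}\), together with \(\braket{UV}{xy}=\braket{V}{\braket{U}{x}y}\). Finally, the relation with~\(\delta_K\) follows from Step~2. Uniqueness holds because the \(\delta^*_U\) are forced as inverses.

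\textbf{Main obstacle.} The main obstacle is this last construction: showing that the \(\delta_U^*\) obtained from inverting~\(\nu_p\) are well defined independently of choices, and that they satisfy the starred multiplicativity. I would handle it by identifying \(\delta_U^*\) intrinsically as the unique \(y\in\charmap{\s(U)}D\) with \(\delta_{U'}y=\delta_{\braket{U'}{U}}^{\mathrm{op}}\)-type relations. More precisely, I would use that \(\nu_p\) is injective to check each identity after multiplying on the left by the spanning elements~\(\delta_{U'}\).
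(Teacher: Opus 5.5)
Your proposal is correct and is essentially the paper's proof. Step~2 is the paper's check that \(x\mapsto(\delta^*_{U_i}x)_i\) inverts \(\psi_{K,p}\); your \(\delta_U^*\) built from \(\nu_p^{-1}\) and \(\braket{U}{-}\) is the paper's \(\kappa_p^*\bigl(\mathcal{I}_{\Bisp_p}(\charmap{U^*})\bigr)\); the starred multiplicativity rests on the same identity \(\braket{UV}{WX}=\braket{V}{\braket{U}{W}X}\); and comparing represented functors for all~\(D\) only repackages the paper's two mutually inverse homomorphisms \(\OOO\to\mathcal{Q}\) and \(\mathcal{Q}\to\OOO\). The one slip is in your last paragraph: \(\delta_U^*\) is pinned down by \(\delta_U^*\delta_{U'}=\delta_{\braket{U}{U'}}\) for all \(U'\in\B_p\), with multiplication on the \emph{right}, and what makes this determine it is the \emph{surjectivity} of~\(\nu_p\), i.e.\ that the elements \(\delta_{U'}d\) span~\(D\); this then replaces the paper's surjectivity argument via the rewriting \(\delta_U^*=\sum_j\delta_{\braket{U}{V_j}}\delta^*_{V_j}\).
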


Before the proof of the theorem, we develop some more
theory.
This theory will also be used later when~\(P\) is an Ore monoid, in
order to relate the covariance ring of \(\A*\X\) to the Steinberg
algebra of the groupoid model of~\(\X\).
This theory makes it easier to handle the generators~\(\delta_U^*\).

Let \(F=(F_p,\mu_{p,q})\) be any diagram of rings and proper
bimodules.
Equip the space \(\Hom_{-,F_1}(F_p,F_1)\) of right \(F_1\)\nb-module
homomorphisms \(F_p \to F_1\) with the canonical \(F_1\)\nb-bimodule
structure defined above Theorem~\ref{thm:fgp_is_nice}, and let \(F_p^*
\defeq \Hom_{-,F_1}(F_p,F_1) \cdot F_1\) be the subspace on which the
right \(F_1\)\nb-module structure is nondegenerate.
Let~\(\OOO\) be a covariance ring for~\(F\).
The universal covariant representation \(\tilde\nu_p\colon F_p \to
\OOO\) generates canonical \(F_1,\OOO\)-bimodule isomorphisms
\(\nu_p\colon F_p \otimes_{F_1} \OOO \congto \OOO\), \(x\otimes
y\mapsto \tilde\nu_p(x)y\).
Define a map
\[
  \kappa^*_p\colon F_p^* \to \OOO,\qquad
  T\cdot e \mapsto \nu_1 (T\otimes_{F_1} \id_{\OOO}) \nu_p^{-1}
  (\tilde\nu_1(e)),
\]
that is, we evaluate the composite map
\[
  \OOO \xrightarrow{\nu_p^{-1}}
  F_p \otimes_{F_1} \OOO \xrightarrow{T\otimes_{F_1} \id_{\OOO}}
  F_1 \otimes_{F_1} \OOO \xrightarrow{\nu_1}
  \OOO
\]
on the element \(\tilde\nu_1(e)\in \OOO\).
Recall that~\(\nu_1\) is the canonical isomorphism \(x\otimes y\mapsto
x\cdot y\) from the bicategory of rings and bimodules.

\begin{lemma}
  \label{lem:dual_in_covariance_ring}
  The map \(\kappa^*_p\colon F_p^* \to \OOO\) above is well-defined.
\end{lemma}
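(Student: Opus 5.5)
The only issue is that an element of \(F_p^*\) may be written as \(T\cdot e\) in many ways, with \(T\in\Hom_{-,F_1}(F_p,F_1)\) and \(e\in F_1\) idempotent. So I will show that the value \(\nu_1 (T\otimes \id_{\OOO}) \nu_p^{-1}(\tilde\nu_1(e))\) depends only on the product \(T\cdot e\). Every element of \(F_p^*\) has such a representation: by definition it is a finite sum \(\sum T_i\cdot a_i\) with \(a_i\in F_1\), and for a local unit \(e\) of the \(a_i\) it equals \(\bigl(\sum T_i\cdot a_i\bigr)\cdot e\). The first step is to rewrite the candidate value so that it visibly factors through \(T\cdot e\).

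For \(T\) and \(e\) as above, put \(\Phi_T\defeq \nu_1\circ (T\otimes_{F_1}\id_{\OOO})\circ \nu_p^{-1}\colon \OOO\to\OOO\). This is a composite of left \(F_1\)-module and right \(\OOO\)-module maps, so it is additive and right \(\OOO\)-linear. The key identity I would establish is
\[
  \Phi_T(\tilde\nu_1(e)\cdot y) = \Phi_{T\cdot e}(y)\qquad\text{for all } y\in\OOO .
\]
Here \(\Phi_{T\cdot e}\) makes sense because \(T\cdot e\in\Hom_{-,F_1}(F_p,F_1)\), namely \((T\cdot e)(x)= T(e\cdot x)\). To prove the identity, I use that \(\nu_p\) is a left \(F_1\)-module map, where \(F_1\) acts on \(\OOO\) through \(\tilde\nu_1\). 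Hence \(\nu_p^{-1}(\tilde\nu_1(e)y) = e\cdot\nu_p^{-1}(y)\), and the left action of \(e\) on \(F_p\otimes_{F_1}\OOO\) is \(x\otimes z\mapsto ex\otimes z\). Applying \(T\otimes\id\) then gives exactly \((T\cdot e)\otimes\id\) applied to \(\nu_p^{-1}(y)\).

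Now write \(y=\tilde\nu_1(e)\). Since \(e\) is idempotent, \(\tilde\nu_1(e)=\tilde\nu_1(e)\tilde\nu_1(e)\), so \(\Phi_T(\tilde\nu_1(e)) = \Phi_{T\cdot e}(\tilde\nu_1(e))\). Suppose \(T\cdot e = T'\cdot e'\) with idempotents \(e,e'\). Choose an idempotent \(f\in F_1\) with \(fe=e=ef\) and \(fe'=e'=e'f\). Then \(S\defeq T\cdot e\) satisfies \(S\cdot f = S\), and \(S\cdot e = S = S\cdot e'\). Applying the identity twice gives
\[
  \Phi_T(\tilde\nu_1(e)) = \Phi_S(\tilde\nu_1(e)) = \Phi_{S\cdot e}(\tilde\nu_1(f)) = \Phi_{S}(\tilde\nu_1(f)),
\]
using \(\tilde\nu_1(e)=\tilde\nu_1(e)\tilde\nu_1(f)\) in the middle step. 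The same computation with \(T', e'\) also ends at \(\Phi_S(\tilde\nu_1(f))\). So the value depends only on \(S=T\cdot e\).

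For additivity I pass to a common idempotent \(f\). If \(S_1,S_2\in F_p^*\), take \(f\) with \(S_i\cdot f=S_i\); then \(\kappa^*_p(S_i)=\Phi_{S_i}(\tilde\nu_1(f))\), and \(\Phi_{S_1+S_2}=\Phi_{S_1}+\Phi_{S_2}\). I expect the only delicate step to be the compatibility of \(\nu_p^{-1}\) with the left \(F_1\)-action, that is, checking that \(\nu_p\) is indeed \(F_1\)-linear for the action through \(\tilde\nu_1\). This follows from the covariance relation \(\tilde\nu_1(a)\tilde\nu_p(x)=\tilde\nu_p(ax)\), which is the case \(p=1\), \(q=p\) of the definition of a covariant representation.
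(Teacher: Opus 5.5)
Your proof is correct and is essentially the paper's argument written out by hand. The paper defines the map on \(\Hom_{-,F_1}(F_p,F_1)\otimes_{F_1}F_1\) and then uses that multiplication identifies this tensor product with \(\Hom_{-,F_1}(F_p,F_1)F_1\). That step tacitly relies on your balancing identity \(\Phi_T(\tilde\nu_1(a)y)=\Phi_{T\cdot a}(y)\), and your dominating-idempotent step is the local-unit proof that this multiplication map is injective.

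One small addition: the paper allows an arbitrary \(e\in F_1\) in the factorisation \(T\cdot e\), not only idempotents. Your identity covers this case too, since \(\Phi_T(\tilde\nu_1(a))=\Phi_{T\cdot a}(\tilde\nu_1(f))\) for any idempotent \(f\) with \(af=a\), which reduces it to the idempotent case you treat.
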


\begin{proof}
  The only choice was the factorisation of an element of~\(F_p^*\) as
  \(T\cdot e\) for some \(T\in \Hom_{-,F_1}(F_p,F_1)\) and some \(e\in
  F_1\).
  The formulas above define a map
  \[
    \Hom_{-,F_1}(F_p,F_1) \otimes_{F_1} F_1 \to \OOO,\qquad
    T\otimes e\mapsto
    \nu_1 (T\otimes_{F_1} \id_{\OOO}) \nu_p^{-1}
    (\tilde\nu_1(e)).
  \]
  This proves that~\(\kappa_p^*\) is well-defined because the
  multiplication map is an isomorphism
  \(\Hom_{-,F_1}(F_p,F_1) \otimes_{F_1} F_1 \congto
  \Hom_{-,F_1}(F_p,F_1) F_1\).
\end{proof}

Next we define the ``dual space''~\(\Bisp^*\) of a groupoid
correspondence \(\Bisp\colon \Gr[H] \leftarrow \Gr\).
Its underlying space is~\(\Bisp\), but we denote its elements
by~\(x^*\) for \(x\in \Bisp\).
The dual anchor maps are \(\rg^*,\s^*\colon
\Bisp^*\rightrightarrows\Gr^0\), \(\s^*(x)\defeq \rg(x^*)\) and
\(\rg^*(x^*)\defeq \s(x)\), and the actions of \(\Gr\) and~\(\Gr[H]\)
are defined by \(x^* h \defeq (h^{-1} x)^*\), \(g x^* \defeq (x
g^{-1})^*\) for \(x\in\Bisp\), \(g\in\Gr\), \(h\in\Gr[H]\) that are
suitably composable.
Although~\(\Bisp^*\) need not be a groupoid correspondence, the same
formulas turn \(A_R(\Bisp^*)\) into a well-defined smooth bimodule
over the ring \(A_R(\Gr)\).

\begin{proposition}
  \label{pro:dual_of_correspondence}
  Let \(\Bisp\colon \Gr[H]\leftarrow\Gr\) be a proper groupoid
  correspondence over an ample groupoid~\(\Gr\).
  Then the following map is well-defined and an isomorphism of
  \(A_R(\Gr)\)-\(A_R(\Gr[H])\)-bimodules:
  \begin{align*}
    \mathcal{I}_{\Bisp}\colon A_R(\Bisp^*)
    &\to \Hom_{-, A_R(\Gr)}\bigl( A_R(\Bisp), A_R(\Gr)\bigr) A_R(\Gr[H]),\\
    f &\mapsto
        \left[h \mapsto \Bigl[\gamma\mapsto
        \sum_{\substack{x^*\in \Bisp^* \\ \rg^*(x^*)=\s(\gamma)}}
    f(\gamma \cdot x^*)h(x) \Bigr] \right],
  \end{align*}
  for \(f\in A_R(\Bisp^*)\), \(h \in A_R(\Bisp)\), \(\gamma\in\Gr\).
  Let \(U\subseteq \Bisp\) be a slice and denote its image
  in~\(\Bisp^*\) by~\(U^*\).
  Then \(\mathcal{I}_{\Bisp}(\charmap{U^*})\) is the unique
  \(R\)\nb-module map \(A_R(\Bisp) \to A_R(\Gr)\) that
  maps~\(\charmap{V}\) for a slice \(V\subseteq \Bisp\) to
  \(\charmap{\braket{U}{V}}\).
\end{proposition}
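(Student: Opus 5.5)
The plan is to reduce everything to generators. First I would compute \(\mathcal{I}_{\Bisp}(\charmap{U^*})\) for a compact slice \(U\subseteq\Bisp\) applied to \(\charmap{V}\) for a compact slice \(V\). In the defining sum, \(\charmap{U^*}(\gamma x^*)=1\) means \(x\gamma^{-1}\in U\), and \(\charmap{V}(x)=1\) means \(x\in V\). So the sum at \(\gamma\) counts pairs \(u\in U\), \(v\in V\) with \(u\gamma = v\), that is, \(\gamma=\braket{u}{v}\). Since \(U\) and \(V\) are slices, \(\Qu|_U\) is injective and the right action is free, so there is at most one such pair for a given \(\gamma\). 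Hence the value is \(\charmap{\braket{U}{V}}(\gamma)\), and \(\braket{U}{V}\in A_R(\Gr)\) by Lemma~\ref{lem:operations_on_compact_slices}. This shows the last claim of the proposition. Uniqueness follows because the \(\charmap{V}\) span \(A_R(\Bisp)\) by Theorem~\ref{the:steinmod_is_quot_of_sum}.

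Next I would check well-definedness. The formula is \(R\)-linear in \(f\) and \(h\), and the \(\charmap{U^*}\) span \(A_R(\Bisp^*)\). So it suffices to show three things for generators. First, \(\mathcal{I}(\charmap{U^*})\) is a right \(A_R(\Gr)\)-module map: \(\braket{U}{VW}=\braket{U}{V}W\) for compact slices \(W\subseteq\Gr\). Second, it is bimodule compatible: \(\braket{WU}{V}=W\braket{U}{V}\) for \(W\subseteq\Gr\) compact, corresponding to \(\charmap{W}*\charmap{U^*}=\charmap{(UW^{-1})^*}\); and \(\braket{KU}{V}\) with \(K\subseteq\Gr[H]\) corresponds to precomposition with \(\charmap{K^{-1}}*\). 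These are slice identities of the same kind as in \cite{Antunes-Ko-Meyer:Groupoid_correspondences}*{Lemma~7.4}. Third, the image lies in \(\Hom(\dots)A_R(\Gr[H])\): choosing a compact open \(L\subseteq\Gr[H]^0\) containing \(\rg(U)\), we get \(\mathcal{I}(\charmap{U^*})\cdot\charmap{L}=\mathcal{I}(\charmap{U^*})\).

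For bijectivity I would reduce to fgp pieces, as in the proof of Theorem~\ref{thm:proper_is_nice}. Fix a compact open \(K\subseteq\Gr[H]^0\). Both sides decompose as inductive limits over such \(K\): on the left \(A_R(\Bisp^*)\charmap{K}=A_R(\rg^{-1}(K)^*)\), and on the right \(\Hom(\charmap{K}*A_R(\Bisp),A_R(\Gr))\). Now apply Proposition~\ref{pro:Steinberg_proper_with_bases}: \(\charmap{K}*A_R(\Bisp)\cong\bigoplus_j\charmap{\s(U_j)}*A_R(\Gr)\) via \(f_j\mapsto\charmap{U_j}*f_j\). Hence
\[
\Hom\bigl(\charmap{K}*A_R(\Bisp),A_R(\Gr)\bigr)\cong\bigoplus_j A_R(\Gr)*\charmap{\s(U_j)},
\]
where \(T\) corresponds to the tuple \(\bigl(T(\charmap{U_j})\bigr)_j\). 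Similarly, \(\rg^{-1}(K)=\bigsqcup_j U_j\Gr\) gives \(\rg^{-1}(K)^*=\bigsqcup_j \Gr U_j^*\). Moreover \(\Gr U_j^*\cong\Gr^{\s(U_j)}\) via \(\gamma u^*\mapsto\gamma\), using freeness and properness as in that proposition, so \(A_R(\rg^{-1}(K)^*)\cong\bigoplus_j A_R(\Gr)*\charmap{\s(U_j)}\) by Lemma~\ref{lem:dis_union_is_direkt_sum}. It then remains to check that \(\mathcal{I}\) matches these decompositions. A function \(\charmap{(WU_j)^*}\) with \(W\subseteq\Gr_{\s(U_j)}\) maps to the tuple with \(i\)-th entry \(\charmap{\braket{WU_j}{U_i}}\). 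This entry is \(0\) for \(i\neq j\), because the \(\Qu(U_i)\) are disjoint, and equals \(W\braket{U_j}{U_j}=W\) for \(i=j\). So on these pieces \(\mathcal{I}\) is the identity, and therefore an isomorphism.

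The main obstacle I expect is compatibility with the colimit over \(K\), together with the claim that \(\Hom(A_R(\Bisp),A_R(\Gr))A_R(\Gr[H])=\varinjlim_K\Hom(\charmap{K}*A_R(\Bisp),A_R(\Gr))\), which is as in Theorem~\ref{thm:proper_is_nice}. The other delicate point is getting the left/right conventions of the dual actions right in the slice identities. I would verify those on points before trusting the generator computation.
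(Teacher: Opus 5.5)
Your proposal is correct and follows the paper's proof essentially step by step. Like the paper, you compute $\mathcal{I}_{\Bisp}$ on characteristic functions of compact slices, derive the bimodule properties and nondegeneracy from the bracket identities, and reduce bijectivity to the corners $A_R(\Bisp^*)*\charmap{K}$; there you identify both sides with $\bigoplus_j A_R(\Gr)*\charmap{\s(U_j)}$ via Proposition~\ref{pro:Steinberg_proper_with_bases}, and this matches the paper's use of evaluation at $\charmap{\s(W_i)}$.

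Only cosmetic fixes are needed:
\begin{itemize}
\item The sum at $\gamma$ has at most one term because $\s|_V$ is injective, not because of $\Qu|_U$ and freeness.
\item The slice you write as $WU_j$ should be $U_jW^{-1}$, since $\Gr$ acts on the right of $\Bisp$.
\item $\Gr U_j^*$ corresponds to arrows with source, not range, in $\s(U_j)$.
\end{itemize}
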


\begin{proof}
  We first prove the last claim and let \(U,V\subseteq\Bisp\) be
  slices.
  If \(f=\charmap{U^*}\), \(h = \charmap{V}\), then
  \(\mathcal{I}_{\Bisp}(\charmap{U^*})(\charmap{V})\) is the function
  on~\(\Gr\) that maps \(\gamma\in \Gr\) to the number of
  \(x\in\Bisp\) with \(\s(x) = \s(\gamma)\), \(x\gamma^{-1}\in U\) and
  \(x\in V\).
  Since~\(\s|_V\) is injective, this number is either \(0\) or~\(1\).
  It is~\(1\) if and only if \(x\in V\) as above exists.
  Recall that \(\braket{x}{y}\) for \(x,y\in \Bisp\) is the unique
  \(g\in\Gr\) with \(\rg(g)= \s(x)\) and \(x\cdot g = y\).
  So \(\gamma = \braket{x\gamma^{-1}}{x}\), and thus \(x\in\Bisp\)
  with \(\s(x) = \s(\gamma)\), \(x\gamma^{-1}\in U\) and \(x\in V\)
  exist if and only if \(\gamma = \braket{x \gamma^{-1}}{x} \in
  \braket{U}{V}\).
  Therefore, \(\mathcal{I}_{\Bisp}(\charmap{U^*})(\charmap{V}) =
  \charmap{\braket{U}{V}}\) as claimed.

  By
  \cite{Antunes-Ko-Meyer:Groupoid_correspondences}*{Proposition~3.5},
  if \(U,V\subseteq \Bisp\) and \(W\subseteq \Gr\), \(X\subseteq
  \Gr[H]\) are slices, then
  \begin{equation}
    \label{eq:braket_products}
    \braket{X U}{X V} = \braket{U}{V},
    \qquad
    \braket{U}{V W} = \braket{U}{V} W,\qquad
    \braket{U W}{V} = W^{-1} \braket{U}{V}.
  \end{equation}
  The first condition is equivalent to
  \begin{equation}
    \label{eq:braket_products_2}
    \braket{X^{-1} U}{V} = \braket{U}{X V}.
  \end{equation}
  Characteristic functions of slices span the relevant Steinberg
  spaces as \(R\)\nb-modules by Theorem~\ref{the:steinmod_is_quot_of_sum},
  and the map \((f,h)\mapsto \mathcal{I}_{\Bisp}(f)(h)\) is clearly
  \(R\)\nb-bilinear.
  Therefore, \eqref{eq:braket_products} implies that the map
  \(\mathcal{I}_{\Bisp}(f)\colon A_R(\Bisp) \to A_R(\Gr)\) is right
  \(A_R(\Gr)\)-linear and that~\(\mathcal{I}_{\Bisp}\) is a right
  \(A_R(\Gr[H])\)-linear map to \(\Hom_{-, A_R(\Gr)}\bigl( A_R(\Bisp),
  A_R(\Gr)\bigr)\).
  It is also left \(A_R(\Gr)\)-linear by~\eqref{eq:braket_products_2}.
  Since \(A_R(\Bisp^*)\) is a nondegenerate right module over
  \(A_R(\Gr[H])\), it follows that the range
  of~\(\mathcal{I}_{\Bisp}\) belongs to \(\Hom_{-, A_R(\Gr)}\bigl(
  A_R(\Bisp), A_R(\Gr)\bigr) A_R(\Gr[H])\).
  So~\(\mathcal{I}_{\Bisp}\) is a well-defined
  \(A_R(\Gr)\)-\(A_R(\Gr[H])\)-bimodule map.

  Let \(R,S\) be rings with local units and let \({}_R M_S\) be a
  smooth bimodule.
  Proposition~\ref{prop:loc_then_smooth} implies \(M \cong \varinjlim
  M\cdot e\) as left \(R\)\nb-modules, where the inductive limit runs
  over the set of idempotents in~\(S\).
  We use this for \(S=A_R(\Gr[H])\).
  We may replace the set of all idempotents by the local unit consisting of the
  idempotents \(\charmap{K}\in A_R(\Gr[H])\) for compact open subsets
  \(K\subseteq \Gr[H]^0\) because this is a cofinal subset.
  Since~\(\mathcal{I}_{\Bisp}\) is right \(A_R(\Gr[H])\)-linear, it
  restricts to a map from \(A_R(\Bisp^*)* \charmap{K}\) to \(\Hom_{-,
    A_R(\Gr)}\bigl( A_R(\Bisp), A_R(\Gr)\bigr) A_R(\Gr[H])*
  \charmap{K}\).
  It suffices to prove that these maps are bijective for all~\(K\).
  Here
  \begin{multline*}
    \Hom_{-, A_R(\Gr)}\bigl( A_R(\Bisp), A_R(\Gr)\bigr) A_R(\Gr[H])* \charmap{K}
    =\Hom_{-, A_R(\Gr)}\bigl( A_R(\Bisp), A_R(\Gr)\bigr)* \charmap{K}
    \\\cong \Hom_{-, A_R(\Gr)}\bigl(\charmap{K}* A_R(\Bisp), A_R(\Gr)\bigr).
  \end{multline*}
  Since \(\Bisp\) is proper, Proposition~\ref{prop:Steinpro} implies
  that \(\charmap{K}* A_R(\Bisp)\) is an fgp right \(A_R(\Gr)\)-module.
  Even more, the proof of the proposition shows that \(\charmap{K}*
  A_R(\Bisp) \cong \bigoplus_{i=1}^n \charmap{\s(W_i)}* A_R(\Gr)\) for
  compact open slices \(W_i \subseteq \Bisp\), \(i=1,\dotsc,n\), whose
  images in~\(\Bisp/\Gr\) cover \(\rg_*^{-1}(K) \subseteq \Bisp/\Gr\).
  Thus \(A_R(\Bisp^*)* \charmap{K}\) is an fgp \emph{left}
  \(A_R(\Gr)\)-module and \(A_R(\Bisp^*)* \charmap{K} \cong
  \bigoplus_{i=1}^n A_R(\Gr)* \charmap{\s(W_i)}\) as left
  \(A_R(\Gr)\)-modules.
  By the proof of Theorem~\ref{thm:fgp_is_nice}, evaluation at
  \(\charmap{\s(W_i)}\in \charmap{\s(W_i)}* A_R(\Gr)\) is an
  isomorphism
  \[
    \Hom_{-, A_R(\Gr)}\bigl(\charmap{\s(W_i)}* A_R(\Gr), A_R(\Gr)\bigr)
    \congto A_R(\Gr) * \charmap{\s(W_i)}.
  \]
  The image of \(\charmap{\s(W_i)}* A_R(\Gr)\) in \(A_R(\Bisp)\) is
  \(\charmap{W_i}\).
  The restriction of \(\mathcal{I}_{\Bisp}(\charmap{U^*})\) to
  \(\charmap{\s(W_i)}* A_R(\Gr)\) maps this to
  \(\charmap{\braket{U}{W_i}}\).
  Therefore, we get an isomorphism
  \[
    \Hom_{-, A_R(\Gr)}\bigl(\charmap{K}* A_R(\Bisp), A_R(\Gr)\bigr)
    \cong \bigoplus_{i=1}^n A_R(\Gr)* \charmap{\s(W_i)}
    \cong A_R(\Bisp^*) * \charmap{K},
  \]
  and it maps \(\mathcal{I}_{\Bisp}(\charmap{U^*})\) for a slice
  \(U\subseteq \Bisp\) to the sum of the characteristic functions of
  \(\braket{U}{W_i}W_i^* = (W_i\braket{W_i}{U})^*\).
  This sum is the characteristic function of \((U\cap
  \rg^{-1}(K))^*\).
  As a consequence, this isomorphism is inverse to the restriction
  of~\(\mathcal{I}_{\Bisp}\).
  This implies that the latter is an isomorphism.
\end{proof}

\begin{proof}[Proof of Theorem~\textup{\ref{the:presentation_covariance_ring_groupoid}}]
  Let~\(\mathcal{Q}\) be the \(R\)\nb-algebra with the presentation in
  the theorem.
  We are going to prove that~\(\mathcal{Q}\) is isomorphic to the
  covariance ring of \(\A*\X\), which exists by
  Theorem~\ref{the:covariance_exists}.
  Theorem~\ref{the:steinmod_is_quot_of_sum} says that each
  \(A_R(\Bisp_p)\) is isomorphic to the \(R\)\nb-module that is
  generated by elements~\(\delta_U\) for \(U\in \B_p\) subject to the
  relations \(\delta_U = \delta_{U_1} + \delta_{U_2}\) if
  \(U,U_1,U_2\in\B_p\) satisfy \(U_1 \sqcup U_2 = U\).
  Then the \(P\)\nb-graded ring \(L \defeq \bigoplus_{p\in P}
  A_R(\Bisp_p)\) defined in Section~\ref{sec:Cohn_localisation} is
  generated as an \(R\)\nb-algebra by~\(\delta_U\) for all
  \(U\in\B_p\), \(p\in P\), subject to the relation above and the
  relation \(\delta_U \delta_V = \delta_{U V}\) for all \(p,q\in P\), \(U\in\B_p\)
  and \(V\in\B_q\).
  Mapping each~\(\delta_U\) above to the corresponding generator
  of~\(\mathcal{Q}\) defines a homomorphism \(L \to \mathcal{Q}\).
  We claim that this satisfies the criterion in
  Lemma~\ref{lem:covariant_rep_through_graded_algebra}, so that it
  corresponds to a covariant representation of \(\A*\X\) and induces a
  homomorphism \(\OOO\to \mathcal{Q}\).

  It suffices to check the condition in
  Lemma~\ref{lem:covariant_rep_through_graded_algebra} for idempotents
  of the form~\(\charmap{K}\) for \(K\subseteq \Gr^0\) with
  \(K\in\B_1\) because any compact open subset of~\(\Gr^0\) is a
  disjoint union of such subsets by
  Proposition~\ref{pro:ample_base_unions}, and the idempotent
  elements~\(\charmap{K}\) for \(K\subseteq \Gr^0\) form a local unit
  in \(A_R(\Gr)\) by Proposition~\ref{prop:steinalg_loc_units}.
  Proposition~\ref{pro:Steinberg_proper_with_bases} identifies
  \(\charmap{K} * A_R(\Bisp_p) \cong \bigoplus_{i=1}^n \delta_{\s(U_i)}
  * A_R(\Gr)\) with certain \(U_1,\dotsc,U_n \in \B_p\).
  So the criterion in
  Lemma~\ref{lem:covariant_rep_through_graded_algebra} is that
  \[
    \psi_{K,p}\colon \bigoplus_{i=1}^n \delta_{\s(U_i)} * \mathcal{Q}
    \congto \delta_K * \mathcal{Q},
    \qquad \sum_{i=1}^n x_i\mapsto \sum_{i=1}^n \delta_{U_i}* x_i
  \]
  is invertible for all \(p\in P\), \(K\subseteq \Gr^0\) with \(K\in \B_1\), and
  \(U_1,\dotsc,U_n\in\B_p\) as above.
  We claim that the map
  \[
    \psi^*_{K,p}\colon \delta_K * \mathcal{Q}
    \congto \bigoplus_{i=1}^n \delta_{\s(U_i)} * \mathcal{Q},
    \qquad x \mapsto (\delta^*_{U_i}* x)_{i=1,\dotsc,n},
  \]
  is inverse to~\(\psi_{K,p}\).
  Indeed, \(\psi_{K,p}\psi^*_{K,p}\) is the identity map on \(\delta_K
  * \mathcal{Q}\) because of the relation  \(\delta_K =
  \sum_{j=1}^n \delta_{U_i} \delta^*_{U_i}\) in~\(\mathcal{Q}\).
  The composite  \(\psi_{K,p}^*\psi_{K,p}\) is multiplication by the
  matrix in \(\Mat_n(\mathcal{Q})\) with entries \(\delta_{U_i}^*
  \delta_{U_j} = \delta_{\braket{U_i}{U_j}}\).
  Since \(\Qu(U_i)\cap \Qu(U_j)=\emptyset\) for \(i\neq j\), this matrix
  is diagonal.
  Its \(i\)th entry is \(\delta_{\braket{U_i}{U_i}} =
  \delta_{\s(U_i)}\), which describes the identity map on the summand
  \(\delta_{\s(U_i)} * \mathcal{Q}\).
  So the criterion in
  Lemma~\ref{lem:covariant_rep_through_graded_algebra} is satisfied.
  Thus the tautological formula \(\delta_U \mapsto \delta_U\) defines
  a covariant representation of~\(\A*\X\) in~\(\mathcal{Q}\) and induces
  a homomorphism \(\OOO\to\mathcal{Q}\).

  Next, we construct a homomorphism \(\mathcal{Q} \to \OOO\).
  We must find images for all the generators \(\delta_U\)
  and~\(\delta_U^*\) of~\(\mathcal{Q}\).
  Of course, we map \(\delta_U\in \mathcal{Q}\) for \(U\in\B_p\),
  \(p\in P\) to the image of  \(\charmap{U}\in A_R(\Bisp_p)\)
  in~\(\OOO\).
  We interpret \(\delta_U^*\) as the characteristic function
  \(\charmap{U^*} \in A_R(\Bisp_p^*)\).
  Now we use the isomorphism
  \[
    \mathcal{I}_{\Bisp_p}\colon A_R(\Bisp_p^*)
    \to \Hom_{-, A_R(\Gr)}\bigl( A_R(\Bisp_p), A_R(\Gr)\bigr) A_R(\Gr)
    = A_R(\Bisp_p)^*
  \]
  in Proposition~\ref{pro:dual_of_correspondence} and the canonical
  map \(A_R(\Bisp_p)^* \to \OOO\) defined in
  Lemma~\ref{lem:dual_in_covariance_ring} to map \(\charmap{U^*}\) to
  an element of~\(\OOO\), which we denote by \(\delta_U^*\).
  The relations \(\delta_{U_1} + \delta_{U_2} = \delta_{U}\) and
  \(\delta^*_{U_1} + \delta^*_{U_2} = \delta^*_U\) are satisfied because
  our maps on generators are parts of maps \(A_R(\Bisp_p) \to \OOO\) and
  \(A_R(\Bisp_p)^* \to \OOO\).
  The relation \(\delta_U \delta_V = \delta_{\mu_{p,q}(U V)}\) for all
  \(U\in\B_p\), \(V\in\B_q\), \(p,q\in P\) is already built into the
  covariance ring.
  Next we prove \(\delta_V^* \delta_U^* = \delta_{\mu_{p,q}(U V)}^*\)
  for \(U\in\B_p\), \(V\in\B_q\), \(p,q\in P\) in~\(\OOO\).
  Recall that the canonical map \(\nu_t\colon F_t \otimes_{F_1} \OOO
  \to \OOO\) is an isomorphism for all \(t\in P\).
  Therefore, the map \(\nu_p (\nu_q\otimes \id)\colon F_p
  \otimes_{F_1} F_q \otimes_{F_1} \OOO \to \OOO\) is an isomorphism.
  Since elements of the form \(\charmap{W}\) for \(W\in \B_t\)
  generate~\(F_t\) as an \(R\)\nb-module, it follows that two elements
  \(x_1,x_2\in\OOO\) are equal if \(x_1 \cdot \charmap{W}
  \cdot \charmap{X} = x_2 \cdot \charmap{W}\cdot \charmap{X}\) for all
  \(W \in \B_p\), \(X\in \B_q\).
  Here \(\charmap{W} \cdot \charmap{X} = \charmap{W X}\) with \(W X
  \in \B_{p q}\),
  \[
    \delta_V^* \cdot \delta_U^* \cdot \charmap{W} \cdot \charmap{X}
    = \delta_V^*\cdot \charmap{\braket{U}{W}}\cdot \charmap{X}
    = \delta_V^*\cdot \charmap{\braket{U}{W} X}
    = \charmap{\braket{V}{\braket{U}{W} X}}.
  \]
  and \(\delta_{U V}^*\cdot \charmap{W X} = \charmap{\braket{U V}{W
    X}}\).
  Thus our formula follows if \(\braket{V}{\braket{U}{W} X} =
  \braket{U V}{W X}\) as subsets of~\(\Gr\).
  To prove this, we show that \(\braket{v}{\braket{u}{w} x} =
  \braket{u v}{w x}\) holds for all \(v,x\in \Bisp_p\), \(u,w\in
  \Bisp_p\) with \(\s(u) = \rg(v)\) and \(\s(w) = \rg(x)\); this
  equality includes the claim that one side is defined if and only if
  the other side is defined.
  Indeed, \(\braket{u v}{w x}\) is the unique element of~\(\Gr\) with
  \(u v \braket{u v}{w x} = w x\).
  The equation \(u v \braket{u v}{w x} = w x\) says that there is
  \(h\in \Gr\) with \(u h = w\) and \(h^{-1} v \braket{u v}{w x} =
  x\).
  This means that \(\braket{u}{w}\) is defined and equal to~\(h\) and
  \(v \braket{u v}{w x} = h x = \braket{u}{w} x\).
  The last equation says that \(\braket{v}{\braket{u}{w} x} =
  \braket{u v}{w x}\) as desired.

  Next, we check the relation \(\delta_{U_1}^* \delta_{U_2} =
  \delta_{\braket{U_1}{U_2}}\) for \(U_1,U_2\in\B_p\).
  The element \(\delta_{U_2} \in \OOO\) is defined so that
  \(\delta_{U_2}\cdot x = \nu_p(\charmap{U_2}\otimes x)\) for all
  \(x\in \OOO\).
  Thus
  \begin{multline*}
    \delta_{U_1}^* \cdot \delta_{U_2} \cdot x
    = \nu_1(\mathcal{I}_{\Bisp_p}
    (\charmap{U_1})\otimes \id_{\OOO}) (\charmap{U_2}\otimes x)
    \\= \nu_1(\mathcal{I}_{\Bisp_p} (\charmap{U_1})(\charmap{U_2})
    \otimes x)
    = \nu_1( \charmap{\braket{U_1}{U_2}} \otimes x)
    = \charmap{\braket{U_1}{U_2}} \cdot x.
  \end{multline*}
  This implies that \(\delta_{U_1}^* \cdot \delta_{U_2}\) is the image
  \(\delta_{\braket{U_1}{U_2}}\) of \(\charmap{\braket{U_1}{U_2}} \in
  A_R(\Gr)\).

  Finally, we check the relation \(\delta_K = \sum_{j=1}^n
  \delta_{U_i}\delta^*_{U_i}\) if \(p\in P\), \(K\subseteq \Gr^0\) with
  \(K\in \B_1\), and \(U_1,\dotsc,U_n\in\B_p\) are chosen as in
  Proposition~\ref{pro:Steinberg_proper_with_bases}.
  To check this, we map \(\OOO\) faithfully to
  \(\Endo_{-,\OOO}(\OOO)\) and use the bimodule isomorphism~\(\nu_p\) to
  identify the latter with the endomorphism ring of the right
  \(\OOO\)\nb-module \(F_p \otimes_{F_1} \OOO\).
  So it suffices to prove that \(\delta_K\) and \(\sum_{j=1}^n
  \delta_{U_i}\delta^*_{U_i}\) induce the same map on \(F_p
  \otimes_{F_1} \OOO\).
  Here~\(\delta_K\) acts by left multiplication with \(\charmap{K}\in
  A_R(\Gr)\) on the tensor factor \(F_p = A_R(\Bisp_p)\).
  Thus~\(\delta_K\) maps \(\charmap{V}\otimes x\) for \(V\in \B_p\),
  \(x\in \OOO\) to \(\charmap{K}*\charmap{V}\otimes x =
  \charmap{\rg_{\Bisp_p}^{-1}(K)\cap V}\).
  Left multiplication by \(\delta^*_{U_i}\) applies
  \(\mathcal{I}_{\Bisp_p}(\charmap{U_i^*})\otimes \id_{\OOO}\) to
  \(\charmap{V}\otimes x\), and then multiplies using~\(\nu_1\).
  This gives \(\charmap{\braket{U_i}{V}} \cdot x\).
  Then~\(\delta_{U_i}\) maps this back to \(\charmap{U_i} \otimes
  \charmap{\braket{U_i}{V}} \cdot x\).
  Since the tensor product is balanced over \(A_R(\Gr)\), this is
  equal to \(\charmap{U_i} * \charmap{\braket{U_i}{V}} \otimes x =
  \charmap{U_i \cdot \braket{U_i}{V}} \otimes x\).
  Here \(U_i \braket{U_i}{V}\) is the set of all \(v\in V\) for which
  \(\Qu(v) \in \Bisp_p/\Gr\) belongs to \(\Qu(U_i)\).
  As a consequence, the sum \(\sum_{i=1}^n \charmap{U_i \cdot
  \braket{U_i}{V}}\) is the characteristic function of
  \[
    \bigsqcup_{i=1}^n V\cap \Qu^{-1}(\Qu(U_i))
    = V \cap \rg^{-1}(K)
  \]
  by construction of \(U_1,\dotsc,U_n\).
  This finishes the proof that \(\delta_K\) and \(\sum_{j=1}^n
  \delta_{U_i}\delta^*_{U_i}\) give the same operator on
  \(A_R(\Bisp_p)\otimes_{A_R(\Gr)} \OOO\).
  This gives the required homomorphism \(\mathcal{Q} \to \OOO\).

  The maps back and forth clearly send each generator~\(\delta_U\) to
  itself.
  Since a covariant representation only specifies how the~\(\delta_U\)
  act, a homomorphism \(\OOO \to \OOO\) is determined uniquely by its
  values on the generators~\(\delta_U\).
  Thus the composite map \(\OOO \to \mathcal{Q} \to \OOO\) is the
  identity map.
  To finish the proof, we prove that the homomorphism \(\OOO \to
  \mathcal{Q}\) is surjective.
  It is enough to prove that its image contains all generators
  \(\delta_U\) and~\(\delta_U^*\) for \(U\in\B_p\), \(p\in P\).
  The generators~\(\delta_U\) belong to the image by construction.
  So do the generators~\(\delta^*_{U_i}\) whenever \(U_i \in\B_p\)
  occurs in the situation of
  Proposition~\ref{pro:Steinberg_proper_with_bases} for some
  \(K\subseteq \Gr^0\) with \(K\in\B_1\).
  
  Let \(U\in\B_p\) for some \(p\in P\).
  Choose \(K\subseteq \Gr^0\) with \(\rg(U) \subseteq K\).
  If necessary, write \(K = \bigsqcup_{i=1}^\ell K_j\) with \(K_j\in
  \B_1\) by Proposition~\ref{pro:ample_base_unions}.
  Decompose \(\rg^{-1}(K) = \bigsqcup_{i=1}^n V_i \cdot \Gr\) as in
  Proposition~\ref{pro:Steinberg_proper_with_bases}.
  Then \(U = K U = \bigsqcup_{j=1}^\ell K_j U\).
  The sets~\(K_j U\) all belong to~\(\B_p\), and we may get any union
  of them from~\(U\) by taking the ones not in the union away
  from~\(U\).
  This implies that any union of these sets belongs to~\(\B_p\).
  So the relation \(\delta_U^* = \sum_{j=1}^\ell \delta_{K_j U}^*\)
  holds in~\(\mathcal{Q}\).
  Therefore, it suffices to show that~\(\delta_{K_j U}^*\) belongs to
  the image of the homomorphism \(\OOO \to \mathcal{Q}\).
  All this shows is that we may assume without loss of generality that
  there is \(K \in\B_1\) with \(\rg(U) \subseteq K\).
  Let \(V_1,\dotsc,V_n\in\B_p\) be such that \(\rg^{-1}(K) =
  \bigsqcup_{i=1}^n V_i \cdot \Gr\) as in
  Proposition~\ref{pro:Steinberg_proper_with_bases}.
  Then the relation \(\delta_K = \sum_{j=1}^n \delta_{V_j}
  \delta_{V_j}^*\) is imposed in~\(\mathcal{Q}\).

  The relations defining~\(\mathcal{Q}\) imply \(\delta_K^* =
  \delta_K\) because both multiplication by~\(\delta_K^*\) and
  by~\(\delta_K\) are inverse to multiplication by~\(\delta_K\)
  as a map \(\delta_K \mathcal{Q} \to \delta_K \mathcal{Q}\),
  and the inverse of any map is unique.
  Therefore,
  \[
    \delta_U^*
    = \delta_{K U}^*
    = \delta_U^* \delta_K^*
    = \delta_U^* \delta_K
    = \delta_U^* \sum_{j=1}^n \delta_{V_j} \delta_{V_j}^*
    = \sum_{j=1}^n \delta_{\braket{U}{V_j}} \delta_{V_j}^*.
  \]
  Since \(\delta_{V_j}^*\) and~\(\delta_{\braket{U}{V_j}}\) belong to
  the image of the homomorphism \(\OOO \to \mathcal{Q}\), so
  does~\(\delta_U^*\).
  Thus the latter homomorphism is surjective and our two homomorphisms
  are inverse to each other.
\end{proof}

\begin{corollary}
  \label{cor:covariance_ring_star}
  Assume the situation of
  Theorem~\textup{\ref{the:presentation_covariance_ring_groupoid}}.
  There is a unique anti-homomorphism \(\iota\colon \OOO \to \OOO\) with
  \(\iota(\delta_U) = \delta_U^*\) and \(\iota(\delta_U^*) =
  \delta_U\) for all \(U\in\B_p\), \(p\in P\).
\end{corollary}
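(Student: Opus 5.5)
The plan is to use the presentation of \(\OOO\cong\mathcal{Q}\) from Theorem~\ref{the:presentation_covariance_ring_groupoid}. An anti-homomorphism \(\OOO\to\OOO\) is the same as a homomorphism \(\mathcal{Q}\to\mathcal{Q}^{\op}\). Since \(\mathcal{Q}\) is defined by generators and relations as an \(R\)\nb-algebra (with \(R\) commutative, so that \(\mathcal{Q}^{\op}\) is again an \(R\)\nb-algebra), it suffices to assign \(\delta_U\mapsto\delta_U^*\) and \(\delta_U^*\mapsto\delta_U\) and to check that every defining relation, with the order of products reversed, still holds in \(\mathcal{Q}\). Uniqueness is then automatic, because \(\OOO\) is generated by the elements \(\delta_U\) and \(\delta_U^*\) (this was shown at the end of the proof of the theorem).

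I will check the four families of relations one at a time.
\begin{enumerate}
\item The additivity relations are symmetric under swapping \(\delta\) and \(\delta^*\).
\item The multiplicativity relation \(\delta_U\delta_V=\delta_{\mu_{p,q}(UV)}\) becomes \(\delta_V^*\delta_U^*=\delta^*_{\mu_{p,q}(UV)}\), which is a defining relation, and conversely.
\item The relation \(\delta_{U_1}^*\delta_{U_2}=\delta_{\braket{U_1}{U_2}}\) becomes \(\delta_{U_2}^*\delta_{U_1}=\delta^*_{\braket{U_1}{U_2}}\). The left side equals \(\delta_{\braket{U_2}{U_1}}=\delta_{\braket{U_1}{U_2}^{-1}}\). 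So I need the auxiliary fact \(\delta_W^*=\delta_{W^{-1}}\) for \(W\in\B_1\), where I regard \(\B_1\subseteq\Gr\) as compact slices and \(\Bisp_1=\Gr\).
\item The relation \(\delta_K=\sum_i\delta_{U_i}\delta^*_{U_i}\) is mapped to \(\sum_i\delta_{U_i}\delta^*_{U_i}=\delta_K^*\), so it suffices that \(\delta_K^*=\delta_K\). This was already observed in the proof of the theorem.
\end{enumerate}

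The main step is the auxiliary fact \(\delta_W^*=\delta_{W^{-1}}\) in \(\mathcal{Q}\) for \(W\in\B_1\), and in particular that \(\B_1\) is closed under inversion. Closure comes from \eqref{eq:slices_to_generators}: we have \(\braket{W}{\s(W)}=W^{-1}\) and \(\s(W)=\braket{W}{W}\in\B_1\), hence \(W^{-1}=\braket{W}{\braket{W}{W}}\in\B_1\).

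For the identity itself I would work in \(\OOO\), identified with \(\mathcal{Q}\). There \(\delta_W^*\) is the image of \(\mathcal{I}_{\Gr}(\charmap{W^*})\) under \(\kappa_1^*\). By Proposition~\ref{pro:dual_of_correspondence}, this functional maps \(\charmap{V}\mapsto\charmap{\braket{W}{V}}=\charmap{W^{-1}V}\), which is left multiplication by \(\charmap{W^{-1}}\). Since \(\nu_1\) is the multiplication map, \(\kappa_1^*\) sends it to \(\delta_{W^{-1}}\).

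Alternatively, I can argue purely algebraically, using that \(\delta_W\), \(\delta_{W^{-1}}\) and \(\delta_W^*\) satisfy \(\delta_W^*\delta_W=\delta_{\s(W)}\) and \(\delta_W\delta_{W^{-1}}=\delta_{\rg(W)}\). Then
\[
  \delta_W^*=\delta_W^*\delta_{\rg(W)}=\delta_W^*\delta_W\delta_{W^{-1}}=\delta_{\s(W)}\delta_{W^{-1}}=\delta_{W^{-1}}.
\]
The first equality uses \(\delta^*_W=\delta^*_{\rg(W)W}=\delta_W^*\delta_{\rg(W)}^*\) together with \(\delta_K^*=\delta_K\) for \(K=\rg(W)\). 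I expect bookkeeping of this kind — checking that \(\rg(W),\s(W)\in\B_1\) and that each product of sets lies in the right \(\B_p\) — to be the only real obstacle.

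Finally, \(\iota\) is \(R\)\nb-linear by construction, and it is an involution on generators. Hence \(\iota^2=\id\), so \(\iota\) is bijective.
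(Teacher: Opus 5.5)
Your proposal takes the same route as the paper. The paper's proof is the single remark that \(\iota\) manifestly preserves the defining relations of the presentation in Theorem~\ref{the:presentation_covariance_ring_groupoid}. Your case-by-case check, which reduces the third family to \(\delta_W^*=\delta_{W^{-1}}\) and the fourth to \(\delta_K^*=\delta_K\), spells out what the paper leaves implicit, and both of your arguments for \(\delta_W^*=\delta_{W^{-1}}\) are sound.

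One sub-claim is false, though. In fact \(\braket{W}{\s(W)}=\setgiven{w^{-1}}{w\in W,\ \rg(w)\in\s(W)}\), which is in general smaller than \(W^{-1}\). The correct identity is \(\braket{W}{\rg(W)}=W^{-1}\), but \(\rg(W)\) need not lie in \(\B_1\).

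Moreover, \eqref{eq:slices_to_generators} does not force \(\B_1\) to be closed under inversion. Take \(P\) trivial, the pair groupoid on \(\{1,2,3\}\), and \(W=\{(1,2),(2,3)\}\). The singletons, together with all subsets of \(W\) and of \(\s(W)=\{(2,2),(3,3)\}\), form an ample base satisfying \eqref{eq:slices_to_generators}, yet this base does not contain \(W^{-1}\).

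You never need closure, however. In the third relation \(W=\braket{U_1}{U_2}\), so \(W^{-1}=\braket{U_2}{U_1}\in\B_1\). Hence also \(\rg(W)=WW^{-1}\in\B_1\) and \(\s(W)\in\B_1\), which is all your arguments use. Drop the general closure claim and argue this way instead.

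Your first argument, applied with \(W=K\), also gives \(\delta_K^*=\delta_K\) in \(\OOO\) directly. So you do not have to rely on the corresponding step in the proof of the theorem.
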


\begin{proof}
  It is manifest that~\(\iota\) preserves all the relations in the
  presentation in the theorem, and so it defines a homomorphism \(\OOO
  \to \OOO^\op\).
\end{proof}

Similarly, the covariance ring over the complex numbers carries a
canonical \Star{}algebra structure.

The presentation in
Theorem~\ref{the:presentation_covariance_ring_groupoid} becomes more
transparent if~\(\Gr^0\) is discrete as a topological space.
For a single correspondence, relative Cuntz--Pimsner algebras in this
case are also studied in~\cite{Meyer:Groupoid_models_relative}.
Since each anchor map \(\s\colon \Bisp_p \to \Gr^0\) is a local
homeomorphism and~\(\Gr^0\) is discrete, all~\(\Bisp_p\) are discrete.
Then
\[
  \B_p \defeq \{\emptyset\} \cup \setgiven[\big]{ \{x\}}{x\in\Bisp_p}
\]
is an ample base for each \(p\in P\).
To simplify notation, we omit \(\{\}\) and denote its nonempty
elements as \(x\in\Bisp_p\).
This family of ample bases satisfies the requirements for
Theorem~\ref{the:presentation_covariance_ring_groupoid}.
Before we write down the resulting presentation,
we note one trivial simplification.
We only have \(U_1 \sqcup U_2\in\B_q\) for two disjoint sets
\(U_1,U_2\in\B_p\) if \(U_1 =\emptyset\) or \(U_2 =\emptyset\).
So the only effect of the relation \(\delta_{U_1} + \delta_{U_2} =
\delta_{U_1 \sqcup U_2}\) is that \(\delta_{\emptyset}=0\) for the
empty set as an element of~\(\B_p\) for all \(p\in P\).
Thus we may simply drop these generators and only
consider~\(\delta_x\) for \(x\in \Bisp_p\), \(p\in P\).
In the relations, we must beware that \(\delta_{x\cdot y}\) or
\(\delta_{\braket{x}{y}}\) are interpreted as~\(0\) when \(x\cdot y\)
or \(\braket{x}{y}\) are not defined, that is, \(\{x\}\cdot \{y\} =
\emptyset\) or \(\braket{\{x\}}{\{y\}}=\emptyset\).
With this simplification,
Theorem~\ref{the:presentation_covariance_ring_groupoid} now gives the
following presentation of the covariance ring:

\begin{corollary}
  \label{cor:presentation_covariance_ring_groupoid}
  Let \(\X=(P, \Gr, \Bisp_p, \mu_{p,q})\) be a diagram of proper
  correspondences between discrete groupoids~\(\Gr\).
  Then the covariance ring of the resulting diagram \(\A*\X\)
  in~\(\Ringspr\) is the \(R\)\nb-algebra with the following
  presentation.
  Its generators are elements \(\delta_x\) and~\(\delta_x^*\) for all
  \(x\in\Bisp_p\), \(p\in P\).
  These are subject to the following relations:
  \begin{enumerate}
  \item \(\delta_x \delta_y = \delta_{\mu_{p,q}(x,y)}\) and
    \(\delta_y^* \delta_x^* = \delta_{\mu_{p,q}(x,y)}^*\) for all
    \(x\in\Bisp_p\), \(y\in\Bisp_q\), \(p,q\in P\); this is understood to
    be~\(0\) if \(\s(x) \neq \rg(y)\) in~\(\Gr^0\);
  \item \(\delta_{x_1}^* \delta_{x_2} = \delta_{\braket{x_1}{x_2}}\)
    if \(x_1,x_2\in\Bisp_p\), so that \(\braket{x_1}{x_2}\in \Bisp_1 =
    \Gr\); this is understood to be~\(0\) if \(\Qu(x_1) \neq \Qu(x_2)\) in
    \(\Bisp/\Gr\);
  \item if \(p\in P\), \(x\in \Gr^0\), and
    \(y_1,\dotsc,y_n\in\Bisp_p\) are representatives for the finitely
    many right \(\Gr\)\nb-orbits in \(\rg_{\Bisp_p}^{-1}(\{x\})
    \subseteq \Bisp_p\), then \(\delta_x = \sum_{j=1}^n \delta_{y_j}
    \delta_{y_j}^*\).
  \end{enumerate}
  These relations imply \(\delta_x^* \delta_y^* = \delta_{\mu_{p,q}(y
    x)}^*\) for all \(x\in\B_p\), \(y\in\B_q\), \(p,q\in P\).
\end{corollary}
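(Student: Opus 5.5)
The plan is to specialise Theorem~\ref{the:presentation_covariance_ring_groupoid} to the ample bases \(\B_p = \{\emptyset\}\cup\setgiven{\{x\}}{x\in\Bisp_p}\) and then simplify the relations. First I check that these bases satisfy~\eqref{eq:slices_to_generators}. Since \(\Gr^0\) is discrete and \(\s\colon\Bisp_p\to\Gr^0\) is a local homeomorphism, every \(\Bisp_p\) is discrete. Singletons are compact open slices, and \(\B_p\) is closed under set differences, so each \(\B_p\) is an ample base. For singletons \(\{x\}\in\B_p\) and \(\{y\}\in\B_q\), the product \(\mu_{p,q}(\{x\}\{y\})\) is either \(\{\mu_{p,q}(x,y)\}\) or empty. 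Likewise \(\braket{\{x_1\}}{\{x_2\}}\) is either \(\{\braket{x_1}{x_2}\}\) or empty. Hence the hypotheses hold.

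Next I translate the relations one by one. The additivity relations only apply when \(U_1\) or \(U_2\) is empty, so they just say \(\delta_\emptyset=\delta_\emptyset^*=0\). We may therefore drop these generators and read any \(\delta\) of an empty product or bracket as~\(0\). This gives relations (1) and~(2) directly from the second and third families in the theorem.

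For the last family, take \(K=\{x\}\) with \(x\in\Gr^0\); this suffices because the \(\delta_K\) for general \(K\in\B_1\) with \(K\subseteq\Gr^0\) are then sums, or are zero. The proof of Proposition~\ref{pro:Steinberg_proper_with_bases} chooses elements of~\(\B_p\) whose images in \(\Bisp_p/\Gr\) are disjoint and cover \(\Qu(\rg^{-1}(x))\). This fibre is finite by properness, since \(\Bisp_p/\Gr\) is discrete and \(\rg_*\) is proper. So these elements are exactly one representative \(y_j\) per orbit. The main subtlety is that the relation in the theorem is stated for the particular choice of \(U_j\) made in that proposition, whereas (3) allows arbitrary representatives. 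I would handle this by showing that changing a representative does not change \(\delta_{y}\delta_y^*\). If \(y'=yg\) with \(g\in\Gr\), then \(\delta_{y'}=\delta_y\delta_g\) and \(\delta_{y'}^*=\delta_g^*\delta_y^*\) by (1) with \(p=1\). Also \(\delta_g^*=\delta_{g^{-1}}\), because (2) for \(p=1\) gives \(\delta_g^*\delta_{s(g)}=\delta_{\braket{g}{s(g)}}\), and \(\braket{g}{s(g)}=g^{-1}\). Then \(\delta_g\delta_g^*=\delta_{gg^{-1}}=\delta_{\rg(g)}=\delta_{\s(y)}\), so \(\delta_{y'}\delta_{y'}^*=\delta_y\delta_y^*\).

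Finally, the extra claim \(\delta_x^*\delta_y^*=\delta_{\mu(y,x)}^*\) is just relation (1) with the roles of the letters swapped. Alternatively, it follows by applying the anti-homomorphism \(\iota\) of Corollary~\ref{cor:covariance_ring_star} to \(\delta_y\delta_x=\delta_{\mu(y,x)}\).
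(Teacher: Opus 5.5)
Your main line is the paper's own argument: take the singleton bases \(\B_p\), check \eqref{eq:slices_to_generators}, note that the additivity relations only give \(\delta_\emptyset=0\), and read off (1)--(3) from Theorem~\ref{the:presentation_covariance_ring_groupoid}. The last sentence of the statement is indeed just a relabelling of~(1).

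The gap is in the extra paragraph on changing representatives, at the step \(\delta_g^*=\delta_{g^{-1}}\).

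First, a slip. On \(\Bisp_1=\Gr\) the orbit map is \(\Qu=\rg\), so \(\braket{g}{\s(g)}\) is undefined unless \(\rg(g)=\s(g)\). The correct instance of~(2) is \(\delta_g^*\delta_{\rg(g)}=\delta_{\braket{g}{\rg(g)}}=\delta_{g^{-1}}\).

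Second, a real gap. To remove the factor \(\delta_{\rg(g)}\) you need \(\delta_g^*\delta_{\rg(g)}=\delta_g^*\), but (1) only gives \(\delta_g^*\delta_{\rg(g)}^*=\delta_g^*\). So you are tacitly using \(\delta_x^*=\delta_x\) for \(x\in\Gr^0\). This is not among (1)--(3) and does not follow from them. Take \(\Gr\) a point and \(P=\{1\}\). The relations then say only \(\delta^2=\delta\), \((\delta^*)^2=\delta^*\), \(\delta^*\delta=\delta=\delta\delta^*\). The assignment \(\delta\mapsto(1,0)\), \(\delta^*\mapsto(1,1)\) in \(R\times R\) satisfies them, so \(\delta_g^*\neq\delta_{g^{-1}}\) there.

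The same issue affects the paper. The proof of Theorem~\ref{the:presentation_covariance_ring_groupoid} claims \(\delta_K^*=\delta_K\), but its argument only shows that both act as the identity on \(\delta_K\mathcal{Q}\). In this example the presented algebra is \(R\times R\), whereas the covariance ring is~\(R\). So \(\delta_x^*=\delta_x\) for \(x\in\Gr^0\) really has to be added as a relation. With it, your computation goes through once \(\s(g)\) is replaced by \(\rg(g)\).

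That paragraph is not needed anyway. In the proof of Proposition~\ref{pro:Steinberg_proper_with_bases}, \(U_x\) may be any basic set with \(x\in\Qu(U_x)\) and \(U_x\subseteq\rg^{-1}(K)\). Hence every system of orbit representatives \(\{y_1\},\dotsc,\{y_n\}\) is an admissible choice. Moreover, the proof of Theorem~\ref{the:presentation_covariance_ring_groupoid} verifies \(\delta_K=\sum_i\delta_{U_i}\delta_{U_i}^*\) in~\(\OOO\) for every family whose images in \(\Bisp_p/\Gr\) are disjoint and cover \(\Qu(\rg^{-1}(K))\). So (3) for arbitrary representatives is literally the fourth family of relations in the theorem with \(K=\{x\}\), which is how the paper reads it.

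Similarly, with singleton bases the only \(K\in\B_1\) with \(K\subseteq\Gr^0\) are \(\emptyset\) and singletons, so no reduction to singleton~\(K\) is required.
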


In the third relation, \(\rg^{-1}_{\Bisp_p}(\{x\}) \subseteq \Bisp_p\)
consists of finitely many \(\Gr\)\nb-orbits because~\(\Bisp_p\) is a
proper correspondence.
Note that this relation is imposed even if \(\rg^{-1}_{\Bisp_p}(\{x\})
= \emptyset\), when it says that \(\delta_x=0\).
Thus the universal property of the covariance ring leads to rather
undesirable relations if the range map \(\rg\colon \Bisp_p \to
\Gr^0\) fails to be surjective for some \(p\in P\).

\subsection{The covariance ring as a bicategorical limit}
\label{sec:bilimit}

In this section, we prove that the covariance ring of a proper diagram
is a particular realisation of the bicategorical limit of the diagram.
The covariance ring has the advantage that it is unique up to
isomorphism.
An advantage of bicategorical limits is their functoriality.
Namely, the map sending a diagram to its limit is part of a
pseudofunctor.
In particular, a pseudonatural transformation between two proper
diagrams induces a correspondence between the covariance rings in a
natural way.
This is proven exactly as for the bicategory of groupoids and groupoid
correspondences in \cite{Meyer:Diagrams_models}*{Section~10},
compare the proof of Corollary~10.7.
We decided not to discuss this here because this article is already
getting rather long.
In a future project, we will relate pseudonatural transformations in
the special case of higher-rank graphs to the \(k\)\nb-morphs
of~\cite{Kumjian-Pask-Sims:k-morphs} on the groupoid level and to the
bridging modules
of~\cite{Hazrat-Mukherjee-Pask-Sardar:Higher_graphs_K} on the algebra
level.

A limit of a diagram in a bicategory~\(\Cat\) is defined as a
representing object of a pseudofunctor from~\(\Cat\) to the bicategory
of categories (see~\cite{Johnson-Yau:2-Dim}).
We first define this pseudofunctor and then show that the covariance
ring is such a representing object.
Let~\(P\) be a monoid and let \(\F\colon P \to \Ringspr\) be a diagram
of rings and proper bimodules, consisting of a ring with local
units~\(F_1\), smooth, proper \(F_1\)\nb-bimodules~\(F_p\) for \(p\in
P\) and bimodule isomorphisms \(\mu_{p,q}\colon F_p \otimes_{F_1} F_q
\congto F_{p q}\) as in Definition~\ref{def:diagram_in_rings}.
To simplify notation, we shall not keep track of associators for
bimodule tensor products, that is, we pretend that \((X\otimes_{D_1}
Y) \otimes_{D_2} Z = X\otimes_{D_1} (Y \otimes_{D_2} Z)\) and leave
out the associators identifying these bimodules in diagrams.

Let~\(D\) be an object of \(\Ringspr\), that is, a ring with local
units.
A \emph{cone} over~\(\F\) with summit~\(D\) has the following data:
\begin{itemize}
\item a smooth proper \(F_1,D\)-bimodule~\(E\) --~that is, an arrow
  from~\(D\) to~\(F_1\);
\item bimodule isomorphisms \(\nu_p\colon F_p \otimes_{F_1} E \congto
  E\) for \(p\in P\);
\end{itemize}
this is subject to the following conditions: \(\nu_1\colon F_1
\otimes_{F_1} E \congto E\) is the canonical isomorphism as in
Proposition~\ref{pro:Rings_bicat} and the following diagrams commute
for all \(p,q\in P\):
\begin{equation}
  \label{eq:cone_equation}
  \begin{tikzcd}[column sep=huge]
    F_p \otimes_{F_1} F_q \otimes_{F_1} E
    \arrow[r, "\mu_{p,q} \otimes \id_E"]
    \arrow[d, "\id_{F_p} \otimes \nu_q"']
    &
    F_{p q} \otimes_{F_1} E
    \arrow[d, "\nu_{p q}"]
    \\
    F_p \otimes_{F_1} E
    \arrow[r, "\nu_p"']
    &
    E
  \end{tikzcd}
\end{equation}

Let \((E,\nu_p)\) and \((E',\nu_p')\) be two such cones.
An arrow between them is an \(F_1,D\)-bimodule map \(f\colon E\to E'\)
such that \(f\circ \nu_p = \nu'_p \circ (\id_{F_p} \otimes_{F_1} f)\)
for all \(p\in P\).
These arrows are composed in the obvious way, and identity maps
on~\(E\) provide unit arrows.
So the cones over~\(\F\) with summit~\(D\) form a category
\(\Cone(D,F_1)\).
Notice that we consistently treat bimodules as arrows from right to
left, so that the composition is the balanced tensor product in the
same order.
Were we to use the opposite convention and treat a bimodule as an
arrow from left to right, we would get colimits instead of limits as
in~\cite{Albandik-Meyer:Colimits}.

We now describe a pseudofunctor from~\(\Ringspr\) to the bicategory of
categories that maps~\(D\) to the category of cones above.
Let \(D_1\) and~\(D_2\) be two rings with local units and let~\(X\) be
a smooth proper \(D_1,D_2\)-bimodule.
Let \((E,\nu_p)\) be a cone over~\(\F\) with summit~\(D_1\).
Then \((E\otimes_{D_1} X,\nu_p \otimes_{D_1} \id_X)\) is a cone
over~\(\F\) with summit~\(D_2\).
An arrow \(f\colon (E,\nu_p) \to (E',\nu_p')\) in \(\Cone(D_1,\F)\)
induces an arrow \(f\otimes_{D_1} \id_X\) in \(\Cone(D_2,\F)\), and
this makes \({-} \otimes_{D_1} X\) a functor \(\Cone(D_1,\F) \to
\Cone(D_2,\F)\).
A bimodule homomorphism \(g\colon X\to X'\) between two such bimodules
induces an arrow \(\id_E \otimes_{D_1} g\) of cones, and this
construction defines a natural transformation between the functors
\({-} \otimes_{D_1} X\) and \({-} \otimes_{D_1} X'\) from
\(\Cone(D_1,\F)\) to~\(\Cone(D_2,\F)\).
If~\(X\) is the identity bimodule, then \({-} \otimes_{D_1} X\) is
canonically isomorphic to the identity functor and if \(X,Y\) are
bimodules \(D_1 \leftarrow D_2 \leftarrow D_3\), then the functor
\({-} \otimes_{D_1} (X \otimes_{D_2} Y)\) is naturally isomorphic to
the composite functor \(({-} \otimes_{D_1} X) \otimes_{D_2} Y\).
This data makes \(\Cone\) a pseudofunctor from the
bicategory~\(\Ringspr\) to the bicategory~\(\Cats\) of categories.

A \emph{representing object} for this pseudofunctor
\(\Ringspr \to \Cats\) is a ring~\(L\) such that for all rings~\(D\),
there are natural equivalences of categories between the category
\(\Ringspr(D,L)\) of arrows and \(2\)\nb-arrows \(D\leftarrow L\) and
the category \(\Cone(D,\F)\) of cones over~\(\F\) with summit~\(D\).
Such a representing object is also called a \emph{limit} or
\emph{bilimit} of the diagram~\(\F\).

There is a variant of the definitions above where we do not require
the bimodules \(E, X,X'\) above to be proper.
Equivalently, we treat~\(\F\) as a diagram in the larger bicategory
\(\Rings\) and form cones and representing objects there.
We will see that the limit of the diagram in~\(\Ringspr\) is also a
limit in~\(\Rings\).
That is, it makes no difference in which bicategory we take the limit.
If, however, the original diagram~\(\F\) is not proper, then we do not
know whether a limit exists or what it should be.
So it is crucial to assume the bimodules in the diagram to be proper,
but all other bimodules need not be proper.

\begin{theorem}
  \label{the:covariance_ring_limit}
  Let~\(P\) be any monoid and let \(\F=(F_p,\mu_{p,q})\) be a
  \(P\)\nb-shaped diagram in \(\Ringspr\).
  Let~\(\OOO_\F\) be a covariance ring for~\(\F\).
  Then~\(\OOO_\F\)  is a limit of~\(\F\) in \(\Ringspr\) and
  in~\(\Rings\).
\end{theorem}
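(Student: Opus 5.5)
We have to produce, for every ring \(D\) with local units, an equivalence between \(\Rings(D,\OOO_\F)\), the category of smooth \(\OOO_\F,D\)-bimodules, and \(\Cone(D,\F)\), natural in \(D\). The equivalence should also restrict to proper bimodules on both sides.

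\textbf{The comparison functor.} Let \(\tilde\nu_p\colon F_p\to\OOO_\F\) be the universal covariant representation. It gives canonical isomorphisms \(\nu_p\colon F_p\otimes_{F_1}\OOO_\F\congto\OOO_\F\). Since \(\nu_1\) is an isomorphism, \(\OOO_\F\) is a smooth \(F_1,\OOO_\F\)-bimodule. The pair \((\OOO_\F,\nu_p)\) is a cone with summit \(\OOO_\F\): the cone condition~\eqref{eq:cone_equation} is the multiplicativity \(\tilde\nu_p(x)\tilde\nu_q(y)=\tilde\nu_{pq}(\mu_{p,q}(x\otimes y))\). The comparison functor is
\[
  \Phi_D\colon X\mapsto (\OOO_\F\otimes_{\OOO_\F}X,\ \nu_p\otimes\id_X)\cong(X,\nu_p^X),
\]
where \(\nu_p^X\colon F_p\otimes_{F_1}X\to X\) is \(x\otimes\xi\mapsto\tilde\nu_p(x)\xi\). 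This is exactly the pseudofunctor \(\Cone\) applied to the universal cone, so naturality in \(D\) and compatibility with \(2\)-arrows come for free. On morphisms, \(\Phi_D\) is the identity on bimodule maps.

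\textbf{Full faithfulness.} Let \(f\colon X\to X'\) be an \(F_1,D\)-bimodule map that commutes with the \(\nu_p^X\). Then it commutes with left multiplication by every \(\tilde\nu_p(x)\). These elements generate \(\OOO_\F\) as a ring. To see this, look at the image of \(L=\bigoplus_p F_p\) in \(\OOO_\F\). The subring generated by this image carries a covariant representation. The universal property then forces the inclusion of this subring to be an isomorphism, because \(\OOO_\F\to\OOO_\F\) is determined by the \(\tilde\nu_p\). Hence \(f\) is \(\OOO_\F\)-linear, and \(\Phi_D\) is fully faithful.

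\textbf{Essential surjectivity, the main step.} Take a cone \((E,\nu_p)\). I want to make \(E\) a left \(\OOO_\F\)-module. Consider the ring \(\Endo_{-,D}(E)\) and inside it \(D'\defeq\) the subring of \(D\)-linear endomorphisms \(T\) with \(T=\tilde e\circ T\circ\tilde e\) for some idempotent \(e\in F_1\). Here \(\tilde e\) is left multiplication by \(e\), and smoothness of \(E\) over \(F_1\) is used. Then \(D'\) has local units, and \(x\mapsto\nu_p(x\otimes -)\) defines maps \(\tilde\nu_p\colon F_p\to D'\), which land in \(D'\) because \(F_p\) is smooth.
\begin{itemize}
\item The cone condition~\eqref{eq:cone_equation} gives multiplicativity of the \(\tilde\nu_p\).
\item The strong-ness, i.e.\ invertibility of \(F_p\otimes_{F_1}D'\to D'\), is checked through Lemma~\ref{lem:covariant_rep_through_graded_algebra}. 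By that lemma it suffices that each \(e\psi_p\otimes_L\id_{D'}\) is invertible. These are maps of fgp modules: \(eF_p\) is fgp by properness of \(\F\). Via the matrix description of Lemma~\ref{lem:fgp}, invertibility is a statement about matrices over \(D'\). It follows from invertibility of \(e\nu_p\colon eF_p\otimes_{F_1}E\to eE\), since the inverse matrix entries are \(D\)-linear maps of \(E\) cut down by idempotents.
\end{itemize}
By the universal property we get a nondegenerate homomorphism \(\OOO_\F\to D'\), so \(E\) becomes a smooth \(\OOO_\F,D\)-bimodule. It is smooth because \(E\) is nondegenerate over \(F_1\subseteq\OOO_\F\). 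Its image under \(\Phi_D\) is \((E,\nu_p)\) itself.

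I expect this step to be the main obstacle. The delicate points are:
\begin{itemize}
\item arranging \(D'\) to have local units while containing the images of all \(F_p\);
\item translating the invertibility of \(\nu_p\) into invertibility of the Cohn-localising maps.
\end{itemize}
The latter is where properness of \(\F\) is essential: by Theorem~\ref{thm:fgp_is_nice}, homomorphisms out of \(eF_p\) commute with \(\otimes\).

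\textbf{Properness.} \(\Phi_D\) does not change the underlying \(F_1,D\)-bimodule. Every idempotent of \(F_1\) is an idempotent of \(\OOO_\F\), and idempotents \(e\in F_1\) are cofinal among idempotents of \(\OOO_\F\): for \(\xi\in\OOO_\F\) we have \(\xi=\tilde\nu_1(e)\xi\) for some such \(e\) by smoothness of \(\OOO_\F\) over \(F_1\). So \(X\) is proper over \(\OOO_\F\) iff \(e X\) is fgp for all idempotents \(e\in F_1\), iff \(E\) is proper as an \(F_1,D\)-bimodule. Each idempotent \(f\in\OOO_\F\) satisfies \(f\le\) some \(e\in F_1\), so \(fX\) is a direct summand of \(eX\), which gives the direction needed. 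Therefore the same equivalence works in \(\Ringspr\) and in \(\Rings\), and \(\OOO_\F\) is a limit in both.
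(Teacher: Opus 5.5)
\textbf{Gap in the fullness step.} The elements \(\tilde\nu_p(x)\) with \(p\in P\), \(x\in F_p\) do \emph{not} generate \(\OOO_\F\) as a ring, so this step fails as written. Let \(S\) be the subring generated by the image of \(L=\bigoplus_p F_p\). Then \(S\) carries only a \emph{lax} covariant representation: the map \(F_p\otimes_{F_1}S\to S\) is in general not surjective. The universal property therefore says nothing about \(S\).

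Concretely, in the Ore case the image of \(L\) lies in the subring \(\bigoplus_{p\in P}\OO[p]\) of the \(G\)\nb-graded ring \(\OOO_\F\). This subring misses \(\OO[g]\) for every \(g\) outside the image of \(P\) in~\(G\). For a Leavitt path algebra (\(P=\N\)), \(S\) is the path algebra, which contains no~\(\delta_e^*\). Moreover, for \(p\neq0\) the image of \(F_p\otimes_{F_1}S\to S\) contains no vertex projection. What is true is only that a homomorphism out of \(\OOO_\F\) is determined by its restriction to \(L\). That is, \(L\to\OOO_\F\) is an epimorphism, which is much weaker than surjectivity.

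\textbf{How to repair it.} Fullness is still true, but it needs another argument; either of the following works.
\begin{enumerate}
\item Note that the extra elements of \(\OOO_\F\) act on \(X\) through inverses of the maps \(\nu_p^X\). Examples are the entries of the adjoined inverse matrices in the Cohn localisation of Theorem~\ref{the:covariance_exists}, or the elements \(\kappa_p^*(T)\), which act by \(\nu_1(T\otimes\id)(\nu_p^X)^{-1}\). A map with \(f\circ\nu_p^X=\nu_p^{X'}\circ(\id\otimes f)\) also intertwines these inverses.
\item Use the epimorphism property directly. Let \(g\) be the automorphism \((x,x')\mapsto(x,x'+f(x))\) of \(X\oplus X'\), let \(\sigma\) be the diagonal action of \(\OOO_\F\), and put \(\tau\defeq g\sigma g^{-1}\). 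Both \(\sigma\) and \(\tau\) are nondegenerate homomorphisms into \(F_1\Endo_{-,D}(X\oplus X')F_1\). They agree on every \(\tilde\nu_p(F_p)\) because \(f\) is a cone morphism. By the uniqueness in the universal property, \(\tau=\sigma\). Since \(\tau(a)(x,0)=(ax,f(ax)-af(x))\), this is exactly \(\OOO_\F\)\nb-linearity of~\(f\).
\end{enumerate}
The paper itself simply asserts that intertwining the covariant representations is the same as intertwining the \(\OOO_\F\)\nb-actions.

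\textbf{The other steps are fine.} Essential surjectivity, the naturality via the universal cone, and the properness comparison are sound and agree in substance with the paper. The paper uses the same ring \(F_1\Endo_{-,D}(E)F_1\). It gets strongness from Theorem~\ref{thm:proper_is_nice}, which your matrix argument unwinds. It gets the converse from \(F_p\otimes_{F_1}E\cong F_p\otimes_{F_1}\OOO_\F\otimes_{\OOO_\F}E\cong E\), which is your comparison functor. Your explicit check that properness over \(F_1\) and over \(\OOO_\F\) coincide is something the paper leaves implicit.
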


\begin{proof}
  Let~\(D\) be a ring with local units and let~\(E\) be a smooth
  \(F_1,D\)-bimodule; we do not assume~\(E\) to be proper.
  Roughly speaking, we are going to construct a canonical bijection
  from covariant representations \(\tilde\nu_p\colon F_p\to
  \Endo_{-,D}(E)\) to cones over~\(\F\) with summit~\(D\) based on the
  \(F_1,D\)-bimodule~\(E\).
  More precisely, since covariant representations are implicitly
  assumed to be nondegenerate, we replace \(\Endo_{-,D}(E)\) by the
  target ring \(F_1 \Endo_{-,D}(E) F_1\); a local unit in~\(F_1\) also
  provides one in the latter ring.

  On the one hand, a cone over~\(\F\) with summit~\(D\) based on the
  \(F_1,D\)-bimodule~\(E\) means a family of \(F_1,D\)-bimodule
  isomorphisms \(\nu_p\colon F_p \otimes_{F_1} E \congto E\) such
  that~\(\nu_1\) is the given left \(F_1\)\nb-module structure
  on~\(E\) and the diagram~\eqref{eq:cone_equation} commutes.
  As in Definition~\ref{def:cov_rep_of_F}, \(\nu_p\) corresponds to a
  map \(\tilde\nu_p\colon F_p \to \Endo_{-,D}(E)F_1\) given by
  \(\tilde\nu_p(x)(v) = \nu_p(x\otimes v)\).
  Here~\(\tilde\nu_1\) is the given \(F_1\)\nb-module structure
  and~\eqref{eq:cone_equation} is equivalent to
  \(\tilde\nu_p(x)\tilde\nu_q(y) = \tilde\nu_{p q}(x y)\) for all
  \(p,q\in P\), \(x\in F_p\), \(y\in F_q\).
  In particular, each \(\tilde\nu_p\) is \(F_1\)\nb-linear on the left
  and right.
  Since~\(F_p\) is a smooth \(F_1\)\nb-bimodule, it follows
  that~\(\tilde\nu_p\) is a map to \(F_1 \Endo_{-,D}(E) F_1\).
  Conversely, a family of maps \(\tilde\nu_p\colon F_p \to
  F_1\Endo_{-,D}(E)F_1\) comes from a cone over~\(\F\) with
  underlying \(F_1\)\nb-bimodule~\(E\) if and only if
  \(\tilde\nu_p(x)\tilde\nu_q(y) = \tilde\nu_{p q}(x y)\) for all
  \(p,q\in P\), \(x\in F_p\), \(y\in F_q\), \(\tilde\nu_1\) is the
  given left \(F_1\)\nb-module structure on~\(E\), and the following
  maps for \(p\in P\) are bijective:
  \begin{equation}
    \label{eq:bijection_for_cone}
    \nu_p\colon F_p \otimes_{F_1} E \to E,\qquad
    x\otimes v \mapsto \tilde\nu_p(x)(v).
  \end{equation}
  On the other hand, a covariant representation of our diagram in
  \(F_1\Endo_{-,D}(E)F_1\) is a family of maps
  \((\tilde\nu_p)_{p\in P}\) such that
  \(\tilde\nu_p(x)\tilde\nu_q(y) = \tilde\nu_{p q}(x y)\) for all
  \(p,q\in P\), \(x\in F_p\), \(y\in F_q\), \(\tilde\nu_1\) is the
  given left \(F_1\)\nb-module structure on~\(E\), and the following
  maps for \(p\in P\) are bijective:
  \begin{equation}
    \label{eq:bijection_for_covrep}
    \nu_p\colon F_p \otimes_{F_1} F_1\Endo_{-,D}(E)F_1 \to
    F_1\Endo_{-,D}(E)F_1,\qquad
    x\otimes T \mapsto \tilde\nu_p(x)\cdot T.
  \end{equation}
  The only difference is that~\eqref{eq:bijection_for_covrep}
  replaces~\eqref{eq:bijection_for_cone}.
  So it remains to prove that these two conditions are equivalent.
  To begin with, let us assume that~\eqref{eq:bijection_for_covrep} is
  bijective.
  Then the maps~\(\tilde\nu_p\) induce a nondegenerate representation
  of the covariance ring~\(\OOO_\F\) in \(F_1 \Endo_{-,D}(E)F_1\) by
  the universal property of the covariance ring.
  Since~\(E\) is a nondegenerate left \(F_1\)\nb-module, the ring
  \(F_1 \Endo_{-,D}(E)F_1\) acts nondegenerately on~\(E\).
  Hence \(\OOO_\F \otimes_{\OOO_\F} E \cong E\).
  The universal covariant representation in~\(\OOO_\F\) contains
  \(F_1,\OOO_\F\)-bimodule isomorphisms \(F_p \otimes_{F_1} \OOO_\F
  \cong \OOO_\F\).
  Therefore,
  \[
    F_p \otimes_{F_1} E
    \cong F_p \otimes_{F_1} \OOO_\F \otimes_{\OOO_\F} E
    \cong \OOO_\F \otimes_{\OOO_\F} E
    \cong E.
  \]
  Thus the maps in~\eqref{eq:bijection_for_cone} are bijective if the
  maps in~\eqref{eq:bijection_for_covrep} are bijective.

  Conversely, assume that the maps in~\eqref{eq:bijection_for_cone}
  are bijective.
  Theorem~\ref{thm:proper_is_nice} implies
  \begin{multline*}
    F_p \otimes_{F_1} F_1 \Endo_{-,D}(E) F_1
    = F_1 F_p \otimes_{F_1} \Hom_{-,D}(E,E) F_1
    \\\cong F_1 \Hom_{-,D}(E, F_p \otimes_{F_1} E) F_1
    \cong F_1 \Hom_{-,D}(E, E) F_1
    = F_1 \Endo_{-,D}(E) F_1
  \end{multline*}
  because \(F_p\) is a proper \(F_1\)\nb-bimodule
  and~\eqref{eq:bijection_for_cone} is bijective.

  Along the way, we have replaced covariant representations in
  \(F_1 \Endo_{-,D}(E) F_1\) based on the given action of~\(F_1\) by
  homomorphisms \(\OOO_\F \to F_1 \Endo_{-,D}(E) F_1\) extending the
  given homomorphism on~\(F_1\).
  The latter are also in bijection with homomorphisms \(\OOO_\F \to
  \Endo_{-,D}(E)\) because \(F_1 \to \OOO_\F\) is nondegenerate.
  So we get a bijection between cones over~\(F\) based on the
  \(F_1,D\)-correspondence~\(E\) and \(\OOO_\F,D\)-correspondences
  that restrict to~\(E\) when we restrict the left
  \(\OOO_\F\)\nb-action to~\(F_1\).

  These bijections for different~\(E\) are natural in two ways.
  First, an \(F_1,D\)-correspondence map \(\varphi\colon E\to E'\)
  intertwines the representations of~\(\OOO_\F\) if and only if it
  intertwines the covariant representations of the diagram, if and
  only if it is a morphism of cones.
  Therefore, our bijections combine to an isomorphism of categories
  between the categories of cones over~\(F\) with summit~\(D\) and of
  \(\OOO_\F,D\)-correspondences.
  Secondly, given a \(D,D'\)-correspondence~\(X\), the bijections for
  \(E\) and \(E\otimes_D X\) are related as expected.
  Namely, if \((\nu_p)\) is a cone based on~\(E\), then
  \((\nu_p\otimes_D X)\) is a cone based on \(E\otimes_D X\), and the
  bijection maps the latter cone to the representation of~\(\OOO_\F\)
  that is induced by the covariant representation given by
  \(\tilde\nu_p(\xi) \otimes_D \id_X\) for \(p\in P\), \(\xi\in F_p\).
  Therefore, the isomorphisms of categories above are natural
  in~\(D\), giving a pseudonatural transformation of pseudofunctors
  from \(\Rings\) (or \(\Ringspr\)) to~\(\Cats\).
  This finishes the proof that the covariance ring~\(\OOO_\F\)
  represents the cones pseudofunctor from \(\Rings\) or \(\Ringspr\)
  to~\(\Cats\).
  So it is a bicategorical limit as asserted.
\end{proof}

\section{Covariance rings of proper Ore diagrams}
\label{sec:cov_ring_construction}

We construct the covariance ring explicitly for proper diagrams over
an Ore monoid and show that it is a bicategorical limit in \(\Rings\)
and \(\Ringspr\).
We fix a diagram \(\F=(P, F_p, \mu_{p,q})\) of proper correspondences
in~\(\Ringspr\), where~\(P\) is an \emph{Ore monoid}, that is, it
satisfies the following Ore conditions:

\begin{definition}[compare
    \cite{The_Stacks_project}*{\href{https://stacks.math.columbia.edu/tag/04VB}{Tag
      04VB}}]
  \label{def:Ore_monoid}
  For a monoid~\(P\), the following two properties are called the
  \emph{Ore conditions}:
  \begin{enumerate}[label=\textup{(O\arabic*)}]
  \item \label{enum:O1}%
    For all \(x_1,x_2\in P\), there are \(y_1,y_2\in P\) with
    \(x_1 y_1=x_2 y_2\).
  \item \label{enum:O2}%
    For all \(x, y_1 , y_2\in P\) with \(x y_1 =x y_2\), there is a
    \(z\in P\) with \(y_1 z=y_2 z\).
  \end{enumerate}
  We call~\(P\) an \emph{Ore monoid} if it has these two properties.
\end{definition}

For example, groups and commutative monoids are Ore monoids.
Any Ore monoid has a well-behaved group completion, and the Ore
conditions hold if and only if certain coslice categories that we will
need later are filtered.
First, we need to define the group completion~\(G\) of the Ore
monoid~\(P\).

\begin{definition}
  \label{def:group_completion}
  For an Ore monoid~\(P\), the \emph{group completion}~\(G\) of~\(P\) is the
  set of equivalence classes
  \[
    G\defeq \bigslant{P\times P}{\sim},
  \]
  where \((p_1,p_2)\sim (q_1,q_2)\) if there are \(t_1,t_2\in P\) with
  \(p_1 t_1 =q_1 t_2\) and \(p_2 t_1 = q_2 t_2\).  We denote an element
  of~\(G\) represented by \((p_1,p_2)\) as \(p_1p_2^{-1}\in G\).  The group
  operation in~\(G\) is
  \(p_1p_2^{-1}\cdot q_1q_2^{-1} \defeq (p_1t_1)(q_2t_2)^{-1}\) if
  \(t_1,t_2\in P\) are such that \(p_2t_1=q_1t_2\) (given
  by~\ref{enum:O1}).  The neutral element of the group is
  \(e\defeq 11^{-1}\in G\).
\end{definition}

Details about why this defines a group are checked in
\cite{The_Stacks_project}*{\href{https://stacks.math.columbia.edu/tag/04VB}{Tag
    04VB}}.
The canonical monoid homomorphism \(P\to G,\,p\mapsto p1^{-1}\), need
not be injective.
We still sometimes write “\(p\in G\)” and mean the element
\(p1^{-1}\in G\) for \(p\in P\).
The explicit covariance ring we will construct is naturally
\(G\)\nb-graded.

\begin{definition}[\cite{Albandik-Meyer:Product}*{Definition
    3.14}]
  \label{def:Rg_CPg}
  For \(g\in G\), let
  \[
    R_g \defeq
    \setgiven[\big]{(p_1,p_2)\in P\times P}{p_1p_2^{-1}=g\in G}.
  \]
  Let~\(\CPg\) be the category with~\(R_g\) as its set of objects,
  \(R_g\times P\) as its set of arrows, where \((p_1,p_2,q)\colon
  (p_1,p_2)\to (p_1q, p_2q)\), and with the composition defined by
  \((p_1 q, p_2 q, t)\cdot(p_1, p_2, q) = (p_1, p_2, qt)\)
  for \(p_1,p_2,q,t\in P\).
\end{definition}

If~\(P\) is an Ore monoid, then the category~\(\CPg\) is filtered for each
\(g\in G\) (see \cite{Albandik-Meyer:Product}*{Lemma~3.15}).
We are going to define a diagram of smooth \(F_1\)\nb-bimodules over
this category.
The inductive limits of these diagrams for \(g\in G\) will be the
homogeneous summands of the covariance ring, which is \(G\)\nb-graded
by construction.

Since each~\(F_p\) is an \(F_1\)\nb-bimodule,
\(\Hom_{-,F_1}(F_{p_1},F_{p_2})\) is an \(F_1\)\nb-bimodule in a
natural way with \((a\cdot f\cdot b)(x) \defeq a\cdot (f(b\cdot x))\)
for all \(a,b\in F_1\), \(x\in F_{p_1}\),
\(f\in\Hom_{-,F_1}(F_{p_1},F_{p_2})\).  This bimodule is not smooth,
so we replace it by its largest smooth subbimodule:

\begin{lemma}
  \label{lem:Hom_F_simplify}
  The \(F_1\)\nb-subbimodule \(\Hom_{-,F_1}(F_{p_1}, F_{p_2})F_1\) of
  \(\Hom_{-,F_1}(F_{p_1}, F_{p_2})\) is smooth and isomorphic to
  \(F_{p_2} \otimes_{F_1} F_{p_1}^*\), where
  \(F_{p_1}^* \defeq \Hom_{-,F_1}(F_{p_1},F_1)\cdot F_1\).
\end{lemma}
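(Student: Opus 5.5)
The plan is to apply the first isomorphism of Theorem~\ref{thm:proper_is_nice} with the choices \(R=S=T=U=F_1\), \(M=F_{p_1}\), \(N=F_{p_2}\) and \(P=F_1\).
With these choices \(N\otimes_S P = F_{p_2}\otimes_{F_1} F_1\), which the uniter of Proposition~\ref{pro:Rings_bicat} identifies with~\(F_{p_2}\) because \(F_{p_2}\) is smooth.
Since \(M=F_{p_1}\) is proper, as the diagram is proper, Theorem~\ref{thm:proper_is_nice} then gives an isomorphism
\[
  F_{p_2}\otimes_{F_1} \Hom_{-,F_1}(F_{p_1},F_1)F_1
  \congto \Hom_{-,F_1}(F_{p_1},F_{p_2}\otimes_{F_1}F_1)F_1
  \cong \Hom_{-,F_1}(F_{p_1},F_{p_2})F_1,
\]
given by \(n\otimes f\mapsto [m\mapsto n\cdot f(m)]\).
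The left-hand side is \(F_{p_2}\otimes_{F_1}F_{p_1}^*\) by definition of~\(F_{p_1}^*\).

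Next I check that the isomorphism respects both bimodule structures.
It is a bimodule map by Theorem~\ref{thm:fgp_is_nice}, and composing with the uniter \(F_{p_2}\otimes_{F_1}F_1\cong F_{p_2}\), which is a bimodule isomorphism, preserves left and right linearity.
So the composite is an isomorphism of \(F_1\)\nb-bimodules.

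Smoothness follows by transport.
The module \(F_{p_2}\otimes_{F_1}F_{p_1}^*\) is nondegenerate on the left because \(F_{p_2}\) is smooth; concretely, a local unit \(e\) for the finitely many \(n_i\) in \(\sum n_i\otimes f_i\) satisfies \(e\cdot\sum n_i\otimes f_i=\sum n_i\otimes f_i\).
It is nondegenerate on the right because \(F_{p_1}^*\) is right nondegenerate by construction.
Proposition~\ref{prop:loc_then_smooth} upgrades both one-sided nondegeneracies to smoothness, and smoothness passes through the isomorphism to \(\Hom_{-,F_1}(F_{p_1},F_{p_2})F_1\).
One could also argue directly: left nondegeneracy of the Hom space follows from left nondegeneracy of \(F_{p_2}\)-valued maps, but the transport argument is cleaner.

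The main point to watch is that \(\Hom_{-,F_1}(F_{p_1},F_{p_2})F_1\) really is an \(F_1\)\nb-subbimodule, that is, stable under left multiplication.
This holds because \(a\cdot(f b)=(a\cdot f)b\), so left multiplication preserves elements of the form \(f b\).
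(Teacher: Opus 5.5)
Your proposal is correct and takes the same approach as the paper. You apply the first isomorphism of Theorem~\ref{thm:proper_is_nice} using properness of \(F_{p_1}\), with \(F_{p_2}\) as \(N\), \(F_1\) as the remaining bimodule, and the uniter \(F_{p_2}\otimes_{F_1}F_1\cong F_{p_2}\). You then get smoothness from left smoothness of \(F_{p_2}\) and right nondegeneracy of \(F_{p_1}^*\) by construction, exactly as the paper does. The extra checks you spell out (bimodule compatibility of the uniter and stability of \(\Hom_{-,F_1}(F_{p_1},F_{p_2})F_1\) under the left action) are details the paper leaves implicit.
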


\begin{proof}
  Since~\(F_{p_1}\) is a proper \(F_1\)\nb-bimodule by assumption,
  Theorem~\ref{thm:proper_is_nice} applies here and gives the asserted
  bimodule isomorphism.  The bimodule
  \(F_{p_2} \otimes_{F_1} F_{p_1}^*\) is smooth on the left
  because~\(F_{p_2}\) is, and smooth on the right by construction.
\end{proof}

Since~\(F_1\) is a ring with local units, the following lemma shows
that \(F_1^* \cong F_1\) and thus \(\Hom_{-,F_1}(F_1, F_p)F_1 \cong
F_p \otimes_{F_1} F_1^* \cong F_p\) for all \(p\in P\):

\begin{lemma}
  \label{lem:endo_smooth}
  For any ring with local units, \(\Endo_{-,D}(D)D \cong D\), embedded
  as left multiplication operators.
\end{lemma}

\begin{proof}
  This follows because \(T(d\cdot x) = T(d)\cdot x\) for all \(d,x\in D\) and
  \(T\in\Endo_{-,D}(D)\).
\end{proof}

\begin{definition}
  \label{def:varphi}
  For \((p_1,p_2)\in R_g\) and \(q\in P\) define the maps
  \begin{align*}
    \varphi_{p_1,p_2,q}\colon \Hom_{-,F_1}(F_{p_2}, F_{p_1})F_1
    &\to \Hom_{-,F_1}(F_{p_2q}, F_{p_1q})F_1,\\
    T &\mapsto  \mu_{p_1,q}\circ (T\otimes_{F_1} \id_{F_q}) \circ \mu_{p_2,q}^{-1}.
  \end{align*}
\end{definition}

\begin{lemma}
  \label{lem:varphig_props}
  The map \(\varphi_{p_1,p_2,q}\) for \((p_1,p_2)\in R_g\), \(q\in P\) is
  an \(F_1\)\nb-bimodule homomorphism and its image lies in
  \(\Hom_{-,F_1}(F_{p_2q},F_{p_1q})F_1\).  These maps satisfy
  \begin{align}
    \label{eq:varphig_unit}
    \varphi_{p_1,p_2,1}
    &=\id_{\Hom_{-,F_1}(F_{p_2}, F_{p_1})F_1},\\
    \label{eq:varphig_mult}
    \varphi_{p_1q,p_2q,t}\circ \varphi_{p_1,p_2,q}
    &=\varphi_{p_1,p_2,qt}.
  \end{align}
  The assignment \((p_1,p_2,q)\mapsto \varphi_{p_1,p_2,q}\) defines a
  filtered diagram of \(F_1\)\nb-bimodules over~\(\CPg\), which we
  denote by~\(\Hg\).
\end{lemma}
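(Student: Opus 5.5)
We verify the claims directly from Definition~\ref{def:varphi}, using only that \(\mu_{p,q}\) are bimodule isomorphisms satisfying the coherence condition in Definition~\ref{def:diagram_in_rings}.

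\emph{Well-definedness and bimodule property.} For \(T\in\Hom_{-,F_1}(F_{p_2},F_{p_1})\), the map \(T\otimes_{F_1}\id_{F_q}\colon F_{p_2}\otimes_{F_1}F_q\to F_{p_1}\otimes_{F_1}F_q\) is defined because \(T\) is right \(F_1\)-linear. It is right \(F_1\)-linear because the right action lives on \(F_q\). Composing with the right \(F_1\)-linear maps \(\mu_{p_1,q}\) and \(\mu_{p_2,q}^{-1}\) gives an element of \(\Hom_{-,F_1}(F_{p_2q},F_{p_1q})\). The same formula therefore defines \(\varphi_{p_1,p_2,q}\) on all of \(\Hom_{-,F_1}(F_{p_2},F_{p_1})\).

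\emph{Bilinearity.} For \(a,b\in F_1\), the map \((a\cdot T\cdot b)\otimes\id\) is left multiplication by \(a\) composed with \(T\otimes\id\) composed with left multiplication by \(b\) on \(F_{p_2}\otimes F_q\). Since \(\mu_{p_1,q}\) and \(\mu_{p_2,q}\) are left \(F_1\)-linear, this gives \(\varphi(aTb)=a\,\varphi(T)\,b\). In particular \(\varphi(T\cdot e)=\varphi(T)\cdot e\), so \(\varphi\) maps \(\Hom(\dots)F_1\) into \(\Hom_{-,F_1}(F_{p_2q},F_{p_1q})F_1\), which settles the claim about the image.

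\emph{Unit law~\eqref{eq:varphig_unit}.} The maps \(\mu_{p,1}\colon F_p\otimes_{F_1}F_1\to F_p\) are the canonical multiplication isomorphisms, and \(T\) is right \(F_1\)-linear. Hence \(\mu_{p_1,1}\circ(T\otimes\id)\circ\mu_{p_2,1}^{-1}\) sends \(x\cdot c=\mu_{p_2,1}(x\otimes c)\) to \(T(x)c=T(xc)\), so it equals \(T\).

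\emph{Multiplicativity~\eqref{eq:varphig_mult}.} This is the main step, and it is only a diagram chase. Unwinding the left side gives
\[
  \mu_{p_1q,t}\circ\bigl((\mu_{p_1,q}(T\otimes\id_{F_q})\mu_{p_2,q}^{-1})\otimes\id_{F_t}\bigr)\circ\mu_{p_2q,t}^{-1}.
\]
Rewrite \(\mu_{p_1q,t}\circ(\mu_{p_1,q}\otimes\id)\) as \(\mu_{p_1,qt}\circ(\id\otimes\mu_{q,t})\) using the coherence square, suppressing associators. Likewise rewrite \((\mu_{p_2,q}^{-1}\otimes\id)\circ\mu_{p_2q,t}^{-1}\) as \((\id\otimes\mu_{q,t}^{-1})\circ\mu_{p_2,qt}^{-1}\). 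The middle factor \(T\otimes\id_{F_q}\otimes\id_{F_t}\) commutes with \(\id_{F_{p_i}}\otimes\mu_{q,t}\) by functoriality of \(\otimes\). So the \(\mu_{q,t}\) factors cancel, leaving \(\mu_{p_1,qt}(T\otimes\id_{F_{qt}})\mu_{p_2,qt}^{-1}=\varphi_{p_1,p_2,qt}(T)\).

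\emph{Diagram over \(\CPg\).} The arrow \((p_1,p_2,q)\) goes from \((p_1,p_2)\) to \((p_1q,p_2q)\), and composition is \((p_1q,p_2q,t)\cdot(p_1,p_2,q)=(p_1,p_2,qt)\). So~\eqref{eq:varphig_unit} and~\eqref{eq:varphig_mult} say exactly that the assignment \((p_1,p_2)\mapsto\Hom_{-,F_1}(F_{p_2},F_{p_1})F_1\) is a functor \(\CPg\to\) \(F_1\)-bimodules. This functor is a filtered diagram because \(\CPg\) is filtered for an Ore monoid by \cite{Albandik-Meyer:Product}*{Lemma~3.15}.
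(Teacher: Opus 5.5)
Your proposal is correct and follows essentially the same route as the paper's proof. Both derive bimodule linearity from the left \(F_1\)\nb-linearity of the maps \(\mu_{p_i,q}\), the unit law from the canonical right unit maps \(\mu_{p,1}\), and multiplicativity from the coherence squares on both sides combined with bifunctoriality of the tensor product in the middle. The only difference is that you write out in formulas the diagram chase that the paper draws as a commuting diagram.
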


\begin{proof}
  The formula for~\(\varphi_{p_1,p_2,q}\) defines an
  \(F_1\)\nb-bimodule homomorphism
  \[
    \Hom_{-,F_1}(F_{p_2}, F_{p_1})
    \to \Hom_{-,F_1}(F_{p_2q}, F_{p_1q})
  \]
  because each~\(\mu_{p_2,q}\) is an \(F_1\)\nb-bimodule homomorphism.
  As a bimodule map, \(\varphi_{p_1,p_2,q}\) maps
  \(\Hom_{-,F_1}(F_{p_2}, F_{p_1}) F_1\) to
  \(\Hom_{-,F_1}(F_{p_2q}, F_{p_1q}) F_1\).
  Equation~\eqref{eq:varphig_unit} follows because~\(\mu_{p_i,1}\) for
  \(i=1,2\) is natural for bimodule maps.
  Equation~\eqref{eq:varphig_mult} follows because the following
  diagram commutes for all \(T\in \Hom_{-,F_1}(F_{p_2},F_{p_1})F_1\):
  \[
    \begin{tikzcd}[ampersand replacement=\&,column sep=2.4em]
      {F_{p_1q}\otimes_{F_1} F_t}
      \& {F_{p_1}\otimes_{F_1} F_q \otimes_{F_1} F_t}
      \& {F_{p_2}\otimes_{F_1} F_q \otimes_{F_1} F_t}
      \& {F_{p_2q}\otimes_{F_1} F_t} \\
      {F_{p_1qt}} \& {F_{p_1}\otimes_{F_1} F_{qt}} \& {F_{p_2}\otimes_{F_1} F_{qt}} \& {F_{p_2qt}}
      \arrow["{{\mu_{p_1q,t}}}", "{\cong}"', from=1-1, to=2-1]
      \arrow["{{\id\otimes \mu_{q,t}}}", "{\cong}"', from=1-2, to=2-2]
      \arrow["{{\mu_{p_1,q}\otimes \id}}"',"{\cong}",  from=1-2, to=1-1]
      \arrow["{{\mu_{p_2,q}\otimes \id}}","{\cong}"',  from=1-3, to=1-4]
      \arrow["{{\id\otimes \mu_{q,t}}}","{\cong}"',  from=1-3, to=2-3]
      \arrow["{T \otimes\id\otimes\id}"', from=1-3, to=1-2]
      \arrow["{{\mu_{p_2q,t}}}","{\cong}"',  from=1-4, to=2-4]
      \arrow["{{\mu_{p_1,qt}}}"',"{\cong}",  from=2-2, to=2-1]
      \arrow["{{\mu_{p_2,qt}}}","{\cong}"',  from=2-3, to=2-4]
      \arrow["{T \otimes\id}"', from=2-3, to=2-2]
    \end{tikzcd}
  \]
  Here the left and right squares commute by
  Definition~\ref{def:diagram_in_rings} and the middle square
  commutes because the tensor product is a bifunctor.
\end{proof}

The category of \(F_1\)\nb-bimodules is cocomplete, so the diagram
built above has a colimit.  Since the category~\(\CPg\) is filtered,
the following construction describes this colimit and the universal
cone explicitly.  Define the set
\[
  \mathcal{O}_{\sqcup, g} \defeq
  \bigsqcup_{(p_1,p_2)\in R_g}  \Hom_{-,F_1}(F_{p_2}, F_{p_1})F_1
\]
and let~\(\sim\) be the equivalence relation
on~\(\mathcal{O}_{\sqcup, g}\) generated by
\[
  \bigl(x,(p_1,p_2)\bigr)\sim
  \bigl(\varphi_{p_1,p_2,q}(x),( p_1q, p_2q)\bigr)
\]
for all \((p_1,p_2)\in R_g\), \(q\in P\) and
\(x\in \Hom_{-,F_1}(F_{p_2}, F_{p_1})F_1\).  Let~\(\OO[g]\) be the set of
equivalence classes with elements denoted as \([x,(p_1,p_2)]\in\OO[g]\).
Define an abelian group structure on~\(\OO[g]\) by
\begin{align*}
  \bigl[x,(p_1,p_2)\bigr]+\bigl[y,(q_1,q_2)\bigr]
  &\defeq  \bigl[\varphi_{p_1,p_2,t}(x)+\varphi_{q_1,q_2,u}(y), (p_1t,p_2t)\bigr]
\end{align*}
for \((p_1,p_2),(q_1,q_2)\in R_g\),
\(x\in\Hom_{-,F_1}(F_{p_2}, F_{p_1})F_1\),
\(y\in \Hom_{-,F_1}(F_{q_2}, F_{q_1})F_1\) and \(t,u\in P\) such that
\(p_1t=q_1 u\) and \(p_2 t = q_2 u\); these exist because
\(p_1p_2^{-1}=g=q_1q_2^{-1}\).
With the obvious left and right multiplication by~\(F_1\), this
makes~\(\OO[g]\) an \(F_1\)\nb-bimodule.
The canonical maps
\[
  \iota_{p_1,p_2}\colon\Hom_{-,F_1}(F_{p_2}, F_{p_1})F_1\to \OO[g]
\]
for all \((p_1,p_2)\in R_g\) are \(F_1\)\nb-bimodule homomorphisms and
form the universal cone under~\(\Hg\).

\begin{definition}
  \label{def:multiOO}
  Let \(p_i,q_i\in P\) for \(i=1,2\) and let \(g=p_1p_2^{-1}\),
  \(h=q_1q_2^{-1}\) in~\(G\).
  There are \(t_i\in P\) with \(p_2t_1=q_1t_2\) by the Ore conditions.
  Hence \(gh=(p_1t_1)(q_2t_2)^{-1}\).
  We define the map
  \begin{align*}
    \w_{g,h}\colon \OO[g]\times\OO[h]
    &\to\OO[gh],\\
    \Bigl(\bigl[x,(p_1,p_2)\bigr],\bigl[y,(q_1,q_2)\bigr]\Bigr)
    &\mapsto
    \bigl[\varphi_{p_1,p_2,t_1}(x)\circ\varphi_{q_1,q_2,t_2}(y),
    (p_1t_1,q_2t_2)\bigr].
  \end{align*}
\end{definition}

\begin{lemma}
  The map~\(\w_{g,h}\) is well-defined and descends to an
  \(F_1\)\nb-bimodule map \(\OO[g]\otimes_{F_1}\OO[h] \to \OO[g h]\).
  \end{lemma}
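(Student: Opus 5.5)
We must show that \(\w_{g,h}\) is independent of all choices: the auxiliary elements \(t_1,t_2\) and the representatives of the two classes. We then show that it is biadditive and \(F_1\)\nb-balanced, and that it is compatible with the bimodule structures.

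First, the composite in Definition~\ref{def:multiOO} makes sense. We have \(\varphi_{p_1,p_2,t_1}(x)\in\Hom_{-,F_1}(F_{p_2t_1},F_{p_1t_1})F_1\) and \(\varphi_{q_1,q_2,t_2}(y)\in\Hom_{-,F_1}(F_{q_2t_2},F_{q_1t_2})F_1\). Since \(q_1t_2=p_2t_1\), the composite is a map \(F_{q_2t_2}\to F_{p_1t_1}\). It lies in the smooth part because \(y\)'s factor already does: \(S\circ(T e)=(S\circ T)e\) for \(e\in F_1\).

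\emph{Independence of \(t_1,t_2\).} Suppose \((t_1',t_2')\) is another pair with \(p_2t_1'=q_1t_2'\). The set of such pairs, ordered by right multiplication, is filtered by the Ore conditions: by~\ref{enum:O1} pick \(s,s'\) with \(t_1s=t_1's'\). Then \(q_1t_2s=q_1t_2's'\), so by~\ref{enum:O2} there is \(z\) with \(t_2sz=t_2's'z\). Replacing \(s,s'\) by \(sz,s'z\), both pairs map to a common pair \((t_1s,t_2s)\). It therefore suffices to compare a pair \((t_1,t_2)\) with \((t_1s,t_2s)\). Here the key identity is
\[
\varphi_{a,b,s}(S)\circ\varphi_{b,c,s}(T)=\varphi_{a,c,s}(S\circ T),
\]
which is immediate from Definition~\ref{def:varphi} since the middle \(\mu_{b,s}^{-1}\mu_{b,s}\) cancels. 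Combining it with~\eqref{eq:varphig_mult} gives
\[
\varphi_{p_1,p_2,t_1s}(x)\circ\varphi_{q_1,q_2,t_2s}(y)=\varphi_{p_1t_1,q_2t_2,s}\bigl(\varphi_{p_1,p_2,t_1}(x)\circ\varphi_{q_1,q_2,t_2}(y)\bigr).
\]
This represents the same class in \(\OO[gh]\).

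\emph{Independence of representatives.} The equivalence relation is generated by \((x,(p_1,p_2))\sim(\varphi_{p_1,p_2,q}(x),(p_1q,p_2q))\), so it suffices to treat one generating step in each variable. If \(x\) is replaced by \(\varphi_{p_1,p_2,q}(x)\), choose \(t_1',t_2'\) with \(p_2qt_1'=q_1t_2'\). Then \((qt_1',t_2')\) is an admissible pair for the original representative, and~\eqref{eq:varphig_mult} shows that the two formulas literally agree. The second variable is handled symmetrically. This step, together with the filteredness argument above, is the main bookkeeping obstacle. Everything else is formal.

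\emph{Bilinearity and balancing.} For additivity in \(x\), bring two summands to a common representative \((p_1t,p_2t)\) as in the definition of the addition. Choose a single pair \((t_1,t_2)\) for that representative, which is possible by the independence just proved, and use that \(\varphi\) and composition are additive. For balancing, write \(x\cdot a\) with \(a\in F_1\) as \(x\circ L_a\), where \(L_a\) is left multiplication by \(a\) on \(F_{p_2}\). Then \(\varphi_{p_1,p_2,t_1}(x\cdot a)\) equals \(\varphi_{p_1,p_2,t_1}(x)\) precomposed with left multiplication by \(a\) on \(F_{p_2t_1}=F_{q_1t_2}\), because \(\mu_{p_2,t_1}\) is a bimodule map. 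This in turn equals \(\varphi_{p_1,p_2,t_1}(x)\circ\varphi_{q_1,q_2,t_2}(a\cdot y)\). Left \(F_1\)\nb-linearity in \(x\) and right \(F_1\)\nb-linearity in \(y\) hold for the same reason, since \(\varphi\) is a bimodule map by Lemma~\ref{lem:varphig_props}. Hence \(\w_{g,h}\) induces an \(F_1\)\nb-bimodule map \(\OO[g]\otimes_{F_1}\OO[h]\to\OO[gh]\).
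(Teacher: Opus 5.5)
Your proof is correct and follows the paper's route: an Ore-condition common refinement to show independence of \(t_1,t_2\), then independence of representatives, additivity and \(F_1\)-balancing. The only differences are in packaging, and both variants are valid. You factor the key step through the identity \(\varphi_{a,b,s}(S)\circ\varphi_{b,c,s}(T)=\varphi_{a,c,s}(S\circ T)\) together with~\eqref{eq:varphig_mult}, which is what the commuting diagram in Figure~\ref{fig:wgh_defined} encodes. For representatives, you check generating steps of the equivalence relation, whereas the paper passes to a common later stage \((p_1n,p_2n)=(p_1'n',p_2'n')\).
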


\begin{proof}
  The definition of~\(w_{g,h}\) uses representatives \((p_1,p_2)\in
  R_g\), \((q_1,q_2)\in R_h\) for \(g,h \in G\) and \(t_1,t_2\in P\)
  such that \(p_2t_1=q_1t_2\).
  First, we check that the value of~\(w_{g,h}\) does not depend on the
  choice of \(t_1, t_2\).
  So take \(t_1,t_2\) as above and let \(u_1,u_2\in P\) also satisfy
  \(p_2 u_1=q_1u_2\).
  Condition~\ref{enum:O1} provides \(x_1,x_2\in P\) with
  \(t_1x_1= u_1x_2\).
  Then \(q_1 t_2x_1=p_2 t_1 x_1=p_2 u_1 x_2=q_1 u_2 x_2\).
  Condition~\ref{enum:O2} gives \(n\in P\) with \(t_2 x_1n = u_2 x_2
  n\).
  Let \(b_1\defeq x_1 n\) and \(b_2\defeq x_2 n\).
  Then \(t_1 b_1= u_1 b_2\) and \(t_2b_1= u_2b_2\).
  To show that~\(\w_{g,h}\) does not depend on \(t_1,t_2\), we must prove that
  \[
    \varphi_{p_1 t_1,q_2 t_2,b_1}
    \bigl(\varphi_{p_1,p_2,t_1}(x)\circ\varphi_{q_1,q_2,t_2}(y)\bigr)
    = \varphi_{p_1 u_1,q_2 u_2,b_2}
    \bigl(\varphi_{p_1,p_2,u_1}(x)\circ\varphi_{q_1,q_2,u_2}(y)\bigr).
  \]
  This  works because the diagram in Figure~\ref{fig:wgh_defined}
  commutes.
  \begin{figure}[htbp]
    \[
      \begin{tikzcd}[ampersand replacement=\&,row sep=2.25em, column sep = 3.25em]
        \& {F_{q_2 t_2 b_1}} \\
        {F_{q_2 t_2}\otimes_{F_1} F_{b_1}} \&\& {F_{q_2 u_2}\otimes_{F_1} F_{b_2}} \\
        {F_{q_2}\otimes_{F_1} F_{t_2}\otimes_{F_1} F_{b_1}} \& {F_{q_2}\otimes_{F_1}F_{t_2b_1}} \& {F_{q_2}\otimes_{F_1} F_{u_2}\otimes_{F_1} F_{b_2}} \\
        {F_{q_1}\otimes_{F_1} F_{t_2}\otimes_{F_1} F_{b_1}} \& {F_{q_1}\otimes_{F_1}F_{t_2b_1}} \& {F_{q_1}\otimes_{F_1} F_{u_2}\otimes_{F_1} F_{b_2}} \\
        {F_{q_1 t_2}\otimes_{F_1} F_{b_1}} \& {F_{q_1 t_2 b_1}}
        \& {F_{q_1 u_2}\otimes_{F_1} F_{b_2}} \\
        {F_{p_2}\otimes_{F_1} F_{t_1}\otimes_{F_1} F_{b_1}} \&
        {F_{p_2}\otimes_{F_1}F_{t_1 b_1}}
        \& {F_{p_2}\otimes_{F_1} F_{u_1}\otimes_{F_1} F_{b_2}} \\
        {F_{p_1}\otimes_{F_1} F_{t_1}\otimes_{F_1} F_{b_1}}
        \& {F_{p_1}\otimes_{F_1}F_{t_1b_1}}
        \& {F_{p_1}\otimes_{F_1} F_{u_1}\otimes_{F_1} F_{b_2}} \\
        {F_{p_1t_1}\otimes_{F_1} F_{b_1}}
        \&\& {F_{p_1 u_1}\otimes_{F_1} F_{b_2}} \\
        \& {F_{p_1t_1b_1}}
        \arrow["{\mu_{q_2 t_2,b_1}}", "{\cong}"', from=2-1, to=1-2]
        \arrow["{{\mu_{q_2,t_2}\otimes\id}}", "{\cong}"', from=3-1, to=2-1]
        \arrow["{{\mu_{q_2,u_2}\otimes\id}}"', "{\cong}", from=3-3, to=2-3]
        \arrow["{{\id\otimes \mu_{t_2,b_1}}}", "{\cong}"', from=3-1, to=3-2]
        \arrow["{\mu_{q_2 u_2,b_2}}"', "{\cong}", from=2-3, to=1-2]
        \arrow["{\mu_{q_2,t_2b_1}}"', "{\cong}", from=3-2, to=1-2]
        \arrow["{\id\otimes \mu_{u_2,b_2}}"', "{\cong}", from=3-3, to=3-2]
        \arrow["y\otimes\id\otimes\id"', from=3-1, to=4-1]
        \arrow["y\otimes\id\otimes\id", from=3-3, to=4-3]
        \arrow["{y\otimes \id}", from=3-2, to=4-2]
        \arrow["{\id\otimes \mu_{u_2,b_2}}"', "{\cong}", from=4-3, to=4-2]
        \arrow["{{\id\otimes \mu_{t_2,b_1}}}", "{\cong}"', from=4-1, to=4-2]
        \arrow["{{\mu_{q_1,t_2}\otimes\id}}"', "{\cong}", from=4-1, to=5-1]
        \arrow["{{\mu_{q_1,u_2}\otimes\id}}", "{\cong}"', from=4-3, to=5-3]
        \arrow["{{\mu_{p_2,t_1}\otimes\id}}", "{\cong}"', from=6-1, to=5-1]
        \arrow["{{\mu_{p_2,u_1}\otimes\id}}"', "{\cong}", from=6-3, to=5-3]
        \arrow["{{\id\otimes \mu_{t_1,b_1}}}", "{\cong}"', from=6-1, to=6-2]
        \arrow["{{\id\otimes \mu_{u_1,b_2}}}"', "{\cong}", from=6-3, to=6-2]
        \arrow["{\mu_{q_1,t_2b_1}}", "{\cong}"', from=4-2, to=5-2]
        \arrow["{\mu_{q_1 t_2,b_1}}", "{\cong}"', from=5-1, to=5-2]
        \arrow["{\mu_{p_2,t_1b_1}}"', "{\cong}", from=6-2, to=5-2]
        \arrow["{\mu_{q_1 u_2,b_2}}"', "{\cong}", from=5-3, to=5-2]
        \arrow["{{\id\otimes \mu_{t_1,b_1}}}", "{\cong}"', from=7-1, to=7-2]
        \arrow["{{\id\otimes \mu_{u_1,b_2}}}"', "{\cong}", from=7-3, to=7-2]
        \arrow["x\otimes\id", from=6-2, to=7-2]
        \arrow["x\otimes\id\otimes\id"', from=6-1, to=7-1]
        \arrow["x\otimes\id\otimes\id", from=6-3, to=7-3]
        \arrow["{{\mu_{p_1,t_1}\otimes\id}}"', "{\cong}", from=7-1, to=8-1]
        \arrow["{{\mu_{p_1,u_1}\otimes\id}}", "{\cong}"', from=7-3, to=8-3]
        \arrow["{\mu_{p_1 t_1,b_1}}"', "{\cong}", from=8-1, to=9-2]
        \arrow["{\mu_{p_1 u_1,b_2}}", "{\cong}"', from=8-3, to=9-2]
        \arrow["{\mu_{p_1,t_1b_1}}", "{\cong}"', from=7-2, to=9-2]
      \end{tikzcd}\]
    \caption{A commuting diagram that is used to prove
      that~\(\w_{g,h}\) is well-defined}
    \label{fig:wgh_defined}
  \end{figure}

  Next, we check that~\(\w_{g,h}\) is independent of the choices of
  the representatives of \(\bigl[x,(p_1,p_2)\bigr]\) and
  \(\bigl[y,(q_1,q_2)\bigr]\).
  Let \(\bigl[x,(p_1,p_2)\bigr] = \bigl[x', (p_1',p_2')\bigr]\) and
  \(\bigl[y,(q_1,q_2)\bigr] = \bigl[y',(q_1',q_2')\bigr]\).
  Then there are \(n,n'\in P\) with
  \[
    (p_1 n,p_2 n) = (p_1' n', p_2' n')\quad\text{and}\quad
    \varphi_{p_1, p_2,n}(x) =\varphi_{p_1', p_2', n'}(x').
  \]
  Similarly, there are \(m,m'\in P\) with
  \((q_1 m,q_2 m) = (q_1' m', q_2' m')\) and
  \(\varphi_{q_1,q_2,m}(y) = \varphi_{q_1', q_2', m'}(y')\).
  Next, there are \(t_1,t_2\in P\) with \(p_2 n t_1=q_1 m t_2\).
  Then \(p_2'  n' t_1=q_1' m' t_2\).
  Using Lemma~\ref{lem:varphig_props}, this implies
  \begin{align*}
    \varphi_{p_1,p_2, nt_1}(x)
    &\circ\varphi_{q_1,q_2, mt_2}(y)
      = \varphi_{p_1 n,p_2 n, t_1}\bigl(\varphi_{p_1,p_2,n}(x)\bigr)
      \circ \varphi_{q_1 m ,q_2 m ,t_2}\bigl(\varphi_{q_1,q_2,m}(y)\bigr)\\
    &= \varphi_{p_1' n',p_2' n', t_1}\bigl(\varphi_{p_1', p_2',
      n'}(x')\bigr)
      \circ \varphi_{q_1' m' ,q_2' m' ,t_2}\bigl(\varphi_{q_1', q_2', m'}(y')\bigr)\\
    &= \varphi_{p_1',p_2', n' t_1}(x')\circ \varphi_{q_1',q_2', m' t_2}(y').
  \end{align*}
  Hence~\(\w_{g,h}\) is well-defined.  Next we prove that~\(\w_{g,h}\)
  is additive in both variables.  Since~\(\w_{g,h}\) is independent of
  the choice of representatives, it suffices to check this when both
  summands are in the same \(\Hom_{-, F_1}(F_{p_2}, F_{p_1})F_1\).
  Since~\(\varphi_{p_1,p_2,t}\) is additive by
  Lemma~\ref{lem:varphig_props} and composition distributes over
  addition, \(\w_{g,h}\) is additive in each argument.  It is also easy
  to see that \(\w_{g,h}(a x b, y c) = a \w_{g,h}(x, b y) c\) for
  \(a,b,c\in F_1\), \(x\in \OO[g]\), \(y\in \OO[h]\).  This implies
  that~\(\w_{g,h}\) descends to an \(F_1\)\nb-bimodule map
  \(\OO[g]\otimes_{F_1}\OO[h] \to \OO[g h]\).
\end{proof}

\begin{lemma}
  \label{lem:O_is_morph}
  The diagram
  \[
    \begin{tikzcd}[column sep=large]
      {\OO[g]\otimes_{F_1}\OO[h]\otimes_{F_1} \OO[k]}
      \arrow[d, "{\id\otimes_{F_1} \w_{h,k}}"] \arrow[r, "{\w_{g,h}\otimes_{F_1}\id}"] &
      {\OO[gh]\otimes_{F_1} \OO[k]} \arrow[d, "{\w_{gh,k}}"] \\
      {\OO[g]\otimes_{F_1}\OO[hk]} \arrow[r, "{\w_{g,hk}}"]
      & {\OO[ghk]}
    \end{tikzcd}
  \]
  commutes for all \(g,h,k\in G\).  Hence~\(\OO\) is an associative ring,
  the maps \(\w_{e,g}\) and~\(\w_{g,e}\) make~\(\OO[g]\) an
  \(\OO\)\nb-bimodule for all \(g\in G\), and \(\w_{g,h}\) for
  \(g,h\in G\) induces an \(\OO\)-bimodule homomorphism
  \(\w_{g,h}\colon\OO[g]\otimes_{\OO} \OO[h] \to\OO[gh]\).
\end{lemma}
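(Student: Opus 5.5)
The plan is to check associativity on representatives, after moving all three elements to a common ``level'' where the compositions are literally composable. Every element of \(\OO[g]\otimes_{F_1}\OO[h]\otimes_{F_1}\OO[k]\) is a finite sum of elementary tensors. All maps in the square are additive, so it suffices to test elementary tensors \([x,(p_1,p_2)]\otimes[y,(q_1,q_2)]\otimes[z,(r_1,r_2)]\).

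Using the Ore condition~\ref{enum:O1} repeatedly, I will choose \(t_1,t_2,t_3\in P\) with \(p_2t_1=q_1t_2\) and \(q_2t_2=r_1t_3\). First pick \(a,b\) with \(p_2a=q_1b\). Then pick \(c,d\) with \(q_2bc=r_1d\), and set \(t_1=ac\), \(t_2=bc\), \(t_3=d\). Replacing the representatives by \(\varphi_{p_1,p_2,t_1}(x)\), \(\varphi_{q_1,q_2,t_2}(y)\), \(\varphi_{r_1,r_2,t_3}(z)\), which is allowed by the definition of \(\sim\), I may assume \(p_2=q_1\) and \(q_2=r_1\).

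With these normalised representatives, the choice ``\(t_1=t_2=1\)'' is admissible in Definition~\ref{def:multiOO}, because \(p_2\cdot1=q_1\cdot1\). Since \(\w_{g,h}\) does not depend on the choice of \(t_1,t_2\) (the preceding lemma), and \(\varphi_{\cdot,\cdot,1}=\id\) by~\eqref{eq:varphig_unit}, I get
\[
  \w_{g,h}\bigl([x,(p_1,p_2)],[y,(p_2,q_2)]\bigr)=[x\circ y,(p_1,q_2)].
\]
Applying the same observation once more, both composites around the square give \([x\circ y\circ z,(p_1,r_2)]\), by associativity of composition of module maps. So the square commutes. The only delicate point is that I rely on the independence of the chosen \(t_i\), which the previous lemma established.

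For the consequences, take \(g=h=k=e\): then \(\w_{e,e}\) is an associative multiplication on \(\OO[e]\). The same square with \(g\) or \(k\) equal to \(e\) gives the associativity and compatibility laws that make \(\OO[g]\) a left and right \(\OO[e]\)-module with commuting actions. The case with exactly one of \(g,h,k\) different from \(e\) yields the bimodule axioms. The square with \(h=e\) shows that \(\w_{g,k}\) is balanced over \(\OO[e]\). The cases \(g=e\) and \(k=e\) show that \(\w_{g,h}\) is left and right \(\OO[e]\)-linear, so it descends to an \(\OO[e]\)-bimodule map \(\OO[g]\otimes_{\OO[e]}\OO[h]\to\OO[gh]\).

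Here ``\(\OO\)'' in the statement means \(\OO[e]\), as in the notation \(\OO[{}]\) with default argument \(e\). No unit or smoothness is needed for these purely algebraic axioms. I expect no real obstacle beyond the bookkeeping of choosing the common refinement \(t_1,t_2,t_3\) correctly.
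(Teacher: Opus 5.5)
Your proof is correct and is essentially the paper's argument: you move the three representatives to pairs of the form \((p_1,p_2),(p_2,p_3),(p_3,p_4)\), where the products become plain compositions and associativity of composition gives the square. You make explicit, via~\ref{enum:O1}, the cofinality that the paper only asserts, and your consequences for the ring, bimodule and balancing structure come from specialising \(g,h,k\) to~\(e\) exactly as the paper does.
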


\begin{proof}
  Let \(\tilde{x}\in\OO[g]\), \(\tilde{y}\in\OO[h]\) and
  \( \tilde{z}\in\OO[k]\).
  We may choose representatives of the form
  \(\tilde{x}=\bigl[x,(p_1,p_2)\bigr]\),
  \(\tilde{y}=\bigl[y,(p_2,p_3)\bigr]\) and
  \(\tilde{z}=\bigl[z,(p_3,p_4)\bigr]\) because such triples of pairs
  \((p_1,p_2),(p_2,p_3),(p_3,p_4)\) are cofinal in
  \(\CPg\times\CPg[h]\times\CPg[k]\).
  Then
  \begin{equation}
    \label{eq:w_nice}
    \w_{gh,k}\bigl(\w_{g,h}(\tilde{x} \otimes \tilde{y}) \otimes\tilde{z}\bigr)
    = \w_{g,hk}\bigl(\tilde{x} \otimes\w_{h,k}(\tilde{y} \otimes \tilde{z})\bigr)
  \end{equation}
  because composition of maps is associative.  Equation~\eqref{eq:w_nice} with
  \(g=h = k= e\) says that~\(\OO\) is an associative ring.  Similarly,
  \eqref{eq:w_nice} implies that \(\w_{e,g}\) and~\(\w_{g,e}\)
  make~\(\OO[g]\) an \(\OO\)\nb-bimodule and that the map \(\w_{g,h}\) is
  \(\OO\)-balanced and so descends to an \(\OO\)-bimodule map
  \(\w_{g,h}\colon\OO[g]\otimes_{\OO} \OO[h] \to\OO[gh]\).
  Finally, \eqref{eq:w_nice} says that the diagram in the statement
  commutes.
\end{proof}

\begin{lemma}
  \label{lem:islocunu}
  The ring~\(\OO\) has local units and~\(\OO[g]\) is a smooth
  \(\OO\)\nb-bimodule.
\end{lemma}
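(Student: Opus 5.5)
The plan is to embed \(F_1\) into \(\OO[e]\) and show that idempotents of \(F_1\), so embedded, act as local units on \(\OO[e]\) and on every \(\OO[g]\). For \((p,p)\in R_e\) the map \(\iota_{1,1}\colon \Hom_{-,F_1}(F_1,F_1)F_1\to\OO[e]\) gives, via Lemma~\ref{lem:endo_smooth}, a map \(j\colon F_1\to \OO[e]\), \(a\mapsto [\,a\cdot{-},(1,1)]\). By definition of \(\w\), with representatives \((1,1)\) and \((p_1,p_2)\) and \(t_1=p_1\), \(t_2=1\), we get
\[
  \w_{e,g}\bigl(j(a),[x,(p_1,p_2)]\bigr)=[\varphi_{1,1,p_1}(a\cdot{-})\circ x,(p_1,p_2)],
\]
and \(\varphi_{1,1,p_1}(a\cdot{-})=\mu_{1,p_1}\circ(a\cdot{-}\otimes\id)\circ\mu_{1,p_1}^{-1}\) is left multiplication by~\(a\) on \(F_{p_1}\), because \(\mu_{1,p_1}\) is the canonical isomorphism. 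So \(j(a)\cdot\tilde x=a\cdot\tilde x\), where the right-hand side is the given left \(F_1\)-module structure on \(\OO[g]\). Symmetrically, \(\tilde x\cdot j(a)=\tilde x\cdot a\), computed with \(t_1=1\), \(t_2=p_2\) and \(\varphi_{1,1,p_2}(a\cdot{-})\) as left multiplication by~\(a\) on~\(F_{p_2}\), composed on the right of~\(x\). Thus the \(\OO\)-bimodule structure on \(\OO[g]\) restricts along~\(j\) to the given \(F_1\)-bimodule structure. In particular, \(j\) is a ring homomorphism, so \(j(e)\) is idempotent for idempotent \(e\in F_1\).

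Next I would show that each \(\OO[g]\) is smooth as an \(F_1\)-bimodule. Every element of \(\OO[g]\) is a finite sum of classes \(\iota_{p_1,p_2}(x)\), and the \(\iota_{p_1,p_2}\) are \(F_1\)-bimodule maps from \(\Hom_{-,F_1}(F_{p_2},F_{p_1})F_1\), which is smooth by Lemma~\ref{lem:Hom_F_simplify}. Hence for finitely many elements \(\tilde x_1,\dotsc,\tilde x_n\in\OO[g]\) I can choose, using local units in \(F_1\) as in Definition~\ref{def:local_unit}, a single idempotent \(e\in F_1\) with \(e\tilde x_i=\tilde x_i=\tilde x_i e\) for all~\(i\). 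By the previous paragraph, \(j(e)\tilde x_i=\tilde x_i=\tilde x_i j(e)\) in the \(\OO\)-module sense.

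Taking \(g=e\), this says \(\OO\) has local units, namely the idempotents \(j(e)\). For general~\(g\), the same computation shows that every element of \(\OO[g]\) is fixed on both sides by some idempotent of~\(\OO\). Proposition~\ref{prop:loc_then_smooth}, criterion~\ref{en:loc_then_smooth_3}, and its right-module analogue then give smoothness of \(\OO[g]\) as an \(\OO\)-bimodule. The only real obstacle is the bookkeeping in the first step: I must check that the computation of \(\w_{e,g}(j(a),\cdot)\) is compatible with the choice of representatives, which Lemma~\ref{lem:O_is_morph} and the well-definedness of~\(\w\) already guarantee, and that \(\varphi_{1,1,q}\) of a left multiplication is again left multiplication, which uses the normality condition on~\(\mu_{1,q}\).
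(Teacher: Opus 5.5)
Your proposal is correct and takes the same route as the paper. Both arguments show that each \(\OO[g]\) is a nondegenerate \(F_1\)-bimodule, being an inductive limit of the smooth bimodules \(\Hom_{-,F_1}(F_{p_2},F_{p_1})F_1\). Both then use that the \(\OO\)-bimodule structure restricts along the canonical map \(F_1\to\OO\) to the given \(F_1\)-structure, so the images of idempotents of \(F_1\) serve as local units. The only difference is that the paper asserts this restriction property, while you verify it explicitly by computing \(\w_{e,g}\) and \(\w_{g,e}\) with the representative \((1,1)\) and the normality of \(\mu_{1,q}\).
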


\begin{proof}
  As an inductive limit of nondegenerate \(F_1\)\nb-bimodules,
  \(\OO[g]\) is also a nondegenerate \(F_1\)\nb-bimodule.
  The \(\OO\)\nb-bimodule structure on~\(\OO[g]\) extends the
  \(F_1\)\nb-bimodule structure with respect to the canonical map
  \(F_1 \to \OO\).
  Therefore, the idempotents in~\(F_1\) form a local unit in~\(\OO\)
  and~\(\OO[g]\) is nondegenerate as an \(\OO\)\nb-bimodule.
\end{proof}

\begin{definition}
  \label{def:lax_cov_ring_OF}
  Let~\(\OOO_{\F}\) be the \(G\)\nb-graded ring
  \[
    \OOO_{\F}\defeq \bigoplus_{g\in G} \OO[g]
    = \bigoplus_{g\in G} \varinjlim_{(p_1,p_2)\in R_g}
    \Hom_{-,F_1} (F_{p_2}, F_{p_1})F_1
  \]
  with the multiplication
  \[
    a\cdot b\defeq \w_{g,h}(a\otimes b)\in \OO[gh]
  \]
  for \(g,h\in G\), \(a\in\OO[g]\), \(b\in\OO[h]\) and extended
  distributively to~\(\OOO_{\F}\).
\end{definition}

Lemmas \ref{lem:O_is_morph} and~\ref{lem:islocunu} imply
that~\(\OOO_{\F}\) is a ring with local units.
We are going to prove that~\(\OOO_{\F}\) is a covariance ring
for~\(\F\).
A first step towards this is to construct the universal covariant
representation of~\(\F\) in~\(\OOO_{\F}\).
There are canonical \(F_1\)-bimodule homomorphisms
\[
  \tilde\kappa_p\colon
  F_p\congto\Hom_{-,F_1}(F_1,F_p)F_1\to\OO[p]
\]
for all \(p\in P\).
The isomorphism \(F_p\congto\Hom_{-,F_1}(F_1,F_p)F_1\) is explained in
the paragraph after Lemma~\ref{lem:Hom_F_simplify}.
The maps~\(\tilde\kappa_p\) induce \(F_1,\OO\)-bimodule homomorphisms
\[
  \kappa_p\colon
  F_p\otimes_{F_1}\OO\to\OO[p]\otimes_{F_1}\OO\to\OO[p],
  \qquad
  x\otimes y\mapsto \w_{p,e}(\tilde\kappa_p(x) y).
\]

\begin{proposition}
  \label{pro:kappa_iso}
  Let \(p\in P\), \(g\in G\), and identify~\(p\) with its image~\(p
  1^{-1}\) in~\(G\).
  The map
  \[
    \kappa_{p,g}\colon F_p\otimes_{F_1} \OO[g] \to\OO[pg],\qquad
    x\otimes [f,(p_1,p_2)] \mapsto
    \Bigl[\mu_{p,p_1}\bigl(x\otimes f(-)\bigr),(pp_1,p_2)\Bigr],
  \]
  is an isomorphism of \(F_1,\OO\)\nb-bimodules; here
  \(f\in \Hom_{-,F_1}(F_{p_2},F_{p_1}) F_1\).
\end{proposition}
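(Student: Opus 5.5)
Since \(\OO[g]\) is a filtered colimit over \(\CPg\) and the balanced tensor product \(F_p\otimes_{F_1}{-}\) commutes with filtered colimits, I reduce the claim to a levelwise statement. First I check that \(\kappa_{p,g}\) is well defined. For \(f\in\Hom_{-,F_1}(F_{p_2},F_{p_1})F_1\) and \(x\in F_p\), the map \(\mu_{p,p_1}(x\otimes f(-))\) is right \(F_1\)-linear from \(F_{p_2}\) to \(F_{pp_1}\). Writing \(f=f'e\) with \(e\in F_1\) shows that it lies in \(\Hom_{-,F_1}(F_{p_2},F_{pp_1})F_1\). Also \((pp_1,p_2)\in R_{pg}\). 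The formula is \(F_1\)-balanced in \(x\) and \(f\). It is compatible with the connecting maps: the identity \(\varphi_{pp_1,p_2,q}(\mu_{p,p_1}(x\otimes f))=\mu_{p,p_1q}(x\otimes\varphi_{p_1,p_2,q}(f))\) follows from the associativity square of Definition~\ref{def:diagram_in_rings}, applied to \(p,p_1,q\). Right \(\OO\)-linearity is immediate from the definition of \(\w\): choose representatives so that the right factor composes directly. Left \(F_1\)-linearity is clear.

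Next I identify both sides as colimits over \(\CPg\). The left side is \(\varinjlim_{(p_1,p_2)\in R_g} F_p\otimes_{F_1}\Hom_{-,F_1}(F_{p_2},F_{p_1})F_1\). By Theorem~\ref{thm:proper_is_nice}, applied with \(M=F_{p_2}\) proper, this is \(\varinjlim \Hom_{-,F_1}(F_{p_2},F_p\otimes_{F_1}F_{p_1})F_1\). Composing with the isomorphism \(\mu_{p,p_1}\), it becomes \(\varinjlim \Hom_{-,F_1}(F_{p_2},F_{pp_1})F_1\), and \(\kappa_{p,g}\) is the map induced on colimits by the functor \(\Phi\colon\CPg\to\CPg[pg]\), \((p_1,p_2)\mapsto(pp_1,p_2)\). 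So \(\kappa_{p,g}\) is the canonical map from the colimit of \(\Hg[pg]\circ\Phi\) to that of \(\Hg[pg]\), and it suffices to show that \(\Phi\) is cofinal in the filtered sense.

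This cofinality is the main obstacle, because \(P\to G\) need not be injective and \(P\) need not be cancellative. For surjectivity, given \((q_1,q_2)\in R_{pg}\), I use (O1) and (O2) to find \(s\in P\) and \((p_1,p_2)\in R_g\) with \((q_1s,q_2s)=(pp_1,p_2)\). Note that \(q_1q_2^{-1}=p\,p_1'p_2'^{-1}\) for some \((p_1',p_2')\in R_g\). By the definition of equality in \(G\) there are \(t_1,t_2\) with \(q_1t_1=pp_1't_2\) and \(q_2t_1=p_2't_2\). Then \(s=t_1\) works with \(p_1=p_1't_2\) and \(p_2=p_2't_2\), so every element of \(\OO[pg]\) is in the image. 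For injectivity, suppose two objects \((p_1,p_2)\) and \((p_1',p_2')\) of \(\CPg\) become equalised after applying \(\Phi\) and passing along arrows \(s,s'\). That is, \(pp_1s=pp_1's'\), \(p_2s=p_2's'\), and the representatives agree. Applying (O2) to \(pp_1s=pp_1's'\) gives \(z\) with \(p_1sz=p_1's'z\). Then \((p_1sz,p_2sz)=(p_1's'z,p_2's'z)\) is a common object in \(\CPg\). Since \(\varphi\) is compatible with \(\Phi\) by the first paragraph, the elements agree there too. Hence \(\Phi\) is final and \(\kappa_{p,g}\) is bijective.

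Finally, I check that the abstract isomorphism obtained from Theorem~\ref{thm:proper_is_nice} and \(\mu_{p,p_1}\) is given by the stated formula \(x\otimes f\mapsto\mu_{p,p_1}(x\otimes f(-))\). This holds because the canonical map in Theorem~\ref{thm:fgp_is_nice} is \(n\otimes f\mapsto[m\mapsto n\cdot f(m)]\).
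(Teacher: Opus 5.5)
Your proof is correct and follows the paper's route. It builds a levelwise isomorphism \(F_p\otimes_{F_1}\Hom_{-,F_1}(F_{p_2},F_{p_1})F_1\cong\Hom_{-,F_1}(F_{p_2},F_{pp_1})F_1\) compatible with the maps~\(\varphi\), and then shows that \((p_1,p_2)\mapsto(pp_1,p_2)\) from \(\CPg\) to \(\CPg[pg]\) is cofinal. The only difference is how cofinality is verified: you prove directly that the comparison map of filtered colimits is surjective (using the definition of equality in~\(G\)) and injective (using (O2)), whereas the paper shows that two arrows into the image can be equalised within the image using (O1) and (O2).
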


\begin{proof}
  We use Lemma~\ref{lem:Hom_F_simplify} to replace
  \(\Hom_{-,F_1}(F_{p_1}, F_{p_2})F_1\) in the definition
  of~\(\OO[g]\) by \(F_{p_2} \otimes_{F_1} F_{p_1}^*\).  If
  \(p,p_1,p_2\in P\), then there is a chain of isomorphisms
  \[
    \resizebox{\linewidth}{!}{
      \begin{tikzcd}[ampersand replacement=\&,column sep= small, row sep=tiny]
        {F_p\otimes_{F_1} \Hom_{-,F_1}(F_{p_2},F_{p_1})F_1} \& {F_p\otimes_{F_1} F_{p_1}\otimes_{F_1} F_{p_2}^*F_1} \& {F_{pp_1}\otimes_{F_1} F_{p_2}^*F_1} \& {\Hom_{-,F_1}(F_{p_2},F_{pp_1})F_1,} \\
        {x\otimes y\psi(-)} \& {x\otimes y\otimes\psi} \& {\mu_{p,p_1}(x\otimes y)\otimes\psi} \& {\mu_{p,p_1}(x\otimes y)\psi(-)},
        \arrow["\cong"', from=1-2, to=1-1]
        \arrow["\cong", from=1-2, to=1-3]
        \arrow["\cong", from=1-3, to=1-4]
        \arrow[maps to, from=2-2, to=2-3]
        \arrow[maps to, from=2-3, to=2-4]
        \arrow[maps to, from=2-2, to=2-1]
      \end{tikzcd}
    }
  \]
  The composite isomorphisms from left to right for \((p_1,p_2) \in
  R_g\) are compatible with the maps~\(\varphi_{p_1,p_2,t}\) in
  Definition~\ref{def:varphi}.
  That is, they provide an isomorphism of diagrams over~\(\CPg\).
  The inductive system on the left is \(F_p\otimes_{F_1} \Hg\), and
  its inductive limit is \(F_p \otimes_{F_1} \OO[g]\) because the
  tensor product commutes with colimits.
  We claim that the functor \(\CPg \to \CPg[pg]\) that maps
  \((p_1,p_2) \in R_g\) to \((p p_1, p_2)\) is cofinal.
  This implies that the inductive system on the right has the same
  colimit as~\(\Hg[pg]\).
  So the isomorphism of inductive systems above implies the desired
  isomorphism \(F_p \otimes_{F_1} \OO[g] \cong \OO[p g]\).
  It remains to prove the asserted cofinality.

  Let \(q_1,q_2\in P\) be such that \(q_1 q_2^{-1} = p g = p p_1
  p_2^{-1}\).
  The latter means that there are \(t_1,t_2\in P\) with \(p p_1 t_1
  =q_1 t_2\) and \(p_2 t_1 = q_2 t_2\).
  So \((q_1,q_2,t_2)\) is an arrow in~\(\CPg[pg]\) from \((q_1,q_2)\)
  to \((p p_1 t_1, p_2 t_1) = (q_1 t_2, q_2 t_2)\), which is in the
  image of our functor.
  Let \(u_1,u_2\in P\) be another choice with
  \(p p_1 u_1 =q_1 u_2\) and \(p_2 u_1 = q_2 u_2\), so that
  \((q_1,q_2,u_2)\) is another arrow in~\(\CPg[pg]\) from
  \((q_1,q_2)\) to an object
  \((p p_1 u_1, p_2 u_1) = (q_1 u_2, q_2 u_2)\) in the image of our
  functor.  Then~\ref{enum:O1} gives \(v,w\in P\) with
  \(u_2 v = t_2 w\).  Then
  \(p_2 (u_1 v) = q_2 u_2 v = q_2 t_2 w = p_2 (t_1 w)\).
  Then~\ref{enum:O2} gives \(x\in P\) with \(u_1 v x= t_1 w x\).
  Since \(u_2 v x = t_2 w x\) as well, the arrows \((p p_1 t_1,p_2
  t_1,w x)\colon (p p_1 t_1,p_2 t_1) \to (p p_1 t_1 w x, p_2 t_1 w
  x)\) and \((p p_1 u_1,p_2 u_1,v x)\colon (p p_1 u_1,p_2 u_1) \to
  (p p_1 u_1 v x, p_2 u_1 v x)\) in the image of~\(\CPg\) equalise
  the two arrows above.
  This finishes the proof of cofinality.
\end{proof}

\begin{corollary}
  \label{cor:semisat}
  Let \(p\in P\), \(g\in G\) and identify~\(p\) with its image~\(p
  1^{-1}\) in~\(G\).
  The multiplication map~\(\w_{p,g}\) induces an isomorphism
  \(\OO[p]\otimes_{\OO}\OO[g]\cong\OO[pg]\).
\end{corollary}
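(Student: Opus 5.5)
The plan is to factor \(\w_{p,g}\) through the isomorphism \(\kappa_{p,g}\) of Proposition~\ref{pro:kappa_iso}, using the case \(g=e\) of that proposition, namely \(\kappa_p=\kappa_{p,e}\colon F_p\otimes_{F_1}\OO\congto\OO[p]\).
Consider the chain of isomorphisms
\[
  \OO[p]\otimes_{\OO}\OO[g]
  \xleftarrow[\cong]{\kappa_p\otimes\id}
  (F_p\otimes_{F_1}\OO)\otimes_{\OO}\OO[g]
  \cong F_p\otimes_{F_1}(\OO\otimes_{\OO}\OO[g])
  \cong F_p\otimes_{F_1}\OO[g]
  \xrightarrow[\cong]{\kappa_{p,g}}\OO[pg].
\]
The middle isomorphisms are the associator and the uniter.
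The uniter is an isomorphism because \(\OO[g]\) is a smooth \(\OO\)\nb-bimodule by Lemma~\ref{lem:islocunu}, so Proposition~\ref{prop:loc_then_smooth} applies.
The outer maps are isomorphisms by Proposition~\ref{pro:kappa_iso}.
Hence the composite is an isomorphism, and it remains to check that this composite equals the map induced by~\(\w_{p,g}\).

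Both maps are additive, so it suffices to compare them on elements \(\kappa_p(x\otimes y)\otimes z\) with \(x\in F_p\), \(y\in\OO\), \(z\in\OO[g]\); these span the source because \(\kappa_p\) is surjective.
By definition \(\kappa_p(x\otimes y)=\w_{p,e}(\tilde\kappa_p(x)\otimes y)\).
Associativity of~\(\w\) (Lemma~\ref{lem:O_is_morph}) gives
\[
  \w_{p,g}(\kappa_p(x\otimes y)\otimes z)
  =\w_{p,g}(\tilde\kappa_p(x)\otimes y z).
\]
The composite, in turn, sends the same element to \(\kappa_{p,g}(x\otimes yz)\).
So the claim reduces to the identity \(\w_{p,g}(\tilde\kappa_p(x)\otimes w)=\kappa_{p,g}(x\otimes w)\) for all \(w\in\OO[g]\).

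This last identity is the main computation.
Write \(w=[f,(p_1,p_2)]\) with \(f\in\Hom_{-,F_1}(F_{p_2},F_{p_1})F_1\).
By the construction after Lemma~\ref{lem:endo_smooth}, \(\tilde\kappa_p(x)=[x\cdot(-),(p,1)]\), where \(x\cdot(-)\colon F_1\to F_p\) is left multiplication, viewed as an element of \(\Hom_{-,F_1}(F_1,F_p)F_1\).
In Definition~\ref{def:multiOO} we may choose \(t_1=p_1\) and \(t_2=1\), since \(1\cdot p_1=p_1\cdot 1\).
Then
\[
  \w_{p,g}(\tilde\kappa_p(x)\otimes w)
  =\bigl[\varphi_{p,1,p_1}(x\cdot(-))\circ f,\,(pp_1,p_2)\bigr].
\]
Unwinding Definition~\ref{def:varphi}, \(\varphi_{p,1,p_1}(x\cdot(-))\) is the map \(F_{p_1}\to F_{pp_1}\), \(y\mapsto\mu_{p,p_1}(x\otimes y)\), once we use that \(\mu_{1,p_1}\) is the canonical isomorphism.
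Composing with~\(f\) gives exactly \(\mu_{p,p_1}(x\otimes f(-))\), which is the formula for \(\kappa_{p,g}\).
The only delicate point is keeping track of the identification \(F_p\cong\Hom_{-,F_1}(F_1,F_p)F_1\) and of the normality of the diagram; once these are handled, the two maps agree, and \(\w_{p,g}\) induces an isomorphism \(\OO[p]\otimes_{\OO}\OO[g]\cong\OO[pg]\).
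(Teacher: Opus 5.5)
Your proof is correct and is the paper's argument: the paper also builds the isomorphism as the chain \(\OO[p]\otimes_{\OO}\OO[g]\cong F_p\otimes_{F_1}\OO\otimes_{\OO}\OO[g]\cong F_p\otimes_{F_1}\OO[g]\cong\OO[pg]\), applying Proposition~\ref{pro:kappa_iso} twice. Your extra computation shows that this composite really is the map induced by \(\w_{p,g}\); it uses associativity of \(\w\) and the choice \(t_1=p_1\), \(t_2=1\), and it also confirms \(\kappa_p=\kappa_{p,e}\). This fills in a step the paper leaves implicit.
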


\begin{proof}
  We use Proposition~\ref{pro:kappa_iso} twice:
  \[
    \OO[p]\otimes_{\OO}\OO[g]
    \cong F_p\otimes_{F_1} \OO\otimes_{\OO}\OO[g]
    \cong F_p\otimes_{F_1} \OO[g]\cong\OO[pg].\qedhere
  \]
\end{proof}

\begin{corollary}
  \label{cor:presemisat}
  Let \(p\in P\).  The bimodule isomorphisms
  \(\kappa_{p,g}\colon F_p\otimes_{F_1}\OO[g]\congto\OO[pg]\) induce an
  \(F_1,\OOO_\F\)-bimodule isomorphism
  \(\kappa_p\colon
  F_p\otimes_{F_1} \OOO_{\mathcal{F}}\congto\OOO_{\mathcal{F}}\) for all
  \(p\in P\).  The maps~\((\tilde\kappa_p)_{p\in P}\) form a covariant
  representation of~\(\F\) in the ring~\(\OOO_\F\).
\end{corollary}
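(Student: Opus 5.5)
The plan is to assemble \(\kappa_p\) as the direct sum of the isomorphisms of Proposition~\ref{pro:kappa_iso} and then verify the two defining properties of a covariant representation.

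First, since \(\OOO_\F = \bigoplus_{g\in G}\OO[g]\) and the balanced tensor product commutes with direct sums, we have
\[
  F_p\otimes_{F_1}\OOO_\F \cong \bigoplus_{g\in G} F_p\otimes_{F_1}\OO[g].
\]
Applying \(\kappa_{p,g}\) summandwise gives a bijection onto \(\bigoplus_{g\in G}\OO[pg]\). Left multiplication by the image of~\(p\) in~\(G\) is a bijection \(G\to G\), so this target is again \(\OOO_\F\). Hence \(\kappa_p\defeq\bigoplus_g\kappa_{p,g}\) is an isomorphism of Abelian groups.

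It is left \(F_1\)\nb-linear because each~\(\kappa_{p,g}\) is. For right \(\OOO_\F\)\nb-linearity, I check \(\kappa_{p,gh}(x\otimes\w_{g,h}(a\otimes b)) = \w_{pg,h}(\kappa_{p,g}(x\otimes a)\otimes b)\). To do so, choose representatives \(a=[f,(p_1,p_2)]\) and \(b=[f',(p_2,p_3)]\) with matching middle index; this is possible by the cofinality argument already used in Lemma~\ref{lem:O_is_morph}. Then both sides equal \([\mu_{p,p_1}(x\otimes f(f'(-))),(pp_1,p_3)]\), using only the definitions of \(\w\) and~\(\kappa_{p,g}\).

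Next I identify~\(\kappa_p\) with the map \(x\otimes y\mapsto\tilde\kappa_p(x)\cdot y\) built from~\(\tilde\kappa_p\). Here \(\tilde\kappa_p(x)=[\mu_{p,1}(x\otimes -),(p,1)]\), viewing \(x\in F_p\) as the left multiplication map \(F_1\to F_p\). For \(y=[f,(p_1,p_2)]\in\OO[g]\), the multiplication~\(\w_{p,g}\) may use \(t_1=p_1\) and \(t_2=1\), because \(1\cdot p_1 = p_1\cdot 1\). Then \(\varphi_{p,1,p_1}(\tilde\kappa_p(x))\circ f\) is the map \(z\mapsto\mu_{p,p_1}(x\otimes f(z))\), after using the coherence of~\(\mu\) with \(\mu_{1,p_1}\). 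This is exactly the formula for~\(\kappa_{p,g}\). So the maps~\(\nu_p\) of Definition~\ref{def:cov_rep_of_F} attached to~\((\tilde\kappa_p)\) are the isomorphisms~\(\kappa_p\), and the representation is strong.

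It remains to check multiplicativity, \(\tilde\kappa_p(x)\tilde\kappa_q(y)=\tilde\kappa_{pq}(\mu_{p,q}(x\otimes y))\). By the computation just made, the left side is \(\kappa_{p,q}(x\otimes\tilde\kappa_q(y))\), which equals \([\mu_{p,q}(x\otimes\mu_{q,1}(y\otimes -)),(pq,1)]\). The associativity diagram in Definition~\ref{def:diagram_in_rings} identifies this with the right side. Nondegeneracy of the homomorphism \(F_1\to\OOO_\F\) holds by Lemma~\ref{lem:islocunu}.

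The main obstacle is the bookkeeping with representatives and Ore choices in identifying \(\kappa_p\) with \(x\otimes y\mapsto\tilde\kappa_p(x)y\). I will keep it manageable by always choosing representatives with matching indices, and by invoking the independence of choices already proven for~\(\w_{g,h}\).
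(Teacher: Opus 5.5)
Your proposal is correct and follows the paper's proof. You assemble \(\kappa_p\) from the \(\kappa_{p,g}\), using that tensor products commute with direct sums and that \(g\mapsto pg\) is a bijection of~\(G\). You then get multiplicativity from the coherence identity \(\mu_{p,q}(x\otimes\mu_{q,1}(y\otimes z)) = \mu_{pq,1}(\mu_{p,q}(x\otimes y)\otimes z)\), exactly as the paper does.

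You go beyond the paper in one place: you explicitly identify \(\kappa_p\) with \(x\otimes y\mapsto\tilde\kappa_p(x)\cdot y\), which the paper leaves implicit but which is what shows the representation is strong. That identification, together with associativity of \(\OOO_\F\), already gives right \(\OOO_\F\)\nobreakdash-linearity, so your separate check of it is redundant but harmless.
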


\begin{proof}
  The isomorphisms~\(\kappa_{p,g}\) give the isomorphism~\(\kappa_p\)
  because the tensor product commutes with direct sums and
  \(g\mapsto p g\) is a bijection on the group~\(G\).  We check that
  these isomorphisms form a covariant representation.
  The only nontrivial point is that
  \(\tilde\kappa_p(x)\cdot \tilde\kappa_q(y) =
  \tilde\kappa_{pq}(\mu_{p,q}(x\otimes y))\) for all \(p,q\in P\),
  \(x\in F_p\) and \(y\in F_q\).
  The left hand side here is represented by the composite map
  \[
    F_1 \to F_q \to F_p \otimes_{F_1} F_q \cong F_{p q},
    \qquad
    z\mapsto \mu_{p,q}(x\otimes \mu_{q,1}(y\otimes z)).
  \]
  The coherence conditions of the diagram in
  Definition~\ref{def:diagram_in_rings} imply that this is
  \(\mu_{pq, 1}(\mu_{p,q}(x\otimes y)\otimes z)\).
  This is a representative for \(\tilde\kappa_{pq}(\mu_{p,q}(x\otimes
  y))\) as needed.
\end{proof}

\begin{theorem}
  \label{thm:cov_rep_F=O}
  Let \(\F=(P, F_p, \mu_{p,q})\) be a proper Ore diagram, that is, \(P\)
  is an Ore monoid and~\(\F\) is a normal pseudofunctor
  \(P \to \Ringspr\).
  Then \((\tilde\kappa_p)_{p\in P}\) is the universal covariant
  representation of~\(\mathcal{F}\), so that~\(\OOO_{\F}\) is a
  covariance ring of~\(\F\).
  That is, for any ring~\(D\),
  \[
    \beta_D\colon \URing(\OOO_\mathcal{F},D)\to\CovRep[D]{\F},\qquad
    (\OOO_\mathcal{F}\xrightarrow{f}D)
    \mapsto ((f\circ \tilde\kappa_p)_{p\in P}),
  \]
  is an isomorphism
  \(\URing(\OOO_\mathcal{F},D) \congto\CovRep[D]{\F}\).
\end{theorem}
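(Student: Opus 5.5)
The plan is to construct an inverse to~\(\beta_D\) by the same recipe as the map~\(\kappa^*_p\) in Lemma~\ref{lem:dual_in_covariance_ring}: the class \([T,(p_1,p_2)]\) should act as ``\(\nu_{p_1}\circ(T\otimes\id)\circ\nu_{p_2}^{-1}\)''. Fix a covariant representation \((\tilde\nu_p)_{p\in P}\) of~\(\F\) in~\(D\), with bimodule isomorphisms \(\nu_p\colon F_p\otimes_{F_1}D\congto D\).

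\emph{Step 1: define the maps on each term.} For \((p_1,p_2)\in R_g\) and \(T\in\Hom_{-,F_1}(F_{p_2},F_{p_1})\), consider the right \(D\)\nb-module endomorphism
\[
  \Theta(T)\defeq \nu_{p_1}\circ(T\otimes_{F_1}\id_D)\circ\nu_{p_2}^{-1}\colon D\to D .
\]
Lemma~\ref{lem:endo_smooth} identifies \(\Endo_{-,D}(D)D\) with \(D\) acting by left multiplication. So the element
\[
  \theta_{p_1,p_2}(T\cdot e)\defeq\Theta(T)\bigl(\tilde\nu_1(e)\bigr)\in D
\]
represents the operator \(\Theta(T\cdot e)\). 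Well-definedness in the factorisation \(T\cdot e\) follows exactly as in Lemma~\ref{lem:dual_in_covariance_ring}, because \(\Hom\otimes_{F_1}F_1\cong\Hom\cdot F_1\). Each \(\theta_{p_1,p_2}\) is then an \(F_1\)\nb-bimodule map.

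\emph{Step 2: descend to the colimit.} I must check \(\theta_{p_1q,p_2q}\circ\varphi_{p_1,p_2,q}=\theta_{p_1,p_2}\). Covariance gives
\[
  \nu_{pq}\circ(\mu_{p,q}\otimes\id_D)=\nu_p\circ(\id_{F_p}\otimes\nu_q),
\]
since this is the relation \(\tilde\nu_p(x)\tilde\nu_q(y)=\tilde\nu_{pq}(\mu_{p,q}(x\otimes y))\) applied to \(D\). Substituting into the formula for \(\varphi_{p_1,p_2,q}\) in Definition~\ref{def:varphi}, the inner factors \(\nu_q\), \(\nu_q^{-1}\) cancel. Hence the operators \(\Theta(T)\) and \(\Theta(\varphi_{p_1,p_2,q}(T))\) agree, and the maps \(\theta\) induce additive \(F_1\)\nb-bimodule maps \(\theta_g\colon\OO[g]\to D\); additivity is compatible with the addition on~\(\OO[g]\), which is computed after applying some~\(\varphi\). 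Let \(\theta\defeq\bigoplus_g\theta_g\colon\OOO_\F\to D\).

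\emph{Step 3: multiplicativity, nondegeneracy, and that this inverts~\(\beta_D\).} The product in Definition~\ref{def:multiOO} is composition \(\varphi(x)\circ\varphi(y)\) after moving to a common representative with \(p_2t_1=q_1t_2\). The operators satisfy \(\Theta(x\circ y)=\Theta(x)\circ\Theta(y)\), because the middle \(\nu^{-1}\nu\) cancels. Since \(\theta\) is faithful to these operators via Lemma~\ref{lem:endo_smooth}, \(\theta\) is a ring homomorphism. It is nondegenerate because \(\theta\circ\tilde\kappa_1=\tilde\nu_1\) is nondegenerate and the idempotents of~\(F_1\) form a local unit in~\(\OOO_\F\) (Lemma~\ref{lem:islocunu}). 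Moreover \(\theta\circ\tilde\kappa_p=\tilde\nu_p\), taking \(p_2=1\) and \(\nu_1\) canonical. Thus \(\beta_D(\theta)=(\tilde\nu_p)_{p\in P}\), so \(\beta_D\) is surjective.

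\emph{Step 4: injectivity, the main obstacle.} I must show that a nondegenerate \(f\colon\OOO_\F\to D\) is determined by the maps \(f\circ\tilde\kappa_p\). The idea is to apply the argument of Step 1 inside~\(\OOO_\F\) itself, using the isomorphisms \(\kappa_p\) of Corollary~\ref{cor:presemisat}. For \(x=[T,(p_1,p_2)]\), I expect left multiplication by~\(x\) on~\(\OOO_\F\) to equal \(\kappa_{p_1}(T\otimes\id)\kappa_{p_2}^{-1}\); this is checked on representatives using Proposition~\ref{pro:kappa_iso}. Tensoring this identity with \({-}\otimes_{\OOO_\F}D\) along~\(f\) gives \(D\cong\OOO_\F\otimes_{\OOO_\F}D\) by nondegeneracy of~\(f\). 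Under this identification, \(\kappa_p\otimes\id_D\) becomes the \(\nu_p\) of \(f\circ\tilde\kappa_p\), exactly as in the proof that strong covariant representations push forward. Therefore left multiplication by \(f(x)\) is \(\Theta(T)\) built from \(f\circ\tilde\kappa\), so \(f=\theta\).

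The delicate points are the bookkeeping of representatives in Step~3 (which uses the Ore cofinality already used for~\(\w_{g,h}\)) and the operator identity for~\(x\) in Step~4. The rest is formal.
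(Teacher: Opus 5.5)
Your proposal is correct and is essentially the paper's proof: the inverse to \(\beta_D\) is built from the operators \(\nu_{p_1}\circ(T\otimes\id_D)\circ\nu_{p_2}^{-1}\), identified with elements of \(D\) via Lemma~\ref{lem:endo_smooth}. These are descended to \(\OO[g]\) by the covariance identity, shown to be multiplicative, and shown to recover \(\tilde\nu_p\) when \(p_2=1\). Your Step~4 only repackages the paper's uniqueness argument, which evaluates \(f(\xi)\) on elements \(\nu_{p_2}(x\otimes d)\), as a base change along~\(f\); both rest on the same identity \([T,(p_1,p_2)]\cdot\tilde\kappa_{p_2}(y)=\tilde\kappa_{p_1}(T(y))\) in~\(\OOO_\F\), which follows directly from the definition of~\(\w_{g,h}\).
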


\begin{proof}
  Let \((\tilde\nu_p)_{p\in P}\) be a covariant representation
  of~\(\F\) in the ring~\(D\).  Then~\(\tilde\nu_p\) for \(p\in P\)
  induces an \(F_1,D\)-bimodule isomorphism
  \(\nu_p\colon F_p\otimes_{F_1}D\congto D\).  In particular, \(D\) is a
  smooth \(F_1,D\)-bimodule.  Let \(g\in G\) and let
  \((p_1,p_2)\in P\times P\) satisfy \(p_1 p_2^{-1} = g\).
  Define a map
  \begin{align*}
    \tilde{\psi}_{p_1,p_2}\colon
    \Hom_{-,F_1}(F_{p_2},F_{p_1})
    &\to\Endo_{-,D}(D),\\
    T&\mapsto\nu_{p_1}\circ(T\otimes\id_D)\circ\nu_{p_2}^{-1}.
  \end{align*}
  As an \(F_1\)\nb-bimodule map, this maps
  \[
    \Hom_{-,F_1}(F_{p_2},F_{p_1})F_1\to \Endo_{-,D}(D)F_1
    \subseteq \Endo_{-,D}(D)D = D,
  \]
  where the last step is Lemma~\ref{lem:endo_smooth}.
  So~\(\tilde{\psi}_{p_1,p_2}\) restricts to an \(F_1\)\nb-bimodule
  map
  \[
    \psi_{p_1,p_2}\colon \Hom_{-,F_1}(F_{p_2},F_{p_1})F_1 \to D.
  \]
  The covariance condition in Definition~\ref{def:cov_rep_of_F} is
  equivalent to
  \[
    \nu_{p_1 p_2}\circ(\mu_{p_1,p_2}\otimes\id_D)
    = \nu_{p_1}\circ(\id_{F_{p_1}}\otimes\nu_{p_2})
  \]
  for all \(p_1,p_2\in P\).
  By a routine computation, this implies that the
  maps~\(\tilde{\psi}_{p_1,p_2}\) form a cone under the filtered
  diagram~\(\Hg\).  This is inherited by the maps~\(\psi_{p_1,p_2}\).
  Hence they induce an \(F_1\)\nb-bimodule map
  \(\varphi_g\colon \OO[g]\to D\) on the inductive limit.  Let
  \(\varphi\colon \OOO_\F\to D\) be the map that restricts
  to~\(\varphi_g\) on the summand~\(\OO[g]\).  An easy computation shows
  that~\(\varphi\) is a ring homomorphism.
  Since~\(D\) is nondegenerate as an \(F_1\)\nb-bimodule and the
  idempotents in~\(F_1\) form a local unit in~\(\OOO_{\F}\), \(D\) is
  nondegenerate as an \(\OOO_{\F}\)\nb-module as well.  That is, the
  homomorphism~\(\varphi\) is nondegenerate.

  It remains to prove that the map that sends the covariant
  representation~\((\tilde\nu_p)_{p\in P}\) to the nondegenerate
  homomorphism~\(\varphi\) is inverse to~\(\beta_D\).
  In one direction, we must compute \(\beta_D(\varphi)\).
  Lemma~\ref{lem:endo_smooth} identifies \(\Endo_{-,D}(D)D\) with the
  subring isomorphic to~\(D\) consisting of the left multiplication
  operators.
  This works also for~\(F_1\), so that \(F_1^* = \Endo_{-,F_1}(F_1)
  F_1 \cong F_1\).
  Then \(F_p \otimes_{F_1} F_1^* \cong F_p\).
  A simple inspection shows that the map~\(\psi_{p,1}\) becomes the
  given map~\(\tilde\nu_p\) after the resulting identification of
  \(\Hom_{-,F_1}(F_1,F_p)F_1\) with~\(F_p\).
  This says that~\(\beta_D\) maps~\(\varphi\) back
  to~\((\tilde\nu_p)_{p\in P}\).
  Conversely, let us start with a nondegenerate homomorphism
  \(f\colon \OOO_\F \to D\).
  This is mapped by~\(\beta_D\) to the covariant representation
  \(\tilde\nu_p = f\circ \tilde\kappa_p\).
  Let \(\varphi\colon \OOO_\F \to D\) be the nondegenerate
  homomorphism associated to~\((\tilde\nu_p)_{p\in P}\).
  We claim that \(f=\varphi\).
  It suffices to check this on an element of~\(\OOO_\F\) that is the
  image of \(\xi\in \Hom_{-,F_1}(F_{p_2},F_{p_1})F_1\) for some
  \(p_1,p_2\in P\).
  Since~\(D\) is a ring with local units, \(f(\xi)=\varphi(\xi)\)
  follows if \(f(\xi)\cdot d=\varphi(\xi)\cdot d\) holds for all
  \(d\in D\).
  We know that \(\nu_{p_2}\colon F_{p_2} \otimes_{F_1} D \to D\) is an
  isomorphism.
  Therefore, it suffices to prove \(f(\xi)\cdot \nu_{p_2}(x\otimes d)
  = \varphi(\xi)\cdot \nu_{p_2}(x\otimes d)\) for all \(x\in
  F_{p_2}\), \(d\in D\).
  We compute
  \begin{multline*}
    \varphi(\xi)(\nu_{p_2}(x\otimes d))
    = \nu_{p_1}(\xi(x)\otimes d)
    = \tilde\nu_{p_1}(\xi(x))\cdot d
    = f(\kappa_{p_1}(\xi(x)))\cdot d
    \\= f(\kappa_{p_1 p_2^{-1}}(\xi)) f(\kappa_{p_2}(x)) \cdot d
    = f(\xi)(\nu_{p_2}(x\otimes d)).
  \end{multline*}
  Thus the two constructions are inverse to each other and
  so~\(\beta_D\) is bijective.
\end{proof}

\begin{example}
  We continue the study of a regular graph, based on Examples
  \ref{ex:graphmod} and~\ref{exa:graph_Steinberg}.
  The resulting covariance ring is the Leavitt path algebra of the
  graph.
  This comes with a canonical \(\Z\)\nb-grading.
  In our theory, \(\Z\) occurs as the group completion of the Ore
  monoid~\(\N\).
  The \(F_1\)\nb-module~\(F_p\) is \(F_p = A_R(E^{\circ p})\) for
  \(p\in\N\).
  It has \(\delta\)\nb-functions of paths~\(\alpha\) of length~\(p\) as a
  basis.
  Proposition~\ref{pro:dual_of_correspondence} shows that~\(F_p^*\)
  has \(\beta^*\) for paths of length~\(p\) as a basis.
  Then \(\Hom_{-,F_1} (F_{p_1}, F_{p_2}) F_1 \cong F_{p_2}
  \otimes_{F_1} F_{p_1}^*\) has a basis consisting of pairs
  \((\alpha,\beta^*)\) with paths \(\alpha\) and~\(\beta\) of length
  \(p_2\) and~\(p_1\), respectively, satisfying \(\s(\alpha) = \s(\beta)
  = \rg^*(\beta^*)\).
  The maps in the inductive system defining \(\OO[n] = \varinjlim
  F_{p_2} \otimes_{F_1} F_{p_1}^*\) send \((\alpha,\beta^*)\) to
  \(\sum_{\rg(e) = \s(\alpha)} (\alpha e,(\beta e)^*)\).
  The inductive limit description of~\(\OO[n]\) says that any element
  in the covariance ring of degree~\(n\) may be written as a linear
  combination of pairs \((\alpha,\beta^*)\), where the lengths are
  related by \(p_2-p_1 = n\), and that this decomposition is unique if
  \(p_2\) and~\(p_1\) are fixed and sufficiently big.
  All this is very familiar from the study of Leavitt path algebras.
\end{example}

\section{Ore diagrams of ample correspondences and groupoid models}
\label{sec:diagrams_in_grcat}

We recall the explicit construction of the groupoid model of a proper
Ore diagram of ample groupoid correspondences
from~\cites{Albandik:Thesis, Meyer:Diagrams_models}.
It realises a particular bicategorical limit.
The construction proceeds by first tightening the diagram and then
building the model for the tight case.

\subsection{The groupoid model of a tight Ore diagram}
\label{sec:tight_model}

Let~\(P\) be an Ore monoid and let \(\X=(P, \Gr, \Bisp_p, \mu_{p,q})\) be
a tight Ore diagram in \(\Grcat\).
We will reduce proper diagrams to this case later.
Let~\(G\) be the group completion of~\(P\) (see
Definition~\ref{def:group_completion}) and let \(g\in G\).
As in the construction of the covariance ring, we shall use the
filtered category~\(\CPg\) in Definition~\ref{def:Rg_CPg}, which has
the object set
\[
  R_g \defeq  \setgiven[\big]{(p_1,p_2)\in P\times P}{p_1p_2^{-1}=g\in G}
\]
and the arrow set \(R_g \times P\).
We are going to define a functor from~\(\CPg\) to the category of
topological spaces \(\Top\).

For \((p_1,p_2)\in R_g\), we define \(\Bisp_{p_1}\circ \Bisp_{p_2}^*\)
as above Lemma~\ref{lem:base_of_comp}, using that~\(\Bisp_{p_1}\) is a
groupoid correspondence.
That is, it is the quotient of \(\Bisp_{p_1}\times_{\s,\Gr^0,\s}
\Bisp_{p_2}\) by the equivalence relation \({(x_1,x_2)\sim (x_1 g,x_2 g)}\)
for all \(g\in \Gr\) with \(\s(x_1)=\s(x_2)=\rg(g)\).
We write elements as \(x_1 x_2^*\in \Bisp_{p_1}\circ
\Bisp_{p_2}^*\), where \(x_j \in \Bisp_{p_j}\).

\begin{definition}
  \label{def:alpha_pp^q}
  For \((p_1,p_2)\in R_g\) and \(q\in P\), we define the map
  \[
    \alpha_{p_1,p_2}^q\colon \Bisp_{p_1}\circ \Bisp_{p_2}^*
    \to \Bisp_{p_1q}\circ \Bisp_{p_2q}^*,\qquad
    x_1 x_2^* \mapsto x_1 z (x_2 z)^*,
  \]
  where \(z\in \Bisp_q\) is an element with \(\s(x_1)=\s(x_2)=\rg(z)\) and
  \(x_1 z\defeq \mu_{p_1,q}(x_1,z)\in\Bisp_{p_1q}\).
\end{definition}

The definition above uses that~\(\Bisp_q\) is tight, that is, the maps
\(\rg\colon \Bisp_q/\Gr\to \Gr^0\) are homeomorphisms.
Therefore, the element \(z\in \Bisp_q\) exists and is unique up to
right multiplication by some \(g\in \Gr\).
This does not affect the class in \(\Bisp_{p_1 q}\circ \Bisp_{p_2 q}^*\)
because \(x_1 z (x_2 z)^* = x_1 z h (x_2 z h)^*\) for all \(h\in\Gr\)
with \(\rg(h) = \s(z)\).
In addition, replacing \((x_1,x_2)\) by~\((x_1 g,x_2 g)\) for \(g\in G\) and
adjusting~\(z\) to \(g^{-1} z\) leaves \((x_1 z) (x_2 z)^*\)
unchanged.
So~\(\alpha_{p_1,p_2}^q\) is well-defined.

\begin{lemma}
  \label{lem:alpha_nice}
  The map~\(\alpha_{p_1,p_2}^q\) is a well-defined local homeomorphism.
  In addition,
  \[
    \alpha_{p_1q,p_2q}^t\circ\alpha_{p_1,p_2}^q
    = \alpha_{p_1,p_2}^{q t}
  \]
  and \(\alpha_{p_1,p_2}^1 = \id_{\Bisp_{p_1}\circ \Bisp_{p_2}^*}\) for
  all \((p_1,p_2)\in R_g\) and \(t,q\in P\).
\end{lemma}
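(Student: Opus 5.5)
We prove the three claims in turn: well-definedness (already sketched before the statement, but made precise), the local homeomorphism property, and the cocycle identities. The key structural input is that \(\Bisp_q\) is tight, so the anchor map induces a homeomorphism \(\rg_*\colon \Bisp_q/\Gr\congto\Gr^0\). Since \(\Qu\colon\Bisp_q\to\Bisp_q/\Gr\) is a local homeomorphism, every point of \(\Gr^0\) has an open neighbourhood \(W\) with a continuous local section \(\sigma\colon W\to\Bisp_q\) of \(\rg\), namely \(\sigma=(\Qu|_V)^{-1}\circ\rg_*^{-1}|_W\) for a slice \(V\subseteq\Bisp_q\) with \(\Qu(V)\supseteq\rg_*^{-1}(W)\).

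For the local homeomorphism property, we work on the open sets \(\Omega_W\subseteq \Bisp_{p_1}\circ\Bisp_{p_2}^*\) of classes \(x_1x_2^*\) with \(\s(x_1)\in W\). On \(\Omega_W\) the map is induced by
\[
  (x_1,x_2)\mapsto \bigl(\mu_{p_1,q}(x_1,\sigma(\s x_1)),\ \mu_{p_2,q}(x_2,\sigma(\s x_2))\bigr),
\]
which is continuous on \(\Bisp_{p_1}\times_{\s,\Gr^0,\s}\Bisp_{p_2}\) and equivariant enough to descend along the orbit maps. Both orbit maps are local homeomorphisms, by the discussion before Lemma~\ref{lem:base_of_comp}. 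Hence \(\alpha^q_{p_1,p_2}\) is continuous. To see it is a local homeomorphism, we use basic sets \(U_1U_2^*\) with \(U_j\) compact slices. The map on such a set factors as \((x_1,x_2)\mapsto (x_1\sigma,x_2\sigma)\) into the set \(\mu(U_1V)\,\mu(U_2V)^*\), which is again a basic open set of the same type by Lemma~\ref{lem:base_of_comp} and Lemma~\ref{lem:operations_on_compact_slices}. It is injective there, because \(x_1\sigma\) determines \(x_1\), as \(\mu_{p_1,q}\) is a homeomorphism and \(U_1V\to U_1\) is injective. It is also open, because the inverse is obtained by reading off the first factor of \(\mu_{p_j,q}^{-1}\), which is continuous. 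A continuous injective open map on a neighbourhood gives the local homeomorphism property. I expect this openness/inverse argument to be the main technical point, and I would phrase it via \(\mu_{p_1,q}\colon \Bisp_{p_1}\circ\Bisp_q\congto\Bisp_{p_1q}\) restricted to \(U_1V\), where it is a homeomorphism onto an open set.

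The identity \(\alpha^1=\id\) follows from normality (condition~\ref{en:diagrams_in_Grcat_2} of Definition~\ref{def:diagrams_in_Grcat}). With \(z=1_{\s(x_1)}\in\Gr=\Bisp_1\), we get \(\mu_{p,1}(x,1)=x\).

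For the composition rule, pick \(z\in\Bisp_q\) and \(w\in\Bisp_t\) with \(\rg(w)=\s(z)\). Then \(\mu_{q,t}(z,w)\in\Bisp_{qt}\) is a valid choice for \(\alpha^{qt}\), since its range is \(\rg(z)\). The coherence diagram~\eqref{eq:coherence_category-diagram} gives
\[
  \mu_{p_jq,t}\bigl(\mu_{p_j,q}(x_j,z),w\bigr)=\mu_{p_j,qt}\bigl(x_j,\mu_{q,t}(z,w)\bigr)\quad\text{for } j=1,2.
\]
Both sides therefore produce the same class, and independence of choices gives the identity.
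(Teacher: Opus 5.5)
The paper gives no argument of its own here: well-definedness is discussed in the paragraph before the lemma, and everything else is delegated to \cite{Albandik:Thesis}*{Lemma~3.6}. Your proposal is correct and gives a self-contained proof instead.

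The argument runs as follows. Tightness makes \(\rg = \rg_*\circ\Qu\) injective and open on any slice \(V\subseteq\Bisp_q\). So on a basic set \(U_1U_2^*\) with \(\s(U_1)\subseteq\rg(V)\), the map \(\alpha^q_{p_1,p_2}\) becomes \((x_1,x_2)\mapsto(\mu_{p_1,q}(x_1,v),\mu_{p_2,q}(x_2,v))\), where \(v\) is the unique element of \(V\) over \(\s(x_1)\). The identities \(\alpha^1=\id\) and \(\alpha^t\circ\alpha^q=\alpha^{qt}\) are then just normality and the coherence square evaluated on representatives.

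This is the same kind of slice computation the paper itself performs later for \(\tilde\alpha^q_{p_1,p_2}\) in Lemma~\ref{lem:compute_Phi}. Your route therefore keeps the article self-contained at the cost of a few lines, and it makes visible that only tightness and coherence are used; the citation only saves space.

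Two details deserve a sentence each when you write it up.

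First, \(\s(x_1)\) is not an invariant of the class \(x_1x_2^*\): it becomes \(\s(g)\) for the representative \((x_1g)(x_2g)^*\). So define your open set as the image of \(\setgiven{(x_1,x_2)}{\s(x_1)\in W}\) under the (open) orbit map.

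Second, openness requires the image of \(U_1U_2^*\) to be all of \(\mu_{p_1,q}(U_1V)\,\mu_{p_2,q}(U_2V)^*\), not just contained in it. This holds for the following reason. If \(\bigl(\mu_{p_1,q}(x_1,v_1),\mu_{p_2,q}(x_2,v_2)\bigr)\) lies in the fibre product over \(\s\), then \(\s(v_1)=\s(v_2)\). This forces \(v_1=v_2\) because \(V\) is a slice, hence \(\s(x_1)=\s(x_2)=\rg(v_1)\) and \(v_1=\sigma(\s(x_1))\). Take \(V\) to be a compact slice with \(W=\rg(V)\), rather than the possibly noncompact \(\sigma(W)\), so that Lemma~\ref{lem:base_of_comp} applies to \(U_jV\) and to the target basic set.
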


\begin{proof}
  See \cite{Albandik:Thesis}*{Lemma 3.6}.
\end{proof}

As a consequence, the data above defines a functor
\(\HXg\colon\CPg \to\Top\), mapping the object \((p_1,p_2)\) to
\(\Bisp_{p_1}\circ \Bisp_{p_2}^*\) and the arrow \((p_1,p_2,q)\)
to~\(\alpha_{p_1,p_2}^q\).
Since~\(\CPg\) is filtered, the following definition describes the
colimit of this diagram:

\begin{definition}
  Let~\(\GMH_g\) be the set
  \[
    \GMH_g
    \defeq \varinjlim_{(p_1,p_2)\in R_g} \Bisp_{p_1}\circ \Bisp_{p_2}^*
    =  \biggl(\bigsqcup_{(p_1,p_2)\in R_g} \Bisp_{p_1}\circ
    \Bisp_{p_2}^*\biggr) \biggm/{\sim},
  \]
  where the equivalence relation~\(\sim\) is generated by
  \(x_1 x_2^*\sim \alpha_{p_1,p_2}^q\bigl(x_1 x_2^*\bigr)\) for
  all \(x_1 x_2^*\in\Bisp_{p_1}\circ \Bisp_{p_2}^*\) and
  \((p_1,p_2)\in R_g\), \(q\in P\).
  We give~\(\GMH_g\) the quotient topology.
\end{definition}

\begin{lemma}
  \label{lem:lambda_nice}
  The canonical maps
  \begin{align*}
    \lambda_{p_1,p_2}\colon\Bisp_{p_1}\circ \Bisp_{p_2}^* &\to \GMH_g,\text{ and}\\
    \lambda\colon \bigsqcup_{(p_1,p_2)\in R_g} \Bisp_{p_1}\circ \Bisp_{p_2}^* &\to \GMH_g
  \end{align*}
  are local homeomorphisms.
\end{lemma}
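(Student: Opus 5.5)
The plan is to reduce everything to one general fact: in a colimit of a filtered diagram of spaces whose connecting maps are local homeomorphisms, the canonical maps into the colimit are local homeomorphisms. Lemma~\ref{lem:alpha_nice} says that all connecting maps \(\alpha_{p_1,p_2}^q\) of \(\HXg\) are local homeomorphisms, and \(\CPg\) is filtered. The quotient map \(\lambda\) from the disjoint union restricts to \(\lambda_{p_1,p_2}\) on each open summand. Hence it suffices to show that \(\lambda\) is continuous, open, and locally injective. Continuity holds by definition of the quotient topology.

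\textbf{Openness.} Let \(W\subseteq \Bisp_{p_1}\circ\Bisp_{p_2}^*\) be open. By definition of the quotient topology, I must show that \(\lambda^{-1}(\lambda(W))\) is open in every summand \(\Bisp_{q_1}\circ\Bisp_{q_2}^*\). Because \(\CPg\) is filtered, a point \(y\) of that summand has the same image as a point of \(W\) exactly when there are arrows \((p_1,p_2,t)\) and \((q_1,q_2,u)\) into a common object \((r_1,r_2)\) with \(\alpha^u(y)\in\alpha^t(W)\). So
\[
  \lambda^{-1}(\lambda(W))\cap \Bisp_{q_1}\circ\Bisp_{q_2}^*
  = \bigcup_{t,u} (\alpha^u_{q_1,q_2})^{-1}\bigl(\alpha^t_{p_1,p_2}(W)\bigr).
\]
Each \(\alpha^t\) is open, being a local homeomorphism, and each \(\alpha^u\) is continuous. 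So this union is open.

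\textbf{Local injectivity.} This is the main obstacle: a point can be identified with another only after passing further along the diagram, and a priori there is no uniform stage. I would use the concrete form of the maps. Take a slice-type neighbourhood \(W=U_1U_2^*\), with \(U_1,U_2\) compact slices. Suppose \(x,x'\in W\) have the same image in \(\GMH_g\). Then, after one arrow \((p_1,p_2,t)\), or after two parallel arrows that are then equalised using~\ref{enum:O2}, we get \(\alpha^t(x)=\alpha^t(x')\).

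I then claim that \(\alpha^t\) is injective on \(W\). Write \(x=x_1x_2^*\) and \(x'=x_1'x_2'^*\) with \(x_j,x_j'\in U_j\). The equality \(x_1z(x_2z)^*=x_1'z'(x_2'z')^*\) gives some \(h\) with \(x_1z h=x_1'z'\). Projecting to \(\Bisp_{p_1}/\Gr\) via \(\mu_{p_1,t}\) identifies \(x_1\) and \(x_1'\) up to the action. Since \(U_1\) is a slice, this forces \(x_1=x_1'\), and then \(x_2=x_2'\) follows similarly.

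This computation has to be checked against Albandik's Lemma~3.7 in \cite{Albandik:Thesis}. If it is awkward, the fallback is to cite \cite{Meyer:Diagrams_models}, where the analogous result is proved for \(\GMH_g\).

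With local injectivity in hand, \(\lambda\) restricted to a small open set is a continuous, open injection, and hence a homeomorphism onto an open subset. This gives both claims of the lemma.
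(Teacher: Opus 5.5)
Your proof is correct, but it follows a different route from the paper. The paper gives no argument of its own. It cites \cite{Albandik:Thesis}*{Lemma~3.9} for \(\lambda_{p_1,p_2}\) and then passes to \(\lambda\). You instead prove the claim directly for \(\lambda\) and get \(\lambda_{p_1,p_2}\) by restricting to an open summand.

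Both of your steps hold up:
\begin{itemize}
\item \emph{Openness.} This follows from the standard description of the identifications in a filtered colimit (a common later stage), together with openness of the maps \(\alpha^t\) from Lemma~\ref{lem:alpha_nice}.
\item \emph{Local injectivity.} The reduction to a single \(s\) with \(\alpha^s(x)=\alpha^s(x')\) is exactly where \ref{enum:O2} enters. Injectivity of \(\alpha^s\) on \(U_1U_2^*\) then holds for every \(s\): since \(\mu_{p_1,s}\) is an isomorphism, \(x_1zh=x_1'z'\) forces \(\Qu(x_1)=\Qu(x_1')\), and the slice conditions on \(U_1\) and \(U_2\) give \(x=x'\).
\end{itemize}
These sets \(U_1U_2^*\) are open and form a base by Lemma~\ref{lem:base_of_comp}. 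So a continuous, open, locally injective map results, and the fallback to a citation is unnecessary. Incidentally, the paper cites Lemma~3.9 of \cite{Albandik:Thesis}, not Lemma~3.7.

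The citation buys brevity. Your argument buys self-containedness and shows which ingredients are actually used: filteredness of \(\CPg\) via \ref{enum:O2}, Lemma~\ref{lem:alpha_nice}, and the basic slices \(U_1U_2^*\). It also yields the extra fact that \(\lambda\) is injective on each basic slice \(U_1U_2^*\). This is close in spirit to the injectivity statement later established in Lemma~\ref{lem:lambda_pi_properties}.
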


\begin{proof}
  The first map is a local homeomorphism by
  \cite{Albandik:Thesis}*{Lemma 3.9}, and this implies that the second
  map is so as well.
\end{proof}

\begin{definition}
  \label{def:gpmod}
  Define the topological groupoid
  \[
    \GMH \defeq  \bigsqcup_{g\in G} \GMH_g
    =  \bigsqcup_{g\in G}\varinjlim_{(p_1,p_2)\in R_g}
    \Bisp_{p_1}\circ \Bisp_{p_2}^*
  \]
  with
  \begin{itemize}
  \item object set \(\GMH^0\defeq \Gr^0\);
  \item range and source maps
    \(\rg(x_1 x_2^*)\defeq \rg(x_1)\) and
    \(\s(x_1 x_2^*)\defeq \rg(x_2) = \s^*(x_2^*)\); and
  \item composition uniquely determined by
    \(x_1 x_2^*\cdot x_2 x_3^*\defeq x_1 x_3^*\).
  \end{itemize}
\end{definition}

\begin{theorem}
  \label{thm:gpmodtight}
  The data~\(\GMH\) above defines an ample topological groupoid.
\end{theorem}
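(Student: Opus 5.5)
The plan is to verify the groupoid axioms first, then the topological requirements: continuity of the operations, \(\s,\rg\) local homeomorphisms, \(\GMH^0\) Hausdorff with compact open neighbourhoods.

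\emph{Algebraic structure.} Take \(\gamma=x_1x_2^*\in\GMH_g\) and \(\eta\in\GMH_h\) with \(\s(\gamma)=\rg(\eta)\). Since \(\CPg\) is filtered and the connecting maps \(\alpha^q_{p_1,p_2}\) are defined in Definition~\ref{def:alpha_pp^q} by appending elements of the tight correspondences \(\Bisp_q\), we may enlarge representatives. Using the Ore condition~\ref{enum:O1}, as in Lemma~\ref{lem:O_is_morph}, we can write \(\gamma=x_1x_2^*\) with \((p_1,p_2)\in R_g\) and \(\eta=y_2y_3^*\) with \((p_2,p_3)\in R_h\).

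The condition \(\s(\gamma)=\rg(\eta)\) means \(\rg(x_2)=\rg(y_2)\). Tightness of \(\Bisp_{p_2}\) says \(\rg_*\colon\Bisp_{p_2}/\Gr\to\Gr^0\) is a homeomorphism. Hence \(y_2=x_2k\) for a unique \(k\in\Gr\), and we define \(\gamma\eta\defeq x_1(y_3k^{-1})^*\in\GMH_{gh}\).

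Independence of the choices then reduces to two facts:
\begin{itemize}
\item the \(\alpha\)-maps are compatible with this rule, by the same kind of cofinality argument as in Proposition~\ref{pro:kappa_iso};
\item the identity \(\alpha^t\circ\alpha^q=\alpha^{qt}\) from Lemma~\ref{lem:alpha_nice}.
\end{itemize}
With these, the groupoid axioms follow directly:
\begin{itemize}
\item associativity holds on common representatives;
\item units are \(x\mapsto [\rg^{-1}\text{-lift } z z^*]\in\GMH_e\), with \(z\in\Bisp_1=\Gr\), namely the unit \(1_x\); one checks that \(\Gr^0\) embeds in \(\GMH_e\) as \(1_x\sim zz^*\) for any \(z\) with \(\rg(z)=x\), using tightness;
\item inverses are \((x_1x_2^*)^{-1}=x_2x_1^*\in\GMH_{g^{-1}}\).
\end{itemize}

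\emph{Topology.} By Lemma~\ref{lem:lambda_nice}, \(\lambda\) is a local homeomorphism. Each \(\Bisp_{p_1}\circ\Bisp_{p_2}^*\) carries the ample base of sets \(UV^*\) for compact slices \(U,V\) given by Lemma~\ref{lem:base_of_comp}. On \(UV^*\), the maps \(\rg\) and \(\s\) are the compositions \(u v^*\mapsto \rg(u)\) and \(uv^*\mapsto\rg(v)\). Since \(\s|_U,\s|_V\) are injective, \(\Qu|_U\) is injective, and \(\rg_*\) is a homeomorphism by tightness, both maps are injective and open, hence local homeomorphisms on the quotient.

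For continuity of multiplication, work locally on products \(\lambda(UV^*)\times_{\Gr^0}\lambda(V'W^*)\) with common middle index. There the product is given by \((u v^*,v'w^*)\mapsto u\braket{v}{v'}\dotsm\), which is continuous because \(\braket{\cdot}{\cdot}\) and the actions are continuous. Inversion is clearly continuous. \(\GMH^0=\Gr^0\) is Hausdorff and ample. Openness of \(\Gr^0\subseteq\GMH\) follows since \(\lambda(\s(U)\s(U)^*)\) is open.

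\emph{Main obstacle.} I expect the hardest step to be checking that the topology on the colimit interacts well with the choice of common representatives. Concretely, one must show that the fibre product \(\GMH\times_{\Gr^0}\GMH\) is locally a quotient of the fibre products at finite stages, so that continuity can be checked stagewise. I would address this via Lemma~\ref{lem:lambda_nice}: \(\lambda\times\lambda\) restricted to fibre products is again a local homeomorphism onto an open subset. For the full verification I would appeal to \cite{Albandik:Thesis}, where the \(\Cst\)-version of this groupoid is constructed; ampleness is then automatic from the ample bases above.
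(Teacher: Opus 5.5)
Your proposal is correct and ultimately rests on the same input as the paper. The paper's proof simply cites \cite{Albandik:Thesis}*{Proposition~3.10} for \(\GMH\) being a locally compact, \'etale topological groupoid and gets ampleness from \(\GMH^0=\Gr^0\). Your direct sketch is a sound outline of what that citation supplies: composable pairs represented with a common middle index, the product defined via tightness of \(\Bisp_{p_2}\), \(\rg\) and \(\s\) shown to be local homeomorphisms stagewise through Lemma~\ref{lem:lambda_nice}, and continuity of multiplication checked on finite-stage fibre products.
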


\begin{proof}
  It is shown in \cite{Albandik:Thesis}*{Proposition 3.10} that~\(\GMH\)
  is a locally compact, \'etale, topological groupoid.
  It is ample because~\(\Gr\) is ample and \(\GMH^0=\Gr^0\).
\end{proof}

It is shown in \cite{Meyer:Diagrams_models}*{Theorem 8.18} that
\(\GMH\) is a groupoid model of the diagram~\(\X\).
Here we shall only use this concrete description of the groupoid
model.

\subsection{Tightening a proper diagram}
\label{sec:tighten}

Now let \(\X=(P,\Gr,\Bisp_p,\mu_{p,q})\) be a proper diagram, not
necessarily tight.
We recall, without proofs, the construction in
\cite{Meyer:Diagrams_models}*{\S{8.1}}, which produces a tight Ore
diagram in~\(\Grcat\) from~\(\X\).
The point of this construction is that the new diagram has the same
groupoid model.
For our purposes, we may use this to extend the definition of the
groupoid model to proper diagrams that need not be tight.
The construction in~\cite{Meyer:Diagrams_models} does not require the
diagram~\(\X\) to be proper.
We need this here to ensure that the object space of the groupoid
model is again locally compact, so that it has a Steinberg algebra.

Let \(p,q\in P\).
The coordinate projection \(\Bisp_p \times_{\s,\rg}\Bisp_q \to
\Bisp_p\) descends to a map \(\Bisp_p \circ \Bisp_q \to \Bisp_p/\Gr\).
We let~\(\pi^q_p\) be the composite map
\[
  \pi^q_p\colon \Bisp_{p q}/\Gr
  \xrightarrow[\cong]{\mu_{p,q}^{-1}}(\Bisp_p\circ\Bisp_q)/\Gr
  \to \Bisp_p/\Gr.
\]
The left \(\Gr\)\nb-action on~\(\Bisp_p\) induces a \(\Gr\)\nb-action
on~\(\Bisp_p/\Gr\) because the left and right actions commute.
Let \(\Top^{\Gr}\) be the category of \(\Gr\)\nb-spaces.
Let~\(PO\) be the category with object space~\(P\) and with arrows
\(q\colon p \to p q\) for \(p,q\in P\); this is a subcategory of the
category~\(\CPg[e]\) in Definition~\ref{def:Rg_CPg}.
The maps~\(\pi^q_p\) are \(\Gr\)\nb-equivariant and so \((\Bisp_p/\Gr,
\pi^q_p)\) is a contravariant functor \(PO \to \Top^{\Gr}\).

\begin{definition}
  \label{def:omegaspace}
  Let \(\Omega \defeq \varprojlim(\Bisp_p/\Gr,\pi^q_p)\)
  and let \(\pi_p^\infty\colon \Omega\to \Bisp_p/\Gr\) be the
  canonical map.
  Equip~\(\Omega\) with the induced continuous \(\Gr\)\nb-action.
  Its anchor map is the canonical map \(\pi_1^\infty\colon \Omega\to
  \Bisp_1/\Gr \cong \Gr^0\).
\end{definition}

By definition, \(\Omega\) is the subspace of the product space
\(\prod_{p\in P}\Bisp_p/\Gr\) consisting of all families \((x_p)_{p\in
  P}\) with \(x_{p'}=\pi^q_{p'}(x_{p'q})\) for all \(p',q\in P\).
The map~\(\pi_p^\infty\) maps this family to~\(x_p\).
Since~\(P\) is Ore, the category~\(PO\) is also a filtered category,
so that the diagram above is filtered.
This is used in \cite{Meyer:Diagrams_models}*{Lemma~8.5}
to define certain homeomorphisms
\[
  \mu_{p'}\colon \Bisp_{p'}\circ\Omega\congto\Omega.
\]
The assignment \(p\mapsto (\Bisp_{p'p}/\Gr,\pi^q_{p' p})\) gives
another contravariant functor \(PO\to \Top/\Gr^0\) with the same
projective limit~\(\Omega\).
Thus there is a continuous map
\begin{multline*}
  \Bisp_{p'}\times_{\s_{p'},\Gr^0,\pi_1^\infty} \Omega
  =
  \Bisp_{p'}\times_{\s_{p'},\Gr^0,\pi_1^\infty}\varprojlim(\Bisp_p/\Gr,\pi^q_p)
  \cong
  \varprojlim(\Bisp_{p'}\times_{\s_{p'},\Gr^0,(\rg_p)_*}\Bisp_p/\Gr,\id\times\pi^q_p)
  \\\to\varprojlim(\Bisp_{p'}\circ\Bisp_p/\Gr,\id\circ\pi^q_p)
  \cong\varprojlim(\Bisp_{p'p}/\Gr,\pi^q_{p' p})\cong\Omega.
\end{multline*}

Let \(\Gr\Omega\defeq \Gr\ltimes\Omega\) and \(\Bisp_p \Omega\defeq
\Bisp_p\times_{\s,\Gr^0,\pi_1^\infty}\Omega\) for \(p\in P\).
It is shown in \cites{Albandik:Thesis, Meyer:Diagrams_models}
that~\(\Omega\) is a locally compact, Hausdorff space and that
\(\pi_1^\infty\colon \Omega \to \Gr^0\) is a local homeomorphism.
Since~\(\Gr\) is ample, so is~\(\Omega\).
So~\(\Gr\Omega\) is an ample groupoid.
The following is shown in \cite{Meyer:Diagrams_models}*{\S8.1}:

\begin{proposition}
  \label{prop:tightenediag}
  There are well-defined right and left \(\Gr\Omega\)-actions
  on~\(\Bisp_p \Omega\), which turn it into a groupoid correspondence
  \(\Bisp_p \Omega\colon \Gr\Omega\leftarrow\Gr\Omega\).
  Let \(\s_\Omega,\rg_\Omega\colon \Bisp_p \Omega\to \Omega\) denote
  its anchor maps.
  The maps
  \[
    \Bisp_p \Omega\times_{\s_\Omega,\Omega,\rg_\Omega}
    \Bisp_q \Omega \to \Bisp_{p q} \Omega,
    \qquad
    (x,\omega_1,x_2,\omega_2) \mapsto (\mu_{p,q}(x,x_2), \omega_2),
  \]
  induce isomorphisms of correspondences
  \[
    \mu_{p,q}\Omega\colon \Bisp_p \Omega\circ_{\Gr\Omega}
    \Bisp_q \Omega
    \to \Bisp_{p q} \Omega
  \]
  for all \(p,q\in P\).
  The quadruple \((P,\Gr\Omega,\Bisp_p \Omega,\mu_{p,q} \Omega)\) is a
  diagram of tight ample groupoid correspondences of shape~\(P\).
\end{proposition}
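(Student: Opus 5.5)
We need a proof plan for Proposition \ref{prop:tightenediag}: the tightened diagram \((P,\Gr\Omega,\Bisp_p\Omega,\mu_{p,q}\Omega)\) is a diagram of tight ample groupoid correspondences.

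\textbf{Step 1: the actions.} I would first define the two actions of \(\Gr\Omega=\Gr\ltimes\Omega\) on \(\Bisp_p\Omega=\Bisp_p\times_{\s,\Gr^0,\pi_1^\infty}\Omega\).
\begin{itemize}
\item The right action is \((x,\omega)\cdot(g,g^{-1}\omega)\defeq(xg,g^{-1}\omega)\), with anchor \(\s_\Omega(x,\omega)=\omega\).
\item The left action has anchor \(\rg_\Omega(x,\omega)\defeq\mu_p([x,\omega])\in\Omega\), using the homeomorphism \(\mu_p\colon\Bisp_p\circ\Omega\congto\Omega\) from \cite{Meyer:Diagrams_models}*{Lemma~8.5}.
\item An arrow \((h,\mu_p[x,\omega])\) with \(\s(h)=\rg(x)\) acts by \((x,\omega)\mapsto(hx,\omega)\).
\end{itemize}
\(\Gr\)-equivariance of \(\mu_p\) shows that the anchors match, so the left action is well defined. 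The two actions commute because the \(\Gr\)-actions on \(\Bisp_p\) commute.

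\textbf{Step 2: correspondence axioms and tightness.}
\begin{itemize}
\item The right action is free because the right \(\Gr\)-action on \(\Bisp_p\) is free.
\item It is proper because it is the pullback of a proper action along a map to \(\Omega\), and \(\Omega\) is Hausdorff.
\item \(\s_\Omega\) is a local homeomorphism because \(\s\colon\Bisp_p\to\Gr^0\) is one and fibre products preserve local homeomorphisms.
\item The orbit space is \(\Bisp_p\Omega/\Gr\Omega\cong\Bisp_p\circ\Omega\) (the diagonal quotient). Under this identification \(\rg_{\Omega*}\) is exactly \(\mu_p\), which is a homeomorphism.
\end{itemize}
This gives tightness, and hence also properness. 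Ampleness follows because \(\Omega\) is ample and \(\pi_1^\infty\) is a local homeomorphism.

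\textbf{Step 3: the multiplication maps.} The given map sends \((x,\omega_1,x_2,\omega_2)\) with \(\omega_1=\mu_q[x_2,\omega_2]\) to \((\mu_{p,q}(x,x_2),\omega_2)\).
\begin{itemize}
\item Its target is well defined because \(\s(\mu_{p,q}(x,x_2))=\s(x_2)=\pi_1^\infty(\omega_2)\).
\item It is invariant under the diagonal \(\Gr\Omega\)-action, since that action changes \((x,x_2)\) to \((xg,g^{-1}x_2)\). So it descends to the composite.
\item It is equivariant for both actions. On the left side this needs \(\mu_p[x,\mu_q[x_2,\omega_2]]=\mu_{pq}[\mu_{p,q}(x,x_2),\omega_2]\), a compatibility of the maps \(\mu_p\) that follows from their construction as limits of the \(\mu_{p,q}\)'s and from coherence.
\item Bijectivity: the map \(\Bisp_p\Omega\circ\Bisp_q\Omega\to(\Bisp_p\circ\Bisp_q)\times_{\Gr^0}\Omega\) forgets \(\omega_1\), which is determined by \((x_2,\omega_2)\), and quotienting by \(\Gr\Omega\) matches quotienting by \(\Gr\). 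Composing with \(\mu_{p,q}\) then gives a bijective local homeomorphism, hence a homeomorphism.
\end{itemize}

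\textbf{Step 4: diagram axioms.} Normality holds because \(\Bisp_1\Omega=\Gr\times_{\Gr^0}\Omega\) is the unit correspondence of \(\Gr\Omega\), using \(\mu_1=\) the action map. The coherence hexagon reduces pointwise to that of \(\X\), since \(\omega_2\) is just carried along.

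The main obstacle is in Step 3: proving that the quotient map really is a homeomorphism, and checking the \(\mu_p\)-compatibility that gives left equivariance. For the latter I would first go back to the explicit limit formula for \(\mu_p\) displayed before Definition~\ref{def:omegaspace} and verify the identity in each \(\Bisp_{pqr}/\Gr\) using the associativity condition \ref{en:diagrams_in_Grcat_3}.
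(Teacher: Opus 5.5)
Your proof is correct. The paper gives no argument of its own here. It cites \cite{Meyer:Diagrams_models}*{\S8.1} and later refers back there for details, so your plan is a direct, self-contained verification of what the paper outsources.

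Your set-up and checks all go through:
\begin{itemize}
\item The anchors \(\s_\Omega(x,\omega)=\omega\) and \(\rg_\Omega(x,\omega)=\mu_p[x,\omega]\) are the right ones.
\item The orbit space is \(\Bisp_p\Omega/\Gr\Omega=\Bisp_p\circ\Omega\), so tightness is exactly the statement that \(\mu_p\) is a homeomorphism.
\item Eliminating \(\omega_1=\mu_q[x_2,\omega_2]\) identifies \(\mu_{p,q}\Omega\) with \(\mu_{p,q}\times\id_\Omega\) on \((\Bisp_p\circ\Bisp_q)\times_{\Gr^0}\Omega\).
\item The anchor compatibility \(\mu_p[x,\mu_q[x_2,\omega_2]]=\mu_{pq}[\mu_{p,q}(x,x_2),\omega_2]\) follows as you suggest. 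Compare components in \(\Bisp_{pqt}/\Gr\) using coherence, and note that each component of an element of \(\Omega\) is determined by its components at elements of the form \(pqt\), by~\ref{enum:O1}.
\end{itemize}

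Step~3 is not really the obstacle you flag. Your own elimination of \(\omega_1\) already exhibits \(\mu_{p,q}\Omega\) as a composite of homeomorphisms. The only substantial input, in your proof as in the paper, is that \(\mu_p\colon\Bisp_p\circ\Omega\to\Omega\) is a homeomorphism. That is \cite{Meyer:Diagrams_models}*{Lemma~8.5}, which you correctly take as given.

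One correction: do not justify ampleness of \(\Gr\Omega\) by saying \(\pi_1^\infty\) is a local homeomorphism. The paper's text asserts this, but it is false in general. Take \(P=\N\), \(\Gr\) a point and \(\Bisp_p\) the words of length~\(p\) in two letters. Then \(\Omega\) is a Cantor set and \(\pi_1^\infty\) maps it onto a point.

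What is true, and sufficient, is the following:
\begin{itemize}
\item \(\pi_1^\infty\) is proper, being a projective limit of the proper maps \(\Bisp_p/\Gr\to\Gr^0\). Hence \(\Omega\) is locally compact Hausdorff with a base of compact open cylinder sets.
\item The source map \(\Gr\times_{\s,\Gr^0,\pi_1^\infty}\Omega\to\Omega\) of \(\Gr\ltimes\Omega\) is a pullback of \(\s\colon\Gr\to\Gr^0\), hence a local homeomorphism.
\end{itemize}
Nothing else in your argument needs \(\pi_1^\infty\) to be \'etale. In particular, \(\s_\Omega\) is a local homeomorphism simply as the pullback of \(\s\colon\Bisp_p\to\Gr^0\).
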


We call \(\X\Omega = (P,\Gr\Omega,\Bisp_p \Omega,\mu_{p,q} \Omega)\)
the \emph{tightening} of \(\X=(P,\Gr,\Bisp_p,\mu_{p,q})\).
It is shown in \cite{Meyer:Diagrams_models}*{Theorem 8.10} that
\(\X\Omega\) and~\(\X\) have the same groupoid model.
We simply define the groupoid model of~\(\X\) as the groupoid model of
the tight diagram~\(\X\Omega\).
By Definition~\ref{def:gpmod}, this is
\[
  \GMH \defeq  \bigsqcup_{g\in G} \GMH_g
  =  \bigsqcup_{g\in G} \varinjlim_{(p_1,p_2)\in R_g}
  \Bisp_{p_1} \Omega\circ_{\Gr\Omega} (\Bisp_{p_2} \Omega)^*.
\]

\section{Steinberg algebras of groupoid models}
\label{sec:steinalg_of_grpmodel}

In this section, we are going to prove that the Steinberg algebra of
the groupoid model of a diagram of proper ample groupoid
correspondences of Ore shape~\(P\) is also a covariance ring of the
associated diagram in \(\Ringspr\).
In particular, the Steinberg algebra pseudofunctor from the bicategory
of proper ample groupoid correspondences to \(\Ringspr\) preserves
limits of Ore shape.

\subsection{More computations with duals}
\label{sec:more_duals}

We have already used the ``dual space''~\(\Bisp^*\) of a groupoid
correspondence \(\Bisp\colon \Gr \leftarrow \Gr\) in
Proposition~\ref{pro:dual_of_correspondence} to describe the dual
bimodule \(A_R(\Bisp)^*\) of \(A_R(\Bisp)\).
In Section~\ref{sec:cov_ring_construction}, we have understood the
structure of the covariance ring~\(\OOO\) using the tensor products
\(F_{p_2} \otimes_{F_1} F_{p_1}^*\) for \(p_1,p_2\in P\).
We now describe these tensor products and the canonical maps between
them when the diagram of rings and bimodules comes from a diagram of
proper groupoid correspondences.
Throughout this section, \(P\) is an Ore monoid and \(\X=(P, \Gr,
\Bisp_p, \mu_{p,q})\) is a diagram of proper, ample groupoid
correspondences of shape~\(P\).
We have constructed a normal pseudofunctor
\(\A\colon\Grcat\to\Rings\) (Theorem~\ref{thm:pseudofunctor}) that
maps a groupoid to its Steinberg algebra and a groupoid correspondence
to a bimodule between the Steinberg algebras.
By Proposition~\ref{prop:Steinpro}, this pseudofunctor maps proper
groupoid correspondences to proper bimodules.
In particular, \(\X\) induces a normal pseudofunctor \(\F = \A
* \X\colon P \to \Ringspr\).
The underlying ring of this diagram is \(F_1\defeq A_R(\Gr)\), the
bimodules involved are \(F_p\defeq A_R(\Bisp_p)\), and the
multiplication maps~\(\mu_{p,q}^F\) are induced by the multiplication
maps~\(\mu_{p,q}\) and the canonical isomorphisms \(A_R(\Bisp_p)
\otimes_{A_R(\Gr)} A_R(\Bisp_q) \cong A_R(\Bisp_{p q})\) built after
Definition~\ref{def:mu}.

Recall that the covariance ring of~\(\F\) is a \(G\)\nb-graded ring
\(\OOO_{\F} = \bigoplus_{g\in G} \OO[g]\) with
\[
  \OO[g] \cong \varinjlim_{(p_1,p_2)\in R_g}
  \Hom_{-, A_R(\Gr)}\bigl( A_R(\Bisp_{p_2}), A_R(\Bisp_{p_1})\bigr)
  A_R(\Gr).
\]
We are going to describe~\(\OO[g]\) using the spaces \(\Bisp_{p_1} \circ
\Bisp_{p_2}^*\).

\begin{proposition}
  \label{pro:I_charmap}
  There are canonical \(A_R(\Gr)\)-bimodule isomorphisms
  \begin{align*}
    A_R(\Bisp_{p_1}\circ \Bisp_{p_2}^*)
    &\cong A_R(\Bisp_{p_1}) \otimes_{A_R(\Gr)} A_R(\Bisp_{p_2}^*)
    \\&\cong A_R(\Bisp_{p_1}) \otimes_{A_R(\Gr)}
    \Hom_{-,A_R(\Gr)}
    \bigl( A_R(\Bisp_{p_2}), A_R(\Gr)\bigr) A_R(\Gr)
    \\&\cong \Hom_{-,A_R(\Gr)}
    \bigl( A_R(\Bisp_{p_2}), A_R(\Bisp_{p_1})\bigr) A_R(\Gr)
  \end{align*}
  for all \(p_1,p_2\in P\).  We denote the composite isomorphism by
  \[
    \mathcal{I}_{p_1,p_2}\colon
    A_R(\Bisp_{p_1}\circ \Bisp_{p_2}^*) \congto
    \Hom_{-,A_R(\Gr)}
    \bigl( A_R(\Bisp_{p_2}), A_R(\Bisp_{p_1})\bigr) A_R(\Gr).
  \]
\end{proposition}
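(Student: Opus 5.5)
We will obtain the three isomorphisms one at a time from results already established, and then say what the composite does on characteristic functions of slices.

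\emph{First isomorphism.} We apply Theorem~\ref{the:mu_invertible} with \(\Bisp=\Bisp_{p_1}\colon \Gr\leftarrow\Gr\) and \(\Bisp[Y]=\Bisp_{p_2}^*\). Here \(\Bisp_{p_2}^*\) is regarded as a space with a left \(\Gr\)\nb-action \(g x^*=(xg^{-1})^*\) and a commuting right \(\Gr\)\nb-action.
\begin{itemize}
\item Definition~\ref{def:mu} and Theorem~\ref{the:mu_invertible} only need \(\Bisp\) to be a groupoid correspondence and \(\Bisp[Y]\) to be a \(\Gr,\Gr\)\nb-space with an ample base. So it suffices to produce compatible ample bases as in Lemma~\ref{lem:base_of_comp}.
\item Take \(\B_{\Gr}\) and \(\B_{\Bisp_{p_1}}\) to be all compact slices.
\item Take \(\B_{\Bisp_{p_2}^*}\) to be the sets \(U^*\) for compact slices \(U\subseteq\Bisp_{p_2}\). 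These are compact, open and Hausdorff, form a base, and are stable under differences.
\item The compatibility condition \(W U^*\in\B_{\Bisp_{p_2}^*}\) holds because \(WU^*=(UW^{-1})^*\) and \(UW^{-1}\) is a compact slice by Lemma~\ref{lem:operations_on_compact_slices}.
\end{itemize}
This gives \(A_R(\Bisp_{p_1})\otimes_{A_R(\Gr)}A_R(\Bisp_{p_2}^*)\cong A_R(\Bisp_{p_1}\circ\Bisp_{p_2}^*)\) with \(\charmap{U}\otimes\charmap{V^*}\mapsto\charmap{UV^*}\). Theorem~\ref{the:mu_invertible} also shows this map is left and right \(A_R(\Gr)\)\nb-linear.

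\emph{Second isomorphism.} We apply \(\id\otimes\mathcal{I}_{\Bisp_{p_2}}\) from Proposition~\ref{pro:dual_of_correspondence}. That proposition is stated for \(\Bisp\colon\Gr[H]\leftarrow\Gr\); here we take \(\Gr[H]=\Gr\) and use that \(\Bisp_{p_2}\) is proper. Because \(\mathcal{I}_{\Bisp_{p_2}}\) is a bimodule isomorphism, tensoring on the left with \(A_R(\Bisp_{p_1})\) over \(A_R(\Gr)\) again gives an isomorphism.

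\emph{Third isomorphism.} We use the second statement of Theorem~\ref{thm:proper_is_nice} with \(N=A_R(\Bisp_{p_1})\), \(M=A_R(\Bisp_{p_2})\), \(P=S=T=U=A_R(\Gr)\). Since \(N\) is proper, the canonical map \(n\otimes f\mapsto n\cdot f(-)\) gives
\[
A_R(\Bisp_{p_1})\otimes\Hom_{-,A_R(\Gr)}(A_R(\Bisp_{p_2}),A_R(\Gr))\cong A_R(\Gr)\Hom_{-,A_R(\Gr)}(A_R(\Bisp_{p_2}),A_R(\Bisp_{p_1})).
\]
We still have to insert the right factor \(\cdot A_R(\Gr)\) on both sides, and this is the step we expect to need the most care. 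Two routes are available:
\begin{itemize}
\item Tensor the displayed isomorphism on the right with \(A_R(\Gr)\) and use that the right \(A_R(\Gr)\)\nb-nondegenerate part of a module \(X\) is the image of \(X\otimes_{A_R(\Gr)}A_R(\Gr)\). On the left side this works because the tensor product is right exact, so \(N\otimes(\Hom\cdot A_R(\Gr))\cong(N\otimes\Hom)\cdot A_R(\Gr)\).
\item Alternatively, apply the first statement of Theorem~\ref{thm:proper_is_nice}, with \(M\) proper, which already carries the factor \(R\). This yields \(\Hom(\dotsc)A_R(\Gr)\), and left nondegeneracy then follows from that of \(A_R(\Bisp_{p_1})\).
\end{itemize}
With either route the left factor \(A_R(\Gr)\) is harmless: any \(\Hom\cdot A_R(\Gr)\)-element is a finite sum of maps with image in \(A_R(\Bisp_{p_1})\), and that module has local units from \(A_R(\Gr)\), so it is nondegenerate on the left. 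Naturality of the canonical map in Theorem~\ref{thm:fgp_is_nice} gives compatibility with both bimodule structures.

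\emph{Formula for the composite.} Tracing a generator through the three steps, using the last claim of Proposition~\ref{pro:dual_of_correspondence}, \(\mathcal{I}_{p_1,p_2}(\charmap{UV^*})\) is the map \(\charmap{W}\mapsto\charmap{U}*\charmap{\braket{V}{W}}=\charmap{U\braket{V}{W}}\) for compact slices \(W\subseteq\Bisp_{p_2}\). We record this formula for later use.
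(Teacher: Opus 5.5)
Your proposal is correct and is essentially the paper's proof, which obtains the three isomorphisms from Theorem~\ref{the:mu_invertible}, Proposition~\ref{pro:dual_of_correspondence} and Theorem~\ref{thm:proper_is_nice} in this order and cites properness of \(A_R(\Bisp_{p_1})\), as in your first route; your second route (the first half of Theorem~\ref{thm:proper_is_nice}, with \(M=A_R(\Bisp_{p_2})\) proper) is actually the cleaner one, because it lands directly in \(\Hom_{-,A_R(\Gr)}(A_R(\Bisp_{p_2}),A_R(\Bisp_{p_1}))A_R(\Gr)\). One caveat: your reason why the left factor is harmless does not hold as stated, because a map with values in a left-nondegenerate module need not be fixed by a single idempotent; in your first route the needed equality \(A_R(\Gr)\Hom(\dots)A_R(\Gr)=\Hom(\dots)A_R(\Gr)\) holds because \(e\cdot A_R(\Bisp_{p_2})\) is finitely generated, while in your second route left nondegeneracy simply carries over through the isomorphism from \(A_R(\Bisp_{p_1})\otimes_{A_R(\Gr)}(\dots)\).
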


\begin{proof}
  The isomorphisms in the beginning of the statement follow from
  Theorem~\ref{the:mu_invertible},
  Proposition~\ref{pro:dual_of_correspondence},
  and Theorem~\ref{thm:proper_is_nice}, respectively.
  Here we use that \(A_R(\Bisp_{p_1})\) is a proper bimodule
  because~\(\Bisp_{p_1}\) is proper.
\end{proof}

We want to compute the map~\(\mathcal{I}_{p_1,p_2}\) explicitly.
This requires an ample base for the space \(\Bisp_{p_1}\circ
\Bisp_{p_2}^*\).
We choose this base in a way that also facilitates several other
computations later on.

Let \(p_1,p_2\in P\).
An element of \(\Bisp_{p_1} \circ \Bisp_{p_2}^*\) is written as \(x_1
x_2^*\), where \(x_j\in\Bisp_{p_j}\) for \(j=1,2\) satisfy \(\s(x_1) =
\s(x_2)\).
Note that \(x_1 x_2^* = (x_1 g)(x_2 g)^*\) if \(g\in\Gr\) satisfies
\(\rg(g) = \s(x_1) = \s(x_2)\).
So we sometimes have to check in computations whether formulas remain
the same if we replace \((x_1,x_2)\) by \((x_1 g,x_2 g)\).

\begin{lemma}
  \label{lem:new_Bipp_base}
  There are well-defined local homeomorphisms \(\Qu_j\colon
  \Bisp_{p_1} \circ \Bisp_{p_2}^* \to \Bisp_{p_j}/\Gr\), \(x_1 x_2^*
  \mapsto \Qu(x_j)\), for \(j=1,2\).
  Both are injective on \(U_1 U_2^*\) if \(U_k\subseteq \Bisp_{p_k}\)
  for \(k=1,2\) are slices.
\end{lemma}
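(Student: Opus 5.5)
The plan is to realise \(\Qu_j\) as the composite of the orbit space projection \(\pi\colon \Bisp_{p_1}\times_{\s,\Gr^0,\s}\Bisp_{p_2}\to \Bisp_{p_1}\circ\Bisp_{p_2}^*\) with the coordinate projection \(\mathrm{pr}_j\) followed by \(\Qu\colon \Bisp_{p_j}\to\Bisp_{p_j}/\Gr\). Well-definedness is immediate: replacing \((x_1,x_2)\) by \((x_1g,x_2g)\) does not change \(\Qu(x_j)\). So \(\Qu\circ\mathrm{pr}_j\) is constant on the orbits of the diagonal \(\Gr\)\nb-action and descends to a continuous map on the quotient, which carries the quotient topology.

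For the local homeomorphism property, I would use that \(\pi\) is a local homeomorphism: the diagonal action is basic, as recalled before Lemma~\ref{lem:base_of_comp}. By the same lemma, the sets \(U_1U_2^*=\pi(U_1\times_{\Gr^0}U_2^*)\) with \(U_k\) compact open slices form a base, and \(\pi\) restricts to a homeomorphism on each \(U_1\times_{\Gr^0}U_2^*\). Here \(\Bisp_{p_2}^*\) is a left \(\Gr\)\nb-space with ample base given by the images of slices, and \(g U^*=(Ug^{-1})^*\) keeps this base stable. It therefore suffices to show that \(\Qu_j\) is injective and open on each \(U_1U_2^*\). Then \(\Qu_j|_{U_1U_2^*}\) is a continuous open injection, hence a homeomorphism onto an open set, and \(\Qu_j\) is a local homeomorphism.

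Injectivity is the key step. Take \((x_1,x_2),(y_1,y_2)\in U_1\times_{\Gr^0}U_2\) with \(\Qu(x_1)=\Qu(y_1)\). Since \(\Qu|_{U_1}\) is injective, \(x_1=y_1\), and then \(\s(x_2)=\s(x_1)=\s(y_1)=\s(y_2)\). Because \(\s|_{U_2}\) is injective, \(x_2=y_2\). The case \(j=2\) is symmetric: \(\Qu|_{U_2}\) gives \(x_2=y_2\), and injectivity of \(\s|_{U_1}\) gives \(x_1=y_1\). This proves the injectivity claim for arbitrary slices, even without compactness.

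For openness on the restriction, the image \(\Qu_1(U_1U_2^*)\) equals \(\Qu\bigl(U_1\cap \s^{-1}(\s(U_2))\bigr)\). Its open subsets correspond, via the homeomorphisms \(\pi|\) and \(\mathrm{pr}_1\colon U_1\times_{\Gr^0}U_2\to U_1\cap\s^{-1}(\s(U_2))\) (a homeomorphism because \(\s|_{U_2}\) is one onto an open set), to open subsets of \(U_1\). These are mapped to open sets since \(\Qu\) is a local homeomorphism that is injective on the slice \(U_1\). I expect the only mild obstacle to be bookkeeping that this \(\mathrm{pr}_1\) is a homeomorphism, namely that its inverse \(x\mapsto (x,(\s|_{U_2})^{-1}(\s(x)))\) is continuous. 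This holds because \(\s|_{U_2}\) is a homeomorphism onto its open image.
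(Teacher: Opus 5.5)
Your proof is correct. The well-definedness and injectivity arguments are exactly the paper's. The difference lies in how you obtain the local homeomorphism property.

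\emph{The paper's route.} It gets this in one line from the factorisation \(\Qu_j\circ\pi=\Qu\circ\mathrm{pr}_j\). Here the orbit space projection \(\pi\), the coordinate projection \(\mathrm{pr}_j\) (a pullback of the local homeomorphism \(\s\)) and \(\Qu\) are all local homeomorphisms, and \(\pi\) is surjective. So \(\Qu_j\) is one too, with no base or slices involved.

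\emph{Your route.} You reverse the order: you prove the injectivity statement first, then combine it with openness on the sets \(U_1U_2^*\) to get local homeomorphy. This costs you an appeal to Lemma~\ref{lem:base_of_comp}, including checking its hypotheses for \(\Bisp_{p_2}^*\) via \(gU^*=(Ug^{-1})^*\). That appeal is more than you need. It suffices that the sets \(U_1U_2^*\) are open and cover. This follows because \(\pi\) is a local homeomorphism that is injective on \(U_1\times_{\Gr^0}U_2\), and your own injectivity argument already shows that injectivity.

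In return, you get the slightly more explicit conclusion that \(\Qu_j\) maps each \(U_1U_2^*\) homeomorphically onto an open subset of \(\Bisp_{p_j}/\Gr\). That is the form in which the lemma is used afterwards for slices in \(\Bisp_{p_1}\circ\Bisp_{p_2}^*\).
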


\begin{proof}
  Let \(x_k,y_k\in\Bisp_{p_k}\) for \(k=1,2\) satisfy
  \(\s(x_1) = \s(x_2)\) and \(\s(y_1) = \s(y_2)\), so that \(x_1
  x_2^*, y_1 y_2^* \in \Bisp_{p_1} \circ \Bisp_{p_2}^*\).
  By definition, \(x_1 x_2^* = y_1 y_2^*\) if and only if there is
  \(g\in \Gr\) with \(\rg(g) = \s(x_1) = \s(x_2)\), \(x_1 g = y_1\)
  and \(x_2 g = y_2\).
  This implies \([x_1]=[y_1]\) in \(\Bisp_{p_1}/\Gr\) and
  \([x_2]=[y_2]\) in \(\Bisp_{p_2}/\Gr\).
  So the maps \(\Qu_1\) and~\(\Qu_2\) are well-defined.
  They are local homeomorphisms because the orbit space projections
  \(\Bisp_{p_1} \times_{\s,\s} \Bisp_{p_2} \to \Bisp_{p_1} \circ
  \Bisp_{p_2}^*\) and \(\Bisp_{p_j} \to \Bisp_{p_j}/\Gr\) and the
  coordinate projections \(\Bisp_{p_1} \times_{\s,\s} \Bisp_{p_2} \to
  \Bisp_{p_j}\) are all local homeomorphisms.
  Now assume \(x_k,y_k \in U_k\) for \(k=1,2\).
  Fix \(j\in \{1,2\}\) and assume that \(\Qu_j(x_1 x_2^*) =
  \Qu_j(y_1 y_2^*)\).
  This means \(\Qu(x_j) = \Qu(y_j)\) and implies \(x_j = y_j\)
  because~\(U_j\) is a slice.
  Then \(\s(x_{3-j}) = \s(x_j) = \s(y_j) = \s(y_{3-j})\).
  This implies \(x_{3-j} = y_{3-j}\) because~\(U_{3-j}\) is a slice.
  Thus~\(\Qu_j\) is injective on~\(U_1U_2^*\).
\end{proof}

\begin{definition}
  \label{def:new_Bipp_base}
  An open subset \(W\subseteq \Bisp_{p_1} \circ \Bisp_{p_2}^*\) is
  called a \emph{slice} if \(\Qu_1|_W\) and~\(\Qu_2|_W\) are injective
  on~\(W\).
\end{definition}

\begin{proposition}
  \label{pro:new_base_Bipp}
  The compact slices in \(\Bisp_{p_1} \circ \Bisp_{p_2}^*\) form an
  ample base that is closed under taking arbitrary compact open subsets.
\end{proposition}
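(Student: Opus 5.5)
The plan is to reduce everything to the local homeomorphisms \(\Qu_1,\Qu_2\) of Lemma~\ref{lem:new_Bipp_base} and the Hausdorff orbit spaces \(\Bisp_{p_j}/\Gr\). There are four things to check: compact slices are Hausdorff, their compact open subsets are again compact slices, set differences of compact slices are compact slices, and the compact slices form a base.

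\emph{Hausdorffness.} Let \(W\) be a slice. Then \(\Qu_1|_W\) is an injective local homeomorphism, hence a homeomorphism onto an open subset of the Hausdorff space \(\Bisp_{p_1}/\Gr\). So \(W\) is Hausdorff. In particular, every compact slice is compact, Hausdorff and open.

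\emph{Heredity and set differences.} Let \(W\) be a compact slice and \(W'\subseteq W\) a compact open subset. Injectivity of \(\Qu_1\) and \(\Qu_2\) passes to subsets, so \(W'\) is again a compact slice. This proves the closure property in the statement. For compact slices \(W,V\), the set \(W\setminus V\) is open in \(W\), because \(V\) is compact in the Hausdorff space \(W\cup V\)? That step needs care, so I would instead argue as follows. \(V\cap W\) is open. It is also relatively closed in \(W\): it is compact inside the Hausdorff space \(W\), hence closed there. Therefore \(W\setminus V\) is closed in the compact set \(W\), so it is compact. It is also open, being the complement of a closed subset inside the open set \(W\). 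By the heredity just shown, \(W\setminus V\) is then a compact slice.

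\emph{Base property.} This is the main step. Take \(x_1x_2^*\in O\) with \(O\) open. By Lemma~\ref{lem:base_of_comp}, with all compact slices as bases, the sets \(U_1U_2^*\) with \(U_j\subseteq\Bisp_{p_j}\) compact slices form a base for \(\Bisp_{p_1}\circ\Bisp_{p_2}^*\). Here \(U_2^*\) is a compact open Hausdorff subset of the left \(\Gr\)-space \(\Bisp_{p_2}^*\). So some \(U_1U_2^*\) satisfies \(x_1x_2^*\in U_1U_2^*\subseteq O\). By Lemma~\ref{lem:new_Bipp_base}, both \(\Qu_1\) and \(\Qu_2\) are injective on \(U_1U_2^*\). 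Hence \(U_1U_2^*\) is a compact slice, which proves the base property.

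The main obstacle is making sure Lemma~\ref{lem:base_of_comp} applies with \(\Bisp[Y]=\Bisp_{p_2}^*\). For this I would verify that \(W U_2^* = (U_2W^{-1})^*\) is compact, open and Hausdorff. This follows from Lemma~\ref{lem:operations_on_compact_slices}.
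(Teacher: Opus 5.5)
Your route is the paper's: closure under compact open subsets is immediate, and the base property follows from Lemma~\ref{lem:base_of_comp} together with Lemma~\ref{lem:new_Bipp_base}. Your Hausdorffness argument is also fine: \(\Qu_1|_W\) is an injective local homeomorphism into the Hausdorff space \(\Bisp_{p_1}/\Gr\). The trouble is in your explicit check of closure under set differences. You assert that \(V\cap W\) is compact, but nothing supports this: \(V\cap W\) is only an open subset of the compact sets \(V\) and \(W\). Your compactness of \(W\setminus V\) is fine, because it only uses that \(V\cap W\) is open. What is unsupported is the openness of \(W\setminus V\).

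The step would be valid if \(\Bisp_{p_1}\circ\Bisp_{p_2}^*\) were Hausdorff, since then \(V\) is closed. In the paper's generality this space need not be Hausdorff, and then the conclusion itself fails. For \(p_1=p_2=1\), the map \(gh^*\mapsto gh^{-1}\) identifies \(\Bisp_1\circ\Bisp_1^*\) with \(\Gr\), and slices with bisections. Take \(X=\{0\}\cup\{1/n : n\geq 1\}\) and let \(\Gr=X\sqcup\{z\}\), where \(z\colon 0\to 0\) is one extra arrow with \(z^2\) the unit at~\(0\). Its basic neighbourhoods are \(\{z\}\cup\{1/n : n\geq N\}\). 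This is an ample, non-Hausdorff groupoid. Here \(W=X\) and \(V=\{z\}\cup\{1/n : n\geq 1\}\) are compact open bisections. However, \(V\cap W\) is not compact, and \(W\setminus V=\{0\}\) is not open.

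The paper's own proof does not address this point either. It passes from ``the sets \(U_1U_2^*\) form an ample base'' to ``all compact slices form an ample base as well''. That step tacitly uses the remark after Definition~\ref{def:ample_base} that set differences of compact, Hausdorff, open sets are again open, which is only true in a Hausdorff ambient space. So the set-difference part of the statement, in both proofs, really needs \(\Bisp_{p_1}\circ\Bisp_{p_2}^*\) to be Hausdorff, or else a weaker notion of ample base. Under that hypothesis your argument is complete, and more explicit than the paper's.
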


\begin{proof}
  It is clear that any compact open subset of a slice in \(\Bisp_{p_1}
  \circ \Bisp_{p_2}^*\) is again a slice.
  Lemma~\ref{lem:base_of_comp}. implies that the subsets~\(U_1
  U_2^*\) with compact open slices \(U_j\subseteq \Bisp_{p_j}\) for
  \(j=1,2\) form an ample base.
  Since~\(U_1 U_2^*\)   is a slice by Lemma~\ref{lem:new_Bipp_base},
  the compact open slices in \(\Bisp_{p_1} \circ \Bisp_{p_2}^*\) form
  an ample base as well.
\end{proof}

\begin{proposition}
  \label{pro:compute_I}
  Let \(U\subseteq \Bisp_{p_1} \circ \Bisp_{p_2}^*\) and \(W\subseteq
  \Bisp_{p_2}\) be slices.
  The map
  \begin{equation}
    \label{eq:compute_I_map}
    U  \times_{\Qu_2,\Bisp_{p_2}/\Gr,\Qu} W \to \Bisp_{p_1},\qquad
    (u_1 u_2^*,w) \mapsto u_1 \braket{u_2}{w},
  \end{equation}
  is a homeomorphism onto a compact open slice in~\(\Bisp_{p_1}\),
  which we denote by~\(U(W)\).
  The map \(\mathcal{I}_{p_1,p_2} (\charmap{U}) \colon
  A_R(\Bisp_{p_2})\to A_R(\Bisp_{p_1})\) in
  Proposition~\textup{\ref{pro:I_charmap}} maps \(\charmap{W}\)
  to~\(\charmap{U(W)}\).
\end{proposition}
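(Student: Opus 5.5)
The plan is to split the statement into a topological part, concerning the map~\eqref{eq:compute_I_map}, and an algebraic part, in which I reduce to a basis element \(U=U_1U_2^*\) and trace \(\charmap{U}\) through the three isomorphisms of Proposition~\ref{pro:I_charmap}. Additivity then handles general slices~\(U\).

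For the topological part, I first check that the map is well defined. If \(\Qu(u_2)=\Qu(w)\), then \(\braket{u_2}{w}\) exists and satisfies \(\rg(\braket{u_2}{w})=\s(u_2)=\s(u_1)\), so the product \(u_1\braket{u_2}{w}\) makes sense. Replacing \((u_1,u_2)\) by \((u_1g,u_2g)\) changes \(\braket{u_2}{w}\) to \(g^{-1}\braket{u_2}{w}\), which leaves \(u_1\braket{u_2}{w}\) unchanged. Next I prove continuity locally. Locally \(U\) has the form \(U_1U_2^*\) with compact slices \(U_k\), and on such a piece the map lifts through the homeomorphism \(U_1\times_{\s,\s}U_2\cong U_1U_2^*\) of Lemma~\ref{lem:base_of_comp}. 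There it is the composite of continuous operations, and its image lies in \(U_1\braket{U_2}{W}\), which is a slice by Lemma~\ref{lem:operations_on_compact_slices}. For injectivity, suppose the two images have the same~\(\Qu\). Then they have the same \(\Qu(u_1)\), and since \(\Qu_1|_U\) is injective the two points of~\(U\) agree; the \(w\)'s then have the same \(\Qu(w)=\Qu(u_2)\), so they agree because~\(W\) is a slice. Injectivity of~\(\s\) follows similarly: \(\s(u_1\braket{u_2}{w})=\s(w)\) determines~\(w\), hence \(\Qu(u_2)\), hence the point of~\(U\). The fibre product is compact and Hausdorff, being a closed subset of \(U\times W\) because \(\Bisp_{p_2}/\Gr\) is Hausdorff. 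Openness of the image is local, so I check it on pieces \(U_1U_2^*\). There the map is an injective local homeomorphism by the étale structure, much as in the proof of Lemma~\ref{lem:operations_on_compact_slices}. This shows that \(U(W)\) is a compact open slice. (I implicitly assume \(U,W\) compact, as in the conclusion.)

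For the algebraic part, take \(U=U_1U_2^*\) with \(\s(U_1)\supseteq\s(U_2)\), which I may arrange by Lemma~\ref{lem:base_of_comp}. By Theorem~\ref{the:mu_invertible} the element \(\charmap{U}\) corresponds to \(\charmap{U_1}\otimes\charmap{U_2^*}\). Proposition~\ref{pro:dual_of_correspondence} sends \(\charmap{U_2^*}\) to the functional \(\charmap{V}\mapsto\charmap{\braket{U_2}{V}}\). The isomorphism of Theorem~\ref{thm:fgp_is_nice} sends \(n\otimes f\) to \(m\mapsto n\cdot f(m)\). Hence \(\mathcal{I}_{p_1,p_2}(\charmap{U})\) maps \(\charmap{W}\) to \(\charmap{U_1}*\charmap{\braket{U_2}{W}}=\charmap{U_1\braket{U_2}{W}}\). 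It remains to identify the set \(U_1\braket{U_2}{W}\) with \(U(W)\), which is immediate from the definitions together with the facts that \(\s(u_1)=\s(u_2)\) and that \(U_1,U_2\) are slices.

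Finally, for a general compact slice~\(U\), I write it as a finite disjoint union of basic sets \(U_1^{(i)}U_2^{(i)*}\) using Proposition~\ref{pro:new_base_Bipp} and Proposition~\ref{pro:ample_base_unions}. The sets \(U(W)\) decompose correspondingly as disjoint unions, since the map~\eqref{eq:compute_I_map} is injective, so both sides are additive in~\(U\) and the basic case gives the claim. The main obstacle is the openness and local homeomorphism claim in the topological part, especially checking the well-definedness of the local lift; I would handle it by working throughout on the basic pieces \(U_1U_2^*\), where everything reduces to products and brackets of slices.
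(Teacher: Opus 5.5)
Your proposal is correct and follows the paper's proof: well-definedness via \(\braket{u_2g}{w}=g^{-1}\braket{u_2}{w}\), injectivity of the map and of \(\s\) and \(\Qu\) on \(U(W)\) from the slice properties, and compactness of the fibre product. It then traces \(\charmap{U_1U_2^*}\) through the three isomorphisms of Proposition~\ref{pro:I_charmap} and extends to general compact slices by a disjoint decomposition. The only variation is how you get openness: you localise to basic pieces, where the image is exactly the compact slice \(U_1\braket{U_2}{W}\) of Lemma~\ref{lem:operations_on_compact_slices}, so a continuous bijection from a compact piece onto it is a homeomorphism. The paper instead uses a global local-homeomorphism argument on the triple fibre product, which it reuses in Lemma~\ref{lem:transfer_mult}.
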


\begin{proof}
  Let \(u_1\in\Bisp_{p_1}\) and \(u_2,w\in \Bisp_{p_2}\).
  Then \(u_1 \braket{u_2}{w}\) is defined if and only if \(\Qu(u_2) =
  \Qu(w)\) in \(\Bisp_{p_2}/\Gr\) and \(\s(u_1) = \s(u_2)\) because
  \(\rg(\braket{u_2}{w}) = \s(u_2)\).
  The element \(u_1 \braket{u_2}{w}\) only depends on \(u_1 u_2^* \in
  \Bisp_{p_1}\circ\Bisp_{p_2}^*\) because \(\braket{u_2 g}{w} = g^{-1}
  \braket{u_2}{w}\) for all \(g\in\Gr\) with \(\rg(g) = \s(u_2)\).
  Thus the map in the statement is well-defined, and its
  image~\(U(W)\) consists of all elements that may be written as \(u_1
  \braket{u_2}{w}\) with \(u_1 u_2^*\in U\) and \(w\in W\).
  We claim that the map is also injective.
  Let \(w' = u_1 \braket{u_2}{w}\) for some \(u_1 u_2^*\in U\) and
  \(w\in W\) with \(\Qu(u_2) = \Qu(w)\).
  Then \(\braket{u_2}{w} = \braket{u_1}{w'}\) and \(\Qu(w')=
  \Qu(u_1) = \Qu_1(u_1 u_2^*)\).
  The latter determines \(u_1 u_2^* \in U\) because~\(U\) is a
  slice, and then \(w = u_2 \braket{u_2}{w} = u_2 \braket{u_1}{w'}\)
  is also determined.
  Thus  the map in the statement is injective.
  In addition, \(\s(u_1 \braket{u_2}{w}) = \s(w)\) or \(\Qu(u_1
  \braket{u_2}{w}) = \Qu(u_1) = \Qu_1(u_1 u_2^*)\) determine \(w\)
  or~\(u_1 u_2^*\), respectively, because \(W\) and~\(U\) are
  slices.
  Since \(\Pi|_W\) and \(\Pi_2|_U\) are injective as well, the
  condition \(\Pi_2(u_1 u_2^*) = \Pi(w)\) then determines both \(w\)
  and~\(u_1 u_2^*\).
  As a consequence, \(\s|_{U(W)}\) and \(\Qu|_{U(W)}\) are injective.

  The domain \(U \times_{\Qu_2,\Bisp_{p_2}/\Gr,\Qu} W\) of the map
  in~\eqref{eq:compute_I_map} is a Hausdorff, open subset of
  \((\Bisp_{p_1} \circ \Bisp_{p_2}^*) \times_{\Qu_2,\Bisp_{p_2}/\Gr,\Qu}
  \Bisp_{p_2}\) because \(U\) and~\(W\) are open and Hausdorff.
  It is compact because \(U\times W\) is compact and the equality in
  the Hausdorff space \(\Bisp_{p_2}/\Gr\) defines a closed subset.
  Thus \(U \times_{\Qu_2,\Bisp_{p_2}/\Gr,\Qu} W\) is also compact.
  The map \(\Bisp_{p_2} \times_{\Qu,\Bisp_{p_2}/\Gr,\Qu} \Bisp_{p_2} \to
  \Gr\), \((u_2,w)\mapsto \braket{u_2}{w}\), is a local homeomorphism
  by
  \cite{Antunes-Ko-Meyer:Groupoid_correspondences}*{Proposition~3.5}
  because the right action on~\(\Bisp_{p_2}\) is basic.
  The multiplication \(\Bisp_{p_1}\times_{\s,\Gr^0,\rg} \Gr \to
  \Bisp_{p_1}\) is a local homeomorphism as well
  by \cite{Antunes-Ko-Meyer:Groupoid_correspondences}*{Lemma~2.9}.
  Therefore, the map \(\Bisp_{p_1} \times_{\s,\Gr^0,\s} \Bisp_{p_2}
  \times_{\Qu,\Bisp_{p_2}/\Gr,\Qu} \Bisp_{p_2} \to \Bisp_{p_1}\),
  \((u_1,u_2,w)\mapsto u_1 \braket{u_2}{w}\), is a local
  homeomorphism.
  The induced map on the orbit space of the \(\Gr\)\nb-action
  \((u_1,u_2,w) \cdot g = (u_1 g,u_2 g,w)\) is still a local
  homeomorphism because orbit space projections are surjective local
  homeomorphisms by
  \cite{Antunes-Ko-Meyer:Groupoid_correspondences}*{Lemma~2.10}.
  Since its restriction to \(U  \times_{\Qu_2,\Bisp_{p_2}/\Gr,\Qu} W\)
  is also injective, this restriction is a homeomorphism onto its
  image \(U(W)\), and the latter is a compact open subset
  in~\(\Bisp_{p_1}\).
  Since \(\Qu\) and~\(\s\) are injective on~\(U(W)\), it is a slice.  

  Now we prove that \(\mathcal{I}_{p_1,p_2} (\charmap{U})(\charmap{W})
  = \charmap{U(W)}\) for compact slices \(U\subseteq \Bisp_{p_1} \circ
  \Bisp_{p_2}^*\) and \(W\subseteq \Bisp_{p_2}\).
  First, we assume that \(U = U_1 U_2^*\) for compact slices \(U_j
  \subseteq \Bisp_{p_j}\) for \(j=1,2\).
  The map~\(\mathcal{I}_{p_1,p_2}\) is defined as the composite of
  three isomorphisms
  \begin{align*}
    A_R(\Bisp_{p_1}\circ \Bisp_{p_2}^*)
    &\congto A_R(\Bisp_{p_1}) \otimes_{A_R(\Gr)} A_R(\Bisp_{p_2}^*)
    \\&\congto A_R(\Bisp_{p_1}) \otimes_{A_R(\Gr)}
    \Hom_{-,A_R(\Gr)}
    \bigl( A_R(\Bisp_{p_2}), A_R(\Gr)\bigr) A_R(\Gr)
    \\&\congto \Hom_{-,A_R(\Gr)}
    \bigl( A_R(\Bisp_{p_2}), A_R(\Bisp_{p_1})\bigr) A_R(\Gr).
  \end{align*}
  The formula in Theorem~\ref{the:mu_invertible} shows that the first
  isomorphism maps~\(\charmap{U_1 U_2^*}\) to \(\charmap{U_1} \otimes
  \charmap{U_2^*}\).
  The formula in Proposition~\ref{pro:dual_of_correspondence} shows
  that the second isomorphism maps this on to \(\charmap{U_1} \otimes
  \mathcal{I}_{\Bisp_{p_2}}(\charmap{U_2^*})\), where
  \(\mathcal{I}_{\Bisp_{p_2}}(\charmap{U_2^*})\) is the map
  \(A_R(\Bisp_{p_2}) \to A_R(\Gr)\) that is uniquely determined by
  mapping~\(\charmap{W}\) to~\(\charmap{\braket{U_2}{W}}\).
  The third isomorphism gives the \(R\)\nb-module map that
  sends~\(\charmap{W}\) to~\(\charmap{U_1} \cdot
  \charmap{\braket{U_2}{W}}\).
  This is equal to \(\charmap{U_1\braket{U_2}{W}}\) by
  \cite{Antunes-Ko-Meyer:Groupoid_correspondences}*{Lemma~7.4}.
  If \(U = U_1 U_2^*\), then \(U(W) = U_1 \braket{U_2}{W}\).
  Thus \(\mathcal{I}_{p_1,p_2} (\charmap{U})(\charmap{W}) =
  \charmap{U(W)}\) is true if \(U = U_1 U_2^*\) with compact slices
  \(U_j\subseteq \Bisp_{p_j}\) for \(j=1,2\).

  Any compact slice \(U\subseteq \Bisp_{p_1}\circ \Bisp_{p_2}^*\) is
  compact and Hausdorff.
  The slices of the form \(U_1 U_2^*\) already form an ample base by
  Lemma~\ref{lem:base_of_comp}.
  Therefore, Proposition~\ref{pro:ample_base_unions} allows us to
  write~\(U\) as a disjoint union of sets of the form \(U_{1,k}
  U_{2,k}^*\) with compact slices \(U_{j,k}\subseteq \Bisp_{p_j}\) for
  \(j=1,2\), \(k=1,\dotsc,\ell\).
  Since the map in~\eqref{eq:compute_I_map} is a homeomorphism
  onto~\(U(W)\), the sets \((U_{1,k} U_{2,k}^*)(W) = U_{1,k}
  \braket{U_{2,k}}{W}\) for \(k=1,\dotsc,\ell\) are all compact, open
  and disjoint.
  Since~\(\mathcal{I}_{p_1,p_2}\) is additive, it follows that
  \[
    \mathcal{I}_{p_1,p_2}(\charmap{U})(\charmap{W})
    = \sum_{k=1}^\ell  \mathcal{I}_{p_1,p_2}
    (\charmap{U_{1,k} U_{2,k}^*})(\charmap{W})
    = \sum_{k=1}^\ell  \charmap{U_{1,k} \braket{U_{2,k}}{W}}
    = \charmap{U(W)}.\qedhere
  \]
\end{proof}

The description of~\(\OO[g]\) as an inductive limit involves the
structure maps
\[
  \varphi_{p_1,p_2,q}\colon
  \Hom_{-, A_R(\Gr)}\bigl( A_R(\Bisp_{p_2}), A_R(\Bisp_{p_1})\bigr)
  \to  \Hom_{-, A_R(\Gr)}\bigl( A_R(\Bisp_{p_2 q}),
  A_R(\Bisp_{p_1 q})\bigr)
\]
in Definition~\ref{def:varphi}.
Proposition~\ref{pro:I_charmap} describes \(A_R(\Gr)\)-bimodule
isomorphisms
\[
  \mathcal{I}_{p_1,p_2}\colon
  A_R(\Bisp_{p_1}\circ \Bisp_{p_2}^*)\cong
  \Hom_{-, A_R(\Gr)}\bigl( A_R(\Bisp_{p_2}), A_R(\Bisp_{p_1})\bigr) A_R(\Gr)
\]
for all \(p_1,p_2\in P\).
The maps~\(\varphi_{p_1,p_2,q}\) induce maps
\[
  \Phi_{p_1,p_2,q} \defeq
  \mathcal{I}_{p_1 q,p_2 q}^{-1} \circ \varphi_{p_1,p_2,q} \circ
  \mathcal{I}_{p_1,p_2} \colon
  A_R(\Bisp_{p_1}\circ \Bisp_{p_2}^*) \to  A_R(\Bisp_{p_1 q}\circ \Bisp_{p_2 q}^*).
\]
So \(\OO[g] \cong \varinjlim A_R(\Bisp_{p_1}\circ \Bisp_{p_2}^*)\),
where the inductive system is indexed by the filtered
category~\(\CPg\) introduced in Definition~\ref{def:Rg_CPg} and the
structure maps in the inductive system are~\(\Phi_{p_1,p_2,q}\).
We write elements of \(\Bisp_{p_1 q}\circ \Bisp_{p_2 q}^*\) as words
\((x_1 z_1) (x_2 z_2)^*\) with \(x_1\in\Bisp_{p_1}\),
\(x_2\in\Bisp_{p_2}\) and \(z_1,z_2\in\Bisp_q\) such that \(\s(x_1) =
\rg(z_1)\), \(\s(z_1) = \s(z_2)\), and \(\s(x_2)=\rg(z_2)\).
In formulas, we choose representatives as above, although they are not
unique.
If \(g_1,g_2,h\in\Gr\) are such that \(\rg(g_1) = \s(x_1)\),
\(\rg(g_2) = \s(x_2)\), and \(\rg(h) = \s(z_1) = \s(z_2)\), then
\[
  (x_1 z_1) (x_2 z_2)^* \sim
  (x_1 g_1 \, g_1^{-1} z_1 h) (x_2 g_2\, g_2^{-1} z_2 h)^*,
\]
that is, the quadruple \((x_1 g_1,g_1^{-1} z_1 h, x_2 g_2, g_2^{-1}
z_2 h)\) represents the same element as \((x_1,z_1,x_2,z_2)\).
Conversely, two quadruples describe the same element of \(\Bisp_{p_1
  q}\circ \Bisp_{p_2 q}^*\) if and only if they are related in this way.

Theorem~\ref{the:steinmod_is_quot_of_sum} and
Proposition~\ref{pro:new_base_Bipp} show that \(A_R(\Bisp_{p_1}\circ
\Bisp_{p_2}^*)\) is generated by the characteristic functions of
compact open slices.
Therefore, the following lemma determines~\(\Phi_{p_1,p_2,q}\):

\begin{lemma}
  \label{lem:compute_Phi}
  Let \(p_1,p_2,q\in P\) and let \(U\subseteq \Bisp_{p_1} \circ
  \Bisp_{p_2}^*\) be a compact slice.
  Then~\(\Phi_{p_1,p_2,q}\) maps~\(\charmap{U}\) to the
  characteristic function of the subset
  \begin{multline*}
    \tilde\alpha_{p_1,p_2}^q(U)
    \defeq
    \bigl\{ (u_1 z) (u_2 z)^* \in (\Bisp_{p_1} \circ \Bisp_q) \circ
    (\Bisp_{p_2} \circ \Bisp_q)^* :{}\\
    u_1\in \Bisp_{p_1},\ u_2\in \Bisp_{p_2},\  z\in \Bisp_q,\ \s(x) =
    \s(y) = \rg(z),\
    u_1 u_2^* \in U\bigr\}.
  \end{multline*}
  The subset \(\tilde\alpha_{p_1,p_2}^q(U)\subseteq \Bisp_{p_1 q} \circ
  \Bisp_{p_2 q}^*\) is a compact slice as well.
\end{lemma}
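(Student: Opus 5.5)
The plan is to reduce to the case \(U=U_1U_2^*\) with compact slices \(U_j\subseteq\Bisp_{p_j}\), and there compute \(\varphi_{p_1,p_2,q}\) on the operator \(\mathcal{I}_{p_1,p_2}(\charmap{U})\) by testing it on generators. Since \(\mathcal{I}_{p_1q,p_2q}\) is injective, it suffices to show that \(\varphi_{p_1,p_2,q}(\mathcal{I}_{p_1,p_2}(\charmap{U}))\) and \(\mathcal{I}_{p_1q,p_2q}(\charmap{\tilde\alpha_{p_1,p_2}^q(U)})\) agree as maps \(A_R(\Bisp_{p_2q})\to A_R(\Bisp_{p_1q})\). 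Before that I would check that \(\tilde\alpha_{p_1,p_2}^q(U)\) is a compact slice, so that Proposition~\ref{pro:compute_I} applies to it.

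For that first step, write \(\tilde\alpha^q_{p_1,p_2}(U)\) as the image of
\[
  U\times_{\Gr^0}\Bisp_q \to \Bisp_{p_1q}\circ\Bisp_{p_2q}^*,\qquad (u_1u_2^*,z)\mapsto (u_1z)(u_2z)^*,
\]
which is well defined modulo the \(\Gr\)-actions. The resulting map from \(U\circ \Bisp_q\) is continuous, and it is a local homeomorphism by the same arguments as in Proposition~\ref{pro:compute_I}. Compactness is the delicate point because \(\Bisp_q\) is not compact. However, \((u_1z)(u_2z)^*\) is unchanged under \(z\mapsto zh\), so the image only depends on \(\Qu(z)\in\Bisp_q/\Gr\) with \(\rg_*(\Qu(z))\in\s(U_1)\), a compact set. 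Properness of \(\Bisp_q\) then makes this set compact, and covering it by finitely many compact slices \(Z_k\subseteq\Bisp_q\) writes \(\tilde\alpha(U)\) as a finite union of the compact sets \(\mu(U_1Z_k)\mu(U_2Z_k)^*\). Injectivity of \(\Qu_1\) on \(\tilde\alpha(U)\) goes as follows: \(\Qu(u_1z)=\Qu(u_1'z')\) gives, via \(\mu_{p_1,q}\) and the slice property of \(U_1\), that \(u_1=u_1'\) and \(z'=zh\); then \(u_2=u_2'\) because \(U\) is a slice, so the two points coincide. The argument for \(\Qu_2\) is symmetric. Openness follows from the local homeomorphism property.

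For the computation, take a compact slice \(W=\mu_{p_2,q}(W_2W_q)\) with \(W_2\subseteq\Bisp_{p_2}\) and \(W_q\subseteq\Bisp_q\) compact slices; such sets span \(A_R(\Bisp_{p_2q})\) by Theorem~\ref{the:mu_invertible}. By Definition~\ref{def:varphi}, \(\varphi(T)\) sends \(\charmap{W}\) to \(\mu_{p_1,q}(T(\charmap{W_2})\otimes\charmap{W_q})\). With \(T=\mathcal{I}_{p_1,p_2}(\charmap{U})\), Proposition~\ref{pro:compute_I} gives \(T(\charmap{W_2})=\charmap{U_1\braket{U_2}{W_2}}\), so the result is \(\charmap{U_1\braket{U_2}{W_2}W_q}\). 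On the other side, Proposition~\ref{pro:compute_I} gives \(\charmap{\tilde\alpha(U)(W)}\). It then remains to verify the set identity
\[
  \tilde\alpha(U)(W) = \mu_{p_1,q}\bigl(U_1\braket{U_2}{W_2}W_q\bigr)
\]
pointwise. A point \((u_1z)\braket{u_2z}{w_2w_q}\) of the left side has \(\braket{u_2z}{w_2w_q}=\braket{z}{\braket{u_2}{w_2}w_q}\), by the computation already carried out in the proof of Theorem~\ref{the:presentation_covariance_ring_groupoid}. Choosing the representative \(z=\braket{u_2}{w_2}w_q\) makes this bracket a unit, so the point is \(u_1\braket{u_2}{w_2}w_q\). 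Conversely, every point of the right side arises in this way.

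I expect the compactness and slice verification for \(\tilde\alpha(U)\) to be the main obstacle, since it uses properness of \(\Bisp_q\) in an essential way. In contrast, the operator identity is a bookkeeping exercise with brackets, and it extends from sets of the form \(U_1U_2^*\) to general compact slices by additivity, via Proposition~\ref{pro:ample_base_unions}. For that extension one also notes that \(\tilde\alpha\) maps disjoint unions to disjoint unions, which follows from injectivity of \(\Qu_1\) on \(\tilde\alpha\) of the union.
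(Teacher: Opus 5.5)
Your proposal is correct and follows essentially the paper's proof. You first show that $\tilde\alpha_{p_1,p_2}^q(U)$ is a compact slice, then test both operators on the spanning elements $\charmap{WX}$ using the set identity $U(W)X=\tilde\alpha_{p_1,p_2}^q(U)(WX)$, which rests on $\braket{uv}{wx}=\braket{v}{\braket{u}{w}x}$. The paper differs only in two details: it skips your reduction to $U=U_1U_2^*$ and the additivity step, because Proposition~\ref{pro:compute_I} already covers arbitrary compact slices and the bracket computation works verbatim for any $u_1u_2^*\in U$; and it gets compactness from the map $\tilde\alpha_{p_1,p_2}^q(U)\to U$, $(u_1z)(u_2z)^*\mapsto u_1u_2^*$, whose fibres are compact by properness of $\Bisp_q$, instead of your finite cover by slices $Z_k$.
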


\begin{proof}
  Let \(\Qu_{p_1 p_2^*}\colon \Bisp_{p_1}\times_{\s,\Gr^0,\s} \Bisp_{p_2}^* \to
  \Bisp_{p_1}\circ \Bisp_{p_2}^*\) be the canonical quotient map and
  define \(\Qu_{p_1 p_2^*, q}\colon \Bisp_{p_1}\times_{\s,\Gr^0,\s}
  \Bisp_{p_2}^* \times_{\s,\Gr^0,\rg} \Bisp_q \to \Bisp_{p_1 q}\circ
  \Bisp_{p_2 q}^*\), \((u_1,u_2,z) \mapsto (u_1 z)(u_2 z)^*\).
  The map~\(\Qu_{p_1 p_2^*}\) is an orbit space projection for a basic
  groupoid action, so that it is a local homeomorphism.
  We claim that the second map is a local homeomorphism as well.
  This follows because for any slices \(U_1\subseteq \Bisp_{p_1}\),
  \(U_2 \subseteq \Bisp_{p_2}\), \(X\subseteq \Bisp_q\), the restriction
  of~\(\Qu_{p_1 p_2^*,q}\) to \(U_1 \times_{\s,\s} U_2 \times_{\s,\rg}
  X\) is a homeomorphism onto the slice \((U_1 X) (U_2 X)^*\).

  The set \(\Qu_{p_1 p_2^*}^{-1}(U)\) is open because~\(\Qu_{p_1
    p_2^*}\) is continuous.
  Then \(\Qu_{p_1 p_2^*}^{-1}(U) \times_{\s,\Gr^0,\rg}
  \Bisp_q\) is an open subset of \(\Bisp_{p_1}\times_{\s,\Gr^0,\s} \Bisp_{p_2}^*
  \times_{\s,\Gr^0,\rg} \Bisp_q\).
  The set~\(\tilde\alpha_{p_1,p_2}^q(U)\) is its image under the local
  homeomorphism~\(\Qu_{p_1 p_2^*,q}\), so it is also open.

  Next, we show that~\(\tilde\alpha_{p_1,p_2}^q(U)\) is a slice.
  We prove that~\(\Qu_1\) is injective on it.
  The proof for~\(\Qu_2\) is the same.
  So let \(x_1^k\in \Bisp_{p_1}\), \(x_2^k\in \Bisp_{p_2}\) and \(z^k\in
  \Bisp_q\) satisfy \(\s(x^k) = \s(y^k) = \rg(z^k)\) and \(x_1^k (x_2^k)^* \in
  U\) for \(k=1,2\).
  We assume that \(x_1^1 z^1 = x_1^2 z^2\) in \(\Bisp_{p_1 q}\).
  This means that there are \(g,h\in\Gr\) with \(\rg(g) = \s(x_1^1)\)
  and \(\rg(h) = \s(z^1)\) such that \(x_1^2 = x_1^1 g\) and \(z^2 =
  g^{-1} z^1 h\).
  This implies \(\Qu_1(x_1^1 (x_2^1)^*)\) = \(\Qu_1(x_1^2
  (x_2^2)^*)\).
  Since~\(U\) is a slice and \(x_1^1 (x_2^1)^*, x_1^2 (x_2^2)^*
  \in U\), it follows that \(x_2^2 = x_2^1 g\).
  So
  \[
    (x_1^2 z^2) (x_2^2 z^2)^*
    = (x_1^1 g\, g^{-1} z^1 h) (x_2^1 g\, g^{-1} z^1 h)^*
    = (x_1^1 z^1) (x_2^1 z^1)^*.
  \]

  The proof above also shows that there is a well-defined map
  \(f\colon \tilde\alpha_{p_1,p_2}^q(U) \to U\), \((x_1 z)(x_2 z)^*
  \mapsto x_1 x_2^*\).
  The fibre of this map at \(x_1 x_2^* \in U\) is homeomorphic to
  the set of \(z\in\Bisp_q\) with \(\rg(z) = \s(x_1) = \s(x_2)\).
  This set is compact because \(\rg_*\colon \Bisp_q/\Gr \to \Gr^0\) is
  proper.
  Therefore, \(f\) is proper and so~\(\tilde\alpha_{p_1,p_2}^q(U)\) is
  compact.
  This finishes the proof that~\(\tilde\alpha_{p_1,p_2}^q(U)\) is a compact
  slice.

  Next, we claim that \(U(W) X =  \tilde\alpha_{p_1,p_2}^q(U)(W X)\)
  holds if \(W\subseteq \Bisp_{p_2}\) and \(X\subseteq \Bisp_q\) are
  slices.
  The first set consists of all \(u_1 \braket{u_2}{w} x\) for \(u_1
  u_2^* \in U\), \(w\in W\) and \(x\in X\) with \(\Qu(u_2) = \Qu(w)\),
  \(\s(w) = \rg(x)\).
  The second set consists of all \(u_1 z \braket{u_2 z}{w x}\)
  for \(u_1 u_2^* \in U\), \(z\in\Bisp_q\), \(w\in W\) and \(x\in X\)
  with \(\s(u_1) = \s(u_2) = \rg(z)\), \(\s(w) = \rg(x)\), and
  \(\Qu(u_2 z) = \Qu(w x)\).
  The last equation says that there are \(g,h\in\Gr\) with \(u_2 g =
  w\) and \(g^{-1} z h = x\).
  In particular, \(\Qu(u_2) = \Qu(w)\), and \(g= \braket{u_2}{w}\)
  is uniquely determined.
  Then \(\braket{u_2 z}{w x} = h\) and so \(u_1 z
  \braket{u_2 z}{w x} = u_1 z h = u_1 \braket{u_2}{w} x\).
  This computation shows that indeed \(U(W) X =
  \tilde\alpha_{p_1,p_2}^q(U)(W X)\).

  Proposition~\ref{pro:I_charmap} computes \(\mathcal{I}_{p_1,p_2}
  (\charmap{\tilde\alpha_{p_1,p_2}^q(U)})\): it maps~\(\charmap{V}\) for a
  compact slice \(V\subseteq \Bisp_{p_2 q}\) to
  \(\charmap{\tilde\alpha_{p_1,p_2}^q(U)(V)}\).
  Similarly, \(\mathcal{I}_{p_1,p_2} (\charmap{U})\)
  maps~\(\charmap{W}\) for \(W\subseteq \Bisp_{p_2}\) to
  \(\charmap{U(W)}\).
  The slices of the form~\(W X\) for slices \(W\subseteq \Bisp_{p_2}\)
  and \(X\subseteq \Bisp_q\) form an ample base for~\(\Bisp_{p_2 q}\)
  by Lemma~\ref{lem:base_of_comp}.
  Theorem~\ref{the:mu_invertible} implies that \(\charmap{W X} =
  \mu_{\Bisp_{p_2},\Bisp_q}(\charmap{W} \otimes \charmap{X})\) for the
  canonical isomorphism \(\mu_{\Bisp_{p_2},\Bisp_q}\colon
  A_R(\Bisp_{p_2}) \otimes_{A_R(\Gr)} A_R(\Bisp_q) \congto
  A_R(\Bisp_{p_2 q})\).
  Thus the operator \(\varphi_{p_1,p_2,q}(\mathcal{I}_{p_1,p_2}
  (\charmap{U}))\colon A_R(\Bisp_{p_2 q}) \to A_R(\Bisp_{p_1 q})\) maps
  \(\charmap{W X}\) to
  \begin{align*}
    \varphi_{p_1,p_2,q}(\mathcal{I}_{p_1,p_2} (\charmap{U}))
    (\charmap{W X})
    &= \mu_{\Bisp_{p_1},\Bisp_q} (\mathcal{I}_{p_1,p_2}
    (\charmap{U})(\charmap{W}) \otimes \charmap{X})
    \\&= \mu_{\Bisp_{p_1},\Bisp_q} (\charmap{U(W)}) \otimes
    \charmap{X}
    = \charmap{U(W) X};
  \end{align*}
  here \(U(W) \subseteq \Bisp_{p_1}\) is a compact slice by
  Proposition~\ref{pro:compute_I}, so that \(U(W) X \subseteq
  \Bisp_{p_1 q}\) is a compact slice by Lemma~\ref{lem:base_of_comp}.
  The functions of the form~\(\charmap{W X}\) for slices \(W\)
  and~\(X\) as above span \(A_R(\Bisp_{p_2 q})\)  by
  Theorem~\ref{the:steinmod_is_quot_of_sum} and
  Lemma~\ref{lem:base_of_comp}.
  We already know that \(U(W) X = \tilde\alpha_{p_1,p_2}^q(U)(W X)\).
  So the computations above show that \(\Phi_{p_1,p_2,q}(\charmap{U})
  = \charmap{\tilde\alpha_{p_1,p_2}^q(U)}\).
\end{proof}

\begin{remark}
  The notation for \(\tilde\alpha_{p_1,p_2}^q(U)\) comes from the tight
  case, where there is a well-defined map
  \(\alpha_{p_1,p_2}^q\colon \Bisp_{p_1}\circ \Bisp_{p_2}^* \to
  \Bisp_{p_1q}\circ \Bisp_{p_2 q}^*\), which maps \(x_1 x_2^*\) to
  \((x_1 z) (x_2 z)^*\) for any \(z\in\Bisp_q\) with
  \(\rg(z) = \s(x_1) = \s(x_2)\).
  Clearly, this map~\(\tilde\alpha_{p_1,p_2}^q\) restricts to a
  homeomorphism from~\(U\) onto \(\tilde\alpha_{p_1,p_2}^q(U)\).
\end{remark}

To describe the ring structure on~\(\OOO_\F\), we now transfer the
multiplication to the Steinberg modules \(A_R(\Bisp_{p_1}\circ
\Bisp_{p_2}^*)\).
The following lemma determines this.
It generalises Proposition~\ref{pro:compute_I} when we let \(p_3 = 1\)
and simplify \(\Bisp_p \circ \Bisp_1^* = \Bisp_p \circ \Gr \cong
\Bisp_p\).

\begin{lemma}
  \label{lem:transfer_mult}
  Let \(p_1,p_2,p_3\in P\) and let \(U\subseteq \Bisp_{p_1} \circ
  \Bisp_{p_2}^*\) and \(V\subseteq \Bisp_{p_2} \circ \Bisp_{p_3}^*\)
  be compact slices.
  There is a compact slice \(U\circ V\subseteq \Bisp_{p_1}\circ
  \Bisp_{p_3}^*\) such that the map
  \begin{equation}
    \label{eq:transfer_mult}
    U \times_{\Qu_2,\Bisp_{p_2}/\Gr, \Qu_1} V \to U\circ V,\qquad
    (u_1 u_2^*, v_1 v_2^*) \mapsto u_1 \braket{u_2}{v_1} v_2^*,
  \end{equation}
  is a homeomorphism.
  If \(f\in A_R(\Bisp_{p_1}\circ \Bisp_{p_2}^*)\), \(h\in
  A_R(\Bisp_{p_2}\circ \Bisp_{p_3}^*)\), then there is \(f* h \in
  A_R(\Bisp_{p_1}\circ \Bisp_{p_3}^*)\) with
  \(\mathcal{I}_{p_1,p_2}(f) \mathcal{I}_{p_2,p_3}(h) =
  \mathcal{I}_{p_1,p_3}(f*h)\) in~\(\OOO_\F\).
  Here \(\charmap{U} * \charmap{V} = \charmap{U \circ V}\).
\end{lemma}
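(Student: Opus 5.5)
I would model the proof on Proposition~\ref{pro:compute_I} and Lemma~\ref{lem:compute_Phi}. It splits into a topological part and an algebraic part.

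\emph{Topological part.} First show that~\eqref{eq:transfer_mult} is well defined. The element \(u_1\braket{u_2}{v_1}v_2^*\) exists precisely when \(\Qu(u_2)=\Qu(v_1)\), which is the fibre product condition. It also depends only on the classes \(u_1u_2^*\) and \(v_1v_2^*\). Indeed, replacing \((u_1,u_2)\) by \((u_1g,u_2g)\) changes the bracket to \(g^{-1}\braket{u_2}{v_1}\). Replacing \((v_1,v_2)\) by \((v_1h,v_2h)\) changes it to \(\braket{u_2}{v_1}h\), and \((xh)(v_2h)^*=xv_2^*\).

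Injectivity uses the slice property twice. From \(w=u_1\braket{u_2}{v_1}v_2^*\) we get \(\Qu_1(w)=\Qu(u_1)=\Qu_1(u_1u_2^*)\), and this determines \(u_1u_2^*\in U\). Likewise \(\Qu_2(w)\) determines \(v_1v_2^*\in V\). The same two observations show that \(\Qu_1\) and \(\Qu_2\) are injective on the image, so the image is a slice once we know it is open.

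The domain is compact and Hausdorff, because it is a closed subset of \(U\times V\) cut out by equality in the Hausdorff space \(\Bisp_{p_2}/\Gr\). To show the map is a local homeomorphism, lift it to fibre products of \(\Bisp_{p_1},\Bisp_{p_2},\Bisp_{p_2},\Bisp_{p_3}\). There it is a composite of the bracket map, which is a local homeomorphism by \cite{Antunes-Ko-Meyer:Groupoid_correspondences}*{Proposition~3.5}, the action map, which is one by \cite{Antunes-Ko-Meyer:Groupoid_correspondences}*{Lemma~2.9}, and orbit space projections. This is the same as in the proof of Proposition~\ref{pro:compute_I}. An injective local homeomorphism from a compact space is a homeomorphism onto a compact open subset, so \(U\circ V\) is a compact slice.

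\emph{Algebraic part.} Define \(f*h\) on generators by \(\charmap U*\charmap V\defeq\charmap{U\circ V}\). This is safe because, once the identity below is proved, \(f*h=\mathcal{I}_{p_1,p_3}^{-1}\bigl(\mathcal{I}_{p_1,p_2}(f)\circ\mathcal{I}_{p_2,p_3}(h)\bigr)\). Here the product of the two operators is composition of maps \(A_R(\Bisp_{p_3})\to A_R(\Bisp_{p_2})\to A_R(\Bisp_{p_1})\); it lies in \(\Hom(\dots)A_R(\Gr)\), and it is the multiplication~\(\w_{g,h}\) on representatives with matching middle index (take \(t_1=t_2=1\) in Definition~\ref{def:multiOO}). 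Since \(\mathcal{I}_{p_1,p_3}\) is bijective, \(f*h\) is well defined and bilinear. It therefore suffices to prove
\[
  \mathcal{I}_{p_1,p_2}(\charmap U)\bigl(\mathcal{I}_{p_2,p_3}(\charmap V)(\charmap W)\bigr)
  =\mathcal{I}_{p_1,p_3}(\charmap{U\circ V})(\charmap W)
\]
for all compact slices \(W\subseteq\Bisp_{p_3}\), because these span \(A_R(\Bisp_{p_3})\). By Proposition~\ref{pro:compute_I} this reduces to the set identity \(U(V(W))=(U\circ V)(W)\).

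The main obstacle, though elementary, is checking this set identity carefully, including that one side is defined exactly when the other is. An element of the left side has the form \(u_1\braket{u_2}{v_1\braket{v_2}{w}}\). Using \(\braket{u_2}{v_1 k}=\braket{u_2}{v_1}k\) from~\eqref{eq:braket_products}, it equals \(u_1\braket{u_2}{v_1}\braket{v_2}{w}\). Writing \(x=u_1\braket{u_2}{v_1}\), this is \(x\braket{v_2}{w}\), which is exactly an element of \((U\circ V)(W)\) built from \(xv_2^*\in U\circ V\) and \(w\in W\). The definedness conditions match: \(\Qu(u_2)=\Qu(v_1)\) and \(\Qu(v_2)=\Qu(w)\) on both sides.
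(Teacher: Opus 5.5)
Your proposal is correct and follows the paper's proof essentially step by step. It uses well-definedness and injectivity via the slice property, openness via the local-homeomorphism argument of Proposition~\ref{pro:compute_I}, and the algebraic identity via the set equality \(U(V(W))=(U\circ V)(W)\) together with the fact that the product in \(\OOO_\F\) is represented by composition. The only step you compress is the injectivity of \(\Qu_1\) and \(\Qu_2\) on \(U\circ V\). That step also needs that, inside the fibre product, each coordinate determines the other, because \(\Qu(u_2)=\Qu(v_1)\) and \(\Qu_2|_U\), \(\Qu_1|_V\) are injective; the paper states this explicitly.
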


\begin{proof}
  Let \((u_1 u_2^*, v_1 v_2^*)\) be in the domain of the map
  in~\eqref{eq:transfer_mult}.
  Then \(\s(u_1) = \s(u_2)\), \(\s(v_1) = \s(v_2)\) and \(\Qu(u_2) =
  \Qu_2(u_1 u_2^*) = \Qu_1(v_1 v_2^*) = \Qu(v_1)\).
  So \(u_1 \braket{u_2}{v_1} v_2^*\) is defined.
  The properties of the bracket map say that \(u_1 \braket{u_2}{v_1}
  v_2^* = u_1 g \braket{u_2 g}{v_1 h} (v_2 h)^*\) for all
  \(g,h\in\Gr\) with \(\rg(g) = \s(u_1) = \s(u_2)\), \(\rg(h) =
  \s(v_1) = \s(v_2)\), so that
  \(u_1\braket{u_2}{v_1} v_2^* \in \Bisp_{p_1} \circ \Bisp_{p_3}^*\)
  depends only on \(u_1 u_2^* \in \Bisp_{p_1} \circ \Bisp_{p_2}^*\) and
  \(v_1 v_2^* \in \Bisp_{p_2} \circ \Bisp_{p_3}^*\).
  Thus the map in~\eqref{eq:transfer_mult} is well-defined.
  Of course, we let \(U\circ V\) be its image, so that this map
  becomes surjective.
  
  We compute \(\Qu_1(u_1 \braket{u_2}{v_1} v_2^*) = \Qu(u_1) =
  \Qu_1(u_1 u_2^*)\) and \(\Qu_2(u_1 \braket{u_2}{v_1} v_2^*) = \Qu(v_2) =
  \Qu_2(v_1 v_2^*)\).
  Since \(U\) and~\(V\) are slices, these determine
  \(u_1 u_2^*\in U\) or \(v_1 v_2^*\in V\), respectively.
  Thus the map in~\eqref{eq:transfer_mult} is injective.
  Since \(\Qu(u_2) = \Qu(v_1)\), and \(U\) and~\(V\) are slices, 
  either \(u_1 u_2^*\in U\) or \(v_1 v_2^*\in V\) determines the
  other.
  As a consequence, \(\Qu_1\) and~\(\Qu_2\) restrict to injective maps
  on \(U\circ V\).
  An argument as in the proof of Proposition~\ref{pro:compute_I} shows
  that the same formula as in~\eqref{eq:transfer_mult} defines a local
  homeomorphism \((\Bisp_{p_1} \circ \Bisp_{p_2}^*)
  \times_{\Qu_2,\Bisp_{p_2}/\Gr,\Qu_1} (\Bisp_{p_2} \circ
  \Bisp_{p_3}^*) \to \Bisp_{p_1} \circ \Bisp_{p_3}^*\).
  Since the restriction of this local homeomorphism
  in~\eqref{eq:transfer_mult} is bijective, it follows that this map
  is a homeomorphism onto an open subset.
  Therefore, \(U\circ V\) is open in \(\Bisp_{p_1} \circ
  \Bisp_{p_3}^*\) and compact Hausdorff.
  Since \(\Qu_1\) and~\(\Qu_2\) restrict to injective maps on \(U\circ
  V\), it is a compact slice.

  Let \(X\subseteq \Bisp_{p_3}\) be a slice.
  By Proposition~\ref{pro:I_charmap},
  \(\mathcal{I}_{p_2,p_3} (\charmap{V})\) maps~\(\charmap{X}\)
  to~\(\charmap{V(X)}\), the slice consisting of all
  \(v_1\braket{v_2}{x}\)
  with \(v_1 v_2^* \in V\), \(x\in X\) and \(\Qu(v_2) = \Qu(x)\).
  Then \(\mathcal{I}_{p_1,p_2} (\charmap{U})\) maps this on to
  \(\charmap{U(V(X))}\), the slice consisting of all \(u_1
  \braket{u_2}{v_1\braket{v_2}{x}} = u_1\braket{u_2}{v_1}
  \braket{v_2}{x}\) for all \(u_1 u_2^*\in U\), \(v_1 v_2^* \in
  V\) and \(x\in X\) with \(\Qu(v_2) = \Qu(x)\) and \(\Qu(u_2) =
  \Qu(v_1)\).
  This is the same as \(\charmap{(U\circ V)(X)}\).

  Recall that the product in~\(\OOO_\F\) of
  \begin{align*}
    \mathcal{I}_{p_2,p_3} (\charmap{V})
    &\in \Hom\bigl(A_R(\Bisp_{p_3}),A_R(\Bisp_{p_2})\bigr)A_R(\Gr),\\
    \mathcal{I}_{p_1,p_2} (\charmap{U})
    &\in \Hom\bigl(A_R(\Bisp_{p_2}),A_R(\Bisp_{p_1})\bigr)A_R(\Gr)
  \end{align*}
  is represented by the composite map in
  \(\Hom\bigl(A_R(\Bisp_{p_3}),A_R(\Bisp_{p_1})\bigr)A_R(\Gr)\).
  This implies that \(f*h\) as in the statement of the lemma
  exists.
  Since \(\mathcal{I}_{p_1,p_3}(\charmap{U\circ V})(\charmap{X}) =
  \charmap{U(V(X))} = \mathcal{I}_{p_1,p_2} (\charmap{U}) \circ
  \mathcal{I}_{p_2,p_3} (\charmap{V})\) for all compact slices~\(X\), it
  follows that
  \[
    \mathcal{I}_{p_1,p_2} (\charmap{U})
    \circ \mathcal{I}_{p_2,p_3} (\charmap{V})
    = \mathcal{I}_{p_1,p_3}(\charmap{U \circ V}).\qedhere
  \]
\end{proof}

\subsection{Comparison to the groupoid model}
\label{sec:compare_GMH}

The groupoid model~\(\GMH\) of~\(\X\) is described in
Definition~\ref{def:gpmod} as the groupoid model of the tight diagram
\(\X\Omega=(P,\Gr\Omega, \Bisp_p \Omega,\mu_{p,q} \Omega)\):
\[
  \GMH \defeq \bigsqcup_{g\in G} \GMH_g
  = \bigsqcup_{g\in G} \varinjlim_{(p_1,p_2)\in R_g}
  \Bisp_{p_1} \Omega\circ_{\Gr\Omega} (\Bisp_{p_2} \Omega)^*.
\]

We are going to construct a canonical nondegenerate homomorphism
from~\(\OOO_\F\) to \(A_R(\GMH)\) and then prove that it is an algebra
isomorphism.
By the universal property of the covariance ring, a nondegenerate
homomorphism \(\OOO_\F\to A_R(\GMH)\) corresponds to a covariant
representation of the diagram of bimodules \(A_R(\Bisp_p)\) in
\(A_R(\GMH)\).
Such a covariant representation consists of maps \(A_R(\Bisp_p)\to
A_R(\GMH)\) for all \(p\in P\) with certain properties.
More generally, we will construct maps \(A_R(\Bisp_{p_1}\circ
\Bisp_{p_2}^*) \to A_R(\GMH)\) for all \(p_1,p_2\in P\).
Later, we will see that these maps are compatible with the structure
maps~\(\Phi_{p_1,p_2,q}\) in Lemma~\ref{lem:compute_Phi}, so that they
descend to the inductive limits~\(\OO[g]\), and that the resulting map
on~\(\OOO_\F\) is bijective.
This gives us the desired isomorphism.

Since \(\Gr\Omega = \Gr \ltimes \Omega\), a composition over the
groupoid~\(\Gr\Omega\) such as \(\Bisp_{p_1} \Omega\circ_{\Gr\Omega}
(\Bisp_{p_2} \Omega)^*\) involves a fibre product over~\(\Omega\) and
an orbit space for a \(\Gr\)\nb-action.
That is, \(\Bisp_{p_1} \Omega \circ_{\Gr\Omega} (\Bisp_{p_2}
\Omega)^*\) is the orbit space of the action of~\(\Gr\) on the triple
fibre product \(\Bisp_{p_1} \times_{\s,\Gr^0,\s} \Bisp_{p_2}
\times_{\s,\Gr^0,\pi_1^\infty} \Omega\) by the \(\Gr\)\nb-action
defined by \((x_1,x_2,\omega)\cdot g = (x_1 g, x_2 g, g^{-1}
\omega)\).
(See the proof of Proposition~\ref{prop:tightenediag} given
in \cite{Meyer:Diagrams_models}*{\S8.1} for more details.)
The map~\(\pi_1^\infty\) is a proper local homeomorphism, and this
property is preserved under pullbacks
\cite{Antunes-Ko-Meyer:Groupoid_correspondences}*{Lemma~5.1}.
Therefore, the following map is proper:
\begin{equation}
  \label{eq:projection_before_orbit}
  \Bisp_{p_1} \times_{\s,\Gr^0,\s} \Bisp_{p_2}
  \times_{\s,\Gr^0,\pi_1^\infty} \Omega \to
  \Bisp_{p_1} \times_{\s,\Gr^0,\s} \Bisp_{p_2},
  \qquad
  (x_1,x_2,\omega)\mapsto (x_1,x_2).
\end{equation}

\begin{lemma}
  \label{lem:pi_proper_local_homeo}
  The map in~\eqref{eq:projection_before_orbit} induces a proper
  continuous map
  \[
    \pi_{p_1,p_2}\colon \Bisp_{p_1} \Omega\circ_{\Gr\Omega}
    (\Bisp_{p_2} \Omega)^* \to \Bisp_{p_1}\circ \Bisp_{p_2}^*.
  \]
\end{lemma}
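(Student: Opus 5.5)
The plan is to show that the map in~\eqref{eq:projection_before_orbit} is equivariant for the relevant \(\Gr\)\nb-actions, so that it descends to the orbit spaces. Properness will then come from the fact that both orbit space projections are open surjections and that the \(\Gr\)\nb-action on the source leaves the \(\Omega\)-coordinate ``fibrewise compact''.

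\emph{Descent.} Let \(\Gr\) act on \(\Bisp_{p_1}\times_{\s,\Gr^0,\s}\Bisp_{p_2}\) by \((x_1,x_2)g=(x_1g,x_2g)\). Then \(\Bisp_{p_1}\circ\Bisp_{p_2}^*\) is its orbit space, and the projection in~\eqref{eq:projection_before_orbit} intertwines this action with the action \((x_1,x_2,\omega)g=(x_1g,x_2g,g^{-1}\omega)\). Hence it induces a map \(\pi_{p_1,p_2}\) on orbit spaces. This map is continuous because the orbit space projection \(Q\) on the source is a quotient map; indeed, it is an open surjection, since orbit projections of basic actions are local homeomorphisms by \cite{Antunes-Ko-Meyer:Groupoid_correspondences}*{Lemma~2.10}.

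\emph{Properness.} Let \(K\subseteq\Bisp_{p_1}\circ\Bisp_{p_2}^*\) be compact. The plan is to show that \(\pi_{p_1,p_2}^{-1}(K)\) is the image under~\(Q\) of a compact set.
\begin{enumerate}
\item Cover~\(K\) by finitely many sets \(U_1U_2^*\) with compact slices \(U_j\subseteq\Bisp_{p_j}\). This is possible because such sets form a base of open sets with compact closures by Lemma~\ref{lem:base_of_comp}.
\item For each such set, \(C=U_1\times_{\s,\s}U_2\) is compact, since it is a closed subset of \(U_1\times U_2\).
\item The preimage of~\(C\) under the proper map~\eqref{eq:projection_before_orbit} is therefore compact.
\item Every point of \(\pi_{p_1,p_2}^{-1}(U_1U_2^*)\) has a representative \((x_1,x_2,\omega)\) with \((x_1,x_2)\in C\). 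To see this, take any representative. Its image in \(\Bisp_{p_1}\circ\Bisp_{p_2}^*\) lies in \(U_1U_2^*\), so some \(g\in\Gr\) moves \((x_1,x_2)\) into~\(C\). Acting by that~\(g\) on the whole triple gives the required representative.
\end{enumerate}
Consequently \(\pi_{p_1,p_2}^{-1}(K)\) is a closed subset of the finite union of the compact sets \(Q(\text{preimage of } C)\), and so it is compact.

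The main obstacle is that properness in the paper's sense (\(g\times\id_Z\) closed) needs more than ``preimages of compacts are compact'' when the spaces are not Hausdorff. Here \(\Bisp_{p_1}\circ\Bisp_{p_2}^*\) is locally Hausdorff and locally compact, and the source orbit space is Hausdorff, being the groupoid model's arrow-space piece from Proposition~\ref{prop:tightenediag}. I would handle this by checking closedness locally over the open cover by slices \(U_1U_2^*\). On each slice \(U_1U_2^*\), the restricted map is identified, via the homeomorphism \(C\cong U_1U_2^*\) of Lemma~\ref{lem:base_of_comp}, with the restriction of~\eqref{eq:projection_before_orbit} to the preimage of~\(C\). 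That restriction is a proper map; equivalently, it is the pullback of \(\pi_1^\infty\) along \(\s\colon C\to\Gr^0\). Since properness is local on the target, this gives properness of \(\pi_{p_1,p_2}\).
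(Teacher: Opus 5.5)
Your final, local argument is correct and is essentially the paper's proof. Both arguments restrict to the basic sets \(U_1U_2^*\) and identify \(\pi_{p_1,p_2}^{-1}(U_1U_2^*)\) with \(U_1\times_{\s,\s}U_2\times_{\s,\pi_1^\infty}\Omega\): your step~4 gives surjectivity of the orbit map onto this set, and you should add injectivity, which the paper states explicitly and which follows from injectivity of \(\Qu\) on \(U_1\) together with freeness of the action (it also yields the Hausdorffness of these preimages that is used later). The paper then concludes via a local criterion with Hausdorff compact preimages, whereas you use pullback stability and locality of properness on the target.

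Drop the preceding global compactness argument and the claim that the source orbit space is Hausdorff. Neither \(\Bisp_{p_1}\circ\Bisp_{p_2}^*\) nor the groupoid model need be Hausdorff (take \(p_1=p_2=1\) with \(\Gr\) non-Hausdorff), so a compact \(K\) need not be closed and \(\pi_{p_1,p_2}^{-1}(K)\) need not be a closed subset of your finite union; your local argument uses neither claim.
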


\begin{proof}
  The map in~\eqref{eq:projection_before_orbit} is
  \(\Gr\)\nb-equivariant for the relevant \(\Gr\)\nb-actions on the
  spaces \(\Bisp_{p_1} \times_{\s,\Gr^0,\s} \Bisp_{p_2}
  \times_{\s,\Gr^0,\pi_1^\infty} \Omega\) and \(\Bisp_{p_1}
  \times_{\s,\Gr^0,\s} \Bisp_{p_2}\), and so it induces a
  map~\(\pi_{p_1,p_2}\) on the orbit spaces as in the lemma.
  The two actions on \(\Bisp_{p_1} \times_{\s,\Gr^0,\s} \Bisp_{p_2}
  \times_{\s,\Gr^0,\pi_1^\infty} \Omega\) and \(\Bisp_{p_1}
  \times_{\s,\Gr^0,\s} \Bisp_{p_2}\) are basic because the right actions
  on \(\Bisp_{p_1}\) and~\(\Bisp_{p_2}\) are basic.
  Therefore, their orbit space projections are surjective local
  homeomorphisms by
  \cite{Antunes-Ko-Meyer:Groupoid_correspondences}*{Lemma~2.10}.

  A continuous map \(f\colon X\to Y\) between two spaces with an ample
  base is proper if and only if any point in~\(Y\) has a Hausdorff,
  compact open neighbourhood~\(U\) such that \(f^{-1}(U)\) is Hausdorff,
  compact.
  In the case at hand, such~\(U\) are provided by compact slices of
  the form~\(U_1 U_2^*\) with compact slices \(U_j
  \subseteq\Bisp_{p_j}\) for \(j=1,2\).
  The preimage \(\pi_{p_1,p_2}^{-1}(U_1 U_2^*)\) is the set of all
  \((u_1, \omega) (u_2, \omega)^*\) with \(u_1\in U_1\), \(u_2\in U_2\),
  and \(\omega \in \Omega\) such that \(\s(u_1) = \s(u_2) =
  \rg(\omega)\).
  Since \(\Qu|_{U_1}\) is injective, the relation \((u_1, \omega)
  (u_2, \omega)^* = (u_1 g, g^{-1}\omega) (u_2 g, g^{-1}\omega)^*\) does
  not identify any of the elements \((u_1, \omega) (u_2, \omega)^*\)
  above.
  So \(\pi_{p_1,p_2}^{-1}(U_1 U_2^*)\) is homeomorphic to \(U_1
  \times_{\s,\Gr^0,\s} U_2 \times_{\rg,\Gr^0,\pi_1^\infty} \Omega\), and
  the latter is Hausdorff and compact because the spaces \(U_1\),
  \(U_2\) and \((\pi_1^\infty)^{-1}(\s(U_2)\cap \s(U_1))\) are
  and~\(\Gr^0\) is Hausdorff.
\end{proof}

A proper continuous map \(g\colon X\to Y\) induces a map
\(g^*\colon A_R(Y) \to A_R(X)\), \(h\mapsto h\circ g\),
by Proposition~\ref{pro:Steinberg_functorial}.
Therefore, the maps in Lemma~\ref{lem:pi_proper_local_homeo} induce
maps
\begin{equation}
  \label{eq:pi_star_pp}
  \pi_{p_1,p_2}^*\colon A_R(\Bisp_{p_1}\circ \Bisp_{p_2}^*) \to
  A_R(\Bisp_{p_1} \Omega\circ_{\Gr\Omega} (\Bisp_{p_2} \Omega)^*).
\end{equation}
Since~\(\X\Omega\) is a tight diagram, the canonical maps
\begin{equation}
  \label{eq:lambda_star_pp}
  \lambda_{p_1,p_2}\colon
  \Bisp_{p_1} \Omega\circ_{\Gr\Omega} (\Bisp_{p_2} \Omega)^*
  \to \varinjlim \Bisp_{p_1} \Omega\circ_{\Gr\Omega} (\Bisp_{p_2} \Omega)^*
  = \GMH_g \subseteq \GMH
\end{equation}
in the inductive limit cone are local homeomorphisms by
Lemma~\ref{lem:lambda_nice}.
By Proposition~\ref{pro:Steinberg_functorial}, they induce maps
\[
  (\lambda_{p_1,p_2})_*\colon
  A_R(\Bisp_{p_1} \Omega\circ_{\Gr\Omega} (\Bisp_{p_2} \Omega)^*)
  \to A_R(\GMH).
\]

\begin{lemma}
  \label{lem:lambda_pi_properties}
  Let \(p_1,p_2\in P\).
  The maps
  \[
    \tilde\alpha_{p_1,p_2}^\infty \defeq (\lambda_{p_1,p_2})_* \circ
    (\pi_{p_1,p_2})^*\colon A_R(\Bisp_{p_1}\circ \Bisp_{p_2}^*)
    \to A_R(\GMH)
  \]
  and the subsets
  \[
    \tilde\alpha_{p_1,p_2}^\infty(U)
    \defeq \lambda_{p_1,p_2}(\pi^{-1}_{p_1,p_2}(U))
    \subseteq \GMH_{p_1 p_2^{-1}}
  \]
  for a slice \(U\subseteq \Bisp_{p_1} \circ \Bisp_{p_2}^*\) have the
  following properties:
  \begin{enumerate}
  \item \label{en:lambda_pi_properties_0}%
    \(\tilde\alpha_{p_1,p_2}^\infty(U)\) is a compact slice
    in~\(\GMH\) and the restriction of~\(\lambda_{p_1,p_2}\) to
    \(\pi_{p_1,p_2}^{-1}(U)\) is a homeomorphism onto
    \(\tilde\alpha_{p_1,p_2}^\infty(U)\);
  \item \label{en:lambda_pi_properties_1}%
    \(\tilde\alpha_{p_1,p_2}^\infty(\charmap{U}) =
    \charmap{\tilde\alpha_{p_1,p_2}^\infty(U)}\);
  \item \label{en:lambda_pi_properties_2}%
    \(\tilde\alpha^\infty_{p_1 q,p_2 q} (\tilde\alpha_{p_1,p_2}^q(U)) =
    \tilde\alpha^\infty_{p_1,p_2}(U)\);
  \item \label{en:lambda_pi_properties_3}%
    \(\tilde\alpha^\infty_{p_1,p_2}(f) * \tilde\alpha^\infty_{p_2,p_3}(h) =
    \tilde\alpha^\infty_{p_1,p_3}(f * h)\) for \(f\in A_R(\Bisp_{p_1}\circ
    \Bisp_{p_2}^*)\), \(h\in A_R(\Bisp_{p_2}\circ \Bisp_{p_3}^*)\),
    where \(f* h\in A_R(\Bisp_{p_1}\circ \Bisp_{p_3}^*)\) is the
    transferred product from~\(\OOO_\F\).
  \end{enumerate}
\end{lemma}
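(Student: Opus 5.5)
We prove the four claims in order, working on the level of the concrete spaces and then passing to characteristic functions. Throughout, \(\pi^{-1}_{p_1,p_2}(U)\) is described as in the proof of Lemma~\ref{lem:pi_proper_local_homeo}. If \(U\) is a compact slice, \(\Qu_1|_U\) is injective. Hence no two distinct points of \(\pi_{p_1,p_2}^{-1}(U)\) (the classes \((u_1,\omega)(u_2,\omega)^*\) with \(u_1u_2^*\in U\)) are identified by the \(\Gr\)-action. So \(\pi_{p_1,p_2}^{-1}(U)\) is homeomorphic to the fibre product \(U\times_{\Gr^0,\pi_1^\infty}\Omega\) (using \(\s\) of the first coordinate). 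This is compact Hausdorff because \(\pi_1^\infty\) is proper, and it is open because \(\pi_{p_1,p_2}\) is continuous.

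For~\ref{en:lambda_pi_properties_0}, we show that \(\lambda_{p_1,p_2}\) is injective on \(\pi^{-1}_{p_1,p_2}(U)\) and that its image is a slice in~\(\GMH\). Then, since \(\lambda_{p_1,p_2}\) is a local homeomorphism by Lemma~\ref{lem:lambda_nice}, it restricts to a homeomorphism onto a compact open set. The range and source of \((u_1,\omega)(u_2,\omega)^*\) in \(\GMH\) are \(\rg(u_1,\omega)\) and \(\rg(u_2,\omega)\) in \(\Omega\). These are the points \(u_1\cdot\omega\) and \(u_2\cdot\omega\) given by the maps \(\mu_{p_1}\), \(\mu_{p_2}\colon \Bisp_{p_j}\circ\Omega\to\Omega\). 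Their image under \(\pi_{p_1}^\infty\) is \(\Qu(u_1)=\Qu_1(u_1u_2^*)\). Since \(\Qu_1|_U\) is injective, \(\rg\) determines \(u_1u_2^*\in U\), and then \(\omega\) is determined because \(\mu_{p_1}\) is a homeomorphism. The same argument with \(\Qu_2\) handles \(\s\). Thus \(\rg\) and \(\s\) are injective on the image, and injectivity of \(\lambda_{p_1,p_2}\) there follows at once. Claim~\ref{en:lambda_pi_properties_1} is then immediate from Proposition~\ref{pro:Steinberg_functorial}: \(\pi^*\) sends \(\charmap U\) to \(\charmap{\pi^{-1}(U)}\), and \(\lambda_*\) sends this to \(\charmap{\lambda(\pi^{-1}(U))}\) because \(\lambda\) is injective on this set.

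For~\ref{en:lambda_pi_properties_2}, we compare the two subsets of \(\GMH_g\) pointwise. A point \((u_1z,\omega)(u_2z,\omega)^*\) of \(\pi^{-1}_{p_1q,p_2q}(\tilde\alpha^q_{p_1,p_2}(U))\) equals, under the tightening isomorphisms \(\mu_{p,q}\Omega\), the image under \(\alpha^q_{p_1,p_2}\) (the structure map of the tight diagram) of \((u_1,z\omega)(u_2,z\omega)^*\). The latter lies in \(\pi^{-1}_{p_1,p_2}(U)\). Conversely, given \((u_1,\omega')(u_2,\omega')^*\) with \(u_1u_2^*\in U\), tightness of \(\X\Omega\) lets us write \(\omega'=z\omega\) via \(\mu_q^{-1}\). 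Since \(\lambda_{p_1q,p_2q}\circ\alpha^q_{p_1,p_2}=\lambda_{p_1,p_2}\), the two images coincide. This identification of \(\alpha^q\) in the tightened diagram with the formula in the original data is the main obstacle, because it requires unwinding the definitions in \cite{Meyer:Diagrams_models}*{\S8.1}. We expect only a bookkeeping argument.

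For~\ref{en:lambda_pi_properties_3}, by bilinearity and Proposition~\ref{pro:new_base_Bipp} it suffices to treat \(f=\charmap U\) and \(h=\charmap V\) for compact slices. By Lemma~\ref{lem:transfer_mult}, \(f*h=\charmap{U\circ V}\). The product of characteristic functions of slices in the ample groupoid \(\GMH\) is the characteristic function of the product slice. So we must show
\[
  \tilde\alpha^\infty_{p_1,p_2}(U)\cdot\tilde\alpha^\infty_{p_2,p_3}(V)=\tilde\alpha^\infty_{p_1,p_3}(U\circ V).
\]
A composable pair in the product has the form \((u_1,\omega)(u_2,\omega)^*\cdot(v_1,\eta)(v_2,\eta)^*\) with \(u_2\omega=v_1\eta\) in \(\Omega\). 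Applying \(\mu_{p_2}^{-1}\), there is \(g=\braket{u_2}{v_1}\) with \(\omega=g\eta\). The product in \(\GMH\) (Definition~\ref{def:gpmod}) is then \((u_1g,\eta)(v_2,\eta)^*\), which is exactly a point of \(\pi^{-1}(U\circ V)\) by~\eqref{eq:transfer_mult}. The steps reverse, giving equality.
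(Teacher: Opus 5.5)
Your proposal is correct and follows essentially the same route as the paper: injectivity of \(\rg_{\GMH}\circ\lambda_{p_1,p_2}\) and \(\s_{\GMH}\circ\lambda_{p_1,p_2}\) on \(\pi_{p_1,p_2}^{-1}(U)\) from the slice property of \(U\), then the identity \(\pi^{-1}_{p_1q,p_2q}(\tilde\alpha^q_{p_1,p_2}(U))=\alpha^q_{p_1,p_2}(\pi^{-1}_{p_1,p_2}(U))\) combined with \(\lambda_{p_1q,p_2q}\circ\alpha^q_{p_1,p_2}=\lambda_{p_1,p_2}\), and finally the pointwise computation of the product slice using \(\braket{u_2}{v_1}\). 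One small repair: the fibre product \(U\times_{\Gr^0}\Omega\) is not well defined for a general slice \(U\subseteq\Bisp_{p_1}\circ\Bisp_{p_2}^*\), because \(\s(u_1)\) depends on the chosen representative; you do not need it, since \(\pi_{p_1,p_2}^{-1}(U)\) is compact, open and Hausdorff simply because \(\pi_{p_1,p_2}\) is proper (Lemma~\ref{lem:pi_proper_local_homeo}), which is exactly how the paper argues.
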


The abuse of notation of using the same
name~\(\tilde\alpha^\infty_{p_1,p_2}\) for the map on functions and
subsets should not lead to confusion because
of~\ref{en:lambda_pi_properties_1}.

\begin{proof}
  We prove \ref{en:lambda_pi_properties_0}
  and~\ref{en:lambda_pi_properties_1} together.
  We first apply~\(\pi_{p_1,p_2}^*\) and
  then~\((\lambda_{p_1,p_2})_*\).

  It is clear that~\(\pi_{p_1,p_2}^*\) in~\eqref{eq:pi_star_pp}
  maps~\(\charmap{U}\) to \(\charmap{\pi_{p_1,p_2}^{-1}(U)}\).
  This forces \(\pi_{p_1,p_2}^{-1}(U)\) to be a compact, open,
  Hausdorff subset of \(\Bisp_{p_1} \Omega\circ_{\Gr\Omega}
  (\Bisp_{p_2} \Omega)^*\).
  Of course, the reason for this is that~\(\pi_{p_1,p_2}\) is a proper
  continuous map.

  By definition, \(\pi_{p_1,p_2}^{-1}(U)\) consists of all
  \((u_1, \omega)(u_2, \omega)^*\) with \(u_1 u_2^*\in U\) and
  \(\omega\in \Omega\) satisfying \(\s(u_1) = \s(u_2) = \rg(\omega)\),
  and where \((u_1, \omega) (u_2, \omega)^*  = (u_1 g, g^{-1}\omega)
  (u_2 g, g^{-1}\omega)^*\) for all \(g\in\Gr\) with \(\s(u_1) =
  \s(u_2) = \rg(\omega) = \rg(g)\).
  We claim that \(\pi_{p_1,p_2}^{-1}(U)\) is a slice in the sense that
  the projections to \(\Bisp_{p_1} \circ_{\Gr} \Omega\) and
  \(\Bisp_{p_2} \circ_{\Gr} \Omega\) that map \((u_1, \omega)(u_2,
  \omega)^*\) to \(u_1, \omega\) and \(u_2, \omega\), respectively, are
  injective on~\(\pi_{p_1,p_2}^{-1}(U)\).

  It suffices to prove this for one of the two projections.
  Assume \((u_1, \omega) = (u_1', \omega')\) for some \((u_1, \omega)(u_2,
  \omega)^*\) and  \((u_1', \omega')(u_2', \omega')^*\)
  in~\(\pi_{p_1,p_2}^{-1}(U)\).
  That is, there is \(g\in\Gr\) with \(\s(u_1) = \rg(g)\), \(u_1' =
  u_1 g\) and \(\omega' = g^{-1} \omega\).
  Then \(\Qu_1(u_1 u_2^*) = \Qu(u_1) = \Qu(u_1') = \Qu_1(u_1' (u_2')^*)\).
  Since~\(U\) is a slice, it follows that \(u_1 u_2^* = u_1'
  (u_2')^*\), that is, \(u_1 h = u_1'\) and \(u_2 h = u_2'\) for some
  \(h\in\Gr\) with \(\s(u_1) = \s(u_2) = \rg(h)\).
  Here \(g=h\) because the right action on~\(\Bisp_{p_1}\) is free.
  Then \((u_1, \omega)(u_2, \omega)^* = (u_1', \omega')(u_2', \omega')^*\)
  in \(\Bisp_{p_1} \Omega\circ_{\Gr\Omega} (\Bisp_{p_2} \Omega)^*\).

  Recall that there are canonical homeomorphisms \(\Bisp_{p_j} \circ
  \Omega \congto \Omega\) which, roughly speaking, identify the
  class of \((u, \omega) \in \Bisp_{p_j}\times_{\s,\Gr^0,\pi_1^\infty}
  \Omega\) for the equivalence relation \((u_1,\omega) \sim (u_1
  g,g^{-1}\omega)\) with the infinite word \(u \omega \in \Omega\);
  more precisely, \(u\omega \in \varprojlim \Bisp_{p_j q}/\Gr\), which
  is isomorphic  to~\(\Omega\) because the relevant subcategory is
  cofinal in the category~\(PO\) defining~\(\Omega\).
  We write an element of \(\Bisp_{p_j} \Omega\) with a comma to
  distinguish it from the corresponding element of \(\Bisp_{p_j} \circ
  \Omega \cong \Omega\).
  When we map \(\Bisp_{p_1} \Omega \circ (\Bisp_{p_2}\Omega)^*\)
  to~\(\GMH\) using~ \(\lambda_{p_1,p_2}\), then the maps that send
  \((u_1, \omega)(u_2, \omega)^*\) to \(u_1 \omega\in\Omega\) and \(u_2
  \omega\in\Omega\) become the range and source maps
  \(\rg_{\GMH},\s_{\GMH}\colon \GMH\rightrightarrows \Omega\).
  So the slice property shown above for \(\pi_{p_1,p_2}^{-1}(U)\) says
  that the two maps
  \[
    \rg_{\GMH}\circ \lambda_{p_1,p_2},
    \s_{\GMH}\circ \lambda_{p_1,p_2}\colon
    \Bisp_{p_1} \Omega\circ_{\Gr\Omega} (\Bisp_{p_2} \Omega)^*
    \to \Omega
  \]
  are injective on the subset \(\pi_{p_1,p_2}^{-1}(U)\).
  This implies that the restriction of~\(\lambda_{p_1,p_2}\) to
  \(\pi_{p_1,p_2}^{-1}(U)\) is injective.
  Then it follows that \((\lambda_{p_1,p_2})_*
  (\charmap{\pi_{p_1,p_2}^{-1}(U)}) = \charmap{\lambda_{p_1,p_2}
  (\pi_{p_1,p_2}^{-1}(U))}\), which is the formula asserted
  in~\ref{en:lambda_pi_properties_1}.
  In particular, this subset must be compact, open and Hausdorff.
  The argument above also shows that \(\rg_{\GMH}\) and~\(\s_{\GMH}\)
  restrict to injective maps on it.
  That is, \(\lambda_{p_1,p_2}(\pi_{p_1,p_2}^{-1}(U))\) is a slice
  in~\(\GMH\).

  We have now proven
  \ref{en:lambda_pi_properties_0}--\ref{en:lambda_pi_properties_1}.
  Next, we prove~\ref{en:lambda_pi_properties_2}.
  Lemma~\ref{lem:compute_Phi} shows that
  \(\Phi_{p_1,p_2,q}(\charmap{U})\) for a compact slice \(U\subseteq
  \Bisp_{p_1}\circ \Bisp_{p_2}^*\) is the characteristic function of the
  compact slice \(\tilde\alpha_{p_1,p_2}^q(U) \subseteq \Bisp_{p_1 q} \circ
  \Bisp_{p_2 q}^*\) defined in Lemma~\ref{lem:compute_Phi}.
  So \((\pi_{p_1 q,p_2 q})^* \circ \Phi_{p_1,p_2,q}\)
  maps~\(\charmap{U}\) to the characteristic function of \(\pi_{p_1
    q,p_2,q}^{-1}(\tilde\alpha_{p_1,p_2}^q(U))\).

  Since the diagram~\(\X\Omega\) is tight,
  Definition~\ref{def:alpha_pp^q} provides well-defined maps
  \[
    \alpha_{p_1,p_2}^q\colon
    \Bisp_{p_1} \Omega\circ_{\Gr\Omega} (\Bisp_{p_2} \Omega)^*
    \to \Bisp_{p_1 q} \Omega\circ_{\Gr\Omega} (\Bisp_{p_2 q} \Omega)^*,
    \qquad
    x y^* \mapsto (x z) (y z)^*.
  \]
  By construction, \(\lambda_{p_1 q,p_2 q} \circ \alpha_{p_1,p_2}^q =
  \lambda_{p_1,p_2}\).
  We claim that
  \begin{equation}
    \label{eq:claim_pi_alpha_order}
    \pi_{p_1 q,p_2,q}^{-1}(\tilde\alpha_{p_1,p_2}^q(U))
    =  \alpha_{p_1,p_2}^q(\pi_{p_1,p_2}^{-1}(U)).
  \end{equation}
  Together with \(\lambda_{p_1 q,p_2 q} \circ \alpha_{p_1,p_2}^q =
  \lambda_{p_1,p_2}\), this implies~\ref{en:lambda_pi_properties_2}.

  The set \(\tilde\alpha_{p_1,p_2}^q(U)\) consists of all
  \((u_1 z) (u_2 z)^* \in \Bisp_{p_1 q} \circ \Bisp_{p_2 q}^*\)
  with \(u_1 u_2^*\in U\), \(z\in \Bisp_q\) and
  \(\s(u_1) = \s(u_2) = \rg(z)\).
  Thus the preimage \(\pi_{p_1 q,p_2,q}^{-1}(\tilde\alpha_{p_1,p_2}^q(U))\)
  consists of all \((u_1 z, \omega) (u_2 z, \omega)^* \in \Bisp_{p_1
    q}\Omega \circ_{\Gr\Omega} (\Bisp_{p_2 q} \Omega)^*\) with \(u_1
  u_2^* \in U\), \(z\in \Bisp_q\), \(\omega \in \Omega\), \(\s(u_1) =
  \s(u_2) = \rg(z)\), and \(\s(z) = \pi_1^\infty(\omega)\).

  The set \(\pi_{p_1 q,p_2,q}^{-1}(U)\) consists of
  \((u_1, \omega') (u_2, \omega')^*\) with \(u_1 (u_2)^*\in U\) and
  \(\omega'\in\Omega\) such that \(\s(u_1') = \s(u_2') =
  \pi_1^\infty(\omega')\).
  Since the range map induces a homeomorphism
  \(\Bisp_q\Omega/\Gr\Omega \cong \Omega\),
  there is \((z,\omega) \in \Bisp_q \Omega\) with
  \(\s(z) = \pi_1^\infty(\omega)\) and
  \(\rg_{\Bisp_q\Omega}(z,\omega) = \omega'\), and its
  \(\Gr\)\nb-orbit is unique.
  This implies \(\rg(z) = \s(u_1) = \s(u_2)\).
  By definition, \(\alpha_{p_1,p_2}^q((u_1, \omega') (u_2, \omega')^*) =
  (u_1 z, \omega) (u_2 z, \omega)^*\).
  Here \(u_1,u_2,z,\omega\) satisfy the
  conditions above to define an element of
  \(\pi_{p_1 q,p_2,q}^{-1}(\tilde\alpha_{p_1,p_2}^q(U))\).
  Conversely, given any such
  \(u_1,u_2,z,\omega\), we may take
  \(\omega' = \rg_{\Bisp_q\Omega}(z, \omega)\) to witness that we have
  an element of \(\pi_{p_1 q,p_2 q}^{-1}(\tilde\alpha_{p_1,p_2}^q(U))\).
  This proves~\eqref{eq:claim_pi_alpha_order} and finishes the proof
  of~\ref{en:lambda_pi_properties_2}.

  Finally, we prove~\ref{en:lambda_pi_properties_3}.
  It suffices to prove this for \(f = \charmap{U}\), \(h =
  \charmap{V}\) for slices \(U\subseteq \Bisp_{p_1} \circ
  \Bisp_{p_2}^*\) and \(V\subseteq \Bisp_{p_2} \circ \Bisp_{p_3}^*\).
  Lemma~\ref{lem:transfer_mult} shows that \(\charmap{U} * \charmap{V}
  = \charmap{U \circ V}\) for a certain slice \(U\circ
  V\subseteq\Bisp_{p_2} \circ \Bisp_{p_3}^*\).
  We have seen above that~\(\tilde\alpha^\infty_{p_1,p_2}\) maps
  \(\charmap{U}\) to the characteristic function of the slice
  \(\tilde\alpha^\infty_{p_1,p_2}(U) = \lambda_{p_1,p_2}(
  \pi_{p_1,p_2}^{-1}(U))\), and similarly for the other relevant
  characteristic functions.
  Since these sets are slices, the convolution of the characteristic
  functions of \(\tilde\alpha_{p_1,p_2}^\infty(U)\) and
  \(\tilde\alpha_{p_2,p_3}^\infty(V)\) is the characteristic function of
  the product slice \(\tilde\alpha_{p_1,p_2}^\infty(U) \cdot
  \tilde\alpha_{p_2,p_3}^\infty(V)\).
  So what we have to prove is that this product slice is equal to
  \(\tilde\alpha_{p_1,p_3}^\infty(U\circ V)\).

  The multiplication in~\(\GMH\) is described in
  Definition~\ref{def:gpmod} by the equation \(x_1 x_2^* \cdot
  x_2 x_3^* = x_1 x_3^*\).
  Here \(x_j \in \Bisp_{p_j}\Omega\) for \(j=1,2,3\).
  Since \(x_1 x_2^* = x_1 g (x_2 g)^*\) if \(g\in\Gr\Omega\) is
  composable with~\(x_2\), this implies \(x_1 g (x_2 g)^* \cdot
  x_2 x_3^* = x_1 x_3^*\).
  The product \(x_1 x_2^* \cdot x_2' x_3^*\) is only defined if
  \(x_2' g = x_2\) for some \(g\in \Gr\Omega\).
  Here~\(g\) is unique, namely, \(g = \braket{x_2'}{x_2}\).
  So \(x_1 x_2^* \cdot x_2' x_3^* = x_1 \braket{x_2}{x_2'} x_3^*\) if
  \(\braket{x_2}{x_2'}\) is defined, and \(x_1 x_2^* \cdot x_2' x_3^*\)
  is not defined otherwise.

  Elements of \(\tilde\alpha_{p_1,p_2}^\infty(U)\) are represented by
  \((u_1, \omega) (u_2, \omega)^*\) with \(u_1 u_2^*\in U\),
  \(\omega\in\Omega\) such that \(\s(u_1) = \s(u_2) =
  \pi_1^\infty(\omega)\).
  The equivalence relation on these representatives is generated by
  \((u_1, \omega) (u_2, \omega)^* \sim (u_1 g, g^{-1}\omega) (u_2 g,
  g^{-1}\omega)^*\) for \(g\in\Gr\) with \(\rg(g) = \s(u_1) = \s(u_2)
  = \rg(\omega)\).
  Similarly, elements of \(\tilde\alpha_{p_2,p_3}^\infty(V)\) are represented by
  \((v_1, \omega') (v_2, \omega')^*\) with \(v_1 v_2^*\in V\) and
  \(\omega'\in\Omega\) such that \(\s(v_1) = \s(v_2) =
  \pi_1^\infty(\omega')\),
  with a similar equivalence relation.
  Now \((u_1, \omega) (u_2, \omega)^* \cdot (v_1, \omega')
  (v_2, \omega')^*\) is only defined if \((u_2, \omega) = (v_1, \omega')\)
  in~\(\Omega\).
  Equivalently, there is \(g\in\Gr\) with \(\rg(g) = \s(u_2)\), \(v_1
  = u_2 g\), and \(\omega' = g^{-1} \omega\).
  Here \(g = \braket{u_2}{v_1}\) and so \(\omega = \braket{u_2}{v_1}
  \omega'\).
  Since \((u_1, \omega) (u_2, \omega)^* = (u_1 g, g^{-1} \omega) (u_2
  g, g^{-1}\omega)^*\), the product is equal to \((u_1
  \braket{u_2}{v_1}, \omega') (v_2,\omega')^*\).
  Now Lemma~\ref{lem:transfer_mult} shows that the product slice is
  indeed equal to \(\tilde\alpha_{p_1,p_3}^\infty(U\circ V)\).
  This finishes the proof of~\ref{en:lambda_pi_properties_3}.
\end{proof}

Lemma \ref{lem:lambda_pi_properties}.\ref{en:lambda_pi_properties_2}
and the description of~\(\OOO_\F\) in Section~\ref{sec:diagrams} show
that the maps~\(\tilde\alpha^\infty_{p_1,p_2}\) for \(p_1 p_2^{-1} = g\) descend to
a map on the inductive limit~\(\OO[g]\).
And these maps for all \(g\in G\) piece together to an \(R\)\nb-module
map \(\varrho\colon \OOO_\F \to A_R(\GMH)\).
Lemma \ref{lem:lambda_pi_properties}.\ref{en:lambda_pi_properties_3}
implies that~\(\varrho\) is an algebra homomorphism.

It is easy to see that~\(\varrho\) restricts to a nondegenerate
homomorphism on \(A_R(\Gr^0)\).
(We do not check this because we will soon prove that~\(\varrho\) is
an isomorphism.)
Then the universal property of the covariance ring implies that the
maps
\[
  \tilde\alpha^\infty_{p,e}\colon A_R(\Bisp_p)
  \cong A_R(\Bisp_p \circ \Bisp_e^*)
  \to A_R(\GMH)
\]
for \(p\in P\) form a covariant representation of the
diagram~\(\A * \X\) in \(A_R(\GMH)\).
We could also have constructed this covariant representation first and
then defined~\(\varrho\) as the nondegenerate homomorphism that it
induces.
The more general statements in Lemma~\ref{lem:lambda_pi_properties}
make it easier to prove that~\(\varrho\) is an isomorphism, which is
our next goal.
We fix \(g\in G\) and want to prove that the restriction
\(\varrho_g\colon \OO[g] \to A_R(\GMH_g)\) is invertible.
This is where our specific choice of ample bases in \(\Bisp_{p_1}
\circ \Bisp_{p_2}^*\) becomes important.
Namely, we are going to prove that their images
\(\tilde\alpha_{p_1,p_2}^\infty(U)\) in the groupoid model form an
ample base and that all the relations in
Theorem~\ref{the:steinmod_is_quot_of_sum} are also realised
in~\(\OOO_\F\).
(The more obvious base in \(\Bisp_{p_1}\circ \Bisp_{p_2}^*\) of
subsets of the form \(U V^*\) for compact
slices \(U\subseteq \Bisp_{p_1}\) and \(V\subseteq \Bisp_{p_2}\) works
poorly here because images of such slices under the
maps~\(\tilde\alpha_{p_1,p_2}^q\) for \(q\in P\) may fail to have the same
form.)

\begin{proposition}
  \label{prop:base_on_Hg}
  The slices \(\tilde\alpha_{p_1,p_2}^\infty(U)\) for compact slices
  \(U\subseteq \Bisp_{p_1}\circ \Bisp_{p_2}^*\) and \(p_1,p_2\in P\)
  form an ample base~\(\B_{\GMH}\) in~\(\GMH\) that is closed under
  taking arbitrary compact open subsets.
\end{proposition}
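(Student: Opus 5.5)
We need to show: the sets \(\tilde\alpha_{p_1,p_2}^\infty(U)\) are compact slices (done in Lemma~\ref{lem:lambda_pi_properties}), that they form a base of the topology of~\(\GMH\), and that any compact open subset of such a set is again of this form. The last property implies closure under set differences, since \(A\setminus B\) is compact open in \(A\) whenever \(A,B\) are compact open and \(A\) is Hausdorff; hence it gives an ample base. So the plan reduces to two claims: a base property and a hereditary property for compact open subsets.

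For the hereditary property, let \(U\subseteq \Bisp_{p_1}\circ\Bisp_{p_2}^*\) be a compact slice and let \(W\subseteq \tilde\alpha_{p_1,p_2}^\infty(U)\) be compact open. By Lemma~\ref{lem:lambda_pi_properties}.\ref{en:lambda_pi_properties_0}, \(\lambda_{p_1,p_2}\) restricts to a homeomorphism from \(\pi_{p_1,p_2}^{-1}(U)\) onto \(\tilde\alpha^\infty_{p_1,p_2}(U)\), so \(W\) corresponds to a compact open subset \(W'\subseteq\pi_{p_1,p_2}^{-1}(U)\cong U\times_{\Gr^0}\Omega\) (the identification as in the proof of Lemma~\ref{lem:pi_proper_local_homeo}). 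The problem is that \(W'\) need not be a preimage \(\pi^{-1}(U')\). Here I would use the description of~\(\Omega\) as the projective limit \(\varprojlim \Bisp_q/\Gr\). Its topology has a base of sets \((\pi^\infty_q)^{-1}(V)\) with \(V\subseteq \Bisp_q/\Gr\) compact open, and these are cofinal as \(q\) grows along the filtered category~\(PO\). By compactness, \(W'\) is a finite union of sets \(U_i\times_{\Gr^0}(\pi^\infty_{q_i})^{-1}(V_i)\). Using~\ref{enum:O1}, pass to a common \(q\). Then, after applying \(\alpha^q_{p_1,p_2}\) together with \eqref{eq:claim_pi_alpha_order} and Lemma~\ref{lem:lambda_pi_properties}.\ref{en:lambda_pi_properties_2}, the set \(W\) becomes \(\lambda_{p_1q,p_2q}(\pi^{-1}_{p_1q,p_2q}(U'))\). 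Here \(U'\subseteq\tilde\alpha^q_{p_1,p_2}(U)\) is the compact open subset of those \((u_1z)(u_2z)^*\) with \(\Qu(z)\) in the relevant \(V_i\). Every compact open subset of a slice is a slice by Proposition~\ref{pro:new_base_Bipp}, so \(U'\) is a compact slice, and hence \(W\in\B_{\GMH}\).

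For the base property, take a point \(\gamma\in\GMH_g\) and an open neighbourhood \(N\). By Lemma~\ref{lem:lambda_nice}, \(\gamma=\lambda_{p_1,p_2}(\xi)\) for some \(\xi\in\Bisp_{p_1}\Omega\circ(\Bisp_{p_2}\Omega)^*\), and \(\lambda_{p_1,p_2}^{-1}(N)\) is an open neighbourhood of~\(\xi\). Choose a compact slice \(U\ni\pi_{p_1,p_2}(\xi)\). Then \(\tilde\alpha^\infty_{p_1,p_2}(U)\cap N\) is an open neighbourhood of~\(\gamma\) inside a compact Hausdorff open set. Since \(\GMH^0=\Omega\) is ample, this set contains a compact open neighbourhood of~\(\gamma\), and by the hereditary property that neighbourhood lies in~\(\B_{\GMH}\).

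The main obstacle is the hereditary step. It requires showing that the cylinder sets of \(\Omega\) pulled back through finitely many levels \(q\) are exactly realised by the images \(\tilde\alpha^q_{p_1,p_2}\), and that the tightened coordinates \((z,\omega)\) match \(\Qu(z)\in\Bisp_q/\Gr\) via \(\pi^\infty_q\). I would check this first on basic sets \(U\times(\pi^\infty_q)^{-1}(V)\), then handle finite unions using additivity and cofinality in \(PO\).
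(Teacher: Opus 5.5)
Your hereditary step is the paper's own argument. You pull a compact open \(W\subseteq\tilde\alpha^\infty_{p_1,p_2}(U)\) back along the homeomorphism \(\lambda_{p_1,p_2}|_{\pi_{p_1,p_2}^{-1}(U)}\), cover it by finitely many cylinder sets over \(\Omega=\varprojlim\Bisp_q/\Gr\), and pass to one common \(q\) by the Ore condition. You then recognise the result as \(\tilde\alpha^\infty_{p_1q,p_2q}(U')\) for a compact open \(U'\subseteq\tilde\alpha^q_{p_1,p_2}(U)\).

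The real difference is the base property. The paper proves it first and directly. The \(\lambda_{p_1,p_2}\)-images of the cylinder sets~\eqref{eq:base_set_in_Bipp_Omega} form a base because \(\lambda_{p_1,p_2}\) is open. Each such image is then rewritten as \(\tilde\alpha^\infty_{p_1q,p_2q}\) of a compact open piece of \(\tilde\alpha^q_{p_1,p_2}(U)\), so the cylinder computation is effectively done twice.

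You instead get the base property as a corollary of heredity. The point \(\gamma\) lies in some \(\tilde\alpha^\infty_{p_1,p_2}(U)\), and \(\rg_{\GMH}\) carries this slice homeomorphically onto an open subset of the ample space \(\Omega=\GMH^0\). Hence \(N\cap\tilde\alpha^\infty_{p_1,p_2}(U)\) contains a compact open neighbourhood of \(\gamma\), and heredity puts that neighbourhood in \(\B_{\GMH}\). This is correct and shorter. It relies on the \'etale, ample structure of \(\GMH\) from Theorem~\ref{thm:gpmodtight}, whereas the paper's route exhibits an explicit neighbourhood base and only needs \(\lambda_{p_1,p_2}\) to be open.

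One caveat you share with the paper: passing from heredity to closure under set differences needs \(A\cap B\) to be closed in \(A\). This is automatic when \(\GMH\) is Hausdorff but fails otherwise. For two inseparable arrows \(\gamma\in A\), \(\gamma'\in B\) with \(A,B\in\B_{\GMH}\), the set \(A\setminus B\ni\gamma\) is not open. The paper makes the same tacit assumption in its remark after Definition~\ref{def:ample_base}.
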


\begin{proof}
  By
  Lemma~\ref{lem:lambda_pi_properties}.\ref{en:lambda_pi_properties_0}
  these subsets are compact open slices in~\(\GMH\).
  First, we prove that they generate the topology on~\(\GMH\).
  As a projective limit, the space~\(\Omega\) comes with canonical
  maps \(\pi_p^\infty\colon \Omega \to \Bisp_p/\Gr\).
  The topology of~\(\Omega\) is generated by subsets of the form
  \((\pi_p^\infty)^{-1}(U)\) for compact open subset \(U\subseteq
  \Bisp_p/\Gr\).
  The topology on \(\Bisp_p\Omega = \Bisp_p
  \times_{\s,\Gr^0,\pi_1^\infty} \Omega\) is the canonical one on the
  fibre product, so sets of the form \(U \times_{\s,\pi_1^\infty}
  (\pi_q^\infty)^{-1}(V)\) for compact open subsets \(U\subseteq
  \Bisp_p\) and \(V\subseteq \Bisp_q/\Gr\) generate its topology.
  This induces a canonical topology on the spaces \(\Bisp_{p_1} \Omega
  \circ (\Bisp_{p_2} \Omega)^*\).
  A base for it is formed, for instance, by sets of the form
  \begin{equation}
    \label{eq:base_set_in_Bipp_Omega}
    \setgiven{(u_1, \omega)(u_2, \omega)^*}{ u_1 u_2^*\in U,\
      \omega\in (\pi_q^\infty)^{-1}(\Qu(V)),\ \s(u_1) = \s(u_2) =
      \pi_1^\infty(\omega)}
  \end{equation}
  for slices \(U\subseteq \Bisp_{p_1} \circ \Bisp_{p_2}^*\),
  \(q\in P\) and compact slices \(V\subseteq \Bisp_p\).
  Any element of \((\pi_q^\infty)^{-1}(\Pi(V))\) may be written as~\(v
  \omega'\) for \(v\in V\) and \(\omega'\in\Omega\) with \(\s(v) =
  \rg(\omega')\).
  Here \(v\omega' \in\Bisp_q \circ_{\Gr} \Omega \cong \Omega\).
  We claim that the \(\lambda_{p_1,p_2}\)-images of the base sets
  in~\eqref{eq:base_set_in_Bipp_Omega} in~\(\GMH\) form a base of the
  topology.
  Let \(W\subseteq \GMH_g\) be an open neighbourhood of some \(h\in
  \GMH\).
  There are \(p_1,p_2\in P\) and \(x\in \Bisp_{p_1}\circ
  \Bisp_{p_2}^*\) such that \(\lambda_{p_1,p_2}(x)=h\).
  Then \(\lambda_{p_1,p_2}^{-1}(W)\subseteq\Bisp_{p_1}\circ
  \Bisp_{p_2}^*\) is an open neighbourhood of~\(x\).
  Since the sets in~\eqref{eq:base_set_in_Bipp_Omega} form a base, one
  of them is contained in~\(\lambda_{p_1,p_2}^{-1}(W)\).
  Then its \(\lambda_{p_1,p_2}\)-image is a neighbourhood of~\(x\)
  contained in~\(W\).
  The subset in~\eqref{eq:base_set_in_Bipp_Omega} has the same
  \(\lambda_{p_1,p_2}\)-image in~\(\GMH\) as the following subset of
  \(\Bisp_{p_1 q} \Omega \circ (\Bisp_{p_2 q} \Omega)^*\):
  \[
    \setgiven{(u_1v, \omega')(u_2 v, \omega')^*}{u_1 u_2^*\in U,\
      v\in V,\ 
      \omega'\in \Omega,\ \s(u_1) = \s(u_2) = \rg(v),\
      \s(v) = \pi_1^\infty(\omega')}
  \]
  Here the elements \((u_1 v) (u_2 v)^* \in \Bisp_{p_1 q}\circ
  \Bisp_{p_2 q}^*\) for \(u_1 u_2^*\in U\) and \(v\in V\) form a compact
  open subset of the slice \(\tilde\alpha_{p_1,p_2}^q(U)\).
  Therefore, the set above is a compact open slice in \(\Bisp_{p_1 q}\circ
  \Bisp_{p_2 q}^*\).
  As a consequence, already the sets of the form
  \(\tilde\alpha_{p_1,p_2}^\infty(U)\) for compact slices \(U\subseteq
  \Bisp_{p_1}\circ \Bisp_{p_2}^*\) and \(p_1,p_2\in P\) form a base for
  the topology on~\(\GMH\).

  Next, we claim that any compact open subset of a set of the form
  \(\tilde\alpha_{p_1,p_2}^\infty(U)\) may also be written in the same
  form, but usually with different \(p_1,p_2\).
  By Lemma
  \ref{lem:lambda_pi_properties}.\ref{en:lambda_pi_properties_0}, the
  map~\(\lambda_{p_1,p_2}\) maps \(\pi_{p_1,p_2}^{-1}(U)\) for a
  slice~\(U\) homeomorphically onto
  \(\tilde\alpha_{p_1,p_2}^\infty(U)\).
  Therefore, any compact open subset of
  \(\tilde\alpha_{p_1,p_2}^\infty(U)\) is the image of a compact open
  subset \(W\subseteq \pi_{p_1,p_2}^{-1}(U) \subseteq \Bisp_{p_1}\Omega
  \circ (\Bisp_{p_2}\Omega)^*\).
  Since \(\Omega\) is defined as a projective limit, we may
  cover~\(W\) by cylinder sets of the form \(X
  \times_{\s,\Gr^0,\pi_1^\infty} \pi_q^{-1}(\Qu(Y))\) with slices
  \(X\subseteq \Bisp_{p_1}\) and \(Y \subseteq \Bisp_q\) for some
  \(q\in P\).
  Since~\(W\) is compact, finitely many such sets suffice.
  Since~\(P\) is an Ore monoid, there is one \(q\in P\) that dominates
  all of them, so that we only need sets as above with this
  fixed~\(q\).
  Then we replace subsets of \(\Bisp_{p_1}\Omega \circ
  (\Bisp_{p_2} \Omega)^*\) by subsets of \(\tilde\alpha_{p_1,p_2}^q(U)
  \subseteq \Bisp_{p_1 q}\Omega \circ (\Bisp_{p_2 q} \Omega)^*\) as
  above.
  This shows that \(W = \tilde\alpha_{p_1 q,p_2 q}^\infty(W')\) for
  a compact open subset \(W'\subseteq \tilde\alpha_{p_1,p_2}^q(U)\).
  Now~\(W'\) is a compact slice in \(\Bisp_{p_1 q} \circ \Bisp_{p_2
    q}^*\) because it is a compact open subset of a compact slice.
  Thus \(W = \tilde\alpha_{p_1 q,p_2 q}^\infty(W')\) for a compact
  slice in \(\Bisp_{p_1 q} \circ \Bisp_{p_2 q}^*\).
  This finishes the proof that compact open subsets of sets
  in~\(\B_{\GMH}\) again belong to~\(\B_{\GMH}\).
\end{proof}

\begin{lemma}
  \label{lem:relations_base_gm}
  Let \(p_1,p_2,q_1,q_2\in P\) and let \(U\subseteq \Bisp_{p_1}\circ
  \Bisp_{p_2}^*\) and \(V\subseteq \Bisp_{q_1}\circ \Bisp_{q_2}^*\) be
  slices.
  Then
  \begin{enumerate}
  \item \(\tilde\alpha_{p_1,p_2}^\infty(U) \supseteq
    \tilde\alpha_{q_1,q_2}^\infty(V)\) if and only if there are \(a,b\in
    P\) with \(p_1 a = q_1 b\), \(p_2 a = q_2 b\) and
    \(\tilde\alpha_{p_1,p_2}^a(U) \supseteq
    \tilde\alpha_{q_1,q_2}^b(V)\) as slices in \(\Bisp_{p_1 a}\circ
    \Bisp_{p_2 a}^* = \Bisp_{q_1 b}\circ \Bisp_{q_2 b}^*\).
  \item \(\tilde\alpha_{p_1,p_2}^\infty(U) =
    \tilde\alpha_{q_1,q_2}^\infty(V)\) if and only if there are
    \(a,b\in P\) with \(p_1 a = q_1 b\), \(p_2 a = q_2 b\) and
    \(\tilde\alpha_{p_1,p_2}^a(U) = \tilde\alpha_{q_1,q_2}^b(V)\) as slices in
    \(\Bisp_{p_1 a}\circ \Bisp_{p_2 a}^* = \Bisp_{q_1 b}\circ
    \Bisp_{q_2 b}^*\).
  \end{enumerate}
\end{lemma}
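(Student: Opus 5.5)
The strategy is to reduce both statements to a comparison at a single common level \((p_1a,p_2a)=(q_1b,q_2b)\). There the maps \(\lambda\) and~\(\pi\) are controlled by Lemma~\ref{lem:lambda_pi_properties}.

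First, the ``if'' directions. Lemma~\ref{lem:lambda_pi_properties}.\ref{en:lambda_pi_properties_2} gives \(\tilde\alpha^\infty_{p_1,p_2}(U)=\tilde\alpha^\infty_{p_1a,p_2a}(\tilde\alpha^a_{p_1,p_2}(U))\), and similarly for \(V\) with~\(b\). Moreover, \(\tilde\alpha^\infty_{r_1,r_2}(W)=\lambda_{r_1,r_2}(\pi^{-1}_{r_1,r_2}(W))\) is monotone in~\(W\). So containment or equality at the common level \((r_1,r_2)=(p_1a,p_2a)\) implies it in~\(\GMH\). Note also that the images of \((p_1,p_2)\) and \((q_1,q_2)\) in~\(G\) must agree in the ``only if'' directions, because \(\GMH\) is a disjoint union over~\(G\) (assuming \(V\neq\emptyset\); the empty case is trivial after choosing any common level). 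Then \((p_1,p_2),(q_1,q_2)\in R_g\), and the Ore conditions, as in the cofinality argument of Proposition~\ref{pro:kappa_iso}, give \(a,b\) with \(p_1a=q_1b\) and \(p_2a=q_2b\).

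For the ``only if'' direction of~(1), I first move both slices to this common level, which reduces to the case \((p_1,p_2)=(q_1,q_2)\). The aim is then to show: if \(\tilde\alpha^\infty(U)\supseteq\tilde\alpha^\infty(V)\) for compact slices \(U,V\) in \(\Bisp_{p_1}\circ\Bisp_{p_2}^*\), then \(\tilde\alpha^c(U)\supseteq\tilde\alpha^c(V)\) for some \(c\in P\). Since \(\lambda_{p_1,p_2}\) is injective on each \(\pi^{-1}(U)\), but not globally, I would argue as follows. Take \(y\in\pi^{-1}(V)\). Then \(\lambda(y)=\lambda(x)\) for some \(x\in\pi^{-1}(U)\). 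In the filtered colimit, there is an arrow \(c_y\) with \(\alpha^{c_y}(x)=\alpha^{c_y}(y)\), where I can take a common target by the Ore conditions. Since \(\alpha^{c}\) is a local homeomorphism, the set where \(\alpha^{c}\) agrees with \(\alpha^{c}\) composed with the local homeomorphism \(\pi^{-1}(V)\to\pi^{-1}(U)\) (defined through \(\lambda\)-injectivity) is open. So each~\(y\) has a neighbourhood on which a single~\(c_y\) works.

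Compactness of \(\pi^{-1}(V)\) (Lemma~\ref{lem:pi_proper_local_homeo}) then yields a single~\(c\) dominating finitely many \(c_y\), by (O1) applied to the arrows of the filtered category \(\CPg\). Hence \(\alpha^c(\pi^{-1}(V))\subseteq\alpha^c(\pi^{-1}(U))\) inside \(\Bisp_{p_1c}\Omega\circ(\Bisp_{p_2c}\Omega)^*\). By \eqref{eq:claim_pi_alpha_order}, this reads \(\pi^{-1}(\tilde\alpha^c(V))\subseteq\pi^{-1}(\tilde\alpha^c(U))\).

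The last step is to descend from \(\pi^{-1}\) to the sets themselves. This needs the image of \(\pi_{p_1c,p_2c}\) to contain \(\tilde\alpha^c(V)\), i.e.\ surjectivity of~\(\pi\) over the relevant points. This is the main obstacle, since \(\Omega\to\Gr^0\) need not be surjective when range maps fail to be surjective. I would try to avoid it by working with \(\pi^{-1}\)-sets throughout. Alternatively, I would enlarge~\(c\) further: points of \(\tilde\alpha^c(V)\setminus\tilde\alpha^c(U)\) lying outside the image of~\(\pi\) form a compact set \(K\). Such a point has \(\s\)-fibre over which \(\Omega\) is empty, and this emptiness is detected at a finite stage of the projective limit. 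So some further \(\tilde\alpha^{d}\) kills~\(K\), because the fibres \(\rg^{-1}\) become empty by compactness of the filtered limit of compact nonempty sets. This gives~(1) at level~\(cd\).

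Statement~(2) follows from~(1) applied in both directions, after passing to a common further level via the Ore conditions and \(\tilde\alpha^{t}\circ\tilde\alpha^{s}=\tilde\alpha^{st}\). This last identity follows from \eqref{eq:varphig_mult} and Lemma~\ref{lem:compute_Phi}.
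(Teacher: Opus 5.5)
Your argument is correct in substance, but it is organised differently from the paper's.

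\textbf{The paper's route.} The paper proves the ``only if'' part of~(1) in one stroke, by contradiction. After moving to a common level, suppose no \(q\) gives \(\tilde\alpha^q_{p_1,p_2}(U)\supseteq\tilde\alpha^q_{p_1,p_2}(V)\). Then the sets \(\tilde\alpha^q(V)\setminus\tilde\alpha^q(U)\) are nonempty and compact. They form a projective system over \(PO\) under the truncation maps \((v_1wx)(v_2wx)^*\mapsto(v_1w)(v_2w)^*\). Their limit is \(\tilde\alpha^\infty(V)\setminus\tilde\alpha^\infty(U)\), which is therefore nonempty. This single compactness argument absorbs both the identifications made in the colimit \(\GMH_g\) and the points whose fibre in \(\Omega\) is empty. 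So the surjectivity problem you isolated never needs separate treatment.

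\textbf{Your route.} You work on the colimit side. The homeomorphism \(\lambda|_{\pi^{-1}(U)}\) gives \(\sigma\colon\pi^{-1}(V)\to\pi^{-1}(U)\). Pointwise stabilisation in the filtered colimit, compactness of \(\pi^{-1}(V)\) and (O1) give one \(c\) with \(\pi^{-1}(\tilde\alpha^c(V))\subseteq\pi^{-1}(\tilde\alpha^c(U))\) by~\eqref{eq:claim_pi_alpha_order}. A second step then removes \(K=\tilde\alpha^c(V)\setminus\tilde\alpha^c(U)\). That second step is itself the paper's projective-limit compactness idea, applied to the fibres of \(\pi_1^\infty\colon\Omega\to\Gr^0\). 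So your route is longer, but it makes explicit exactly where non-surjectivity of the range maps enters, and the first step produces~\(c\) constructively.

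\textbf{Details to repair.}
\begin{enumerate}
\item Openness of \(\{y:\alpha^c(y)=\alpha^c(\sigma(y))\}\) does not follow from \(\alpha^c\) being a local homeomorphism: two local homeomorphisms may agree at a point without agreeing nearby. It follows because \(\lambda_{p_1c,p_2c}\circ\alpha^c=\lambda_{p_1,p_2}=\lambda_{p_1,p_2}\circ\sigma\) on \(\pi^{-1}(V)\), and \(\lambda_{p_1c,p_2c}\) is locally injective by Lemma~\ref{lem:lambda_nice}.
\item Killing \(K\) needs a uniform \(d\), not just pointwise emptiness of fibres. Since \(\rg_*\) is proper, \(\rg(\Bisp_q)\subseteq\Gr^0\) is closed. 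So each point of the compact set \(\{\s(k_1): k_1k_2^*\in K\}\) has a neighbourhood disjoint from \(\rg(\Bisp_q)\) for some \(q\). Finitely many such \(q\) suffice. A common right multiple \(d\) works because \(\rg(\Bisp_{qe})\subseteq\rg(\Bisp_q)\). Then \(\tilde\alpha^d(K)=\emptyset\) and \(\tilde\alpha^{cd}(V)=\tilde\alpha^d(\tilde\alpha^c(V))\subseteq\tilde\alpha^d(K)\cup\tilde\alpha^d(\tilde\alpha^c(U))=\tilde\alpha^{cd}(U)\).
\item Your first alternative, working with \(\pi^{-1}\)-sets throughout, would not prove the statement, since the statement is about the slices themselves.
\item Your parenthetical should assume \(\tilde\alpha^\infty_{q_1,q_2}(V)\neq\emptyset\), not \(V\neq\emptyset\). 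If \(\tilde\alpha^\infty_{q_1,q_2}(V)=\emptyset\) and \(p_1p_2^{-1}\neq q_1q_2^{-1}\), no common level exists, and the ``only if'' fails as literally stated. The paper's proof has the same lapse. It is harmless in the paper's application, where all slices are taken over a fixed \(g\in G\).
\end{enumerate}
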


\begin{proof}
  The second statement about equality follows easily from the first
  statement about inclusions because \(X = Y\) if and only if both
  \(X\supseteq Y\) and \(Y\supseteq X\).
  Here we use that~\(P\) is an Ore monoid because we need to find an
  upper bound for two elements in~\(P\).
  Thus it suffices to prove the statement about inclusions.
  Since \(\tilde\alpha_{p_1,p_2}^\infty(U) = \tilde\alpha_{p_1 a,p_2
    a}^\infty(\tilde\alpha_{p_1,p_2}^a(U))\) by Lemma
  \ref{lem:lambda_pi_properties}.\ref{en:lambda_pi_properties_2}, an
  inclusion \(\tilde\alpha_{p_1,p_2}^a(U) \supseteq
  \tilde\alpha_{q_1,q_2}^b(V)\) as slices in \(\Bisp_{p_1 a}\circ
  \Bisp_{p_2 a}^* = \Bisp_{q_1 b}\circ \Bisp_{q_2 b}^*\) for some
  \(a,b\in P\) with \(p_1 a = q_1 b\), \(p_2 a = q_2 b\) implies that
  \(\tilde\alpha_{p_1,p_2}^\infty(U) \supseteq
  \tilde\alpha_{q_1,q_2}^\infty(V)\).
  The main point is the converse implication.

  We may assume without loss of generality that already
  \(p_1=q_1\) and \(p_2= q_2\).
  If \(p_1 p_2^{-1} \neq q_1 q_2^{-1}\) in~\(G\), then all sets we
  consider are disjoint anyway.
  If \(p_1 p_2^{-1} = q_1 q_2^{-1}\) in~\(G\), then we may choose
  \(a\) and~\(b\) to move both sets to the same \(\Bisp_{t_1} \circ
  \Bisp_{t_2}^*\) for some \(t_1,t_2\in P\).
  So we assume \(p_1 = p_2\) from now on.
  In addition, we assume that there is no \(q\in P\) with
  \(\tilde\alpha_{p_1,p_2}^q(U) \supseteq
  \tilde\alpha_{p_1,p_2}^q(V)\).
  Equivalently, the sets \(\tilde\alpha_{p_1,p_2}^q(V) \setminus
  \tilde\alpha_{p_1,p_2}^q(U)\) are nonempty for all~\(q\).
  These sets are compact and open in \(\Bisp_{p_1 q}\circ \Bisp_{p_2
    q}^*\) because both \(\tilde\alpha_{p_1,p_2}^q(U)\) and
  \(\tilde\alpha_{p_1,p_2}^q(V)\) are compact open subsets.
  
  We claim that the spaces \(\tilde\alpha_{p_1,p_2}^q(V)\) form a
  projective system using the maps
  \begin{equation}
    \label{eq:projective_system_in_base}
    \tilde\alpha_{p_1,p_2}^{q t}(V) \to
    \tilde\alpha_{p_1,p_2}^q(V),\qquad
    (v_1 w x) (v_2 w x)^* \mapsto (v_1 w) (v_2 w)^*
  \end{equation}
  for all \(v_1 v_2^*\in V\), \(w\in \Bisp_q\), \(x\in\Bisp_t\).
  Recall that the notation \(v_1 v_2^* \in V\) means that
  \((v_1,v_2)\in \Bisp_{p_1} \times_{\s,\Gr^0,\s} \Bisp_{p_2}\) and
  that its image in
  \(\Bisp_{p_1}\circ \Bisp_{p_2}^*\)  belongs to~\(V\).
  The words \((v_1 w x) (v_2 w x)^*\) and \((v_1 w) (v_2 w)^*\)
  use the lifts \(v_j\in\Bisp_{p_j}\) and not just the image \(v_1
  v_2^* \in \Bisp_{p_1}\circ \Bisp_{p_2}^*\).
  We must show that the maps in~\eqref{eq:projective_system_in_base}
  are well-defined.
  The equivalence relation that defines \(\Bisp_{p_1 q t} \circ
  \Bisp_{p_2 q t}^*\) is
  \[
    (v_1 w x) (v_2 w x)^* \sim
    (v_1 g_1\,g_1^{-1} w h_1\,h_1^{-1} x k)
    (v_2 g_2\,g_2^{-1} w h_2\,h_2^{-1} x k)^*
  \]
  for \(g_1,g_2,h_1,h_2,k\in\Gr\) with \(\rg(g_1) = \rg(g_2) =
  \s(v_1)= \s(v_2)\), \(\rg(h_1) = \rg(h_2) = \s(w)\),
  \(\rg(k) = \s(x)\).
  In addition, \((v_1 g_1)(v_2 g_2)^* \in V\), \(g_1^{-1} w h_1 =
  g_2^{-1} w h_2\), and \(h_1^{-1} x k = h_2^{-1} x k\) are needed in
  order for the word \((v_1 g_1\,g_1^{-1} w h_1\,h_1^{-1} x k) (v_2
  g_2\,g_2^{-1} w h_2\,h_2^{-1} x k)^*\)  to belong  to
  \(\tilde\alpha_{p_1,p_2}^q(V)\).
  Since~\(V\) is a slice and \(\Qu_1(v_1 v_2^*) = \Qu(v_1) = \Qu(v_1 g_1) =
  \Qu_1(v_1 g_1  (v_2 g_2)^*)\), this forces \(v_1 g_1  (v_2 g_2)^*
  = v_1 v_2^*\).
  This is equivalent to \(g_1 = g_2\) because the right
  \(\Gr\)\nb-actions on \(\Bisp_{p_1}\) and \(\Bisp_{p_2}\) are free.
  Then \(h_1 = h_2\) follows because the right \(\Gr\)\nb-action
  on~\(\Bisp_q\) is free.
  And the latter is what we need for \((v_1 g_1\,g_1^{-1} w h_1) (v_2
  g_2\,g_2^{-1} w h_2)^* = (v_1 w)(v_2 w)^*\) in \(\Bisp_{p_1 q} \circ
  \Bisp_{p_2 q}^*\).
  This finishes the proof that the map
  in~\eqref{eq:projective_system_in_base} is well-defined.
  It is clear that these maps form a contravariant functor on
  the filtered category~\(PO\) used in Definition~\ref{def:omegaspace}.

  The projective limit of the projective system defined
  in~\eqref{eq:projective_system_in_base} is canonically identified
  with the set \(\tilde\alpha_{p_1,p_2}^\infty(V) \subseteq
  \lambda_{p_1,p_2}(\Bisp_{p_1}\Omega (\Bisp_{p_2}\Omega)^*)\) because
  \(\Omega = \varprojlim \Bisp_q/\Gr\).
  Roughly speaking, in the limit the finite words \(w\in \Bisp_q\)
  approximate an element of~\(\Omega\).
  A crucial point here is that \( (v_1 w) (v_2 w)^* \sim
  (v_1 w h) (v_2 w h)^*\) for all \(h\in\Gr\) with \(\rg(h) = \s(w)\).

  The same remarks apply to~\(U\) instead of~\(V\).
  Even more, we claim that the spaces \(\tilde\alpha_{p_1,p_2}^q(V)
  \setminus \tilde\alpha_{p_1,p_2}^q(U) \subseteq
  \tilde\alpha_{p_1,p_2}^q(V) \subseteq \Bisp_{p_1 q}\circ \Bisp_{p_2
    q}^*\) also form a projective system by the same maps.
  To see this, we show that if \((v_1 w) (v_2 w)^* \in \Bisp_{p_1
    q}\circ \Bisp_{p_2 q}^*\) has another representative that belongs to
  \(\tilde\alpha_{p_1,p_2}^q(U)\), then \((v_1 w x) (v_2 w x)^*\) has
  another representative that belongs to \(\tilde\alpha_{p_1,p_2}^{q
    t}(U)\).
  Another representative for \((v_1 w) (v_2 w)^*\) is of the form
  \((v_1 g_1\, g_1^{-1} w h) (v_2 g_2\, g_2^{-1} w h)\) for some
  \(g_1,g_2,h\in \Gr\) with \(\rg(g_1) = \rg(g_2) = \s(v_1) = \s(v_2)\),
  \(\rg(h) = \s(w)\), and \((v_1 g_1)(v_2 g_2)^* \in U\).
  Then \((v_1 g_1\, g_1^{-1} w h\, h^{-1} x) (v_2 g_2\, g_2^{-1} w h\,
  h^{-1} x)\) is another representative for \((v_1 w x) (v_2 w x)^*\)
  that belongs to \(\tilde\alpha_{p_1,p_2}^{q t}(U)\).
  This proves that the spaces \(\tilde\alpha_{p_1,p_2}^q(V) \setminus
  \tilde\alpha_{p_1,p_2}^q(U)\) also form a projective system.
  By assumption, all these spaces are compact and nonempty.
  Then their projective limit is compact and nonempty as well.
  Their projective limit is \(\tilde\alpha_{p_1,p_2}^\infty(V)
  \setminus \tilde\alpha_{p_1,p_2}^\infty(U)\), and so
  \(\tilde\alpha_{p_1,p_2}^\infty(U)\) does not contain
  \(\tilde\alpha_{p_1,p_2}^\infty(V)\).
  This finishes the proof.
\end{proof}

\begin{lemma}
  \label{lem:inductive_limits_B2}
  Let \(p_1,p_2,q_1,q_2,t_1,t_2\in P\) and let \(U_1\subseteq
  \Bisp_{p_1}\circ \Bisp_{p_2}^*\), \(U_2\subseteq \Bisp_{q_1}\circ
  \Bisp_{q_2}^*\), and \(U_3\subseteq \Bisp_{t_1}\circ \Bisp_{t_2}^*\)
  be slices such that \(\tilde\alpha_{p_1,p_2}^\infty(U_1) \cap
  \tilde\alpha_{q_1,q_2}^\infty(U_2)  = \emptyset\) and
  \(\tilde\alpha_{p_1,p_2}^\infty(U_1) \cup
  \tilde\alpha_{q_1,q_2}^\infty(U_2)  =
  \tilde\alpha_{t_1,t_2}^\infty(U_3)\).
  Then there are \(a,b,c\in P\) with \(p_j a = q_j b = t_j c\) for
  \(j=1,2\) and \(\tilde\alpha_{p_1,p_2}^a(U_1) \cap
  \tilde\alpha_{q_1,q_2}^b(U_2) = \emptyset\) and
  \(\tilde\alpha_{p_1,p_2}^a(U_1) \cup \tilde\alpha_{q_1,q_2}^b(U_2) =
  \tilde\alpha_{t_1,t_2}^c(U_3)\).
\end{lemma}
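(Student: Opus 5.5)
The plan is to reduce everything to Lemma~\ref{lem:relations_base_gm} and to the Ore conditions. First I move all three slices to a common level. Since \(\tilde\alpha^\infty_{p_1,p_2}(U_1)\) and \(\tilde\alpha^\infty_{t_1,t_2}(U_3)\) lie in the same \(\GMH_g\) (unless \(U_1\) is empty, a trivial case), \(p_1p_2^{-1} = q_1q_2^{-1} = t_1t_2^{-1}\) in~\(G\). Filteredness of~\(\CPg\) gives \(a_0,b_0,c_0\in P\) with \(p_ja_0 = q_jb_0 = t_jc_0 =: s_j\) for \(j=1,2\). By Lemma~\ref{lem:lambda_pi_properties}.\ref{en:lambda_pi_properties_2} we may replace \(U_1,U_2,U_3\) by \(\tilde\alpha^{a_0}_{p_1,p_2}(U_1)\), \(\tilde\alpha^{b_0}_{q_1,q_2}(U_2)\), \(\tilde\alpha^{c_0}_{t_1,t_2}(U_3)\), all compact slices in \(\Bisp_{s_1}\circ\Bisp_{s_2}^*\) by Lemma~\ref{lem:compute_Phi}. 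The hypotheses are unchanged, and the composition rule in Lemma~\ref{lem:alpha_nice}, respectively its analogue \(\tilde\alpha^{t}\circ\tilde\alpha^{q} = \tilde\alpha^{qt}\), transfers any conclusion at a further level \(s_jq\) back. So we may assume \(p_j=q_j=t_j\), and we look for a single \(q\) with \(a=b=c=q\).

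For the union, I apply Lemma~\ref{lem:relations_base_gm}(1) twice, once to \(\tilde\alpha^\infty(U_3)\supseteq\tilde\alpha^\infty(U_1)\) and once to \(\tilde\alpha^\infty(U_3)\supseteq\tilde\alpha^\infty(U_2)\). This gives levels at which \(\tilde\alpha^{q}(U_3)\supseteq\tilde\alpha^q(U_1)\) and \(\tilde\alpha^{q'}(U_3)\supseteq\tilde\alpha^{q'}(U_2)\). I bring these to one common \(q\) by Ore again. Inclusions of images persist when we pass to higher levels, since \(\tilde\alpha^t\) is monotone on subsets. For the reverse inclusion \(\tilde\alpha^q(U_3)\subseteq\tilde\alpha^q(U_1)\cup\tilde\alpha^q(U_2)\), I cannot apply Lemma~\ref{lem:relations_base_gm} directly, because the union need not be a slice in the base. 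Instead I rerun its compactness argument. The sets \(\tilde\alpha^q(U_3)\setminus(\tilde\alpha^q(U_1)\cup\tilde\alpha^q(U_2))\) are compact open. By the same representative-lifting argument as in that proof, applied separately to \(U_1\) and \(U_2\), they form a projective system under the maps~\eqref{eq:projective_system_in_base}. Its limit is \(\tilde\alpha^\infty(U_3)\setminus(\tilde\alpha^\infty(U_1)\cup\tilde\alpha^\infty(U_2)) = \emptyset\). Hence one term is empty, and this term bounds all later ones.

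For the disjointness, I use the same method. The sets \(\tilde\alpha^q(U_1)\cap\tilde\alpha^q(U_2)\) are compact, because a compact open set inside the Hausdorff slice \(\tilde\alpha^q(U_1)\) is compact. They form a projective system, which I check via the restriction of the maps~\eqref{eq:projective_system_in_base} for~\(U_1\), again using that a representative in~\(U_2\) lifts. Its limit is \(\tilde\alpha^\infty(U_1)\cap\tilde\alpha^\infty(U_2)=\emptyset\), so some stage, and hence every later stage, is empty. Finally I take \(q\) dominating all the levels found. This gives \(a=b=c=q\) after the reduction, and composing with \(a_0,b_0,c_0\) gives the statement.

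I expect the main obstacle to be the identification of the projective limits with the subsets of~\(\GMH\). One must check that the limit of the difference or intersection systems really equals the difference or intersection of the limits. For this I will argue pointwise: a point of \(\tilde\alpha^\infty(U_3)\) given by a compatible family of words \((v_1w)(v_2w)^*\) lies in \(\tilde\alpha^\infty(U_1)\) exactly when some stage has a representative in \(\tilde\alpha^q(U_1)\). The nonempty direction uses that a nonempty projective limit of nonempty compact spaces over the filtered category~\(PO\) is nonempty, exactly as at the end of the proof of Lemma~\ref{lem:relations_base_gm}.
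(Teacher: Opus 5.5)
Your reduction to a common level and your union step are sound. The sets \(\tilde\alpha^q(U_3)\setminus(\tilde\alpha^q(U_1)\cup\tilde\alpha^q(U_2))\) are carried into each other by the maps~\eqref{eq:projective_system_in_base} for~\(U_3\), exactly because of the lifting property in the proof of Lemma~\ref{lem:relations_base_gm}.

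The disjointness step, however, has a gap. Let \(I_q=\tilde\alpha^q(U_1)\cap\tilde\alpha^q(U_2)\). For these sets to form a projective system under the maps~\(\rho\) for~\(U_1\), you need \(\rho(I_{qt})\subseteq I_q\): a point at level~\(qt\) lying in \(\tilde\alpha^{qt}(U_2)\) must project into \(\tilde\alpha^q(U_2)\). The lifting property gives the opposite implication (membership at level~\(q\) propagates upwards), so citing it proves nothing here. Nor is the needed inclusion automatic. Suppose \((v_1wx)(v_2wx)^*=(u_1w'x')(u_2w'x')^*\) with \(v_1v_2^*\in U_1\) and \(u_1u_2^*\in U_2\). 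Then you only get \(u_j=v_jg_j\), \(w'=g_1^{-1}wh_1=g_2^{-1}wh_2\) and \(h_1^{-1}x=h_2^{-1}x\). When \(U_1\cup U_2\) is not contained in a slice, \(g_1\neq g_2\) is possible. Since the left action on~\(\Bisp_t\) need not be free, \(h_1\neq h_2\) is then possible too. In that case the \(U_1\)-projection \((v_1w)(v_2w)^*\) and the \(U_2\)-projection \((v_1wh_1)(v_2wh_2)^*\) of the same point differ.

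For instance, take \(P=\N\) and let \(\Gr=\Z/2=\{1,\sigma\}\) act self-similarly on \(\{a,b\}\), with \(\sigma\) fixing both letters, \(\sigma|_a=\sigma\) and \(\sigma|_b=1\). Let \(U_1=\{\sigma1^*\}\) and \(U_2=\{11^*\}\) at level~\(0\). The point \((ab,1)(ab,1)^*\) lies in~\(I_2\), but neither of its projections to level~\(1\) lies in~\(I_1\). Your compactness claim for~\(I_q\) is circular for the same reason: you need both sets to sit inside one compact Hausdorff set.

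The repair is to run the disjointness argument only at levels \(q_0r\). Here \(q_0\) is the level from your union step with \(\tilde\alpha^{q_0}(U_j)\subseteq\tilde\alpha^{q_0}(U_3)\) for \(j=1,2\). Above~\(q_0\), the maps for \(U_1\) and~\(U_2\) are restrictions of the well-defined maps for~\(U_3\), so they agree on common points. Also, \(I_{q_0r}\) is compact, because \(\tilde\alpha^{q_0r}(U_1)\) and \(\tilde\alpha^{q_0r}(U_2)\) are closed in the compact Hausdorff set \(\tilde\alpha^{q_0r}(U_3)\).

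Once you are there, your remark that Lemma~\ref{lem:relations_base_gm} cannot be applied directly no longer holds. After replacing \(U_j\) by \(\tilde\alpha^{q_0}(U_j)\), both \(U_1\cup U_2\) and \(U_1\cap U_2\) are compact open subsets of the slice~\(U_3\), hence compact slices. This is the paper's proof. It applies Lemma~\ref{lem:relations_base_gm} to \(U_1\cup U_2\) versus~\(U_3\), using that \(\tilde\alpha^\infty\) commutes with unions, and to \(U_1\cap U_2\) versus~\(\emptyset\). Inside the common slice~\(U_3\), \(\tilde\alpha^a(U_1\cap U_2)=\tilde\alpha^a(U_1)\cap\tilde\alpha^a(U_2)\). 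This avoids rerunning the projective-limit argument and identifying limits of differences and intersections, which you rightly expected to be the delicate part.
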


\begin{proof}
  If one of the sets \(\tilde\alpha_{p_1,p_2}^\infty(U_1)\),
  \(\tilde\alpha_{q_1,q_2}^\infty(U_2)\) or
  \(\tilde\alpha_{t_1,t_2}^\infty(U_3)\) is empty, then the statement
  follows from Lemma~\ref{lem:relations_base_gm}.
  So we may assume that they are all nonempty.
  Then all slices must belong to the same homogeneous
  component~\(\GMH_g\) because otherwise their union cannot be of
  the form \(\tilde\alpha_{t_1,t_2}^\infty(U_3)\).
  As a consequence, there are \(a,b,c\) with \(p_j a = q_j b = t_j c\)
  for \(j=1,2\).
  We may replace \(U_1\), \(U_2\) and~\(U_3\) by
  \(\tilde\alpha_{p_1,p_2}^a(U_1)\),
  \(\tilde\alpha_{q_1,q_2}^b(U_2)\), and
  \(\tilde\alpha_{t_1,t_2}^c(U_3)\)
  to arrange, without loss of generality, that already \(p_j = q_j =
  t_j\) for \(j=1,2\).
  Next, since \(\tilde\alpha_{p_1,p_2}^\infty(U_j) \subseteq
  \tilde\alpha_{p_1,p_2}^\infty(U_3)\) for \(j=1,2\),
  Lemma~\ref{lem:relations_base_gm} shows that there is \(a\in P\) with
  \(\tilde\alpha_{p_1,p_2}^a(U_j) \subseteq
  \tilde\alpha_{p_1,p_2}^a(U_3)\) for \(j=1,2\).
  Replacing \(U_j\) by \(\tilde\alpha_{p_1,p_2}^a(U_j)\) for
  \(j=1,2,3\), we may therefore arrange without loss of generality that
  already \(U_1\cup U_2 \subseteq U_3\).
  This makes \(U_1\cup U_2\) a slice.
  Our assumption \(\tilde\alpha_{p_1,p_2}^\infty(U_1) \cup
  \tilde\alpha_{p_1,p_2}^\infty(U_2)  =
  \tilde\alpha_{p_1,p_2}^\infty(U_3)\) is equivalent to
  \(\tilde\alpha_{p_1,p_2}^\infty(U_1 \cup U_2) =
  \tilde\alpha_{p_1,p_2}^\infty(U_3)\) because
  \(\tilde\alpha_{p_1,p_2}^\infty\) commutes with unions.
  Therefore, Lemma~\ref{lem:relations_base_gm} shows that there is
  \(a\in P\) with \(\tilde\alpha_{p_1,p_2}^a(U_1 \cup U_2) =
  \tilde\alpha_{p_1,p_2}^a(U_3)\).
  Therefore, we may assume without loss of generality that already
  \(U_1 \cup U_2 = U_3\).
  Finally, \(U_1 \cap U_2\) and~\(\emptyset\) are slices in
  \(\Bisp_{p_1}\circ\Bisp_{p_2}^*\), and
  Lemma~\ref{lem:relations_base_gm} also applies to them.
  Since
  \[
    \tilde\alpha_{p_1,p_2}^\infty(U_1 \cap U_2)
    \subseteq
    \tilde\alpha_{p_1,p_2}^\infty(U_1) \cap
    \tilde\alpha_{p_1,p_2}^\infty(U_2)
    = \emptyset
    = \tilde\alpha_{p_1,p_2}^\infty(\emptyset)
  \]
  Lemma~\ref{lem:relations_base_gm} provides \(a\in P\) with
  \(\tilde\alpha_{p_1,p_2}^a(U_1 \cap U_2) \subseteq
  \tilde\alpha_{p_1,p_2}^a(\emptyset) = \emptyset\).
  So the reduction steps above eventually lead to a true statement.
\end{proof}

\begin{theorem}
  \label{the:covariance_ring_is_Steinberg_of_groupoid_model}
  The ring homomorphism \(\varrho\colon \OOO_\F \to A_R(\GMH)\)
  constructed above is an isomorphism.
\end{theorem}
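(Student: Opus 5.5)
The strategy is to work one homogeneous component at a time and to compare two presentations by generators and relations using Theorem~\ref{the:steinmod_is_quot_of_sum}. Fix \(g\in G\). Since \(\varrho\) is the direct sum of the maps \(\varrho_g\colon \OO[g]\to A_R(\GMH_g)\), and \(\GMH=\bigsqcup_g \GMH_g\) with \(A_R(\GMH)\cong\bigoplus_g A_R(\GMH_g)\) by Lemma~\ref{lem:dis_union_is_direkt_sum}, it suffices to show that each~\(\varrho_g\) is bijective.

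For the domain, I use \(\OO[g]\cong\varinjlim_{(p_1,p_2)\in R_g} A_R(\Bisp_{p_1}\circ\Bisp_{p_2}^*)\) with structure maps~\(\Phi_{p_1,p_2,q}\). By Proposition~\ref{pro:new_base_Bipp} and Theorem~\ref{the:steinmod_is_quot_of_sum}, each term is the free \(R\)\nb-module on compact slices~\(U\) modulo the relations \(\delta_{U_1\sqcup U_2}=\delta_{U_1}+\delta_{U_2}\). By Lemma~\ref{lem:compute_Phi}, the structure maps send \(\delta_U\mapsto\delta_{\tilde\alpha^q_{p_1,p_2}(U)}\). Because \(\CPg\) is filtered, the colimit \(\OO[g]\) is the free \(R\)\nb-module on pairs \((U,(p_1,p_2))\). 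Two such generators are identified when some \(\tilde\alpha^a,\tilde\alpha^b\) bring them to the same slice. Additivity relations are imposed at some finite stage.

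For the target, I use the ample base \(\B_{\GMH}\) restricted to \(\GMH_g\), given by Proposition~\ref{prop:base_on_Hg}, together with Theorem~\ref{the:steinmod_is_quot_of_sum}. This presents \(A_R(\GMH_g)\) as the free \(R\)\nb-module on \(\B_{\GMH}\cap\GMH_g\) modulo disjoint-union relations. By Lemma~\ref{lem:lambda_pi_properties}, \(\varrho_g\) sends the class of \(\delta_U\) to \(\charmap{\tilde\alpha^\infty_{p_1,p_2}(U)}\). So \(\varrho_g\) is induced by the map on generators \([U,(p_1,p_2)]\mapsto\tilde\alpha^\infty_{p_1,p_2}(U)\).

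The proof then has three checks. First, this map on generators is surjective onto the base, which holds by the definition of \(\B_{\GMH}\). Second, it is injective on generators modulo identification: by Lemma~\ref{lem:relations_base_gm}(2), equal images force equality after applying some \(\tilde\alpha^a\) and \(\tilde\alpha^b\), and then the two generators already coincide in \(\OO[g]\). Third, every relation in the presentation of \(A_R(\GMH_g)\) lifts to a relation holding in \(\OO[g]\). This is exactly Lemma~\ref{lem:inductive_limits_B2}. If \(\tilde\alpha^\infty(U_1)\sqcup\tilde\alpha^\infty(U_2)=\tilde\alpha^\infty(U_3)\), then at some stage \(\tilde\alpha^a(U_1)\sqcup\tilde\alpha^b(U_2)=\tilde\alpha^c(U_3)\) as slices in one \(\Bisp_{p_1a}\circ\Bisp_{p_2a}^*\). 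There it is a defining relation of the Steinberg module, and it maps to the required relation in \(\OO[g]\).

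Conversely, relations in \(\OO[g]\) map to valid relations, since \(\varrho_g\) is well-defined. Hence \(\varrho_g\) identifies two quotients of the same free module by the same relations, and is therefore an isomorphism. Multiplicativity is already known from Lemma~\ref{lem:lambda_pi_properties}.\ref{en:lambda_pi_properties_3}, so \(\varrho\) is a ring isomorphism.

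I expect the main obstacle to be making the colimit presentation of \(\OO[g]\) precise. One has to check that the filtered colimit of the presented modules is presented by the union of generators and relations, with generators identified along the~\(\Phi\)'s. One also has to check that Lemma~\ref{lem:relations_base_gm} gives injectivity rather than merely equality of images. This needs care when some slices are empty, which is handled by the empty-set case of that lemma.
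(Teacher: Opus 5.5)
Your proposal is correct and follows essentially the same route as the paper. The paper fixes \(g\) and presents both \(\OO[g]\cong\varinjlim A_R(\Bisp_{p_1}\circ\Bisp_{p_2}^*)\) and \(A_R(\GMH_g)\) via Theorem~\ref{the:steinmod_is_quot_of_sum}. It then uses Lemmas~\ref{lem:relations_base_gm} and~\ref{lem:inductive_limits_B2} to identify the base of \(\GMH_g\), and its disjoint-union relations, with the inductive limits of the finite-stage ones. Your explicit ``same generators, same relations'' check is what the paper packages as the fact that free modules and cokernels commute with colimits.
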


\begin{proof}
  We fix \(g\in G\) to use the inductive limit description of
  \(\OO[g]\subseteq \OOO_\F\) and the corresponding inductive limit
  desccription of \(\GMH_g\).
  Theorem~\ref{the:steinmod_is_quot_of_sum} describes \(A_R(X)\)
  for any space~\(X\) through generators and relations, given any
  ample base~\(\B\).
  Namely, let
  \[
    \B^{(2)} \defeq
    \setgiven{(V_1,V_2)\in\B^2}{V_1\cap V_2 =  \emptyset,\ 
      V_1 \sqcup V_2\in\B};
  \]
  then \(A_R(X)\) is isomorphic to the cokernel of the map
  \[
    \bigoplus_{(V_1,V_2)\in\B^{(2)}} R
    \to  \bigoplus_\B R,\qquad
    \delta_{(V_1,V_2)} \mapsto
    \delta_{V_1} + \delta_{V_2} - \delta_{V_1\sqcup V_2}.
  \]
  By Proposition~\ref{prop:base_on_Hg}, the sets
  \(\tilde\alpha^\infty_{p_1,p_2}(U)\) for \(p_1,p_2\in P\) with \(p_1
  p_2^{-1} =g\) and slices
  \(U\subseteq \Bisp_{p_1} \circ \Bisp_{p_2}^*\) form an ample
  base~\(\B_{\GMH_g}\) for~\(\GMH_g\).

  We describe \(A_R(\Bisp_{p_1}\circ \Bisp_{p_2}^*)\) as in
  Theorem~\ref{the:steinmod_is_quot_of_sum} using the ample
  base~\(\B_{p_1,p_2}\) of all slices.
  These ample bases with the maps~\(\tilde\alpha^q_{p_1,p_2}\)
  form an inductive system by Lemma~\ref{lem:lambda_pi_properties}.
  Lemma~\ref{lem:relations_base_gm} says that our chosen ample base
  for~\(\GMH_g\) is the inductive limit \(\varinjlim \B_{p_1,p_2}\).
  Lemma~\ref{lem:inductive_limits_B2} says that same for~\(\B^{(2)}\).
  Therefore, \(A_R(\GMH_g)\) is the cokernel of the induced map
  \[
    \bigoplus_{\varinjlim \B_{p_1,p_2}^{(2)}} R
    \to  \bigoplus_{\varinjlim \B_{p_1,p_2}} R,\qquad
    \delta_{(V_1,V_2)} \mapsto
    \delta_{V_1} + \delta_{V_2} - \delta_{V_1\sqcup V_2}.
  \]
  Now the construction of free \(R\)\nb-modules and the construction
  of cokernels both commute with arbitrary colimits and, in
  particular, with this inductive limit.
  So \(A_R(\GMH_g)\) is also isomorphic to the inductive limit of the
  system of cokernels, which is \(\OO[g] = \varinjlim A_R(\Bisp_{p_1}\circ
  \Bisp_{p_2}^*)\).
  The way we defined the maps, it is clear that this isomorphism is
  the restriction of~\(\varrho\) defined above.
\end{proof}

\section{From Steinberg algebras to groupoid \texorpdfstring{$\Cst$}{C*}-algebras}
\label{sec:Cstar-consequence}

In~\cite{Clark-Zimmerman:Steinberg_to_Cstar}, Clark and Zimmerman have
shown that the full groupoid \(\Cst\)\nb-algebra of an ample
groupoid~\(\Gr\) is the \(\Cst\)\nb-completion of the Steinberg
algebra of~\(\Gr\) over the complex numbers.
We are going to use this to prove that the groupoid
\(\Cst\)\nb-algebra of the groupoid model of a diagram of ample
groupoid correspondences is the Cuntz--Pimsner algebra of the product
system associated to the original diagram.
This result was also proven by Albandik~\cite{Albandik:Thesis}, even
for general \'etale locally compact groupoids.
In this section, we write \(A(\Gr)\) for \(A_\C(\Gr)\), the complex
Steinberg algebra of an ample groupoid~\(\Gr\).
This is a complex \Star{}algebra in a canonical way.

\begin{definition}
  Let~\(\norm{f}\) for \(f\in A(\Gr)\) be the supremum
  of~\(\norm{\pi(f)}\) for all \Star{}\alb{}homomorphisms \(\pi \colon
  A(\Gr) \to \Bound(\Hils)\) for some Hilbert
  space~\(\Hils\).
\end{definition}

This norm on \(A(\Gr)\) is a \(\Cst\)\nb-norm on \(A(\Gr)\).
The resulting \(\Cst\)\nb-completion of \(A(\Gr)\) is the groupoid
\(\Cst\)\nb-algebra of~\(\Gr\) by
\cite{Clark-Zimmerman:Steinberg_to_Cstar}*{Theorem~4.7}.

Using the Steinberg pseudofunctor \(A \colon \mathfrak{Gr} \to
\mathfrak{Rings}\), the Steinberg bimodule \(A(\Bisp)\) of a
groupoid correspondence \(\Bisp \colon \Gr[H] \leftarrow \Gr\)
becomes an \(A(\Gr[H]), A(\Gr)\)-bimodule.
Similarly, a pseudofunctor \(\Cst\colon \Grcat_\mathrm{inj}  \to \Corr\)
from the groupoid correspondence bicategory to the bicategory of
\(\Cst\)\nb-correspondences is defined in
\cite{Antunes-Ko-Meyer:Groupoid_correspondences}*{Theorem~7.13}.
This uses a variant of the bicategory of groupoid correspondences
where only injective \(2\)\nb-arrows are allowed.
We could have worked in this bicategory throughout this article, we
have never used a noninvertible \(2\)\nb-arrow.
In this section, we restrict to \(\Grcat_\mathrm{inj}\) to work both
with bimodules and \(\Cst\)\nb-correspondences.
In particular, a \(\Cst\)\nb-correspondence \(\Cst(\Bisp)\) is defined
in~\cite{Antunes-Ko-Meyer:Groupoid_correspondences}.
By Proposition~\ref{prop:Steinpro}, the bimodule \(A(\Bisp)\) is
proper if~\(\Bisp\) is proper.
Similarly, the  \(\Cst\)\nb-correspondence \(\Cst(\Bisp)\) is proper
if~\(\Bisp\) is proper by
\cite{Antunes-Ko-Meyer:Groupoid_correspondences}*{Theorem~7.14}.

Define a pairing \(\braket{-}{-} \colon A(\Bisp) \times A(\Bisp) \to
A(\Gr)\) by
\[
  \braket{\xi}{\eta}(g) \defeq \sum_{s(x) = r(g)} \conj{\xi(x)} \eta(x \cdot g)
\]
for \(g \in \Gr\), \(\xi,\eta\in A(\Bisp)\) as in
\cite{Antunes-Ko-Meyer:Groupoid_correspondences}*{Equation~(7.2)}.
It follows from
\cite{Antunes-Ko-Meyer:Groupoid_correspondences}*{Lemma~7.4} that
\(\braket{\charmap{U}}{\charmap{V}} = \charmap{\braket{U}{V}}\) for
two slices \(U,V\subseteq \Bisp\).

Let \(\ContS(\Bisp)\) be the linear span of all
functions in \(\Contc(U)\), extended by~\(0\) to~\(\Bisp\), for open
slices \(U\subseteq \Bisp\).
The \(\Cst\)\nb-correspondence \(\Cst(\Bisp)\) is defined as the
completion of \(\ContS(\Bisp)\) in the norm
\begin{equation}
  \label{eq:norm_on_A_Bisp}
  \norm{f}_{\Bisp}
  \defeq \norm{\braket{f}{f}}_{\Cst(\Gr)}^{1/2}.
\end{equation}
In the ample case, it suffices to take \(\Cont(U)\) for all compact,
open slices.
Since \(A(U) \subseteq \Cont(U)\) is dense for any compact slice~\(U\)
by the Stone--Weierstraß Theorem and since \(\norm{\braket{f}{f}} =
\norm{f}_\infty\) if \(f\in \Cont(U) \subseteq\ContS(\Bisp)\),
it follows that \(A(\Bisp)\) is a dense subspace of~\(\Cst(\Bisp)\).
Thus~\(\Cst(\Bisp)\) is also the completion of \(A(\Bisp)\) in the
norm in~\eqref{eq:norm_on_A_Bisp}.

Let \(\X = (P,\Gr, \Bisp_p, \mu_{p,q})\) be a diagram of proper
groupoid correspondences.
Let~\(\OOO\) be the covariance ring of the resulting diagram \(\A*\X\)
in \(\Ringspr\).
Even if~\(P\) is not an Ore monoid, we have described~\(\OOO\)
through generators and relations in
Section~\ref{sec:covariance_ring_Grcorr} and used this in
Corollary~\ref{cor:covariance_ring_star} to define an
antihomomorphism on it.
Here we may simply take the bases \(\B_{\Gr}\) and~\(\B_{\Bisp_p}\) to
consist of all compact open slices in \(\Gr\) and~\(\Bisp_p\),
respectively.
The same idea provides a conjugate-linear antihomomorphism when
\(R=\C\), so that~\(\OOO\) becomes a \Star{}algebra.
All the generators of~\(\OOO\) are partial isometries, so that any
\(\Cst\)\nb-seminorm on them is at most~\(1\).
Therefore, there is a maximal \(\Cst\)\nb-seminorm on~\(\OOO\).
We let \(\Cst(\OOO)\) be the \(\Cst\)\nb-completion of~\(\OOO\) in
this maximal \(\Cst\)\nb-seminorm.
If~\(P\) is an Ore monoid, then we have identified~\(\OOO\) with the
Steinberg algebra of the groupoid model~\(\GMH\) in
Theorem~\ref{the:covariance_ring_is_Steinberg_of_groupoid_model}.
This homomorphism maps all the generators in
Corollary~\ref{cor:presentation_covariance_ring_groupoid} to
characteristic functions of specific compact open slices in~\(\GMH\).
These formulas show that it is a \Star{}homomorphism.
Thus \cite{Clark-Zimmerman:Steinberg_to_Cstar}*{Theorem~4.7} implies
\(\Cst(\OOO) \cong \Cst(\GMH)\) if~\(P\) is an Ore monoid.

\begin{lemma}
  \label{lem:CPcovconduni}
  Let \((\kappa_p\colon A(\Bisp_p) \to \OOO)_{p \in P}\) be the
  universal covariant representation of~\(\A*\X\).
  If \(f, g \in A(\Bisp_p)\) for some \(p \in P\), then
  \[
    \kappa_p(f)^* \kappa_p(g) = \kappa_1(\braket{f}{g}).
  \]
\end{lemma}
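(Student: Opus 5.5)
The plan is to reduce to characteristic functions of compact open slices and then apply the relations of the presentation in Theorem~\ref{the:presentation_covariance_ring_groupoid}, with the \Star{}structure coming from Corollary~\ref{cor:covariance_ring_star}. Both sides of the claimed identity are sesquilinear in \((f,g)\): conjugate-linear in~\(f\), because the involution on~\(\OOO\) is conjugate-linear and \(\braket{f}{g}\) is conjugate-linear in~\(f\), and linear in~\(g\). By Theorem~\ref{the:steinmod_is_quot_of_sum}, applied with the ample base of all compact open slices in~\(\Bisp_p\), the functions \(\charmap{U}\) for compact open slices \(U\subseteq\Bisp_p\) span \(A(\Bisp_p)\) over~\(\C\). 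It therefore suffices to prove the claim for \(f=\charmap{U_1}\) and \(g=\charmap{U_2}\).

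For such slices, \(\kappa_p(\charmap{U_j})\) is the generator \(\delta_{U_j}\) of the presentation. The involution of Corollary~\ref{cor:covariance_ring_star}, in its conjugate-linear form over \(R=\C\), sends \(\delta_{U_1}\) to \(\delta_{U_1}^*\). Hence \(\kappa_p(\charmap{U_1})^*\kappa_p(\charmap{U_2}) = \delta_{U_1}^*\delta_{U_2}\). The third family of relations in Theorem~\ref{the:presentation_covariance_ring_groupoid} gives \(\delta_{U_1}^*\delta_{U_2}=\delta_{\braket{U_1}{U_2}}\), and this element equals \(\kappa_1(\charmap{\braket{U_1}{U_2}})\). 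Since the paper notes, via \cite{Antunes-Ko-Meyer:Groupoid_correspondences}*{Lemma~7.4}, that \(\braket{\charmap{U_1}}{\charmap{U_2}}=\charmap{\braket{U_1}{U_2}}\), the claim follows for characteristic functions, and therefore for all \(f,g\) by sesquilinearity.

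The main point to check is that the identifications are consistent. First, the generators \(\delta_U\) of the presented algebra must correspond to \(\kappa_p(\charmap{U})\) under the isomorphism built in the proof of Theorem~\ref{the:presentation_covariance_ring_groupoid}; that proof sends \(\delta_U\mapsto\delta_U\) in both directions, so this holds. Second, the conjugate-linear involution must really map \(\delta_U\) to \(\delta_U^*\); this is the construction stated just before the lemma. The presentation was derived over a general unital ring~\(R\), and taking \(R=\C\) causes no issue.

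If one prefers to avoid the presentation, an alternative is to use the element \(\delta_U^*=\kappa^*_p(\mathcal{I}_{\Bisp_p}(\charmap{U^*}))\) from Lemma~\ref{lem:dual_in_covariance_ring} together with the computation \(\mathcal{I}_{\Bisp_p}(\charmap{U_1^*})(\charmap{U_2})=\charmap{\braket{U_1}{U_2}}\) in Proposition~\ref{pro:dual_of_correspondence}. This is exactly the argument already carried out in the proof of the third relation.
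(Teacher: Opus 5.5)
Your proposal is correct and follows the paper's proof: you reduce to \(f=\charmap{U}\), \(g=\charmap{V}\) for compact open slices, identify \(\kappa_p(f)^*\kappa_p(g)\) with \(\delta_U^*\delta_V\), and apply the relation \(\delta_U^*\delta_V=\delta_{\braket{U}{V}}\) together with \(\braket{\charmap{U}}{\charmap{V}}=\charmap{\braket{U}{V}}\). Your sesquilinearity argument justifying the reduction is a detail the paper leaves implicit.
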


\begin{proof}
  It suffices to prove this when \(f= \charmap{U}\) and
  \(g=\charmap{V}\) for compact open slices \(U,V\subseteq \Bisp_p\).
  Then \(\kappa_p(f) = \delta_U\), \(\kappa_p(g)^* = \delta_V^*\), and
  \[
    \kappa_p(f)^* \kappa_p(g)
    = \delta_U^* \delta_V
    = \delta_{\braket{U}{V}}
    = \kappa_1(\charmap{\braket{U}{V}})
    = \kappa_1(\braket{f}{g}).\qedhere
  \]
\end{proof}

We want to look at nondegenerate homomorphisms from the
\(\Cst\)\nb-completion \(\Cst(\OOO)\) to a  \(\Cst\)\nb-algebra~\(D\).
Here nondegeneracy means that \(\Cst(\OOO)\cdot D\) is equal to~\(D\).
This only implies that \(\OOO\cdot D\) is dense in~\(D\),
and this is not the same as being equal to~\(D\) because the
Cohen--Hewitt Factorisation Theorem does not apply to~\(\OOO\).
Therefore, we must be careful about nondegeneracy in our statements.

Let~\(D\) be a \(\Cst\)\nb-algebra.
Let \(\tilde\nu_p\colon A(\Bisp_p) \to D\) be maps that form a
nondegenerate covariant representation in the sense that
\(\tilde\nu_p(f) \tilde\nu_q(g) = \tilde\nu_{p q}(\mu_{p,q}(f,g))\)
for all \(p,q\in P\), \(f\in A(\Bisp_p)\), \(g\in A(\Bisp_p)\),
and \(\tilde\nu_p(A(\Bisp_p))D\subseteq D\) is dense for all \(p\in
P\).
The above data is a nondegenerate covariant representation of the
diagram \(\A*\X\) in the ring \(A(\Gr) D A(\Gr) \subseteq D\), and so
it generates a nondegenerate representation \(\varphi\colon \OOO \to
A(\Gr) D A(\Gr)\), which is nondegenerate into~\(D\) in the weaker
sense that \(\varphi(\OOO)\cdot D\) is dense in~\(D\).

\begin{proposition}
  \label{prop:covrepCstar}
  The homomorphism \(\varphi\colon \OOO \to D\) above is a
  \Star{}homomorphism if and only if the corresponding covariant
  representation \((\tilde\nu_p \colon A(\Bisp_p) \to D)_{p \in P}\)
  satisfies
  \[
    \tilde\nu_p(f)^* \tilde\nu_p(g) = \tilde\nu_e(\braket{f}{g})
    \qquad \text{for all } f, g \in A(\Bisp_p),\ p \in P.
  \]
\end{proposition}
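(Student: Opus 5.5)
The forward implication is immediate: if \(\varphi\) is a \Star{}homomorphism, then \(\tilde\nu_p = \varphi\circ\kappa_p\), and Lemma~\ref{lem:CPcovconduni} gives
\[
  \tilde\nu_p(f)^*\tilde\nu_p(g)
  = \varphi(\kappa_p(f)^*\kappa_p(g))
  = \varphi(\kappa_1(\braket{f}{g}))
  = \tilde\nu_e(\braket{f}{g}).
\]

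For the converse, I will use the presentation of~\(\OOO\) in Theorem~\ref{the:presentation_covariance_ring_groupoid}, with the bases of all compact open slices. The \(\C\)\nb-algebra~\(\OOO\) is generated by \(\delta_U\) and~\(\delta_U^*\), and the involution is the conjugate-linear antihomomorphism of Corollary~\ref{cor:covariance_ring_star}, which swaps \(\delta_U\) and~\(\delta_U^*\). Since both \(\varphi\circ{}^*\) and \({}^*\circ\varphi\) are conjugate-linear antihomomorphisms \(\OOO\to D\), it suffices to check that they agree on the generators. By symmetry, it is enough to show \(\varphi(\delta_U^*) = \varphi(\delta_U)^*\) for each compact slice \(U\subseteq\Bisp_p\); the other identity follows by taking adjoints.

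To identify \(\varphi(\delta_U^*)\), I will use the relation \(\delta_U^* = \sum_j \delta_{\braket{U}{V_j}}\delta_{V_j}^*\) from the end of the proof of Theorem~\ref{the:presentation_covariance_ring_groupoid}. This reduces the problem to slices \(V_j\) that occur in a decomposition \(\delta_K = \sum_j \delta_{V_j}\delta_{V_j}^*\) as in Proposition~\ref{pro:Steinberg_proper_with_bases}. It also uses \(\varphi(\delta_{\braket{U}{V_j}}) = \tilde\nu_e(\charmap{\braket{U}{V_j}})\), together with the inner product hypothesis giving \(\varphi(\delta_U)^*\varphi(\delta_{V_j}) = \tilde\nu_e(\charmap{\braket{U}{V_j}})\).

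Concretely, write \(a \defeq \varphi(\delta_U)\), \(b_j \defeq \varphi(\delta_{V_j})\) and \(c_j \defeq \varphi(\delta_{V_j}^*)\). I first verify that \(\tilde\nu_e\) is a \Star{}map on \(A(\Gr)\). This holds because \(\tilde\nu_e(\charmap{W})\) for \(W\subseteq\Gr^0\) is a self-adjoint idempotent, since by hypothesis \(\tilde\nu_e(\charmap{W})^*\tilde\nu_e(\charmap{W}) = \tilde\nu_e(\charmap{W})\). For general slices \(W\subseteq\Gr\), the hypothesis with \(p=e\) yields \(\tilde\nu_e(\charmap{W})^*\tilde\nu_e(\charmap{\s(W)}) = \tilde\nu_e(\charmap{W^{-1}})\). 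With \(k \defeq \varphi(\delta_K)\) and \(\rg(U)\subseteq K\), I then compute
\[
  \varphi(\delta_U^*)
  = \sum_j a^* b_j c_j
  = a^*\sum_j b_j c_j
  = a^* k
  = (k a)^*
  = a^*,
\]
since \(k\) is a projection and \(k a = a\).

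The main obstacle I expect is the step \(\varphi(\delta_U^*) = \sum_j \varphi(\delta_{\braket{U}{V_j}})\,c_j\) combined with the fact that \(k\) is self-adjoint. Both rest on having \(\tilde\nu_e\) be \Star{}preserving on \(A(\Gr)\), which I derive from the hypothesis at \(p=e\) applied to slices in~\(\Gr\). Nondegeneracy in the weaker sense plays no role, because everything is checked on generators inside \(A(\Gr)DA(\Gr)\).
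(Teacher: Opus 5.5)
Your proof is correct, but it gets \(\varphi(\delta_U^*)=\varphi(\delta_U)^*\) by a different mechanism than the paper.

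The paper uses Lemma~\ref{lem:CPcovconduni} to get \(\bigl(\varphi(\kappa_p(f)^*)-\tilde\nu_p(f)^*\bigr)\tilde\nu_p(g)=0\) for all \(g\in A(\Bisp_p)\). It then cancels \(\tilde\nu_p(g)\) using that \(\tilde\nu_p(A(\Bisp_p))D\) is dense in the \(\Cst\)\nb-algebra~\(D\).

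You never identify \(c_j=\varphi(\delta_{V_j}^*)\) individually. Instead you push the identities \(\delta_U^*=\sum_j\delta_{\braket{U}{V_j}}\delta_{V_j}^*\) and \(\delta_K=\sum_j\delta_{V_j}\delta_{V_j}^*\) in~\(\OOO\) through~\(\varphi\). So the Cuntz--Pimsner relation does the job that density does in the paper. The only extra input is that \(k=\tilde\nu_e(\charmap{K})\) is a projection, which you correctly deduce from \(q^*q=q\).

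The paper's argument is shorter and needs nothing beyond Lemma~\ref{lem:CPcovconduni} and the list of generators. Yours relies on the derived relation from the end of the proof of Theorem~\ref{the:presentation_covariance_ring_groupoid}. In exchange it is purely algebraic: it works for an arbitrary \(*\)\nb-algebra~\(D\) and does not use the nondegeneracy hypothesis at all.

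Two remarks that do not affect validity:
\begin{itemize}
\item Your identity for a general slice \(W\subseteq\Gr\) is off. \(\braket{W}{\s(W)}\) consists only of those \(w^{-1}\) with \(\rg(w)\in\s(W)\); the correct version is \(\tilde\nu_e(\charmap{W})^*\tilde\nu_e(\charmap{\rg(W)})=\tilde\nu_e(\charmap{W^{-1}})\). You never actually need \(\tilde\nu_e\) to be \(*\)\nb-preserving on all of \(A(\Gr)\), though.
\item The step \(\varphi(\delta_U^*)=\sum_j\varphi(\delta_{\braket{U}{V_j}})\,c_j\) does not rest on any \(*\)\nb-property, contrary to what you say. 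It is just~\(\varphi\) applied to an identity in~\(\OOO\), and \(\varphi(\delta_{\braket{U}{V_j}})=a^*b_j\) comes directly from the hypothesis at~\(p\).
\end{itemize}
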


\begin{proof}
  The natural isomorphism in Theorem~\ref{thm:cov_rep_F=O} is of the
  form \(\tilde\nu_p = \varphi \circ \kappa_p\) for all \(p \in P\) by
  the Yoneda Lemma.
  If~\(\varphi\) is a \Star{}homomorphism, then \(\tilde\nu_p(f)^* \tilde\nu_p(g) =
  \tilde\nu_e(\braket{f}{g})\) for \(f, g \in A(\Bisp_p)\), \(p\in P\)
  follows immediately from Lemma~\ref{lem:CPcovconduni}.
  Conversely, assume \(\tilde\nu_p(f)^* \tilde\nu_p(g) =
  \tilde\nu_e(\braket{f}{g})\) for all \(f, g \in A(\Bisp_p)\), \(p \in P\).
  Since~\(\varphi\) is a ring homomorphism,
  Lemma~\ref{lem:CPcovconduni} implies
  \begin{align*}
    \varphi(\kappa_p(f)^*) \cdot \tilde\nu_p(g)
      &= \varphi(\kappa_p(f)^* \kappa_p(g))
       = \varphi(\kappa_e(\braket{f}{g}))
       = \tilde\nu_e(\braket{f}{g})
       = \tilde\nu_p(f)^* \tilde\nu_p(g).
  \end{align*}
  Therefore, \((\varphi(\kappa_p(f)^*) - \tilde\nu_p(f)^*) \cdot \tilde\nu_p(g) = 0\)
  for all \(g \in A(\Bisp_p)\).
  Since \(\tilde\nu_p(A(\Bisp_p))D\) is dense in~\(D\) for any
  convariant representation, this implies \(\varphi(\kappa_p(f)^*) =
  \tilde\nu_p(f)^* = \varphi(\kappa_p(f))^*\).
  This implies that~\(\varphi\) is compatible with the involution on
  all the generators \(\delta_U\) and~\(\delta_U^*\).
  Thus, \(\varphi\) is a \Star{}homomorphism.
\end{proof}

\begin{theorem}
  \label{the:Cstar_covariance_ring}
  Let \(\X = (P,\Gr, \Bisp_p, \mu_{p,q})\) be a diagram of proper
  groupoid correspondences.
  Let~\(\OOO\) be the covariance ring of the corresponding diagram of
  proper bimodules \(\A*\X\).
  Equip~\(\OOO\) with the canonical involution and let~\(\Cst(\OOO)\)
  be its \(\Cst\)\nb-completion.
  This is the absolute Cuntz--Pimsner algebra of the proper product
  system~\(\Cst*\X\).
\end{theorem}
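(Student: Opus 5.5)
The plan is to show that \(\Cst(\OOO)\), together with the maps \(\kappa_p\colon A(\Bisp_p)\to\OOO\to\Cst(\OOO)\), has the universal property of the absolute Cuntz--Pimsner algebra of \(\Cst*\X\). That algebra is universal for nondegenerate Toeplitz (isometric) representations \((\psi_p)_{p\in P}\) of the product system that are Cuntz--Pimsner covariant. For proper correspondences, this covariance means that the induced maps \(\Cst(\Bisp_p)\otimes_{\Cst(\Gr)} D\to D\) are unitary (onto), equivalently that \(\psi_1(a)=\psi^{(p)}(\varphi_p(a))\) for all \(a\in\Cst(\Gr)\). So it suffices to build a natural bijection between nondegenerate \Star{}homomorphisms \(\Cst(\OOO)\to D\) and such representations \((\psi_p)\) in an arbitrary \(\Cst\)-algebra \(D\).

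For the direction from homomorphisms to representations, start with a nondegenerate \Star{}homomorphism \(\Phi\colon\Cst(\OOO)\to D\) and put \(\tilde\nu_p\defeq \Phi\circ\kappa_p\). Lemma~\ref{lem:CPcovconduni} gives \(\tilde\nu_p(f)^*\tilde\nu_p(g)=\tilde\nu_1(\braket{f}{g})\). Hence \(\norm{\tilde\nu_p(f)}^2=\norm{\tilde\nu_1(\braket{f}{f})}\le\norm{\braket{f}{f}}_{\Cst(\Gr)}=\norm{f}^2_{\Bisp_p}\), where \(\tilde\nu_1\) extends to \(\Cst(\Gr)\) by the result of Clark--Zimmerman. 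Since \(A(\Bisp_p)\) is dense in \(\Cst(\Bisp_p)\), \(\tilde\nu_p\) extends continuously to \(\psi_p\), and multiplicativity and the inner-product relation pass to the completion, so \((\psi_p)\) is a Toeplitz representation. The key covariance is obtained algebraically: in \(\OOO\) the relation \(\delta_K=\sum_j\delta_{U_j}\delta_{U_j}^*\) from Theorem~\ref{the:presentation_covariance_ring_groupoid} holds, and it says exactly that \(\psi_1(\charmap{K})\) equals \(\psi^{(p)}\) of the finite-rank operator \(\sum_j\theta_{\charmap{U_j},\charmap{U_j}}=\varphi_p(\charmap{K})\). Because the \(\charmap{K}\) form an approximate unit for \(\Cst(\Gr)\) and \(\varphi_p(\Cst(\Gr))\subseteq\mathbb K(\Cst(\Bisp_p))\) by properness, this gives covariance on all of \(\Cst(\Gr)\). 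Nondegeneracy carries over because \(\Phi\) is nondegenerate and the local units of \(\OOO\) lie in \(A(\Gr)\).

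For the converse, start with a Cuntz--Pimsner covariant representation \((\psi_p)\) in \(D\) with \(\psi_1\) nondegenerate, and restrict it to \(\tilde\nu_p\defeq\psi_p|_{A(\Bisp_p)}\), which lands in \(A(\Gr)DA(\Gr)\). To feed this into Theorem~\ref{the:covariance_exists} / Lemma~\ref{lem:covariant_rep_through_graded_algebra}, one must check that \(\nu_p\colon A(\Bisp_p)\otimes_{A(\Gr)}A(\Gr)DA(\Gr)\to A(\Gr)DA(\Gr)\) is invertible; I expect this algebraic strong covariance to be the main obstacle. The plan is to reduce it to invertibility of \(\charmap{K}A(\Bisp_p)\otimes D\to\charmap{K}D\), using the decomposition \(\bigoplus_j\charmap{\s(U_j)}A(\Gr)\) of Proposition~\ref{pro:Steinberg_proper_with_bases}. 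The inverse is \(x\mapsto(\psi_p(\charmap{U_j})^*x)_j\). The composite in one order is \(\sum_j\psi_p(\charmap{U_j})\psi_p(\charmap{U_j})^*=\psi^{(p)}(\varphi_p(\charmap{K}))=\psi_1(\charmap{K})\) by Cuntz--Pimsner covariance. In the other order it is the diagonal matrix \(\psi_1(\charmap{\braket{U_i}{U_j}})\), which is the identity by the isometric relation, exactly as in the proof of Theorem~\ref{the:presentation_covariance_ring_groupoid}.

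This yields a ring homomorphism \(\varphi\colon\OOO\to D\), which is a \Star{}homomorphism by Proposition~\ref{prop:covrepCstar} and therefore extends to \(\Cst(\OOO)\) by maximality of the \(\Cst\)-seminorm. Checking that the two assignments are mutually inverse only requires comparing them on the generators \(\delta_U\), by density, since \(\delta_U^*\) is determined by the involution. Naturality in \(D\) is then clear, so the Yoneda lemma identifies \(\Cst(\OOO)\) with the absolute Cuntz--Pimsner algebra.
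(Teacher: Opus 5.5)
Your proposal is correct and follows the paper's strategy. Both arguments match nondegenerate \Star{}homomorphisms out of $\Cst(\OOO)$ with Cuntz--Pimsner covariant Toeplitz representations of $\Cst*\X$, using Lemma~\ref{lem:CPcovconduni} and Proposition~\ref{prop:covrepCstar}.

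The only difference is the covariance step. The paper cites \cite{Albandik-Meyer:Product}*{Proposition~2.5} (Cuntz--Pimsner covariance if and only if $\Cst(\Bisp_p)\cdot D$ is dense) together with density of $A(\Bisp_p)$. You instead derive covariance from the relation $\delta_K=\sum_j\delta_{U_j}\delta_{U_j}^*$, and you prove invertibility of the algebraic map $\nu_p$ into $A(\Gr)DA(\Gr)$ via the explicit inverse $x\mapsto(\psi_p(\charmap{U_j})^*x)_j$. This makes explicit a step the paper only asserts.
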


\begin{proof}
  By definition, \(\Cst*\X\) is the pseudofunctor from~\(P\) to the
  \(\Cst\)\nb-correspondence bicategory that we get by
  composing~\(\X\) with the pseudofunctor~\(\Cst\) from the groupoid to
  the \(\Cst\)\nb-correspondence bicategory.
  Such a pseudofunctor is identified with a product system
  in~\cite{Albandik-Meyer:Colimits}.
  It consists of proper \(\Cst\)\nb-correspondences because~\(\X\)
  consists of proper groupoid correspondences.
  Therefore, the absolute Cuntz--Pimsner algebra~\(\mathcal{Q}\) of
  this product system is defined.
  Being absolute means that we impose the Cuntz--Pimsner relation on
  all elements of \(\Cst(\Gr)\), which all act by compact operators on
  \(\Cst(\Bisp_p)\).
  As a consequence, a nondegenerate \Star{}homomorphism
  from~\(\mathcal{Q}\) to a \(\Cst\)\nb-algebra~\(D\) is a family of
  Cuntz--Pimsner covariant Toeplitz representations \(S_p\colon
  \Cst(\Bisp_p) \to D\) for all \(p\in P\) that also satisfy
  \(S_p(f)S_q(g) = S_{p q}(\mu_{p,q}(f\otimes g))\) for all \(f\in
  \Cst(\Bisp_p)\), \(g\in \Cst(\Bisp_q)\).
  By \cite{Albandik-Meyer:Product}*{Proposition~2.5}, the
  Cuntz--Pimsner covariance condition is equivalent to \(\Cst(\Bisp_p)
  \cdot D\) being dense in~\(D\).
  It suffices to check the conditions for Toeplitz representations on
  the dense subspaces \(A(\Bisp_p) \subseteq \Cst(\Bisp_p)\) and
  \(A(\Gr) \subseteq \Cst(\Gr)\).
  Therefore, \((S_p)\) is the same as a covariant representation of
  \(\A*\X\) in \(A(\Gr) D A(\Gr)\) that satisfies the extra condition
  \(\tilde\nu_p(f)^* \tilde\nu_p(g) = \tilde\nu_e(\braket{f}{g})\) for
  all \(f, g \in A(\Bisp_p)\), \(p \in P\), which is characteristic for
  Toeplitz representations.
  Now Proposition~\ref{prop:covrepCstar} provides a natural bijection
  between nondegenerate \Star{}homomorphisms \(\mathcal{Q} \to D\) and
  \(\OOO\to A(\Gr) D A(\Gr)\) or, equivalently, \(\Cst(\OOO)\to D\).
  This implies \(\Cst(\OOO) \cong \mathcal{Q}\).
\end{proof}

\begin{corollary}
  In the situation of Theorem~\textup{\ref{the:Cstar_covariance_ring}}, assume
  also that~\(P\) is an Ore monoid.
  Then the absolute Cuntz--Pimsner algebra of~\(\Cst*\X\) is
  isomorphic to the \(\Cst\)\nb-algebra of the groupoid model
  \(\Cst(\GMH)\).
\end{corollary}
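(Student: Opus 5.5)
The corollary should follow by chaining two identifications that are already in hand. By Theorem~\ref{the:Cstar_covariance_ring}, the absolute Cuntz--Pimsner algebra of~\(\Cst*\X\) is isomorphic to \(\Cst(\OOO)\), the maximal \(\Cst\)\nb-completion of the covariance ring~\(\OOO\) with its canonical involution. So it remains to identify \(\Cst(\OOO)\) with \(\Cst(\GMH)\) when~\(P\) is an Ore monoid.

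For this second step I would use Theorem~\ref{the:covariance_ring_is_Steinberg_of_groupoid_model} with \(R=\C\). It gives a ring isomorphism \(\varrho\colon \OOO_\F\to A(\GMH)\), and \(\OOO_\F\cong\OOO\) by uniqueness of covariance rings (Theorem~\ref{thm:cov_rep_F=O}).

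The key check is that~\(\varrho\) is a \Star{}isomorphism. It suffices to test this on the generators \(\delta_U\) and~\(\delta_U^*\) for compact open slices \(U\subseteq\Bisp_p\). By Lemma~\ref{lem:lambda_pi_properties}, \(\varrho(\delta_U)\) is the characteristic function of the compact slice \(\tilde\alpha^\infty_{p,1}(U)\). Under the identification \(A(\Bisp_p^*)\cong A(\Bisp_1\circ\Bisp_p^*)\) coming from Proposition~\ref{pro:dual_of_correspondence}, \(\delta_U^*\) is sent to the characteristic function of \(\tilde\alpha^\infty_{1,p}(U^*)\). This set is exactly the inverse slice \(\tilde\alpha^\infty_{p,1}(U)^{-1}\) in~\(\GMH\), because inversion is \(x_1x_2^*\mapsto x_2x_1^*\) in Definition~\ref{def:gpmod}. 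The involution on \(A(\GMH)\) sends \(\charmap{V}\) to \(\charmap{V^{-1}}\) for a slice~\(V\). Since the \Star{}operations on both sides are conjugate-linear antihomomorphisms (Corollary~\ref{cor:covariance_ring_star} and its complex variant), agreement on generators gives \(\varrho(x^*)=\varrho(x)^*\) for all~\(x\).

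Once~\(\varrho\) is a \Star{}isomorphism, the maximal \(\Cst\)\nb-seminorms on \(\OOO\) and \(A(\GMH)\) correspond, so \(\Cst(\OOO)\cong\Cst(A(\GMH))\). The latter is \(\Cst(\GMH)\) by \cite{Clark-Zimmerman:Steinberg_to_Cstar}*{Theorem~4.7}. Combining this with the first step proves the corollary.

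The only point needing care is the generator computation for~\(\delta_U^*\). The rest is formal.
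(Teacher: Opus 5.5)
Your proposal is correct and is the paper's own argument. The paper likewise combines Theorem~\ref{the:Cstar_covariance_ring} with the isomorphism \(\OOO\cong A(\GMH)\) from Theorem~\ref{the:covariance_ring_is_Steinberg_of_groupoid_model}, checks that this isomorphism is a \Star{}isomorphism, and then applies the Clark--Zimmerman identification of the maximal \(\Cst\)\nb-completion of \(A(\GMH)\) with \(\Cst(\GMH)\). The paper only asserts the \Star{}compatibility (``these formulas show that it is a \Star{}homomorphism''); your check that \(\varrho(\delta_U^*)\) is the characteristic function of \(\tilde\alpha^\infty_{p,1}(U)^{-1}\) supplies that step.
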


\begin{proof}
  This follows from Theorem~\ref{the:Cstar_covariance_ring} and the
  isomorphism \(\Cst(\OOO) \cong \Cst(\GMH)\) that we deduced above
  if~\(P\) is an Ore monoid.
\end{proof}

\section{Examples: Higher-rank graphs and self-similarities}
\label{sec:eg}

In this section, we relate our theory to some previous work, in the
special cases where the underlying groupoids are just spaces viewed as
groupoids with only identity arrows or discrete groups.
In the first case, the resulting \(\Cst\)\nb-algebras and groupoid
models have already been studied in \cites{Albandik-Meyer:Product,
  Antunes-Ko-Meyer:Groupoid_correspondences, Meyer:Diagrams_models}.
They generalise Kumjian--Pask algebras of higher-rank graphs.
In the second case, we look at a special case of groupoid
correspondences on a group studied previously by
Stammeier~\cite{Stammeier:Irreversible} using a different language.

\subsection{Topological correspondences between spaces}
\label{sec:top_correspondences}

Let \(\Gr\) and~\(\Gr[H]\) be Hausdorff, locally compact spaces with
ample bases, viewed as ample groupoids with only identity arrows.
In this case, a groupoid correspondence \(\Bisp\colon \Gr\leftarrow
\Gr[H]\) is the same as a Hausdorff, locally compact space~\(\Bisp\)
with an ample base together with a continuous map \(\rg\colon \Bisp\to
\Gr\) and a local homeomorphism \(\s\colon \Bisp\to\Gr[H]\).
The correspondence is proper if and only if~\(\rg\) is a proper map.
(The space~\(\Bisp\) must be Hausdorff in order for the right
\(\Gr[H]\)\nb-action on it to be free and proper.)

Let~\(P\) be a monoid.
A \(P\)\nb-shaped diagram of proper groupoid correspondences is
the same as an action of~\(P\) on a space~\(\Gr\) by proper
topological correspondences as defined in
\cite{Albandik-Meyer:Product}*{Definition~4.4}; in this article, we
assume~\(\Gr\) to have an ample base in order to define Steinberg
algebras.
It is shown in~\cite{Albandik-Meyer:Product} that such an action
generates a product system over~\(P\), which gives rise to a
Cuntz--Pimsner algebra.
Our description of the covariance ring in
Section~\ref{sec:cov_ring_construction} is an algebraic analogue of
the description of the Cuntz--Pimsner algebra of the product system
in~\cite{Albandik-Meyer:Product}.
The spaces of maps \(\Hom_{-,F_1}(F_{p_1}, F_{p_2})F_1 \cong
F_{p_2}\otimes_{F_1} F_{p_1}^*\) used in
Section~\ref{sec:cov_ring_construction} replace the compact operators
between Hilbert modules used in~\cite{Albandik-Meyer:Product}.
We also replace the inductive limit \(\Cst\)\nb-algebras
in~\cite{Albandik-Meyer:Product} by a purely algebraic inductive
limit.

When \(P=\N^k\), then such a proper diagram is the same as a
row-finite topological rank-\(k\) graph.
If~\(\Gr\) is a set with the discrete topology (and still only identity
arrows), then we get the usual higher-rank graphs of
Kumjian--Pask~\cite{Kumjian-Pask:Higher_rank}.
These are described in a different language, using a small
category~\(\Lambda\) and a functor \(d\colon \Lambda\to\N^k\) with a
certain factorisation property.
More generally, for an arbitrary monoid, this construction was
generalised by Brown and Yetter~\cite{Brown-Yetter:Conduche}, where it
is realised that the relevant factorisation property is that of a discrete
Conduch\'e fibration.
As noticed in~\cite{Antunes-Ko-Meyer:Groupoid_correspondences}, a
discrete Conduch\'e fibration over a monoid~\(P\) is the same as a
\(P\)\nb-shaped diagram of groupoid correspondences where~\(\Gr\) is a
set made a topological groupoid with the discrete topology and only
identity arrows.
In one direction, the translation replaces~\((\Lambda,d)\) by \(\Gr=
d^{-1}(0)\) and \(\Bisp_p\defeq d^{-1}(p) \subseteq \Lambda\) for all
\(p\in \N^k\), with the multiplication maps coming from the
multiplication in~\(\Lambda\).
In the other direction, \(\Lambda = \bigsqcup_{p\in P} \Bisp_p\) with
object space~\(\Gr\), range and source as given on the
components~\(\Bisp_p\), and the multiplication coming from the maps
\(\Bisp_p \circ \Bisp_q \to \Bisp_{p q}\) and with the canonical
functor \(\Lambda \to P\).
The discrete Conduch\'e fibration property says exactly that the
multiplication in~\(\Lambda\) defines bijective maps \(\Bisp_p \circ
\Bisp_q \congto \Bisp_{p q}\).

In addition, the groupoid correspondences~\(\Bisp_p\) are proper if
and only if the maps  \(\rg\colon \Bisp_p \to \Gr^0\) are
finite-to-one.
In this case, the higher-rank graph is called \emph{row-finite}.
We need a stronger property: we call the discrete Conduch\'e fibration
\emph{regular} if the maps \(\rg\colon \Bisp_p \to \Gr^0\) are
finite-to-one and surjective for all \(p\in P\).

\begin{proposition}
  \label{pro:Kumjian-Pask}
  Let \(d\colon \Lambda\to P\) be a regular discrete Conduch\'e fibration,
  turned into a diagram of proper groupoid correspondences as above.
  Then the presentation of the covariance ring of the diagram in
  Corollary~\textup{\ref{cor:presentation_covariance_ring_groupoid}} is the
  same as the presentation of the  \(\Cst\)\nb-algebra of the discrete
  Conduch\'e fibration in
  \cite{Brown-Yetter:Conduche}*{Definition~2.7}.
\end{proposition}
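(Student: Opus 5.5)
The plan is to translate the generators and relations of Corollary~\ref{cor:presentation_covariance_ring_groupoid} one by one into the language of~\((\Lambda,d)\) and compare them with the relations in \cite{Brown-Yetter:Conduche}*{Definition~2.7}. The latter are the Kumjian--Pask style relations for a family of partial isometries \(\{s_\lambda\}_{\lambda\in\Lambda}\): mutually orthogonal projections~\(s_v\) for \(v\in\Lambda^0\); \(s_\lambda s_\mu = s_{\lambda\mu}\) when \(\s(\lambda)=\rg(\mu)\) and~\(0\) otherwise; \(s_\lambda^* s_\lambda = s_{\s(\lambda)}\); and \(s_v = \sum_{\lambda\in v\Lambda^p} s_\lambda s_\lambda^*\) for all \(v\) and~\(p\). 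The dictionary is \(\delta_x\leftrightarrow s_x\) and \(\delta_x^*\leftrightarrow s_x^*\) for \(x\in\Bisp_p = d^{-1}(p)\). In particular, \(\delta_v\) for \(v\in\Gr^0=\Gr=d^{-1}(1)\) corresponds to~\(s_v\).

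First I would simplify the presentation using that \(\Gr\) has only identity arrows. Each orbit \(x\cdot\Gr\) is the singleton~\(\{x\}\). Hence \(\braket{x_1}{x_2}\) is defined only when \(x_1=x_2\), and then it equals \(\s(x_1)\in\Gr^0\). So relation~(2) becomes \(\delta_{x_1}^*\delta_{x_2} = \delta_{x_1=x_2}\,\delta_{\s(x_1)}\). The representatives \(y_j\) in relation~(3) are exactly all elements of \(\rg^{-1}(v)\cap\Bisp_p = v\Lambda^p\). Thus relation~(3) is the Cuntz--Krieger relation \(\delta_v = \sum_{\lambda\in v\Lambda^p}\delta_\lambda\delta_\lambda^*\). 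Relation~(1) is the product relation, because \(\mu_{p,q}\) is the multiplication of~\(\Lambda\).

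Next I would check that each side implies the remaining relations of the other.
\begin{enumerate}
\item From the Brown--Yetter relations, the relation \(s_{\lambda}^*s_\mu=0\) for distinct \(\lambda,\mu\) of the same degree follows from the Cuntz--Krieger relation and orthogonality of the range projections in the usual way. The relation \(s_\mu^*s_\lambda^*=s_{\lambda\mu}^*\) follows by taking adjoints, since the \(\Cst\)\nb-setting has an involution.
\item Conversely, our presentation must yield the relations among the~\(s_v\). Relation~(1) with \(p=q=1\) gives \(\delta_v\delta_w=\delta_{v=w}\delta_v\), so the \(\delta_v\) are mutually orthogonal idempotents. Relation~(2) with \(p=1\) gives \(\delta_v^*\delta_v=\delta_v\). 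The identity \(\delta_v^*=\delta_v\) follows as in the proof of Theorem~\ref{the:presentation_covariance_ring_groupoid}. The partial isometry property \(\delta_\lambda\delta_\lambda^*\delta_\lambda=\delta_\lambda\) follows from~(2) and~(1).
\end{enumerate}

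One point needs care. The Brown--Yetter definition is an algebra with involution, whereas ours is a ring presentation with separate generators \(\delta_x^*\). I would argue that both define the same \Star{}algebra once our covariance ring is given the involution of Corollary~\ref{cor:covariance_ring_star}. Under that involution, the starred relations are exactly the adjoints of the unstarred ones.

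Regularity is used at the vertices. Surjectivity of \(\rg\) ensures that \(v\Lambda^p\neq\emptyset\). Otherwise our relation~(3) would force \(\delta_v=0\), whereas Brown--Yetter impose the sum relation only in the regular situation or with a different convention for sources. Finite-to-one guarantees that the sums are finite.

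I expect the main obstacle to be purely bookkeeping: matching the exact form and conventions of \cite{Brown-Yetter:Conduche}*{Definition~2.7}, for example whether they write \(\s\) or~\(\rg\) for the side in the Cuntz--Krieger sum, and whether their generators include separate projections for objects. I would first align these conventions with our choice that \(\rg\) is the left anchor map.
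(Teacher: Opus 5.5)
Your proposal is correct and follows the paper's proof. You simplify the relations of Corollary~\ref{cor:presentation_covariance_ring_groupoid} using that \(\Gr\) has only identity arrows: orbits are singletons, so \(\braket{x_1}{x_2}\) is defined only for \(x_1=x_2\), and relation~(3) becomes the Cuntz--Krieger sum over \(v\Lambda^p\). You take \(\delta_v^*=\delta_v\) from the proof of Theorem~\ref{the:presentation_covariance_ring_groupoid}, deduce that the \(\delta_x\) are partial isometries, and then compare the relations with those of Brown--Yetter.

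Your extra remarks fill in what the paper only calls ``easy to check''. These are the matching of the involution and the use of positivity in a \(\Cst\)\nb-algebra to obtain \(s_\lambda^*s_\mu=0\) for distinct \(\lambda,\mu\) of equal degree.
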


\begin{proof}
  First, we consider the relation \(\delta_x \delta_y =  \delta_{x
    y}\) if \(\s(x)=\rg(y)\) and \(\delta_x\delta_y=0\) if
  \(\s(x)\neq\rg(y)\).
  If \(x,y\in\Bisp_1=\Gr = \Gr^0\), this simply says that the
  elements~\(\delta_x\) for \(x\in\Gr\) are orthogonal idempotents.
  We have seen in the proof of
  Theorem~\ref{the:presentation_covariance_ring_groupoid} that
  \(\delta_x = \delta_x^*\) for all \(x\in \Gr^0\).
  So these elements are orthogonal projections.
  In addition, \(\delta_{\rg(x)} \delta_x \delta_{\s(x)} = \delta_x\)
  and \(\delta_{\s(x)} \delta^*_x \delta_{\rg(x)} = \delta_x\)
  holds for all \(x\in \Bisp_p\), \(p\in\Gr\).

  The relation \(\delta_{x_1}^* \delta_{x_2} =
  \delta_{\braket{x_1}{x_2}}\) simplifies because \(\braket{x_1}{x_2}\)
  for \(x_1,x_2\in \Bisp_p\) is only defined if \(x_1 = x_2\), and then
  it is \(\s(x_1)=\s(x_2)\).
  So this relation says \(\delta_{x_1}^* \delta_{x_2} =0\) for
  \(x_1\neq x_2\) and \(\delta_x^* \delta_x = \delta_{\s(x)}\) for all
  \(x\in \bigsqcup\Bisp_p\).
  Together with the relations \(\delta_{\rg(x)} \delta_x
  \delta_{\s(x)} = \delta_x\) and \(\delta_{\s(x)} \delta^*_x
  \delta_{\rg(x)} = \delta_x\), this implies that~\(\delta_x\) is a
  partial isometry.
  The last relation in
  Corollary~\ref{cor:presentation_covariance_ring_groupoid} simplifies
  to
  \begin{equation}
    \label{eq:CP-relation_graph}
    \delta_x = \sum_{\setgiven{y\in\Bisp_p}{\rg(y)=x}} \delta_y\delta_y^*
  \end{equation}
  because the right \(\Gr\)\nb-orbits are just singletons.
  With these points clarified, it becomes easy to check that the
  relations in
  Corollary~\ref{cor:presentation_covariance_ring_groupoid} and those
  in \cite{Brown-Yetter:Conduche}*{Definition~2.7} are equivalent.
\end{proof}

If \(P=\N^k\), then the presentation above generalises the definition
of the Kumjian--Pask algebra of a higher-rank graph in
\cite{Kumjian-Pask:Higher_rank}*{Definition~1.5}.
If the discrete Conduch\'e fibration is row-finite but not regular,
then our definition makes sense.
It does \emph{not} give the usual algebra, however, because we impose
the relation~\eqref{eq:CP-relation_graph} even if
\(\rg^{-1}(x)=\emptyset\), so that the relation degenerates to
\(\delta_x = 0\).

It has always been known that the \(\Cst\)\nb-algebras of
higher-rank graphs are groupoid \(\Cst\)\nb-algebras.
For a regular higher-rank graph, the groupoid model that we study here
is the same as the path groupoid introduced
in~\cite{Kumjian-Pask:Higher_rank}.
When we replace~\(\N^k\) by a monoid~\(P\) that satisfies Ore
conditions, then the covariance ring is the Steinberg algebra of the
groupoid model by our main theorem.
The analogous result for the covariance \(\Cst\)\nb-algebra, which is
the Cuntz--Pimsner algebra of the associated product system,
has been shown both in \cite{Albandik-Meyer:Product} and
in~\cite{Brown-Yetter:Conduche}.
The result in~\cite{Albandik-Meyer:Product} also covers the case
when~\(\Gr\) becomes a locally compact space, which generalises
higher-rank topological graphs.
The groupoid model that we study here is the same as in
\cites{Albandik-Meyer:Product, Brown-Yetter:Conduche}.
Here we need to assume the maps \(\rg\colon \Bisp_p \to \Gr^0\) to be
surjective and proper in order for the covariance ring or
\(\Cst\)\nb-algebra to be defined and the correct object.

We see no need to discuss the groupoid model in detail in
this article because this has already been done previously.
The new aspect in this article compared to
\cites{Albandik-Meyer:Product,
  Antunes-Ko-Meyer:Groupoid_correspondences} is that we can now
realise the Steinberg algebra of the groupoid model as a covariance
algebra as well, and not just the groupoid \(\Cst\)\nb-algebra.

\subsection{Some higher-rank self-similar groups}

Now let~\(\Gr\) be a discrete group.
As noted in \cites{Albandik:Thesis, Meyer:Diagrams_models}, a proper
groupoid correspondence \(\Gr\leftarrow \Gr\) is the same as a self-similar
action of~\(\Gr\), without the assumption that the induced action on
the rooted tree is faithful.
The resulting groupoid model and \(\Cst\)\nb-algebra are the ones
defined by Nekrashevych~\cite{Nekrashevych:Cstar_selfsimilar}.
Our main result also interprets the Steinberg algebra of this groupoid
model as a covariance ring.
The presentation of the covariance ring in
Corollary~\ref{cor:presentation_covariance_ring_groupoid} is the same
as for Nekrashevych's \(\Cst\)\nb-algebra.

We shall not discuss this rank-\(1\) example any further here.
Instead, we consider a class of higher-rank self-similar groups
introduced in a different language by
Stammeier~\cite{Stammeier:Irreversible}.
Let~\(P\) be a monoid and let~\(P\) act on a group~\(\Gr\) by
injective endomorphisms.
That is, we are given injective group homomorphisms \(\vartheta_p\colon
\Gr \to \Gr\) for all \(p\in P\), subject to the conditions \(\vartheta_p
\vartheta_q = \vartheta_{p q}\) for all \(p,q\in P\) and \(\vartheta_1 =
\id_{\Gr}\).
The setup above is more general than the \emph{irreversible algebraic
  dynamical systems} considered in
\cite{Stammeier:Irreversible}*{Definition~1.5}.
Besides a certain independence condition for coprime \(p,q\in P\),
Stammeier assumes~\(P\) to be a countably generated, free Abelian
monoid.
In particular, Stammeier assumes~\(P\) to be Abelian, so that he only
considers Ore monoids.
The fact that Stammeier's \(\Cst\)\nb-algebra is the
\(\Cst\)\nb-algebra of a diagram of groupoid correspondences was
already worked out in the Master's thesis of Daniel Jentsch.

Let~\(\Bisp_p\) be~\(\Gr\) as a set with the left and right
\(\Gr\)\nb-actions \(g_1\bullet x\bullet g_2 \defeq g_1 x
\vartheta_p(g_2)\) for all \(g_1,x,g_2\in \Gr\).
This is a groupoid correspondence because~\(\vartheta_p\) is injective.
The orbit space~\(\Bisp_p/\Gr\) is the coset space
\(\Gr/\vartheta_p(\Gr)\).
Therefore, \(\Bisp_p\) is proper as a groupoid correspondence if and
only if \([\Gr: \vartheta_p(\Gr)]<\infty\).
When this happens for all \(p\in P\), Stammeier speaks of an
irreversible algebraic dynamical systems of \emph{finite type}.
We assume this from now on.

We define the multiplication maps \(\mu_{p,q} \colon \Bisp_p \circ
\Bisp_q \to \Bisp_{p q}\) by \(\mu_{p,q}(x_1,x_2) \defeq x_1
\vartheta_p(x_2)\).

\begin{lemma}
  \label{lem:irreversible_algebraic_diagram}
  The data above is a diagram of proper groupoid correspondences.
\end{lemma}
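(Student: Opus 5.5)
We check the conditions of Definition~\ref{def:diagrams_in_Grcat} one at a time, in the order they appear there.

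\textbf{Each \(\Bisp_p\) is a proper ample correspondence.} Everything is discrete, so the anchor maps to the one-point unit space \(\Gr^0\) are trivially local homeomorphisms and the topology causes no difficulty. The left and right actions commute by associativity in~\(\Gr\). The right action \(x\bullet g = x\vartheta_p(g)\) is free because \(\vartheta_p\) is injective, and it is proper because the space is discrete. Properness of the correspondence is the finite-index assumption \([\Gr:\vartheta_p(\Gr)]<\infty\), since \(\Bisp_p/\Gr \cong \Gr/\vartheta_p(\Gr)\) and the target is a point.

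\textbf{Each \(\mu_{p,q}\) is a well-defined isomorphism.} The composite \(\Bisp_p\circ\Bisp_q\) is the quotient of \(\Gr\times\Gr\) by \((x_1,x_2)\sim(x_1\vartheta_p(g), g^{-1}x_2)\). The formula \(x_1\vartheta_p(x_2)\) is invariant under this relation, so \(\mu_{p,q}\) is well defined. For equivariance, the left action gives \(g_1x_1\vartheta_p(x_2)\), and the right action of \(g_2\) gives \(x_1\vartheta_p(x_2\vartheta_q(g_2)) = x_1\vartheta_p(x_2)\vartheta_{pq}(g_2)\), which uses \(\vartheta_p\vartheta_q=\vartheta_{pq}\). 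Surjectivity is clear by taking \(x_2=1\). For injectivity, if \(x_1\vartheta_p(x_2)=y_1\vartheta_p(y_2)\), set \(g\defeq x_2y_2^{-1}\). Then \(y_1 = x_1\vartheta_p(g)\) and \(y_2=g^{-1}x_2\), so the two pairs are equivalent. Since all spaces are discrete, this equivariant bijection is a homeomorphism and hence an isomorphism of correspondences.

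\textbf{Unit and coherence conditions.} Since \(\vartheta_1=\id\), the correspondence \(\Bisp_1\) is \(\Gr\) with its left and right multiplications, which is the identity correspondence. The map \(\mu_{1,p}(x_1,x_2)=x_1x_2\) is the canonical left multiplication map, and \(\mu_{p,1}(x_1,x_2)=x_1\vartheta_p(x_2)=x_1\bullet x_2\) is the canonical right multiplication map. For the coherence diagram~\eqref{eq:coherence_category-diagram}, both routes send \([x_1,x_2,x_3]\) to \(x_1\vartheta_p(x_2)\vartheta_{pq}(x_3)=x_1\vartheta_p(x_2\vartheta_q(x_3))\), again using \(\vartheta_p\vartheta_q=\vartheta_{pq}\).

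The step needing most care is the injectivity of \(\mu_{p,q}\), where the class in \(\Bisp_p\circ\Bisp_q\) must be tracked correctly. It is settled by the explicit choice \(g=x_2y_2^{-1}\).
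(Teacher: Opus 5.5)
Your proof is correct and follows the same direct verification as the paper: balancing and equivariance of \(\mu_{p,q}\), the unit conditions from \(\vartheta_1=\id\), and coherence from \(\vartheta_p\vartheta_q=\vartheta_{pq}\). It is in fact more complete, since you also check that \(\mu_{p,q}\) is bijective (which the paper's short proof leaves implicit), though note that the right action is proper because it is free (injectivity of \(\vartheta_p\)) and everything is discrete, not because of discreteness alone.
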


\begin{proof}
  The map~\(\mu_{p,q}\) is \(\Gr\)\nb-equivariant for the left and
  right \(\Gr\)\nb-actions on~\(\Bisp_p\) and satisfies
  \(\mu_{p,q}(x_1 \bullet g,x_2) = x_1 \vartheta_p(g) \vartheta_g(x_2) =
  \mu_{p,q}(x_1, g \bullet x_2)\).
  Since \(\vartheta_1 = \id_{\Gr}\), the correspondence~\(\Bisp_1\) is
  the identity correspondence on~\(\Gr\) and the maps \(\mu_{1,q}\)
  and~\(\mu_{p,1}\) are the canonical multiplication maps.
  The assumption \(\vartheta_p \circ \vartheta_q = \vartheta_{p q}\) is
  equivalent to the associativity of the multiplication maps.
\end{proof}

Since~\(\Gr\) is discrete,
Corollary~\ref{cor:presentation_covariance_ring_groupoid} applies and
shows that the covariance ring of the diagram above has the following
presentation:

\begin{corollary}
  \label{cor:Stammeier_presentation}
  The covariance ring of the diagram of rings and proper bimodules
  associated to \((P,\Gr,\Bisp_p,\mu_{p,q})\) above is generated by
  elements \(\delta_{g,p}\) and~\(\delta^*_{g,p}\) for \(g\in \Gr\),
  \(p\in P\), subject to the relations
  \begin{enumerate}
  \item \(\delta_{g,p} \delta_{h,q} = \delta_{g\vartheta_p(h),p q}\) and
    \(\delta^*_{g,p} \delta^*_{h,q} = \delta^*_{h\vartheta_q(g),q p}\) for
    all \(g,h\in \Gr\), \(p,q\in P\);
  \item  \(\delta_{g,p}^* \delta_{h,p} = \delta_{k}\)  for \(g,h\in
    \Gr\), \(p\in P\) if there is \(k\in\Gr\) with \(g \vartheta_p(k) =
    h\), and \(\delta_{g,p}^* \delta_{h,p} = 0\) otherwise;
  \item if \(p\in P\) and \(g_1,\dotsc,g_n\) is a system of
    representatives for the set of cosets \(\Gr/\vartheta_p(\Gr)\), then
    \(\delta_{1,1} = \sum_{j=1}^n \delta_{g_j,p}\delta^*_{g_j,p}\).
  \end{enumerate}
\end{corollary}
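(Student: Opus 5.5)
The plan is to specialise Corollary~\ref{cor:presentation_covariance_ring_groupoid} to the diagram of Lemma~\ref{lem:irreversible_algebraic_diagram}. Since~\(\Gr\) is a discrete group, \(\Gr^0=\{1\}\) is a single point and every \(\Bisp_p=\Gr\) is discrete. By Lemma~\ref{lem:irreversible_algebraic_diagram} the diagram is proper, so the corollary applies. Its generators are \(\delta_x\) and \(\delta_x^*\) for \(x\in\Bisp_p\); we rename them \(\delta_{g,p}\) and \(\delta_{g,p}^*\) for \(g\in\Gr=\Bisp_p\). For \(p=1\), we have \(\Bisp_1=\Gr\), and \(\delta_{1,1}\) is the generator for the unique unit \(1\in\Gr^0\), which we also write \(\delta_k\) for \(k\in\Gr=\Bisp_1\). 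The anchor maps all take values in the one-point space~\(\Gr^0\), so the composability conditions \(\s(x)=\rg(y)\) hold automatically. Hence none of the products in the first relation vanish.

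The first step is to translate relation~(1). With \(\mu_{p,q}(g,h)=g\vartheta_p(h)\), the relation \(\delta_x\delta_y=\delta_{\mu_{p,q}(x,y)}\) reads \(\delta_{g,p}\delta_{h,q}=\delta_{g\vartheta_p(h),pq}\). For the starred relation, the corollary gives \(\delta_y^*\delta_x^*=\delta^*_{\mu_{p,q}(x,y)}\). We apply it with the roles relabelled: take \(x=h\in\Bisp_q\) and \(y=g\in\Bisp_p\). Then \(\delta_{g,p}^*\delta_{h,q}^*=\delta^*_{\mu_{q,p}(h,g)}=\delta^*_{h\vartheta_q(g),qp}\), which is the stated formula. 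The order of \(p\) and~\(q\) needs care here, and \(P\) need not be commutative.

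Next comes relation~(2). Take \(x_1=g\) and \(x_2=h\) in~\(\Bisp_p\). The bracket \(\braket{g}{h}\) is the unique \(k\in\Gr\) with \(g\bullet k=g\vartheta_p(k)=h\). Such a \(k\) exists exactly when \(g\) and~\(h\) lie in the same coset of~\(\vartheta_p(\Gr)\), that is, when \(\Qu(g)=\Qu(h)\). It is unique because \(\vartheta_p\) is injective. Otherwise the product is~\(0\), as the corollary stipulates. So \(\delta_{g,p}^*\delta_{h,p}=\delta_k\), with \(\delta_k=\delta_{k,1}\) the generator for \(k\in\Bisp_1=\Gr\).

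Finally comes relation~(3). Since \(\Gr^0\) is a point, \(\rg^{-1}_{\Bisp_p}(\{1\})=\Bisp_p\). Its right \(\Gr\)\nb-orbits are the cosets \(g\vartheta_p(\Gr)\), so a set of orbit representatives is a system of representatives \(g_1,\dotsc,g_n\) of \(\Gr/\vartheta_p(\Gr)\). There are finitely many because the system has finite type. The relation of the corollary then becomes \(\delta_{1,1}=\sum_j\delta_{g_j,p}\delta_{g_j,p}^*\), where \(\delta_{1,1}\) is the generator for the unit \(1\in\Gr^0\subseteq\Bisp_1\).

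The only real obstacle is bookkeeping: keeping the order of \(p,q\) straight in the starred multiplicativity relation, and checking that the generators \(\delta_x\) for \(x\in\Gr^0\) match \(\delta_{1,1}\). The remainder is a direct substitution into the corollary.
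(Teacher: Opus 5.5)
Your proposal is correct and follows the paper's own route: the paper's proof is the single remark that \(\Gr\) is discrete, so Corollary~\ref{cor:presentation_covariance_ring_groupoid} applies. Your substitution carries out the bookkeeping the paper leaves implicit. This covers the relabelling \(p\leftrightarrow q\) in the starred relation, the identification \(\braket{g}{h}=k\) with \(g\vartheta_p(k)=h\), the right orbits being the cosets \(g\vartheta_p(\Gr)\), and the matching \(\delta_{1,1}\) with the generator for the single unit in \(\Gr^0\).
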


To compare this presentation with the one used by Stammeier
in~\cite{Stammeier:Irreversible}, we let \(u_g \defeq \delta_{g,1}\)
and \(u_g^* \defeq \delta^*_{g,1}\) for \(g\in\Gr\), \(1\in P\), and
\(s_p \defeq \delta_{1,p}\) and \(s_p^* \defeq \delta_{1,p}^*\) for
\(1\in \Gr\), \(p\in P\).
The relations also imply \(u_{g^{-1}} = u_g^*\) for \(g\in\Gr\)
because both are inverse to~\(u_g\).
So we may discard the generators~\(u_g^*\).

\begin{lemma}
  \label{lem:compare_presentation_Stammeier}
  Assume that we are given an irreversible algebraic dynamical system
  of finite type.
  The relations in the above corollary are equivalent to the relations
  \textup{(CNP1)--(CNP3)} in
  \textup{\cite{Stammeier:Irreversible}*{Definition~3.1}}.
\end{lemma}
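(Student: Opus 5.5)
We need to recall Stammeier's relations (CNP1)–(CNP3), which we cannot see, so the plan must be stated in terms of what they plausibly are. They say that \(u\) is a unitary representation of \(\Gr\) and \(s\) is a representation of \(P\) by isometries, \(s_p u_g = u_{\vartheta_p(g)} s_p\), and \(\sum_{g\in\Gr/\vartheta_p(\Gr)} e_{g\vartheta_p(\Gr)} = 1\) with \(e_{g\vartheta_p(\Gr)} \defeq u_g s_p s_p^* u_g^*\), together with a Nica-type condition \(s_p^* s_q = s_q s_p^*\) for coprime \(p,q\).

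We show the equivalence by translating generators in both directions. Given the relations of Corollary~\ref{cor:Stammeier_presentation}, we set \(u_g = \delta_{g,1}\) and \(s_p = \delta_{1,p}\). Relation~(1) gives \(\delta_{g,p} = \delta_{g,1}\delta_{1,p} = u_g s_p\) and \(\delta^*_{g,p} = s_p^* u_g^*\). So all generators are expressed through \(u,s\) and their adjoints, using \(u_g^* = u_{g^{-1}}\). Conversely, given (CNP1)–(CNP3), we define \(\delta_{g,p} \defeq u_g s_p\) and \(\delta^*_{g,p} \defeq s_p^* u_g^*\). It then suffices to check that each family of relations implies the other after this substitution.

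The checks run as follows.
\begin{itemize}
\item Relation~(1) with \(p=q=1\) is the group relation \(u_g u_h = u_{gh}\). With \(q=1\) it reads \(u_g s_p u_h = u_{g\vartheta_p(h)} s_p\), which is the covariance \(s_p u_h = u_{\vartheta_p(h)} s_p\). With \(g=h=1\) it is \(s_p s_q = s_{pq}\).
\item Relation~(2) with \(g=h=1\) gives \(s_p^* s_p = 1\). In general it gives \(s_p^* u_{g^{-1}h} s_p = u_k\) if \(g^{-1}h = \vartheta_p(k)\), and \(0\) otherwise. The first case follows from covariance and isometry. The second case says that distinct cosets give orthogonal range projections, which is part of Stammeier's relations, or follows from~(3) since a sum of projections equal to \(1\) forces orthogonality.
\item Relation~(3) is literally the coset-sum relation, with \(\delta_{1,1}\) the unit.
\end{itemize}

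The main obstacle is Stammeier's Nica-type relation for coprime \(p,q\) (or, more generally, for all \(p,q\)), since nothing in our list looks like it directly. We plan to derive \(s_p^* s_q\) from the relations as follows. First insert \(1 = \sum_j u_{h_j} s_q s_q^* u_{h_j}^*\) over coset representatives of \(\vartheta_q(\Gr)\), so that \(s_p^* s_q = \sum_j s_p^* u_{h_j} s_q s_q^* u_{h_j}^* s_q\). Then use the factorisation through \(s_{pq}\) and relation~(2) for \(pq=qp\), which holds because \(P\) is Abelian here, to reduce to those cosets compatible with both \(\vartheta_p(\Gr)\) and \(\vartheta_q(\Gr)\). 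This is where the independence condition \(\vartheta_p(\Gr)\cap\vartheta_q(\Gr) = \vartheta_{pq}(\Gr)\) with surjectivity of \(\Gr/\vartheta_{pq}(\Gr)\to\Gr/\vartheta_p(\Gr)\times\Gr/\vartheta_q(\Gr)\) for coprime \(p,q\) enters. If this reduction turns out messy, we fall back on showing that both presentations define rings with the same universal covariant representations, using Theorem~\ref{thm:cov_rep_F=O}. Finally, we check that the implicitly imposed relation \(\delta_x = \delta_x^*\) for units does not add anything beyond \(u_g^* = u_{g^{-1}}\).
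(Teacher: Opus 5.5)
Your translation \(\delta_{g,p}=u_g s_p\), \(\delta^*_{g,p}=s_p^*u_{g^{-1}}\) agrees with the paper. So does your matching of relations (1)--(3) with unitarity, isometry, (CNP1) and (CNP3).

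The gap is (CNP2), which you guessed to be a bare Nica condition for coprime elements. In fact it prescribes \(s_p^*u_g s_q\) for \emph{all} \(p,q\in P\) and \(g\in\Gr\). It vanishes unless \(g\in\vartheta_p(\Gr)\vartheta_q(\Gr)\). For \(g=\vartheta_p(g_1)\vartheta_q(g_2)\) it equals \(u_{g_1}\) times a word in \(s_{p'},s_{q'}\) and their adjoints times \(u_{g_2}\), where \(p=tp'\), \(q=tq'\) with \(t\) a greatest common divisor. Your plan derives neither the vanishing clause nor the reduction through~\(t\), so the implication from the corollary to (CNP2) is not established.

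The one computation you write down is also circular. After inserting \(1=\sum_j u_{h_j}s_qs_q^*u_{h_j}^*\) between \(s_p^*\) and~\(s_q\), relation~(2) kills \(s_q^*u_{h_j^{-1}}s_q\) unless \(h_j\in\vartheta_q(\Gr)\). The surviving term is \(s_p^*s_q\) again. The fallback via Theorem~\ref{thm:cov_rep_F=O} does not help: Stammeier's algebra is only given by its presentation, so comparing universal properties \emph{is} this relation check.

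The missing step is to insert the coset-sum relation for the product \(pq\). Take representatives \(h_j\) of \(\Gr/\vartheta_{pq}(\Gr)\) and use \(s_{pq}s_{pq}^*=s_ps_qs_p^*s_q^*\), which holds because \(P\) is Abelian. You get \(s_p^*u_gs_q=\sum_j(s_p^*u_{gh_j}s_p)\,s_qs_p^*\,(s_q^*u_{h_j^{-1}}s_q)\). Relation~(2) forces \(gh_j\in\vartheta_p(\Gr)\) and \(h_j^{-1}\in\vartheta_q(\Gr)\), so everything vanishes unless \(g\in\vartheta_p(\Gr)\vartheta_q(\Gr)\). No independence is needed for this.

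If \(g=\vartheta_p(g_1)\vartheta_q(g_2)\), covariance gives \(s_p^*u_gs_q=u_{g_1}s_p^*s_qu_{g_2}\). Then \(s_t^*s_t=1\) reduces this to \(u_{g_1}s_{p'}^*s_{q'}u_{g_2}\), which is where the paper stops.

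Your Nica step is only needed if one wants the star on the right, \(u_{g_1}s_{q'}s_{p'}^*u_{g_2}\). It follows from the same insertion with \(g=1\): a surviving \(h_j\) must lie in \(\vartheta_p(\Gr)\cap\vartheta_q(\Gr)=\vartheta_{pq}(\Gr)\) for coprime \(p,q\). So only the trivial coset contributes, and it contributes \(s_qs_p^*\).

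Finally, ``projections summing to~\(1\) are orthogonal'' uses positivity in a \(\Cst\)\nb-algebra and fails in a general ring. You do not need it, since (CNP2) with \(p=q\) supplies the orthogonality.
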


\begin{proof}
  Stammeier's generators \(u_g\) and~\(s_p\) produce our generators as
  \(\delta_{g,p} = u_g s_p\) and \(\delta^*_{g,p} = s_p^* u_{g^{-1}}\)
  for all \(g\in\Gr\), \(p\in P\).
  The first relation in Corollary~\ref{cor:Stammeier_presentation}
  is equivalent to \(u_g u_h = u_{g h}\), \(s_p s_q = s_{p q}\),
  \(s_p u_g = u_{\vartheta_p(g)} s_p\), \(s_p^* s_q^* = s^*_{q p}\),
  and \(u_g s_p^* = s_p^* u_{\vartheta_p(g)}\) for all \(g,h\in\Gr\),
  \(p,q\in P\).
  In the presence of the first relation, the second relation in
  Corollary~\ref{cor:Stammeier_presentation} is equivalent to \(s_p^*
  s_p = u_1 = 1\) and \(s_p^* u_g s_p = 0\) if \(g \notin
  \vartheta_p(G)\) for all \(p\in P\), \(g\in \Gr\).
  The third relation says that \(1 = \sum_{j=1}^n u_{g_j} s_p s_p^*
  u_{g_j}^{-1}\) for all \(p\in P\), where \(g_1,\dotsc,g_n\) are a
  system of representatives for the cosets in \(\Gr/\vartheta_p(\Gr)\).
  Stammeier also has a relation about \(s_p^* u_g s_q\) for different
  \(p,q\in P\).
  Here he uses that~\(P\) has more structure to simplify this
  expression.
  We use the relation \(1 = \sum_{j=1}^n u_{h_j} s_{p q} s_{p q}^*
  u_{h_j}^*\) for the element \(p q \in P\) and a set of coset
  representatives \((h_j)\) for \(G/\vartheta_p\vartheta_q(G)\) to rewrite
  \[
    s_p^* u_g s_q
    = \sum_{j=1}^n s_p^* u_g u_{h_j} s_{p q} s_{p q}^* u_{h_j^{-1}}
    s_q
    = \sum_{j=1}^n s_p^* u_{g h_j} s_p s_q s_p^*
    s_q^* u_{h_j^{-1}} s_q.
  \]
  Our relations imply \(s_p^* u_{g h_j} s_p=0\) unless \(g h_j \in
  \vartheta_p(\Gr)\) and \(s_q^* u_{h_j^{-1}} s_q=0\) unless \(h_j^{-1}
  \in \vartheta_q(\Gr)\).
  Thus all summands vanish unless \(g = g h_j h_j^{-1} \in
  \vartheta_p(\Gr) \vartheta_q(\Gr)\).
  So we get \(s_p^* u_g s_q=0\) in this case, as in (CNP2) in
  \cite{Stammeier:Irreversible}*{Definition~3.1}.
  Assume now that \(g = \vartheta_p(g_1) \vartheta_q(g_2)\).
  Let \(t\) be a common lower bound for~\(p,q\), so that \(p
  = t p'\) and \(q = t q'\).
  Then the relations above imply
  \[
    s_p^* u_g s_q
    = u_{g_1} s_p^* s_q u_{g_2}
    = u_{g_1} s_{p'}^* s_t^* s_t s_{q'} u_{g_2}
    = u_{g_1} s_{p'}^* s_{q'} u_{g_2},
  \]
  which is (CNP2) in
  \cite{Stammeier:Irreversible}*{Definition~3.1} when~\(t\) is a
  greatest common lower bound.
  So our relations imply~(CNP2) in full generality if~\(P\) is as
  in~\cite{Stammeier:Irreversible}.
\end{proof}

As a consequence, our covariance ring is an algebraic analogue of the
\(\Cst\)\nb-algebra constructed by Stammeier
in~\cite{Stammeier:Irreversible} provided the irreversible algebraic
dynamical system is of finite type.
If~\(P\) is an Ore monoid, then the covariance ring is the Steinberg
algebra of a groupoid by our main theorem.
In addition, we have described the Steinberg algebra rather concretely
in Section~\ref{sec:cov_ring_construction}.

Let us sketch how the groupoid model looks like when~\(P\) is an Ore
monoid.
This is not used in~\cite{Stammeier:Irreversible}, because Stammeier prefers
other techniques to access the structure of his \(\Cst\)\nb-algebra.
Nevertheless, the object space~\(\Omega\), the most complicated
ingredient in the groupoid model, is used by Stammeier as well.
Namely, it is the spectrum of the commutative \(\Cst\)\nb-subalgebra
generated by the projections \(E_{g,p} \defeq u_g s_p s_p^* u_g^* =
\delta_{g,p}\delta^*_{g,p}\) for all \(g\in G\), \(p\in P\).
According to our recipe, the object space~\(\Omega\) is the projective
limit of the spaces \(\Bisp_p/\Gr = \Gr/\vartheta_p(\Gr)\).
Here the structure maps of the projective system \(\Bisp_{p q}/\Gr \to
\Bisp_p/\Gr\) are the canonical maps \(\Gr/\vartheta_{p q}(\Gr) \to
\Gr/\vartheta_p(\Gr)\) induced by the identity map on~\(\Gr\).
This is well-defined because \(\vartheta_{p q}(\Gr) =
\vartheta_p(\vartheta_q(\Gr)) \subseteq \vartheta_p(\Gr)\).
Let \(\pi_p\colon \Omega \to \Omega_p=\Gr/\vartheta_p(\Gr)\) be the
canonical map from the projective limit.
Then the sets \(\pi_p^{-1}(g\vartheta_p(\Gr))\subseteq \Omega\) for
\(g\in \Gr/\vartheta_p(\Gr)\) and \(p\in P\) form a base of compact open
subsets for the topology on~\(\Omega\).
This base is not closed under set differences.
Nevertheless, the functions \(\pi_p^*(\charmap{g\vartheta_p(\Gr)})\)
generate \(A_R(\Omega)\) as an \(R\)\nb-module because the sets
\(g\vartheta_p(\Gr)\) for \(g\in \Gr/\vartheta_p(\Gr)\) are already
disjoint.

Let~\(T\) be the semigroup with~\(0\) generated by the elements
\(\delta_{g,p} = u_g s_p\) and \(\delta_{g,p}^* = s_p^* u_{g^{-1}}\)
in the covariance ring.
It is a general feature of the groupoid model construction that the
isomorphism between the covariance ring and the Steinberg algebra of
the groupoid model maps each generator of~\(T\) to the characteristic
function of a compact open slice of the groupoid model.
Therefore, \(T\) is an inverse semigroup that is contained in the
inverse semigroup of slices of the groupoid model.
In fact, this inverse semigroup is such that the groupoid model is the
transformation group of~\(T\) acting on~\(\Omega\).
An argument as in the proof of
Lemma~\ref{lem:compare_presentation_Stammeier} shows that
\(\delta_{g,p}^*\delta_{h,q}\) may be rewritten to move all stars
to the right.
Therefore, any nonzero element of~\(T\) may be written as
\(\delta_{g,p}\delta_{h,q}^*\) for some \(g,h\in \Gr\), \(p,q\in
P\).
Therefore, the idempotent elements in~\(T\) are exactly the
elements~\(E_{g,p}\) above.

\end{document}